\colorlet{ggrey}{black!50}
 \theoremstyle{plain}
 \newtheorem{thm1}{Theorem}
 \newtheorem{cor1}[thm1]{Corollary}
\newtheorem{thm}{Theorem}[section]
\newtheorem{lemma}[thm]{Lemma}
\newtheorem{prop}[thm]{Proposition}
\newtheorem{cor}[thm]{Corollary}
\theoremstyle{definition}
\newtheorem{defn}[thm]{Definition}
\newtheorem{remark}[thm]{Remark}
\newtheorem{example}[thm]{Example}
\numberwithin{equation}{section}
\def\sA{\mathsf{A}}
\def\sB{\mathsf{B}}
\def\sD{\mathsf{D}}
\def\sE{\mathsf{E}}
\def\sF{\mathsf{F}}
\def\cA{\mathcal{A}}
\def\cC{\mathcal{C}}
\def\cL{\mathcal{L}}
\def \cP{\mathcal{P}}
\def\cS{\mathcal{S}}
\def\wE{\widetilde{E}}
\def\bC{\mathbf{C}}
\def\AA{\mathbb{A}}
\def\FF{\mathbb{F}}
\def\HH{\mathbb{H}}
\def\KK{\mathbb{K}}
\def\LL{\mathbb{L}}
\def\K{\mathbb{K}}
\def\GQ{\mathsf{GQ}}
\def\uniclass{uniclass}
\def\UNICLASS{UNICLASS}
\DeclareMathOperator\OppRk{\mathsf{rk_{o}}}
\DeclareMathOperator\FixRk{\mathsf{rk_f}}
\DeclareMathOperator\Orb{\mathsf{Orb}}
\DeclareMathOperator\Cl{\mathsf{Cl}}
\DeclareMathOperator\Ch{\mathsf{Ch}}
\DeclareMathOperator\Res{\mathsf{Res}}
\DeclareMathOperator\URes{\mathsf{^URes}}
\DeclareMathOperator\LRes{\mathsf{^LRes}}
\DeclareMathOperator\proj{\mathsf{proj}}
\DeclareMathOperator\Opp{\mathsf{Opp}}
\DeclareMathOperator\Fix{\mathsf{Fix}}
\DeclareMathOperator\Diag{\mathsf{Diag}}
\def\Aut{\mathsf{Aut}}
\DeclareMathOperator\disp{\mathsf{Disp}}
\def\id{\mathsf{id}}
\def\<{\langle}
\def\>{\rangle}
\renewcommand{\@makefnmark}{\mbox{\textsuperscript{}}}
\title{\UNICLASS\ automorphisms of spherical buildings}
\author{Yannick Neyt \and James Parkinson 
\and
Hendrik Van Maldeghem}
\date{\today}
\begin{document}

\maketitle

\begin{abstract} An automorphism of a building is called \textit{uniclass} if the Weyl distance between any chamber and its image lies in a single (twisted) conjugacy class of the Coxeter group. In this paper we characterise uniclass automorphisms of spherical buildings in terms of their fixed structure. For this purpose we introduce the notion of a \textit{Weyl substructure} in a spherical building. We also link uniclass automorphisms to the Freudenthal--Tits magic square.
\end{abstract}


\section*{Introduction}

In this paper we study the \textit{displacement spectra}
$$
\disp(\theta)=\{\delta(C,C^{\theta})\mid C\in\Delta\}
$$
of an automorphism $\theta$ of a building $(\Delta,\delta)$ with Coxeter system~$(W,S)$. More specifically we study the situation where $\disp(\theta)$ is as small as possible in the sense that $\disp(\theta)$ is contained in a single $\sigma$-conjugacy class of~$W$ (where $\sigma$ is the automorphism of the Coxeter graph $\Pi$ of $(W,S)$ given by $\delta(C,D)=s$ if and only if $\delta(C^{\theta},D^{\theta})=s^{\sigma}$, for $s\in S$). Automorphisms with this property are called \textit{uniclass automorphisms}, and it turns out that if $\theta$ is uniclass then $\disp(\theta)$ is necessarily a full $\sigma$-conjugacy class. 

The main result of this paper is a classification of uniclass automorphisms for spherical buildings. This classification is via the geometry fixed by the automorphism, and shows that the uniclass property for spherical buildings is intimately connected to having a large and highly structured fixed geometry. Our main theorem is as follows (recall that an automorphism of a spherical building is \textit{anisotropic} if it maps all chambers to opposite chambers). 

\begin{thm1}\label{thm:main1}
Let $\theta$ be a nontrivial automorphism of a thick irreducible spherical building~$\Delta$ of rank at least~$2$. Then $\theta$ is uniclass if and only if $\theta$ is either anisotropic, or:
\begin{compactenum}[$(1)$]
\item $\Delta$ has type $\mathsf{I}_2(2m)$ $(m\geq 2)$ and in the associated generalised $2m$-gon
\begin{compactenum}[$(a)$]
\item $\theta$ is a collineation that elementwise fixes an ovoid or a spread, or
\item $\theta$ is a polarity (and then necessarily its fixed element structure is an ovoid-spread pairing).
\end{compactenum}
\item $\Delta$ has type $\sA_{2n+1}$ $(n\geq 1)$ and in the associated projective space
\begin{compactenum}[$(a)$]
\item $\theta$ is a fix point free collineation fixing a line spread elementwise, or
\item $\theta$ is a symplectic polarity (a polarity fixing a symplectic polar space of rank~$n$). 
\end{compactenum}
\item $\Delta$ has (Coxeter) type $\sB_n$ $(n\geq 3)$ or $\sD_n$ $(n\geq 4)$ and in the associated polar space
\begin{compactenum}[$(a)$]
\item $\theta$ is a collineation whose fixed points form an ideal subspace, or
\item $\theta$ is a fix point free collineation fixing a line spread elementwise.
\end{compactenum}
\item $\Delta=\sE_6(\K)$ with $\K$ a field and 
\begin{compactenum}[$(a)$]
\item $\theta$ is a symplectic polarity (a polarity fixing a standard split metasymplectic space), or
\item $\theta$ is a collineation fixing an ideal Veronesian pointwise in $\sE_{6,1}(\KK)$. 
\end{compactenum}
\item $\Delta=\sE_7(\K)$ and
\begin{compactenum}[$(a)$]
\item the fixed point structure of $\theta$ in $\sE_{7,1}(\K)$ is a fully embedded metasymplectic space $\sF_4(\K,\LL)$ with $\LL$ a quadratic extension of $\K$, isometrically embedded as a long root subgroup geometry, or
\item the fixed point structure of $\theta$ in $\sE_{7,7}(\K)$ is an ideal dual polar Veronesian.
\end{compactenum}
\item $\Delta=\sE_8(\K)$ and the fixed point structure of $\theta$ in $\sE_{8,8}(\K)$ is a fully (and automatically isometrically) embedded metasymplectic space $\sF_4(\K,\mathbb{H})$ with $\mathbb{H}$ either a quaternion algebra over $\K$ or an inseparable quadratic field extension of degree~$4$.
\item $\Delta$ has type $\sF_4$ and 
\begin{compactenum}[$(a)$]
\item $\theta$ is type preserving and the fixed element structure of $\theta$ is an ideal quadrangular Veronesian, or
\item $\theta$ is a polarity (and then necessarily its fixed element structure forms a Moufang octagon).
\end{compactenum}
\end{compactenum}
Moreover, for each uniclass automorphism the twisted conjugacy class $\disp(\theta)$ is explicitly determined (see Table~\ref{AllWS}). 
\end{thm1}

The definition of the various fixed structures in the theorem (ovoids, spreads, line spreads, ideal subspace, standard split metasymplectic space, ideal Veronesian, and so on) will be given in the relevant subsections of Section~\ref{sec:4}. We call these fixed structures \textit{Weyl substructures} in $\Delta$. These Weyl substructures are large and highly structured subsets of the (simplicial) building. Indeed, each Weyl substructure $\Delta'$ is itself a thick spherical building. 

We record the list of Weyl substructures, along with their Coxeter type, in Table~\ref{AllWS}. In the table, the \textit{absolute type} is the Coxeter type of the ambient building $\Delta$, and the \textit{relative type} is the Coxeter type of the Weyl substructure~$\Delta'$. Moreover, in each case we list the $\sigma$-conjugacy class $\disp(\theta)$ of displacements of the associated automorphisms of $\Delta$ fixing~$\Delta'$. The notation for these classes is in terms of the admissible diagrams introduced in \cite{PVMclass,PVMexc} (see Theorem~\ref{thm:bicapped} for details).

\begin{table}[h!]
\renewcommand{\arraystretch}{1.5}
\begin{tabular}{c|c||c||c}
\mbox{Abs. type} & \mbox{Rel. type} & \mbox{Description of Weyl substructure} & \mbox{$\sigma$-class}\\ \hline\hline
 \multirow{2}{*}{$\mathsf{A}_{2n-1}$} & $\mathsf{B}_{n}$ & symplectic polar space of rank~$n$ & ${^2}\sA_{2n-1;n}^1$ \\
 &   $\mathsf{A}_{n-1}$  & composition line spread  & ${^1}\sA_{2n-1;n-1}^2$  \\ \hline 
 \multirow{2}{*}{$\mathsf{B}_{n}$} & $\mathsf{B}_{i}$ & ideal subspace of rank $i$ & $\sB_{n;i}^1$ \\ 
 &   $\mathsf{B}_{n/2}$  & composition line spread & $\sB_{n;n/2}^2$  \\ \hline 
 \multirow{2}{*}{$\mathsf{D}_{n}$} & $\mathsf{B}_{i}$ & ideal subspace of rank $i$ & $\sD_{n;i}^1$   \\
 &   $\mathsf{B}_{n/2}$  & composition line spread & $\sD_{n;n/2}^2$ or $\sD_{n;n'/2}^2$  \\ \hline 
  \multirow{2}{*}{$\mathsf{E}_{6}$} & $\mathsf{F}_{4}$& standard split metasymplectic space & ${^2}\sE_{6;4}$   \\ 
    & $\mathsf{A}_{2}$  & ideal Veronesian  & $\sE_{6;2}$  \\ \hline
     \multirow{2}{*}{$\mathsf{E}_{7}$} & $\mathsf{F}_{4}$& partial composition spread  & $\sE_{7;4}$  \\ 
    & $\mathsf{B}_{3}$  & ideal dual polar quaternion Veronesian  & $\sE_{7;3}$  \\ \hline
         \multirow{1}{*}{$\mathsf{E}_{8}$} & $\mathsf{F}_{4}$& quaternion metasymplectic space& $\sE_{8;4}$   \\ \hline
           \multirow{2}{*}{$\mathsf{F}_{4}$} & $\mathsf{B}_{2}$& ideal quadrangular Veronesian  & $\sF_{4;2}$   \\ 
    & $\mathsf{I}_{2}(8)$  & Ree--Tits octagon & ${^2}\sF_{4;2}$    \\ \hline
           \multirow{2}{*}{$\mathsf{I}_2(2m)$} & $\mathsf{A}_{1}$& an ovoid or a spread & $\Cl(s_i)$, $i\in\{1,2\}$   \\ 
    & $\mathsf{A}_1$  & an ovoid-spread pair  &$\Cl^{\sigma}(1)$, $\sigma\neq 1$ \\ \hline 
\end{tabular}
\vspace{6pt}
\caption{Weyl substructures\label{AllWS}}
\end{table}

Before proving Theorem~\ref{thm:main1} we first develop the general theory of the displacement spectra $\disp(\theta)$ of an automorphism of a building, a subject that is interesting in its own right. This is undertaken in Sections~\ref{sec:1} and~\ref{sec:2}. We begin with an analysis of twisted conjugacy classes in (mainly spherical) Coxeter systems (if $\sigma\in\Aut(\Pi)$ then the $\sigma$-conjugacy class of $x\in W$ is 
$
\Cl^{\sigma}(x)=\{w^{-1}xw^{\sigma}\mid w\in W\}
$).
We introduce the notion of a \textit{$\sigma$-involution}, generalising the concept of an ordinary involution. The twisted conjugacy classes of $\sigma$-involutions in spherical Coxeter groups turn out to be fundamentally important in our study of uniclass automorphisms, and we classify these classes in Theorem~\ref{thm:downwardsclosure}. 

We define the notion of a \textit{bi-capped} class of $\sigma$-involutions. These are the classes of $\sigma$-involutions with a unique minimal length element, and a unique maximal length element. We classify these classes in Theorem~\ref{thm:bicapped}, and it turns out that these classes correspond to a subset of the admissible diagrams of \cite{PVMclass,PVMexc}. Moreover, by comparing with Theorem~\ref{thm:main1} it turns out that the bi-capped classes are precisely the twisted classes that occur as the displacement sets of uniclass automorphisms. Thus we have the following corollary.

\begin{cor1}\label{cor:bicapped}
The twisted conjugacy classes that occur as the displacement set of an automorphism of some thick irreducible building of spherical type are precisely the bi-capped classes of twisted involutions. 
\end{cor1}

There is a natural ``duality'' on the set of all twisted conjugacy classes of a spherical Coxeter group given by multiplication by the longest element. This duality is explicitly computed for the bi-capped classes in Theorem~\ref{thm:bicapped}. For example, in type $\sE_7$ the class $\sE_{7;4}$ (this is the conjugacy class $\Cl(s_2s_5s_7)$) is dual to the class $\sE_{7;3}$ (this is the conjugacy class $\Cl(w_{\sD_4})$ with $w_{\sD_4}$ the longest element of the $\sD_4$ parabolic subgroup). We say that two Weyl substructures $\Gamma$ and $\Gamma'$ in (possibly different) buildings of the same type are \textit{paired} if the displacement sets of automorphisms fixing the substructures are dual twisted conjugacy classes in the Coxeter group. We have the following corollary to Theorem~\ref{thm:main1}. 

\begin{cor1}\label{cor:pairing}
If $\Gamma$ is a Weyl substructure in a thick irreducible spherical building~$\Delta$, then there exists a (possibly different) thick irreducible spherical building~$\Delta'$ of the same type, and Weyl substructure $\Gamma'$ in $\Delta'$, such that $\Gamma$ and $\Gamma'$ are paired. Moreover, the ranks of $\Gamma$ and $\Gamma'$ sum to the rank of $\Delta$.
\end{cor1}

The need to change the building (but not the type) in Corollary~\ref{cor:pairing} is unavoidable. For example, in the building $\sE_7(\KK)$ Weyl substructures of relative type $\sF_4$ exist if and only if $\K$ admits a quadratic extension, while Weyl substructures of relative type $\sB_3$ exist if and only if there exists a quaternion division algebra $\mathbb{H}$ over $\K$. The fact that the ranks of $\Gamma$ and $\Gamma'$ necessarily add to the rank of $\Delta$ admits a theoretical explanation, see Proposition~\ref{prop:ranksum}.

We also classify the uniclass automorphisms and Weyl substructures that occur in finite thick irreducible spherical buildings (see Theorem~\ref{thm:finitecase}). Moreover, in the finite case we show that the cardinalities of the sets
$
\Delta_w(\theta)=\{C\in\Delta\mid\delta(C,C^{\theta})=w\}
$
can be computed for all uniclass automorphisms. Explicitly, we have (see Theorem~\ref{thm:counts2})
$$
|\Delta_w(\theta)|=|\Delta_J||\Delta'|q_w^{1/2}q_{w_J}^{-1/2},
$$
where the unique minimal length element of the associated (bi-capped) class is $w_J$ (with $J\subseteq S$), $\Delta_J$ is the any residue of $\Delta$ of type $J$, and $\Delta'$ is the (chamber set) of the associated fixed Weyl substructure. Here $q_s$, $s\in S$, are the usual thickness parameters of the finite building, and $q_w=q_{s_1}\cdots q_{s_k}$ whenever $w=s_1\cdots s_k$ is reduced. 

Recall that an automorphism of a spherical building is called \textit{domestic} if it maps no chamber to an opposite chamber. This concept plays a crucial role in the present work, for the following reason: If $\theta$ is uniclass and the companion automorphism of $\theta$ is the opposition relation, then either $\theta$ is domestic or $\theta$ is anisotropic. Domestic automorphisms are rather rare, and there is now an extensive literature on the topic, including~\cite{Lam-Mal:23,NPVV,PTV,PVM:19a,PVM:19b,PVMexc,PVMexc2,PVMclass,TTVM3,TTVM2,Mal:12,Mal:14}, and forthcoming paper~\cite{PVMexc4} dealing with the $\sE_8$ case, which cumulatively give an essentially complete classification of domestic automorphisms of spherical buildings. 

Section~\ref{sec:1} gives background and preliminary results on twisted conjugacy classes and automorphisms of buildings, including the classification of bi-capped classes. In section~\ref{sec:2} we develop the general theory of displacement spectra for automorphisms of buildings. While ultimately we are interested mainly in the spherical case in this paper, we set up some of the machinery in a more general setting. This section also discusses the \textit{fixed} and \textit{opposition diagrams} of uniclass automorphisms (see Proposition~\ref{prop:duality}), and develops methods to compute the cardinalities $|\Delta_w(\theta)|$ in the finite case (see Theorems~\ref{thm:counting} and~\ref{thm:counts}). Moreover, we show in Proposition~\ref{prop:reduciblecase} and Theorem~\ref{thm:reducetothick} that the classification of uniclass automorphisms of non-thick spherical buildings reduces to the thick irreducible case, thus justifying the thickness and irreducibility assumptions in Theorem~\ref{thm:main1}. 

Section~\ref{sec:3} begins the proof of Theorem~\ref{thm:main1}. In particular, in Theorem~\ref{thm:uncapped} we prove that all uniclass automorphisms of a thick spherical building are ``capped'' (meaning that if there exist simplices of types $J_1$ and $J_2$ mapped to opposite simplices by $\theta$, then there is a type $J_1\cup J_2$ simplex mapped to an opposite by~$\theta$). This important property allows us to more easily apply the theory of domesticity. 

Section~\ref{sec:4} contains the bulk of the proof of Theorem~\ref{thm:main1}. The analysis is case-by-case on the type of the building, making extensive use of the literature on domestic automorphisms of spherical buildings (the results for $\sE_8$ are conditional on some results that will appear in a forthcoming paper~\cite{PVMexc4}).

 In Section~\ref{sec:5} we classify the finite Weyl substructures, and compute the sets $|\Delta_w(\theta)|$ for uniclass automorphisms of finite spherical buildings. In Section~\ref{FTMS} we provide a connection between uniclass automorphisms and the Freudenthal--Tits Magic Square.

\textit{Acknowledgement:} We thank Arun Ram for helpful discussions, in particular regarding the counting arguments contained in Section~\ref{sec:counting}.

\section{Background and preliminary results}\label{sec:1}

This section contains background on Coxeter groups, twisted conjugacy classes, buildings, and automorphisms of spherical buildings. We introduce the notion of \textit{bicapped} twisted conjugacy classes in a spherical Coxeter group, and classify these classes. It will turn out that these classes are precisely the classes that occur as the displacement sets of uniclass automorphisms of spherical buildings. 

\subsection{Coxeter groups and twisted conjugacy classes}

Let $(W,S)$ be an arbitrary Coxeter system with Coxeter graph $\Pi=\Pi(W,S)$. For $J\subseteq S$ let $W_J=\langle J\rangle$ be subgroup of $W$ generated by $J$, and let
\begin{align*}
W^J&=\{x\in W \mid \ell(xs)=\ell(x)+1\text{ for all $s\in J$}\}
\end{align*}
be the set of minimal length coset representatives for cosets in $W/W_J$. 

Let $D_L(w)=\{s\in S\mid \ell(sw)<\ell(w)\}$ and $D_R(w)=\{s\in S\mid \ell(ws)<\ell(w)\}$ be the left and right descent sets of~$w$.

A subset $J\subseteq S$ is called \textit{spherical} if $W_J$ is a finite group. In this case we write $w_J$ for the longest element of $W_J$. If $S$ is spherical (that is, $|W|<\infty$) we denote the longest element of $W$ by $w_0=w_S$.

The following well known facts about Coxeter groups are used frequently.

\begin{lemma}\label{lem:foldingcondition}
Let $(W,S)$ be an arbitrary Coxeter system. Let $w\in W$ and $s,t\in S$.
\begin{compactenum}[$(1)$]
\item If $\ell(sw)=\ell(wt)=\ell(w)+1$ then either $\ell(swt)=\ell(w)+2$ or $swt=w$. 
\item If $\ell(swt)=\ell(w)$ and $\ell(sw)=\ell(wt)$ then $swt=w$.
\end{compactenum}
\end{lemma}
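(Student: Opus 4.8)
The plan is to prove part $(2)$ first directly from the (weak) exchange condition, and then obtain part $(1)$ as an immediate corollary. Throughout I will use the standard fact that right or left multiplication by a single generator changes length by exactly $\pm 1$, so in particular $\ell(swt)\in\{\ell(w)-2,\ell(w),\ell(w)+2\}$ is constrained once the lengths of $sw$ and $wt$ are known.

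For part $(2)$, assume $\ell(swt)=\ell(w)$ and $\ell(sw)=\ell(wt)$. There are two cases according to whether $\ell(sw)=\ell(wt)=\ell(w)+1$ or $\ell(sw)=\ell(wt)=\ell(w)-1$. In the first case, fix a reduced expression $w=s_1\cdots s_k$; since $\ell(sw)>\ell(w)$ the word $s\,s_1\cdots s_k$ is a reduced expression for $sw$. Because $\ell\bigl((sw)t\bigr)=\ell(w)<\ell(sw)$, the exchange condition produces $swt=s\,s_1\cdots\widehat{s_i}\cdots s_k$ with one letter of this reduced word deleted. If the deleted letter is some $s_j$ with $j\ge 1$, then left-multiplying by $s$ gives $wt=s_1\cdots\widehat{s_j}\cdots s_k$, a word of length $k-1=\ell(w)-1$, contradicting $\ell(wt)=\ell(w)+1$; hence the leading $s$ is the deleted letter and $swt=s_1\cdots s_k=w$. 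For the second case, put $w'=sw$, so $\ell(w')=\ell(w)-1$; a direct check gives $\ell(sw')=\ell(w)=\ell(w')+1$, $\ell(w't)=\ell(swt)=\ell(w)=\ell(w')+1$, and $\ell(sw't)=\ell(wt)=\ell(w)-1=\ell(w')$, so the first case applies to $w'$ and yields $sw't=w'$, i.e. $wt=sw$, whence $swt=s(wt)=s(sw)=w$.

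For part $(1)$, suppose $\ell(sw)=\ell(wt)=\ell(w)+1$. Then $\ell(swt)=\ell\bigl((sw)t\bigr)\in\{\ell(sw)-1,\ell(sw)+1\}=\{\ell(w),\ell(w)+2\}$. If $\ell(swt)=\ell(w)+2$ we are done; otherwise $\ell(swt)=\ell(w)$, and since also $\ell(sw)=\ell(wt)$, part $(2)$ applies and gives $swt=w$.

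These statements are classical, and there is no genuine obstacle here: the only point needing a little care is the bookkeeping in the exchange-condition step of part $(2)$ (tracking whether the leading generator or an interior one is deleted) and the reduction of its second case to the first by replacing $w$ with $sw$; everything else follows from the parity/one-step behaviour of the length function.
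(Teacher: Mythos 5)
Your proof is correct and takes essentially the same approach as the paper: the paper proves $(1)$ by the exchange condition and deduces $(2)$ via the substitution $v=sw$, while you prove the case $\ell(sw)=\ell(w)+1$ of $(2)$ by the same exchange-condition argument (which is precisely $(1)$ in disguise), handle the other case by the same $v=sw$ substitution, and then read off $(1)$ as a corollary — a reorganization, not a different argument.
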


\begin{proof}
(1) is an easy application of the exchange condition (see \cite[p.79]{AB:09}). For (2), if $\ell(sw)=\ell(w)+1$ then the result follows from (1), and if $\ell(sw)=\ell(w)-1$ then let $v=sw$. Then $\ell(sv)=\ell(w)=\ell(sw)+1=\ell(v)+1$, and $\ell(vt)=\ell(swt)=\ell(w)=\ell(v)+1$, and $\ell(svt)=\ell(wt)=\ell(w)-1=\ell(v)$, and apply~(1).
\end{proof}

Let $(W,S)$ be an arbitrary Coxeter system. Let $\sigma\in\mathrm{Aut}(\Pi)$ (a \textit{diagram automorphism}) and let $w\in W$. The $\sigma$-conjugacy class of $W$ is
$$
\Cl^{\sigma}(w)=\{v^{-1}wv^{\sigma}\mid v\in W\}.
$$
In particular, if $\sigma=1$ then $\Cl^{\sigma}(w)=\Cl(w)$ (a usual conjugacy class). We refer the reader to \cite{GKP:00} for a general treatment of twisted conjugacy classes in finite Coxeter groups.

The nonempty subsets $K\subseteq S$ that are minimal subject to being preserved by $\sigma$ are called the \textit{distinguished $\sigma$-orbits}, and for subsets $J\subseteq S$ preserved by $\sigma$ we write
$$
\Orb^{\sigma}(J)=\{\text{distinguished $\sigma$-orbits $K$ with $K\subseteq J$}\}.
$$
For example, $W$ is of type $\sA_5$ and $\sigma$ has order $2$ then $\Orb^{\sigma}(S)=\{\{1,5\},\{2,4\},\{3\}\}$.

We record some basic observations.

\begin{prop}\label{prop:basic1}
Let $(W,S)$ be a Coxeter system. Let $\sigma$ be a diagram automorphism and let $\bC$ be a $\sigma$-conjugacy class.
\begin{compactenum}[$(1)$]
\item If $u,v\in\bC$ then $\ell(u)=\ell(v)\mod 2$.
\item The order of $ww^{\sigma}w^{\sigma^2}\cdots w^{\sigma^{n-1}}$ is constant for $w\in\bC$, where $n=\mathrm{ord}(\sigma)$.
\end{compactenum}
\end{prop}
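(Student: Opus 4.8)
Both assertions follow from the fact that a diagram automorphism preserves lengths, so I would begin by recording that: if $\sigma\in\mathrm{Aut}(\Pi)$ and $w=s_1\cdots s_k$ is a reduced word, then $w^{\sigma}=s_1^{\sigma}\cdots s_k^{\sigma}$ is again reduced (otherwise applying $\sigma^{-1}$ would shorten it), so $\ell(w^{\sigma})=\ell(w)$ for all $w\in W$.

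For~(1), the plan is to invoke the sign character $\varepsilon\colon W\to\{\pm1\}$, $\varepsilon(w)=(-1)^{\ell(w)}$, which is a group homomorphism. Since $\ell(w^{\sigma})=\ell(w)$ we have $\varepsilon(w^{\sigma})=\varepsilon(w)$. Fixing a representative $w$ of $\bC$, every element of $\bC$ has the form $v^{-1}wv^{\sigma}$ with $v\in W$, and then $\varepsilon(v^{-1}wv^{\sigma})=\varepsilon(v)^{-1}\varepsilon(w)\varepsilon(v)=\varepsilon(w)$ because $\{\pm1\}$ is abelian and $\varepsilon(v)^{-1}=\varepsilon(v)$. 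Hence $\ell(u)\equiv\ell(w)\pmod 2$ for all $u\in\bC$, which is the claim.

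For~(2), set $N(w)=ww^{\sigma}w^{\sigma^2}\cdots w^{\sigma^{n-1}}$ with $n=\mathrm{ord}(\sigma)$; the idea is that $N$ converts $\sigma$-conjugacy into ordinary conjugacy (a ``norm map'' argument). Fix $w\in\bC$ and take an arbitrary element $w'=v^{-1}wv^{\sigma}\in\bC$. Applying the automorphism $\sigma^k$ gives $(w')^{\sigma^k}=v^{-\sigma^k}w^{\sigma^k}v^{\sigma^{k+1}}$ for $0\le k\le n-1$, and multiplying these together the inner factors $v^{\sigma^j}v^{-\sigma^j}$ cancel for $1\le j\le n-1$, leaving $N(w')=v^{-1}N(w)v^{\sigma^n}$. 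Since $\sigma^n=\mathrm{id}$ we have $v^{\sigma^n}=v$, so $N(w')=v^{-1}N(w)v$ is an ordinary conjugate of $N(w)$ and therefore has the same order. As every element of $\bC$ arises this way from the fixed $w$, the order of $N(\cdot)$ is constant on $\bC$.

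I do not anticipate a genuine obstacle here: the only points needing a little care are the length-preservation of $\sigma$ (needed both to get $\varepsilon\circ\sigma=\varepsilon$ in~(1) and implicitly throughout) and the exponent bookkeeping in the telescoping product of~(2), where the identity $\sigma^n=\mathrm{id}$ is exactly what is required to close the product up into an honest conjugation; both are routine.
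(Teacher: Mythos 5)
Both parts are correct, and part~(2) is essentially the paper's own proof: the same telescoping identity $N(v^{-1}wv^{\sigma})=v^{-1}N(w)v$ (the paper notes $\sigma^n=\mathrm{id}$ implicitly by writing the final factor as $v$). For~(1) your sign-character argument is a slightly slicker packaging of the same length-parity fact: the paper instead inducts on the number of elementary $\sigma$-conjugations using $\ell(sws^{\sigma})\in\{\ell(w)-2,\ell(w),\ell(w)+2\}$, whereas you apply $\varepsilon=(-1)^{\ell(\cdot)}$ in one shot, which buys you a cleaner write-up at no extra cost in machinery.
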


\begin{proof}
(1) follows from the fact that $\ell(sws^{\sigma})\in\{\ell(w)-2,\ell(w),\ell(w)+2\}$ and induction. (2) follows from the fact that if $y=v^{-1}xv^{\sigma}$ then $yy^{\sigma}\cdots y^{\sigma^{n-1}}=v^{-1}xx^{\sigma}\cdots x^{\sigma^{n-1}}v$. 
\end{proof}

\begin{lemma}\label{lem:min1}
Let $\sigma\in\Aut(\Pi)$. Suppose that $J\subseteq S$ is a spherical subset with the property that $sw_J=w_Js^{\sigma}$ for all $s\in J$. Then $w_J$ has minimal length in the class $\Cl^{\sigma}(w_J)$.
\end{lemma}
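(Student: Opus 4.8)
The plan is to prove the apparently stronger statement that $\ell(v^{-1}w_Jv^{\sigma})\ge\ell(w_J)$ for \emph{every} $v\in W$. Since $w_J=1^{-1}w_J1^{\sigma}\in\Cl^{\sigma}(w_J)$, this is exactly the assertion that $w_J$ has minimal length in its $\sigma$-conjugacy class.

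First I would draw two elementary consequences from the hypothesis $sw_J=w_Js^{\sigma}$ for $s\in J$. Rewriting it as $w_Jsw_J=s^{\sigma}$ exhibits $s^{\sigma}$ as an element of $W_J\cap S=J$, so $\sigma$ preserves $J$. Moreover, an immediate induction on $\ell(b)$ (stripping off a left letter $s\in J$ and applying the hypothesis) upgrades it to $bw_J=w_Jb^{\sigma}$ for all $b\in W_J$, equivalently $b\,w_J\,b^{-\sigma}=w_J$ for all $b\in W_J$.

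Second I would use the standard factorisation $W=W^JW_J$ to strip the ``$W_J$-part'' off $v$: write $v^{-1}=uc$ with $u\in W^J$ and $c\in W_J$. Then $v^{\sigma}=c^{-\sigma}u^{-\sigma}$, and since $\sigma$ preserves $J$ we have $c^{-\sigma}\in W_J$, so the identity from the previous step gives
$$v^{-1}w_Jv^{\sigma}=u\,(c\,w_J\,c^{-\sigma})\,u^{-\sigma}=u\,w_J\,u^{-\sigma}.$$
Finally, because $u\in W^J$ and $w_J\in W_J$ one has $\ell(uw_J)=\ell(u)+\ell(w_J)$, and combining the triangle inequality $\ell(xy)\ge\ell(x)-\ell(y)$ with $\ell(u^{-\sigma})=\ell(u)$ (diagram automorphisms preserve length) we conclude $\ell(v^{-1}w_Jv^{\sigma})=\ell(uw_Ju^{-\sigma})\ge\ell(u)+\ell(w_J)-\ell(u)=\ell(w_J)$.

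The computations are all routine; the only real choice is how to organise the argument. A head-on induction on $\ell(v)$ does \emph{not} close: once an intermediate twisted conjugate already has length $\ell(w_J)$, one cannot preclude a further length-decreasing twisted cyclic shift without effectively re-proving a minimal-length theorem (e.g.\ of Geck--Kim--Pfeiffer type). The crux is therefore spotting the identity $b\,w_J\,b^{-\sigma}=w_J$, which collapses the $W_J$-component of $v$ and reduces everything to the one-line length estimate for $u\in W^J$.
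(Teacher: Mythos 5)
Your proof is correct and is essentially the paper's own argument: both proofs decompose via $W=W^JW_J$, use the key identity $b\,w_J\,b^{-\sigma}=w_J$ for $b\in W_J$ to collapse the $W_J$-part, and then deduce $\ell(uw_Ju^{-\sigma})\ge\ell(w_J)$ from $\ell(uw_J)=\ell(u)+\ell(w_J)$ for $u\in W^J$ together with $\ell(u^{\sigma})=\ell(u)$. The only difference is cosmetic (the paper phrases the triangle inequality as $\ell(xv)\le\ell(xvx^{-\sigma})+\ell(x^{\sigma})$ rather than $\ell(xy)\ge\ell(x)-\ell(y)$, and names the $W^J$- and $W_J$-components differently).
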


\begin{proof}
Let $w\in W$ and write $w=xu$ with $x\in W^J$ and $u\in W_J$. The hypothesis gives $uw_Ju^{-\sigma}=w_J$, and hence $ww_Jw^{-\sigma}=xw_Jx^{-\sigma}$. Note that for any $v\in W_J$ we have 
$$
\ell(x)+\ell(v)=\ell(xv)=\ell(xvx^{-\sigma}x^{\sigma})\leq \ell(xvx^{-\sigma})+\ell(x^{\sigma}),
$$
and since $\ell(x)=\ell(x^{\sigma})$ this gives $\ell(xvx^{-\sigma})\geq \ell(v)$ for all $v\in W_J$. In particular we have
$$
\ell(ww_Jw^{-\sigma})=\ell(xw_Jx^{-\sigma})\geq \ell(w_J),
$$
and so $w_J$ is of minimal length in $\Cl^{\sigma}(w_J)$. 
\end{proof}

\subsection{An involution on the set of twisted classes}\label{sec:paired}

Let $(W,S)$ be a spherical Coxeter system and let $\sigma_0\in\Aut(\Pi)$ be the opposition relation given by $s^{\sigma_0}=w_0sw_0$ for $s\in S$. 

\begin{lemma}\label{lem:pairing}
Let $(W,S)$ be spherical and let $\sigma\in\Aut(\Pi)$. For $x\in W$ we have $\Cl^{\sigma}(x)w_0=\Cl^{\sigma\sigma_0}(xw_0)$ and $w_0\Cl^{\sigma}(x)=\Cl^{\sigma\sigma_0}(w_0x)$.
\end{lemma}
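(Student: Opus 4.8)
The plan is to prove both identities directly from the definition of a twisted conjugacy class, using only two elementary facts about the longest element $w_0$: that it is an involution ($w_0^2 = 1$), and that conjugation by $w_0$ implements the diagram automorphism $\sigma_0$, i.e. $w^{w_0} = w_0 w w_0 = w^{\sigma_0}$ for all $w \in W$ (this is immediate from $s^{\sigma_0} = w_0 s w_0$ and the fact that $\sigma_0$ is a group automorphism of $W$). A further consequence is that $\sigma_0$ commutes with every $\sigma \in \Aut(\Pi)$, since both act on $W$ and $\sigma_0$ is inner; equivalently, $v^{\sigma \sigma_0} = (v^\sigma)^{\sigma_0} = w_0 v^\sigma w_0$.

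For the first identity, I would take an arbitrary element $v^{-1} x v^\sigma \in \Cl^\sigma(x)$ and compute
\[
(v^{-1} x v^\sigma) w_0 = v^{-1} x v^\sigma w_0 = v^{-1} x w_0 (w_0 v^\sigma w_0) = v^{-1} (x w_0) (v^\sigma)^{\sigma_0} = v^{-1} (x w_0) v^{\sigma \sigma_0},
\]
which is precisely a generic element of $\Cl^{\sigma \sigma_0}(x w_0)$. Running the computation with $v$ ranging over all of $W$ shows $\Cl^\sigma(x) w_0 \subseteq \Cl^{\sigma \sigma_0}(x w_0)$, and since each step is reversible (multiply back by $w_0$ on the right and insert $w_0^2 = 1$ again), the reverse inclusion holds too, giving equality. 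The second identity is symmetric: for $w_0 \Cl^\sigma(x)$ I would instead write
\[
w_0 (v^{-1} x v^\sigma) = (w_0 v^{-1} w_0)(w_0 x) v^\sigma = (v^{\sigma_0})^{-1} (w_0 x) v^\sigma,
\]
and then reindex by setting $u = v^{\sigma_0^{-1}} = v^{\sigma_0}$ (as $\sigma_0$ is an involution), so that $v = u^{\sigma_0}$ and $v^\sigma = u^{\sigma_0 \sigma} = u^{\sigma \sigma_0}$; the expression becomes $u^{-1}(w_0 x) u^{\sigma \sigma_0}$, a generic element of $\Cl^{\sigma \sigma_0}(w_0 x)$, and again the map $v \mapsto u$ is a bijection of $W$ so the inclusion is an equality.

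There is no serious obstacle here; the only points requiring minor care are (i) recording that $\sigma_0$ commutes with $\sigma$ so that $v^{\sigma\sigma_0} = w_0 v^\sigma w_0$ can be used freely, and (ii) being careful in the second identity that the substitution on the conjugating variable is a bijection, so that one genuinely obtains the whole class rather than a subset. I would state (i) as a one-line preliminary remark (or cite that $\sigma_0$ is central in $\Aut(\Pi)$ when restricted to the image in the inner automorphism group), and then both identities follow by the two displayed computations above.
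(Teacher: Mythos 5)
Your proof is correct and takes a different (more uniform) route than the paper's. The paper proves the two identities by a three-way case analysis on $\sigma$: it treats $\sigma = 1$, $\sigma = \sigma_0$, and $\sigma \notin \{1,\sigma_0\}$ separately, and in the last case appeals to the classification of spherical Coxeter systems to conclude $\sigma_0 = 1$. Your argument handles all cases in a single computation by exploiting that conjugation by $w_0$ implements $\sigma_0$ (so $w_0 v^{\sigma} w_0 = v^{\sigma\sigma_0}$), together with $w_0^2 = 1$ and, for the second identity, the change of variable $v \mapsto v^{\sigma_0}$, which is a genuine bijection of $W$. This is cleaner and also a little more robust: the paper's classification appeal implicitly assumes irreducibility (in a reducible spherical $(W,S)$ one can easily have $\sigma \notin \{1,\sigma_0\}$ with $\sigma_0 \neq 1$, e.g.\ a component swap in $\sA_2 \times \sA_2$), whereas your argument needs no such restriction. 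One minor slip in your write-up: you justify $\sigma\sigma_0 = \sigma_0\sigma$ by saying ``$\sigma_0$ is inner,'' which is not in itself a reason for commutativity. The clean justification is that every $\sigma \in \Aut(\Pi)$ preserves length and hence fixes $w_0$, so $\sigma(w_0 v w_0) = \sigma(w_0)\sigma(v)\sigma(w_0) = w_0\sigma(v)w_0$ for all $v \in W$, which is precisely $\sigma\sigma_0 = \sigma_0\sigma$. With that remark in place, both of your displayed computations are valid.
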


\begin{proof}
Let $\bC=\Cl^{\sigma}(x)$. Then $\bC w_0=\{w^{-1}xw^{\sigma}w_0\mid w\in W\}$. If $\sigma=1$ then $w^{-1}xw^{\sigma}w_0=w^{-1}xw_0w^{\sigma\sigma_0}$ and so $\bC w_0=\Cl^{\sigma\sigma_0}(xw_0)$. If $\sigma=\sigma_0$ then since $\sigma_0^2=1$ we have $w^{-1}xw^{\sigma}w_0=w^{-1}xw_0w$ and so $\bC w_0=\Cl(xw_0)=\Cl^{\sigma\sigma_0}(xw_0)$. If $\sigma\neq 1$ with $\sigma\neq \sigma_0$ then, from the classification of spherical Coxeter systems we have $\sigma_0=1$ and so $w^{-1}xw^{\sigma}w_0=w^{-1}xw_0w^{\sigma}$ and hence $\bC w_0=\Cl^{\sigma}(xw_0)=\Cl^{\sigma\sigma_0}(xw_0)$. The statement for $w_0\bC$ is similar.
\end{proof}

Let $\cC$ denote the set of all twisted conjugacy classes in $W$ (for all choices of $\sigma$). Lemma~\ref{lem:pairing} shows that there is an involutive bijection $\psi:\cC\to\cC$ given by 
$$
\psi(\bC)=\bC w_0.
$$
This bijection maps $\sigma$-classes to $\sigma\sigma_0$-classes. 

\subsection{Twisted involutions}

The following notion will play an important role in this work.

\begin{defn}
An element $w\in W$ is called a \textit{$\sigma$-involution} if $\sigma^2=1$ and $ww^{\sigma}=1$. 
\end{defn}

In particular note that if $\sigma=1$ then $\sigma$-involutions are precisely involutions, and if $\sigma^2\neq 1$ then there are no $\sigma$-involutions. We record some basic observations.

\begin{lemma}\label{lem:sigmainvolutions}
If $w\in W$ is a $\sigma$-involution then every element of the $\sigma$-conjugacy class $\Cl^{\sigma}(w)$ is a $\sigma$-involution.
\end{lemma}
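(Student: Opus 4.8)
The plan is to take an arbitrary element $v^{-1}wv^{\sigma}$ of $\Cl^{\sigma}(w)$ and verify directly that it is again a $\sigma$-involution, using only the defining conditions $\sigma^2=1$ and $ww^{\sigma}=1$. Since $w$ is a $\sigma$-involution we already know $\sigma^2=1$, so the only thing to check for the element $u=v^{-1}wv^{\sigma}$ is that $uu^{\sigma}=1$.

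First I would compute $u^{\sigma}$. Applying $\sigma$ (an automorphism of $W$ fixing $S$ setwise, hence a group automorphism) to $u=v^{-1}wv^{\sigma}$ gives
$$
u^{\sigma}=(v^{\sigma})^{-1}\,w^{\sigma}\,v^{\sigma^2}=(v^{\sigma})^{-1}w^{\sigma}v,
$$
where the last equality uses $\sigma^2=1$. Then
$$
uu^{\sigma}=v^{-1}wv^{\sigma}\cdot(v^{\sigma})^{-1}w^{\sigma}v=v^{-1}w\,w^{\sigma}\,v=v^{-1}\cdot 1\cdot v=1,
$$
using $ww^{\sigma}=1$. Hence $u$ is a $\sigma$-involution, which is exactly the claim.

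There is essentially no obstacle here: the statement is a one-line formal computation relying only on the fact that $\sigma$ acts as a group automorphism and on the two defining identities. The only point worth a moment's care is the bookkeeping of where $\sigma$ lands when applied to a product — i.e.\ that $(xy)^{\sigma}=x^{\sigma}y^{\sigma}$ and $(x^{\sigma})^{\sigma}=x$ — and noting that the hypothesis $\sigma^2=1$ travels along with $w$ into the class (it is part of the definition of ``$\sigma$-involution''), so every element of $\Cl^{\sigma}(w)$ automatically satisfies that part of the definition too. I would present the proof in this compact form without further elaboration.
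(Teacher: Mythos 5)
Your computation is correct and is essentially the paper's argument: the paper cites Proposition~\ref{prop:basic1}(2), whose proof rests on exactly the identity $yy^{\sigma}\cdots y^{\sigma^{n-1}}=v^{-1}xx^{\sigma}\cdots x^{\sigma^{n-1}}v$ that you verify in the special case $n=2$. You have simply unpacked the cited result into a direct one-line check.
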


\begin{proof}
Immediately follows from Proposition~\ref{prop:basic1}(2).
\end{proof}

\begin{lemma}\label{lem:sigmacondition}
If $w$ is a $\sigma$-involution and $s\in S$ with $\ell(sws^{\sigma})=\ell(w)$ then $sws^{\sigma}=w$. 
\end{lemma}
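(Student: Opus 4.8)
The plan is to reduce the statement to the folding condition in the form of Lemma~\ref{lem:foldingcondition}(2), applied with $t = s^{\sigma}$. That lemma says: if $\ell(swt) = \ell(w)$ and $\ell(sw) = \ell(wt)$, then $swt = w$. The first hypothesis, with $t = s^{\sigma}$, is exactly our assumption $\ell(sws^{\sigma}) = \ell(w)$, so the only thing to establish is the length identity $\ell(sw) = \ell(ws^{\sigma})$.

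First I would record the defining property of a $\sigma$-involution in its useful form: since $ww^{\sigma} = 1$ we have $w^{\sigma} = w^{-1}$, and hence also $(w^{\sigma})^{-1} = w$. Then I would prove $\ell(ws^{\sigma}) = \ell(sw)$ by chaining the two symmetries of the length function (invariance under inversion and under the diagram automorphism $\sigma$, together with $\sigma^2 = 1$):
$$
\ell(ws^{\sigma}) = \ell\bigl((ws^{\sigma})^{-1}\bigr) = \ell(s^{\sigma}w^{-1}) = \ell\bigl((s^{\sigma}w^{-1})^{\sigma}\bigr) = \ell\bigl(s\,(w^{\sigma})^{-1}\bigr) = \ell(sw),
$$
where in the last step we used $(w^{\sigma})^{-1} = w$, and we used that $s^{\sigma}$ is an involution so $(s^{\sigma})^{-1} = s^{\sigma}$. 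With both hypotheses of Lemma~\ref{lem:foldingcondition}(2) now verified, we conclude $sws^{\sigma} = w$.

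There is no real obstacle here: the argument is just the correct bookkeeping of the length identities, and the degenerate possibilities (for instance $s^{\sigma} = s$, or $sw = w$) cause no difficulty since Lemma~\ref{lem:foldingcondition}(2) carries no side conditions. I would present the proof essentially in the three lines above, perhaps first remarking explicitly that $y := sws^{\sigma}$ is again a $\sigma$-involution by Lemma~\ref{lem:sigmainvolutions}, though this observation is not strictly needed for the conclusion.
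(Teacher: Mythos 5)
Your proof is correct and follows essentially the same route as the paper: both establish the length identity $\ell(sw)=\ell(ws^{\sigma})$ from the $\sigma$-involution property via the invariance of $\ell$ under inversion and under $\sigma$, and then invoke Lemma~\ref{lem:foldingcondition}(2). The paper packages the identity more compactly as $ws^{\sigma}=w^{-\sigma}s^{\sigma}=(sw)^{-\sigma}$, but the content is the same.
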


\begin{proof}
Since $w$ is a $\sigma$-involution we have $ws^{\sigma}=w^{-\sigma}s^{\sigma}=(sw)^{-\sigma}$, and hence $\ell(ws^{\sigma})=\ell(sw)$, and so by Lemma~\ref{lem:foldingcondition} we have $sws^{\sigma}=w$. 
\end{proof}

\begin{lemma}
Let $(W,S)$ be spherical. If $\bC$ is a class of $\sigma$-involutions, then $\psi(\bC)=\bC w_0$ is a class of $\sigma\sigma_0$-involutions. 
\end{lemma}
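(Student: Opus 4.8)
The plan is to reduce the claim to the already-established Lemma~\ref{lem:pairing} together with the characterisation of $\sigma$-involutions via $ww^\sigma=1$. First I would recall that, by definition, $w$ is a $\sigma$-involution precisely when $\sigma^2=1$ and $ww^\sigma=1$, equivalently $w^\sigma=w^{-1}$. So it suffices to (a) check that $\sigma\sigma_0$ is still an involution of $\Pi$, and (b) exhibit one element $x$ of $\psi(\bC)=\bC w_0$ with $x^{\sigma\sigma_0}=x^{-1}$; then Lemma~\ref{lem:sigmainvolutions} (i.e. Proposition~\ref{prop:basic1}(2)) upgrades this to all of $\psi(\bC)$, once we know $\psi(\bC)$ really is a single $\sigma\sigma_0$-conjugacy class, which is exactly the content of Lemma~\ref{lem:pairing}.

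For step (a): $\sigma_0$ is an involution of $\Pi$ (since $\sigma_0^2=1$), and $\sigma_0$ is central in $\Aut(\Pi)$ for spherical diagrams — indeed from the classification of spherical Coxeter systems, either $\sigma_0=1$ (types $\sA_1$, $\sB_n$, $\sD_n$ for $n$ even, $\sE_7$, $\sE_8$, $\sF_4$, $\sG_2$, $\sI_2(m)$ with $m$ even, $\sH_n$) or $\Aut(\Pi)$ is small enough that $\sigma_0$ commutes with everything (it is the unique nontrivial diagram automorphism in types $\sA_n$ $(n\ge 2)$, $\sD_n$ $(n$ odd$)$, $\sE_6$, and $\sI_2(m)$ with $m$ odd). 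In all cases $\sigma\sigma_0=\sigma_0\sigma$ and $(\sigma\sigma_0)^2=\sigma^2\sigma_0^2=1$, so $\sigma\sigma_0$ is indeed an involution of $\Pi$; this is the same case analysis used in the proof of Lemma~\ref{lem:pairing} and I would simply cite it.

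For step (b): pick any $w\in\bC$, so $w^\sigma=w^{-1}$, and set $x=ww_0\in\bC w_0=\psi(\bC)$. Now compute, using $w_0^{-1}=w_0$ and $s^{\sigma_0}=w_0sw_0$ (so $y^{\sigma_0}=w_0yw_0$ for all $y\in W$):
$$
x^{\sigma\sigma_0}=(x^\sigma)^{\sigma_0}=(w^\sigma w_0^\sigma)^{\sigma_0}=(w^{-1}w_0)^{\sigma_0}=w_0(w^{-1}w_0)w_0=w_0w^{-1},
$$
where we used $w_0^\sigma=w_0$ (as $w_0$ is fixed by every diagram automorphism). On the other hand $x^{-1}=(ww_0)^{-1}=w_0^{-1}w^{-1}=w_0w^{-1}$. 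Hence $x^{\sigma\sigma_0}=x^{-1}$, i.e. $x(x^{\sigma\sigma_0})=1$, so $x$ is a $\sigma\sigma_0$-involution. By Lemma~\ref{lem:pairing}, $\psi(\bC)=\bC w_0=\Cl^{\sigma\sigma_0}(x)$ is a single $\sigma\sigma_0$-conjugacy class, and by Lemma~\ref{lem:sigmainvolutions} every element of it is a $\sigma\sigma_0$-involution, as required.

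I do not anticipate a genuine obstacle here — the only point needing a little care is the bookkeeping with where $\sigma$ versus $\sigma_0$ acts and the observation $w_0^\sigma=w_0$, plus the (routine, and already-cited) fact that $\sigma$ and $\sigma_0$ commute for spherical $\Pi$. One could alternatively phrase the whole argument without choosing a base point, by noting $\psi$ maps $\sigma$-classes to $\sigma\sigma_0$-classes (Lemma~\ref{lem:pairing}) and that $w\mapsto ww_0$ carries the condition ``$ww^\sigma=1$'' to ``$(ww_0)(ww_0)^{\sigma\sigma_0}=1$'' by the same computation; but the base-point version above is cleanest to write out.
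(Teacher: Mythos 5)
Your proof is correct and follows essentially the same route as the paper: both check $(\sigma\sigma_0)^2=1$ via commutativity of $\sigma$ and $\sigma_0$, pick $w\in\bC$ and verify that $ww_0$ is a $\sigma\sigma_0$-involution using $w_0^{\rho}=w_0$ for all $\rho\in\Aut(\Pi)$, and invoke Lemma~\ref{lem:pairing} to identify $\bC w_0$ as a single $\sigma\sigma_0$-class. The only cosmetic difference is that you phrase the key computation as $x^{\sigma\sigma_0}=x^{-1}$ rather than $(ww_0)(ww_0)^{\sigma\sigma_0}=1$, and you spell out the commutativity argument in more detail.
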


\begin{proof}
If $\bC$ is a $\sigma$-class then $\bC w_0$ is a $\sigma\sigma_0$-class, and since $\sigma^2=\sigma_0^2=1$ and $\sigma\sigma_0=\sigma_0\sigma$ we have $(\sigma\sigma_0)^2=1$. Let $w\in \bC$, and so $ww^{\sigma}=1$. Then $ww_0\in\bC w_0$ and since $w_0^{\rho}=w_0$ for all $\rho\in\Aut(\Pi)$ we have $(ww_0)(ww_0)^{\sigma\sigma_0}=ww_0w^{\sigma\sigma_0}w_0=ww^{\sigma}w_0^2=1$. So $\psi(\bC)$ is a $\sigma\sigma_0$-involution class. 
\end{proof}

The following theorem gives the classification of $\sigma$-classes of $\sigma$-involutions, showing that they are in bijection with the set of spherical subsets $J\subseteq S$ up to $\sigma$-conjugaction  for which $sw_J=w_Js^{\sigma}$ for all $s\in J$. The proof is adapted from \cite[Proposition~3.2.10]{GF:00} (where the case $\sigma=1$ is given). Geck and Pfeiffer attribute the original result to Richardson~\cite{Rich} and Howlett~\cite{Howlett}.

\begin{thm}\label{thm:downwardsclosure}
Let $(W,S)$ be an arbitrary Coxeter system and let $\bC$ be a $\sigma$-conjugacy class consisting of $\sigma$-involutions. 
\begin{compactenum}[$(1)$]
\item Every minimal length element of $\bC$ is equal to $w_J$ for some subset $J\subseteq S$ (possibly empty) with the property that $sw_J=w_Js^{\sigma}$. 
\item If $w_J$ and $w_{J'}$ are minimal length in $\bC$ then $J'=wJw^{-\sigma}$ for some $w\in W$. 
\item If $w\in \bC$ then there exists $v\in W$ with $\ell(v^{-1}wv^{\sigma})=\ell(w)-2\ell(v)$ and $v^{-1}wv^{\sigma}=w_J$ for some spherical subset $J\subseteq S$ with $w_J$ of minimal length in $\bC$.
\end{compactenum}
\end{thm}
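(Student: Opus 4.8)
The plan is to follow the classical Richardson--Howlett argument (as presented in \cite{GF:00}) but carry the diagram automorphism $\sigma$ along throughout. The key technical device is an exchange-type step for $\sigma$-involutions: given a $\sigma$-involution $w$ that is \emph{not} of minimal length in $\bC$, I want to produce $s\in S$ with $\ell(sws^{\sigma})=\ell(w)-2$. Granting this, part (3) follows by an obvious induction on $\ell(w)$: starting from any $w\in\bC$, repeatedly apply such reflections to get a descending chain $w, s_1ws_1^{\sigma}, \dots$ of $\sigma$-involutions (Lemma~\ref{lem:sigmainvolutions} keeps us inside $\bC$) whose lengths drop by $2$ at each step, terminating at some minimal-length element $u$ with $\ell(v^{-1}wv^{\sigma})=\ell(w)-2\ell(v)$ where $v=s_k\cdots s_1$. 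Part~(1) is then the statement that this terminal $u$ has the form $w_J$: since $u$ is minimal in $\bC$, for every $s\in S$ we have $\ell(sus^{\sigma})\geq\ell(u)$, and Lemma~\ref{lem:sigmacondition} forces $sus^{\sigma}=u$ whenever equality holds; the set $J=\{s\in S: \ell(su)<\ell(u)\}=D_L(u)$ should then satisfy $u=w_J$ and $su^{}=us^{\sigma}$ for $s\in J$ — here one argues that $u$ is the longest element of $W_J$ because $u\in W_J$ (any right descent is matched by a left descent via the involution relation $us^{\sigma}=(su)^{-\sigma}$, and iterating reduces $u$ within $W_J$) and $u$ has no further right descents outside what $W_J$ allows. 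Part~(2) follows because two minimal-length elements $w_J, w_{J'}$ of $\bC$ are $\sigma$-conjugate, say $w_{J'}=w^{-1}w_Jw^{\sigma}$; comparing left descent sets and using that $w_J\mapsto J=D_L(w_J)$ is recovered intrinsically gives $J'=wJw^{-\sigma}$ after checking the conjugating element can be taken to genuinely conjugate the generating sets (one reduces to the case where $w$ normalizes neither, then tracks how $D_L$ transforms).

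The heart of the matter, and the step I expect to be the main obstacle, is the descent lemma: \emph{if $w$ is a $\sigma$-involution not of minimal length in $\bC$, there is $s\in S$ with $\ell(sws^{\sigma})=\ell(w)-2$.} In the untwisted case this is Richardson's lemma and uses a careful analysis of the poset of involutions under the partial order generated by $w\leq sws$; the twisted version needs the analogous poset under $w\leq sws^{\sigma}$. The natural approach is: since $w$ is not minimal, there is some $v\in W$ with $\ell(v^{-1}wv^{\sigma})<\ell(w)$; take such a $v$ of minimal length, write $v=s v'$ with $\ell(v')=\ell(v)-1$, and analyze the three possibilities for $\ell(sws^{\sigma})\in\{\ell(w)-2,\ell(w),\ell(w)+2\}$ (this trichotomy is exactly Proposition~\ref{prop:basic1}-style parity plus the folding condition Lemma~\ref{lem:foldingcondition}). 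If $\ell(sws^{\sigma})=\ell(w)-2$ we are done; if $\ell(sws^{\sigma})=\ell(w)$ then $sws^{\sigma}=w$ by Lemma~\ref{lem:sigmacondition}, so $v^{-1}wv^{\sigma}=(v')^{-1}(sws^{\sigma})(v')^{\sigma}=(v')^{-1}w(v')^{\sigma}$ has the same small length, contradicting minimality of $v$; the remaining case $\ell(sws^{\sigma})=\ell(w)+2$ is the delicate one — here one must show it cannot happen for \emph{every} initial letter of every minimal-length conjugator, which is where the real Richardson--Howlett combinatorics (a minimal counterexample / descent argument on $\ell(v)$ combined with Lemma~\ref{lem:foldingcondition}(1)) comes in. I would mirror the structure of \cite[Proposition~3.2.10]{GF:00} line by line, at each step replacing $s$-conjugation $x\mapsto sxs$ by $\sigma$-twisted conjugation $x\mapsto sxs^{\sigma}$ and checking that the length identities used there (e.g.\ $\ell(sw)=\ell(ws^{\sigma})$ for a $\sigma$-involution, which is Lemma~\ref{lem:sigmacondition}'s proof) survive; since $\ell(s)=\ell(s^{\sigma})=1$ and $\sigma$ preserves length, essentially all the inequalities go through verbatim.

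One should also record the easy bookkeeping that makes the induction in (3) well-founded: by Proposition~\ref{prop:basic1}(1) all elements of $\bC$ have length of a fixed parity, and each application of the descent lemma decreases length by exactly $2$, so the process halts after finitely many steps even when $(W,S)$ is infinite, landing at an element of minimal length in $\bC$, which by (1) equals $w_J$ for a spherical $J$ with $sw_J=w_Js^{\sigma}$ (sphericality of $J$ is automatic: $w_J$ exists as the longest element of $W_J$). Finally, for (2), the cleanest formulation is that the map sending a minimal-length element $u\in\bC$ to its left descent set $J=D_L(u)$ is well-defined with $u=w_J$, and any two such $u,u'$ satisfy $u'=w^{-1}uw^{\sigma}$; then $D_L(u')$ and $D_L(u)$ are related by the action of $w$, giving $J'=wJw^{-\sigma}$ — I would extract this last identity by first reducing (via the descent lemma applied to the conjugator itself, or a standard normal-form argument $w=x u_0$ with $x\in W^J$) to the situation where $w$ maps $W_J$ isomorphically onto $W_{J'}$ compatibly with $\sigma$, which is precisely the assertion $wJw^{-\sigma}=J'$.
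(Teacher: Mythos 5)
Your high-level outline (descend by single twisted conjugations until the length stops dropping, then identify the terminal element as $w_J$ with $sw_J=w_Js^\sigma$) is sound, but the step you yourself flag as the main obstacle --- the descent lemma ``if $w$ is not of minimal length in $\bC$, some $s\in S$ gives $\ell(sws^\sigma)=\ell(w)-2$'' --- is where the proposal has a genuine gap, and the minimal-length-conjugator route you sketch does not close it. Taking a shortest $v$ with $\ell(v^{-1}wv^\sigma)<\ell(w)$ and writing $v=sv'$ reduced, you correctly rule out $\ell(sws^\sigma)=\ell(w)$ using \cref{lem:sigmacondition} (which gives $sws^\sigma=w$, so $v'$ would be a shorter conjugator). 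But when $\ell(sws^\sigma)=\ell(w)+2$ there is no contradiction to extract: $v'$ then conjugates the \emph{longer} element $sws^\sigma$ down below $\ell(w)$, which is perfectly consistent with minimality of $\ell(v)$ for $w$ --- that minimality says nothing about the element $sws^\sigma$. There is no induction on $\ell(v)$ or on $\ell(w)$ available, and the global hypothesis ``$w$ not minimal in $\bC$'' gives you nothing local to leverage.

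The paper's proof (and the Geck--Pfeiffer argument \cite[Prop.~3.2.10]{GF:00} it adapts) does not look at minimal-length conjugators at all. The key substitution is to replace your global trigger by a local one: set $K=\{s\in D_L(w)\mid sw=ws^\sigma\}$ and $x=w_Kw$. Since $K\subseteq D_L(w)$ and $K^\sigma\subseteq D_R(w)$ one gets reduced factorisations $w=w_Kx=xw_K^\sigma$; the identity $w_Kw=ww_K^\sigma$ shows $x$ is again a $\sigma$-involution, and uniqueness of the parabolic decomposition of $w$ with respect to $W_K$ forces $D_L(x)\cap K=\emptyset$. Hence any $t\in D_L(x)$ lies in $D_L(w)\setminus K$, so $tw\neq wt^\sigma$ while $\ell(tw)=\ell(wt^\sigma)$, and \cref{lem:foldingcondition} forces $\ell(twt^\sigma)=\ell(w)-2$. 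Thus a descent exists precisely when $x\neq1$, i.e.\ when $w\neq w_K$; iterating terminates at some $w_J$ with $sw_J=w_Js^\sigma$ for all $s\in J$, which \cref{lem:min1} then certifies as minimal in $\bC$. In this framing your descent lemma is a \emph{corollary} of parts (1) and (3), not an available tool: descending from ``$x=w_Kw\neq1$'' rather than from ``$w$ not minimal'' is precisely the missing idea.
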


\begin{proof}
Let $K=\{s\in D_L(w)\mid sw=ws^{\sigma}\}$ (a spherical subset by \cite[Proposition~2.17]{AB:09}). Since $w$ is a $\sigma$-involution we have $D_L(w)=D_R(w)^{\sigma}$, and hence $w=w_{K}x=x'w_{K}^{\sigma}$ for some $x,x'\in W$ with $\ell(w_Kx)=\ell(w_K)+\ell(x)$ and $\ell(x'w_K^{\sigma})=\ell(x')+\ell(w_K^{\sigma})$. We have $x=w_Kw$ and $x'=ww_K^{\sigma}$, and the definition of $K$ implies that $w_Kw=ww_K^{\sigma}$. Thus $x'=x$.

Suppose that $x\neq 1$. Since $w_Kw=ww_K^{\sigma}$ we have
$$
x^{-1}=(w_Kw)^{-1}=w^{-1}w_K=w^{\sigma}w_K=(ww_K^{\sigma})^{\sigma}=(w_Kw)^{\sigma}=x^{\sigma},
$$
and so $x$ is a $\sigma$-involution. Thus $D_L(x)=D_R(x)^{\sigma}$ and so $\ell(xt^{\sigma})=\ell(tx)$ for all $t\in S$. Let $t\in D_L(x)$. Since $w=xw_K^{\sigma}$ we have $t\in D_L(w)$ and so $\ell(twt^{\sigma})\in\{\ell(w)-2,\ell(w)\}$. Since $\ell(w_Kx)=\ell(w_K)+\ell(x)$ we have $t\notin K$, and hence $tw\neq wt^{\sigma}$. It follows from Lemma~\ref{lem:foldingcondition} that $\ell(twt^{\sigma})=\ell(w)-2$.

Note that $twt^{\sigma}\in\bC$ is a $\sigma$-involution. Repeating the above argument until $x=1$ we obtain a sequence $t=t_1,\ldots,t_n$ such that $w'=t_n\cdots t_1wt_1^{\sigma}\cdots t_n^{\sigma}$ has length $\ell(w')=\ell(w)-2n$, and such that $w'=w_{J}=w_{J}^{\sigma}$ where $J=\{s\in D_L(w')\mid sw'=w's^{\sigma}\}$, and so $sw_{J}=w_{J}s^{\sigma}$ for all $s\in J$. The element $w_J$ is of minimal length in $\bC$ by Lemma~\ref{lem:min1}, and the lemma easily follows.
\end{proof}

Let $\bC$ be a class of $\sigma$-involutions in a spherical Coxeter group. By the classification of $\sigma$-involution classes given in Theorem~\ref{thm:downwardsclosure}, and the fact that the involution $\psi$ produces a $\sigma\sigma_0$-involution class, one can define a ``fixed rank'' and a ``opposition rank'' for $\bC$ as follows. Let $w_J$ (respectively $w_{J'}$) be a minimal length element of $\bC$ (respectively $\psi(\bC)$), and let
\begin{align*}
\FixRk(\bC)&=|\Orb^{\sigma}(S\backslash J)|\\
\OppRk(\bC)&=|\Orb^{\sigma\sigma_0}(S\backslash J')|.
\end{align*}
This does not depend on the particular $J,J'$ chosen (by Theorem~\ref{thm:downwardsclosure}). See Theorem~\ref{thm:bicapped} for motivation regarding the terminology of fixed rank and opposite rank. 

It is clear from the definition that
$$
\OppRk(\bC)=\FixRk(\bC w_0).
$$

\begin{prop}\label{prop:ranksum}
Let $\bC$ be a class of twisted involutions in a spherical Coxeter system $(W,S)$. Then 
$$
\FixRk(\bC)+\OppRk(\bC)=|S|
$$
\end{prop}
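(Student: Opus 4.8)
The plan is to pass to the geometric representation $V$ of $(W,S)$ (with its positive definite invariant form and its basis of simple roots $\{\alpha_s\}_{s\in S}$) and to read off both ranks as eigenspace dimensions of a single linear involution. Since $\bC$ consists of $\sigma$-involutions we have $\sigma^2=1$, so $\sigma$ extends to an orthogonal involution of $V$ permuting $\{\alpha_s\}$, and we may work inside $\widetilde W:=W\rtimes\langle\sigma\rangle\le O(V)$. For any $\sigma$-involution $w$, the element $w\sigma\in\widetilde W$ satisfies $(w\sigma)^2=ww^\sigma\sigma^2=1$, hence acts on $V$ as a linear involution; write $V=V_+(w\sigma)\oplus V_-(w\sigma)$ for its $\pm1$-eigenspace decomposition, so that $\dim V_+(w\sigma)+\dim V_-(w\sigma)=|S|$. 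Moreover $y=v^{-1}wv^\sigma$ gives $y\sigma=v^{-1}(w\sigma)v$, so all operators $w\sigma$ with $w\in\bC$ are $W$-conjugate and $\dim V_\pm(w\sigma)$ depends only on $\bC$.

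First I would show $\FixRk(\bC)=\dim V_+(w\sigma)$. By Theorem~\ref{thm:downwardsclosure} pick a minimal length $w_J\in\bC$, with $J$ spherical, $\sigma$-stable, and $sw_J=w_Js^\sigma$ for all $s\in J$. The relation $w_Jsw_J=s^\sigma$ for $s\in J$, combined with the standard fact $w_J(\alpha_s)=-\alpha_{w_Jsw_J}$, gives $w_J(\alpha_s)=-\alpha_{s^\sigma}=-\sigma(\alpha_s)$, i.e.\ $w_J$ acts on $V_J:=\langle\alpha_s:s\in J\rangle$ as $-\sigma|_{V_J}$, while it acts trivially on $V_J^\perp$. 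Hence $w_J\sigma$ acts as $-\mathrm{id}$ on $V_J$ and as $\sigma$ on $V_J^\perp$, giving $V_+(w_J\sigma)=(V_J^\perp)^\sigma$. Since $V=V_J\oplus V_J^\perp$ is a $\sigma$-stable orthogonal decomposition and $\sigma$ permutes $\{\alpha_s\}$, fixed dimensions count orbits: $\dim V^\sigma=|\Orb^\sigma(S)|$ and $\dim V_J^\sigma=|\Orb^\sigma(J)|$, and as $S=J\sqcup(S\setminus J)$ is a partition into $\sigma$-stable parts we conclude $\dim(V_J^\perp)^\sigma=|\Orb^\sigma(S)|-|\Orb^\sigma(J)|=|\Orb^\sigma(S\setminus J)|=\FixRk(\bC)$.

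Next, $\OppRk(\bC)=\FixRk(\bC w_0)$ by definition, and $\bC w_0$ is a class of $\sigma\sigma_0$-involutions (the lemma immediately preceding Theorem~\ref{thm:downwardsclosure}). Applying the previous paragraph to $\bC w_0$, for any $w\in\bC$ we get $\OppRk(\bC)=\dim V_+\big((ww_0)(\sigma\sigma_0)\big)$. Computing on $V$: $\sigma_0=-w_0$ (since $w_0\alpha_s=-\alpha_{s^{\sigma_0}}=-\sigma_0\alpha_s$) and $\sigma w_0=w_0\sigma$ (since $w_0$ is fixed by every diagram automorphism), so $w_0\sigma w_0=\sigma$ and
\[
(ww_0)(\sigma\sigma_0)=w\,w_0\,\sigma\,(-w_0)=-w(w_0\sigma w_0)=-w\sigma ,
\]
whence $\OppRk(\bC)=\dim V_+(-w\sigma)=\dim V_-(w\sigma)$. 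Adding the two expressions gives $\FixRk(\bC)+\OppRk(\bC)=\dim V_+(w\sigma)+\dim V_-(w\sigma)=|S|$.

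The hard part is the second paragraph: identifying the action of the parabolic longest element $w_J$ on $V_J$ with $-\sigma|_{V_J}$ from the relation $sw_J=w_Js^\sigma$, and verifying that the $\sigma$-orbit count is additive along $S=J\sqcup(S\setminus J)$ (which in turn needs $J$ to be $\sigma$-stable, as recorded in Theorem~\ref{thm:downwardsclosure}). Once that is in place, the remaining manipulations — the identities $\sigma_0=-w_0$ and $\sigma w_0=w_0\sigma$ on $V$, and the translation between $\sigma$-conjugacy in $W$ and ordinary conjugacy in $\widetilde W$ — are routine.
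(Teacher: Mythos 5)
Your proof is correct and takes essentially the same route as the paper: both realise $\FixRk(\bC)$ and $\OppRk(\bC)$ as the dimensions of the $+1$ and $-1$ eigenspaces of $w\sigma$ acting on $V$, using the relation $w_J s w_J = s^\sigma$ on $J$ together with $w_0\sigma_0 = -\mathrm{id}_V$. The only (cosmetic) difference is that you decompose $V = V_J \oplus V_J^\perp$ directly and observe $w_J\sigma$ acts as $-\mathrm{id}$ on $V_J$ and as $\sigma$ on $V_J^\perp$, whereas the paper first splits $V = V^\sigma\oplus\tilde V^\sigma$ and then refines by~$J$; both yield the same eigenspace count $|\Orb^\sigma(S\setminus J)|$.
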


\begin{proof} 
Let $\Phi$ be a (not necessarily crystallographic) root system associated to $(W,S)$ and let $V$ be the real vector space spanned by the simple roots $\{\alpha_s\mid s\in S\}$. Each diagram automorphism $\rho$ acts on $V$ by $\rho\cdot \alpha_s=\alpha_{s^{\rho}}$ for $s\in S$. 

Since $\sigma^2=1$ we have $V=V^{\sigma}\oplus \tilde{V}^{\sigma}$ with $\sigma\cdot v=v$ for all $v\in V^{\sigma}$, and $\sigma\cdot v=-v$ for all $v\in \tilde{V}^{\sigma}$. Explicitly, $V^{\sigma}$ has basis $\{v_K\mid K\in\Orb^{\sigma}(S)\}$ where $v_K=\sum_{s\in K}\alpha_s$, and $\tilde{V}^{\sigma}$ has basis $\{\alpha_s-\alpha_{s^{\sigma}}\mid s\in S,\, s^{\sigma}\neq s\}$. We claim that $w_J\cdot v=v$ for all $v\in \tilde{V}^{\sigma}$. To see this, let $\tilde{V}_J^{\sigma}$ be the subspace of $\tilde{V}^{\sigma}$ spanned by $\{\alpha_s-\alpha_{s^{\sigma}}\mid s\in J,\,s^{\sigma}\neq s\}$ and let $(\tilde{V}_J^{\sigma})^{\perp}$ be an orthogonal complement in $\tilde{V}^{\sigma}$. Since $w_J\cdot \alpha_s=-\alpha_{s^{\sigma}}$ for $s\in J$ we have that $w_J$ acts by $1$ on $\tilde{V}_J^{\sigma}$. If $v\in (\tilde{V}_J^{\sigma})^{\perp}$ then $(v,\alpha_s-\alpha_{s^{\perp}})=0$ for all $s\in J$, which gives $(v,\alpha_s)=(v,\sigma\cdot \alpha_s)=(\sigma\cdot v,\alpha_s)=-(v,\alpha_s)$, and so $(v,\alpha_s)=0$ for all $s\in J$. Thus $w_J$ also acts by $1$ on $(\tilde{V}_J^{\sigma})^{\perp}$. 

Now let $V^{\sigma}_J$ be the subspace of $V^{\sigma}$ spanned by $\{v_K\mid K\in\Orb^{\sigma}(J)\}$ and let $(V_J^{\sigma})^{\perp}$ be an orthogonal complement on $V^{\sigma}$. Then $w_J$ acts by $-1$ on $V_J^{\sigma}$, and we claim that $w_J$ acts by $1$ on $(V_J^{\sigma})^{\perp}$. For if $v\in (V_J^{\sigma})^{\perp}$ then $(v,v_K)=0$ for all $K\in\Orb^{\sigma}(\bC)$. If $K=\{s\}$ then $(v,\alpha_s)=0$, and if $K=\{s,s^{\sigma}\}$ with $s^{\sigma}\neq s$ then $(v,\alpha_s)=-(v,\alpha_{s^{\sigma}})=-(\sigma\cdot v,\alpha_s)=-(v,\alpha_s)$ as $v\in V^{\sigma}$, and so $(v,\alpha_s)=0$ for all $s\in J$. Thus $w_J$ acts by $1$ on $(V_J^{\sigma})^{\perp}$. 

In summary, the $1$-eigenspace for the action of $w_J\sigma$ on $V$ is $(V_J^{\sigma})^{\perp}$. Since 
$$\dim (V_J^{\sigma})^{\perp}=\dim V^{\sigma}-\dim V_J^{\sigma}=|\Orb^{\sigma}(S)|-|\Orb^{\sigma}(J)|=|\Orb^{\sigma}(S\backslash J)|
$$
it follows that $\FixRk(\bC)$ is the dimension of the $1$-eigenspace of $w_J\sigma$ acting on $V$ and hence $\FixRk(\bC)$ is the dimension of the $1$-eigenspace of $x\sigma$ acting on $V$ for all $x\in \bC$. 

Since $\psi(\bC)$ is a $\sigma\sigma_0$-involution class, and since $w_Jw_0\in\psi(\bC)$, the above analysis shows that $\OppRk(\bC)=\FixRk(\psi(\bC))$ is the dimension of the $1$-eigenspace of $w_Jw_0\sigma_0\sigma$ acting on $V$. But $w_0\sigma_0$ acts by $-1$ on $V$, and hence (from the previous paragraphs) this $1$-eigenspace is $V_J^{\sigma}\oplus \tilde{V}^{\sigma}$, and the result follows. 
\end{proof}

\subsection{Admissible diagrams}\label{sec:admissible}
In \cite{PVM:19a} the second and third authors introduced the notion of admissible diagrams in the study of domestic automorphisms. An admissible diagram is a triple $(\Pi,J,\sigma)$ where $\Pi$ is the Coxeter graph of an irreducible spherical Coxeter system~$S$, $J$ is a subset of $S$, and $\sigma\in\Aut(\Pi)$, and various axioms are satisfied. We shall not require the axiomatic definition in the present paper, and instead we refer the reader to \cite[Theorem~2.3]{PVM:19a} for the complete list of admissible diagrams of irreducible spherical Coxeter systems.

\begin{remark}\label{rem:change}
It is more convenient in the present paper to let $(\Pi,J,\sigma)$ denote the admissible diagram that was denoted $(\Pi,J,\sigma\sigma_0)$ in \cite{PVM:19a}. Apart from this notational change the definition of opposition diagrams from~\cite{PVM:19a} is unchanged. 
\end{remark}

In \cite[Tables 1 and 2]{PVMexc} and \cite[Tables 1 and 2]{PVMclass} we have given each admissible diagram a symbol, typically of the form ${^t}\mathsf{X}_{n;i}^j$, where $t$ is the ``twisting index'', and we will use these symbols in the present paper. With the notational change in Remark~\ref{rem:change}, the twisting index of $(\Pi,J,\sigma)$ is $\mathrm{ord}(\sigma)$ (rather than $\mathrm{ord}(\sigma\sigma_0)$). We often omit the twisting index in the case $t=1$, and moreover as noted in the introduction to \cite{PVMclass} the twisting index for diagrams of classical type can be omitted without ambiguity -- this is particularly useful for type $\sD$ diagrams. 

The admissible diagram $(\Pi,J,\sigma)$ is drawn by encircling the distinguished orbits $K\in\Orb^{\sigma}(J)$ on the Coxeter graph~$\Pi$. The diagram is drawn ``straight'' if $\sigma=1$ and ``bent'', in the usual way, otherwise. 

\begin{example}\label{ex:noncrystallographic}
The following are admissible diagrams:
$$
\begin{tikzpicture}[scale=0.5]//,baseline=-2ex]
\node [inner sep=0.8pt,outer sep=0.8pt] at (-2,0) (1) {$\bullet$};
\node [inner sep=0.8pt,outer sep=0.8pt] at (-1,0) (3) {$\bullet$};
\node [inner sep=0.8pt,outer sep=0.8pt] at (0,0) (4) {$\bullet$};
\node [inner sep=0.8pt,outer sep=0.8pt] at (1,0) (5) {$\bullet$};
\node [inner sep=0.8pt,outer sep=0.8pt] at (2,0) (6) {$\bullet$};
\node [inner sep=0.8pt,outer sep=0.8pt] at (3,0) (7) {$\bullet$};
\node [inner sep=0.8pt,outer sep=0.8pt] at (0,-1) (2) {$\bullet$};
\draw (-2,0)--(3,0);
\draw (0,0)--(0,-1);
\draw [line width=0.5pt,line cap=round,rounded corners] (1.north west)  rectangle (1.south east);
\draw [line width=0.5pt,line cap=round,rounded corners] (3.north west)  rectangle (3.south east);
\draw [line width=0.5pt,line cap=round,rounded corners] (4.north west)  rectangle (4.south east);
\draw [line width=0.5pt,line cap=round,rounded corners] (6.north west)  rectangle (6.south east);
\end{tikzpicture}
\quad 
\begin{tikzpicture}[scale=0.5]
\node [inner sep=0.8pt,outer sep=0.8pt] at (0,0) (0) {$\bullet$};
\node [inner sep=0.8pt,outer sep=0.8pt] at (1,0) (1) {$\bullet$};
\node [inner sep=0.8pt,outer sep=0.8pt] at (2,0) (2) {$\bullet$};
\node [inner sep=0.8pt,outer sep=0.8pt] at (3,0) (3) {$\bullet$};
\node [inner sep=0.8pt,outer sep=0.8pt] at (4,0) (4) {$\bullet$};
\node [inner sep=0.8pt,outer sep=0.8pt] at (5,0.5) (5a) {$\bullet$};
\node [inner sep=0.8pt,outer sep=0.8pt] at (5,-0.5) (5b) {$\bullet$};
\draw (0,0)--(4,0);
\draw (4,0) to   (5,0.5);
\draw (4,0) to   (5,-0.5);
\draw [line width=0.5pt,line cap=round,rounded corners] (1.north west)  rectangle (1.south east);
\draw [line width=0.5pt,line cap=round,rounded corners] (3.north west)  rectangle (3.south east);
\end{tikzpicture}\quad
 \begin{tikzpicture}[scale=0.5]
\node [inner sep=0.8pt,outer sep=0.8pt] at (1,0) (1) {$\bullet$};
\node [inner sep=0.8pt,outer sep=0.8pt] at (2,0) (2) {$\bullet$};
\node [inner sep=0.8pt,outer sep=0.8pt] at (3,0) (3) {$\bullet$};
\node [inner sep=0.8pt,outer sep=0.8pt] at (4,0) (4) {$\bullet$};
\node [inner sep=0.8pt,outer sep=0.8pt] at (5,0.5) (5a) {$\bullet$};
\node [inner sep=0.8pt,outer sep=0.8pt] at (5,-0.5) (5b) {$\bullet$};
\draw (1,0)--(4,0);
\draw (4,0) to [bend left] (5,0.5);
\draw (4,0) to [bend right=45] (5,-0.5);
\draw [line width=0.5pt,line cap=round,rounded corners] (1.north west)  rectangle (1.south east);
\draw [line width=0.5pt,line cap=round,rounded corners] (3.north west)  rectangle (3.south east);
\draw [line width=0.5pt,line cap=round,rounded corners] (2.north west)  rectangle (2.south east);
\end{tikzpicture}
\quad 
\begin{tikzpicture}[scale=0.5,baseline=-2.2ex]
\node at (0,0.3) {};
\node [inner sep=0.8pt,outer sep=0.8pt] at (-2,0) (2) {$\bullet$};
\node [inner sep=0.8pt,outer sep=0.8pt] at (-1,0) (4) {$\bullet$};
\node [inner sep=0.8pt,outer sep=0.8pt] at (0,-0.5) (5) {$\bullet$};
\node [inner sep=0.8pt,outer sep=0.8pt] at (0,0.5) (3) {$\bullet$};
\node [inner sep=0.8pt,outer sep=0.8pt] at (1,-0.5) (6) {$\bullet$};
\node [inner sep=0.8pt,outer sep=0.8pt] at (1,0.5) (1) {$\bullet$};
\draw (-2,0)--(-1,0);
\draw (-1,0) to [bend left=45] (0,0.5);
\draw (-1,0) to [bend right=45] (0,-0.5);
\draw (0,0.5)--(1,0.5);
\draw (0,-0.5)--(1,-0.5);
\draw [line width=0.5pt,line cap=round,rounded corners] (2.north west)  rectangle (2.south east);
\draw [line width=0.5pt,line cap=round,rounded corners] (1.north west)  rectangle (6.south east);
\end{tikzpicture}
$$
denoted $\sE_{7;4}$, ${^1}\sD_{7;2}^2$, ${^2}\sD_{6;3}^1$, and ${^2}\sE_{6;2}$. In the first case, $J=\{1,3,4,6\}$ (in Bourbaki labelling) and $\sigma=1$. In the third case $J=\{1,2,3\}$ and $\sigma$ has order~$2$. We note that the non-crystallographic diagrams are not explicitly given symbols in \cite{PVMexc,PVMclass}, and thus we extend the notation as follows (we shall not need symbols for the $\mathsf{H}_3$ and $\mathsf{H}_4$ cases):
$$\mathsf{I}_{2;0}(m)\,=\,\begin{tikzpicture}[scale=0.5,baseline=-0.5ex]
\node at (1,0.8) {};
\node at (1,-0.8) {};
\node at (1,0) (0) {};
\node [inner sep=0.8pt,outer sep=0.8pt] at (1,0) (1) {$\bullet$};
\node [inner sep=0.8pt,outer sep=0.8pt] at (2,0) (2) {$\bullet$};
\draw (1,0)--(2,0);
\node at (1.5,0.1) [above] {\small{$m$}};
\end{tikzpicture}\quad \mathsf{I}_{2;1}^1(m)\,=\,\begin{tikzpicture}[scale=0.5,baseline=-0.5ex]
\node at (1,0.8) {};
\node at (1,-0.8) {};
\node at (1,0) (0) {};
\node [inner sep=0.8pt,outer sep=0.8pt] at (1,0) (1) {$\bullet$};
\node [inner sep=0.8pt,outer sep=0.8pt] at (2,0) (2) {$\bullet$};
\draw (1,0)--(2,0);
\node at (1.5,0.1) [above] {\small{$m$}};
\draw [line width=0.5pt,line cap=round,rounded corners] (1.north west)  rectangle (1.south east);
\end{tikzpicture}\quad \mathsf{I}_{2;1}^2(m)\,=\,\begin{tikzpicture}[scale=0.5,baseline=-0.5ex]
\node at (1,0.8) {};
\node at (1,-0.8) {};
\node at (1,0) (0) {};
\node [inner sep=0.8pt,outer sep=0.8pt] at (1,0) (1) {$\bullet$};
\node [inner sep=0.8pt,outer sep=0.8pt] at (2,0) (2) {$\bullet$};
\draw (1,0)--(2,0);
\node at (1.5,0.1) [above] {\small{$m$}};
\draw [line width=0.5pt,line cap=round,rounded corners] (2.north west)  rectangle (2.south east);
\end{tikzpicture}\quad \mathsf{I}_{2;2}(m)\,=\,\begin{tikzpicture}[scale=0.5,baseline=-0.5ex]
\node at (1,0.8) {};
\node at (1,-0.8) {};
\node at (1,0) (0) {};
\node [inner sep=0.8pt,outer sep=0.8pt] at (1,0) (1) {$\bullet$};
\node [inner sep=0.8pt,outer sep=0.8pt] at (2,0) (2) {$\bullet$};
\draw (1,0)--(2,0);
\node at (1.5,0.1) [above] {\small{$m$}};
\draw [line width=0.5pt,line cap=round,rounded corners] (1.north west)  rectangle (1.south east);
\draw [line width=0.5pt,line cap=round,rounded corners] (2.north west)  rectangle (2.south east);
\end{tikzpicture}\quad {^2}\mathsf{I}_{2;1}(m)\,=\,\begin{tikzpicture}[scale=0.5,baseline=-0.5ex]
\node at (1,0.8) {};
\node at (1,-0.8) {};
\node at (1,0) (0) {};
\node [inner sep=0.8pt,outer sep=0.8pt] at (1,0.5) (1) {$\bullet$};
\node [inner sep=0.8pt,outer sep=0.8pt] at (1,-0.5) (2) {$\bullet$};
\node at (2.5,0) {\small{$m$}};
\draw [domain=-90:90] plot ({1+(0.7)*cos(\x)}, {(0.5)*sin(\x)});
\draw [line width=0.5pt,line cap=round,rounded corners] (1.north west)  rectangle (2.south east);
\end{tikzpicture}
$$
\end{example}

We note that this notion of admissible diagrams is distinct from the ``admissible diagrams'' introduced by Carter~\cite{Car:72} to classify conjugacy classes in Weyl groups.

\begin{prop}
Let $D=(\Pi,J,\sigma)$ be an admissible diagram. For each distinguished $\sigma$-orbit $K\in\Orb^{\sigma}(J)$ let $s_K=w_{S\backslash J}w_K$, and let $W_{D}$ be the subgroup of $W$ generated by $S_D=\{s_K\mid K\in\Orb^{\sigma}(J)\}$. Then $(W_{D},S_{D})$ is a Coxeter system.
\end{prop}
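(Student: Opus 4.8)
The plan is to realise $(W_D,S_D)$ as a \emph{reflection subgroup} of $W$ and then invoke the standard fact (Deodhar, Dyer) that any subgroup of $W$ generated by reflections is again a Coxeter system with distinguished generating set the ``canonical'' reflections it contains. Concretely, I would work in the geometric representation: let $\Phi$ be a root system for $(W,S)$, let $\sigma_0$ denote opposition on $W_{S\backslash J}$, and observe first that $w_{S\backslash J}\sigma$ preserves the standard parabolic $W_J$ (this is part of the admissibility axioms / follows from the list in \cite[Theorem~2.3]{PVM:19a}), so each $s_K=w_{S\backslash J}w_K$ makes sense and the elements $s_K$ for $K\in\Orb^{\sigma}(J)$ together generate a subgroup $W_D$ that is conjugate/related to the fixed subgroup of a twisted parabolic. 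The key computation is that each $s_K$ is an \emph{involution}: since $w_K$ is the longest element of the spherical parabolic $W_K$ and $K$ is $\sigma$-stable with $sw_K=w_Ks^{\sigma}$ for $s\in K$ (Theorem~\ref{thm:downwardsclosure}(1) applied to the class $\bC$ with minimal element $w_J$), one checks $w_{S\backslash J}$ and $w_K$ commute (their supports $S\backslash J$ and $K$ meet only in a way controlled by the diagram, and $w_K$ is the longest element of a $\sigma$-stable parabolic), whence $s_K^2=w_{S\backslash J}w_Kw_{S\backslash J}w_K=1$.

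Second I would identify each $s_K$ as a reflection of $W$ on the relevant subspace. Writing $v_K=\sum_{s\in K}\alpha_s$ as in the proof of Proposition~\ref{prop:ranksum}, the element $w_{S\backslash J}w_K$ acts on $V$ and its $(-1)$-eigenspace should be one-dimensional (spanned by the ``$\sigma$-folded'' root attached to $K$), so $s_K$ is a reflection in the fixed subspace $V^{\sigma}$, or more precisely a reflection of the \emph{folded} root system $\Phi^{\sigma}$ obtained by restricting $\Phi$ to $V^{\sigma}$. This is exactly the situation analysed by Steinberg and by Geck--Pfeiffer for centralizers of diagram automorphisms: the group generated by the $s_K$ is the ``relative Weyl group'' attached to the pair $(J,\sigma)$, and it is a Coxeter group on the generators indexed by $\Orb^{\sigma}(J)$. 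The braid relations among the $s_K$ are read off from the folded diagram — this is precisely why the ``relative type'' column of Table~\ref{AllWS} makes sense — but for the bare statement that $(W_D,S_D)$ \emph{is} a Coxeter system one does not even need to pin these down: it suffices that $W_D$ is generated by reflections (of $\Phi$, or of $\Phi^{\sigma}$) and that the $s_K$ form the canonical/simple system of that reflection subgroup, i.e.\ no $s_K$ lies in the reflection subgroup generated by the others.

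Thus the clean route is: (i) show each $s_K$ is a reflection of $W$ (equivalently, has order $2$ and $(-1)$-eigenspace of dimension $1$ on $V$), so $W_D$ is a reflection subgroup; (ii) apply Dyer's theorem (``every reflection subgroup of a Coxeter group is a Coxeter group, with canonical generators'') to conclude $(W_D, \Sigma)$ is Coxeter for $\Sigma$ the canonical generators; (iii) show $\Sigma=S_D$, i.e.\ that $\{s_K\}$ is exactly the canonical generating set — this amounts to checking that each $s_K$ is a ``dominant'' reflection, which follows from the length/positivity bookkeeping already used in Lemma~\ref{lem:min1} (for $x\in W^{S\backslash J}$ the conjugation $x\,v\,x^{-\sigma}$ is length-nondecreasing on $W_{S\backslash J}$-translates, so the roots attached to the $s_K$ are all on the positive side). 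The main obstacle I expect is step (iii): step (i) is a short eigenspace computation and step (ii) is a citation, but verifying that $S_D$ is literally the canonical simple system — rather than some proper subset generating the same group, or a set that generates a strictly larger reflection subgroup — requires handling each diagram type (or a uniform positivity argument) with care, and this is presumably where the authors' proof does its real work, likely by exhibiting explicitly, for each entry of Table~\ref{AllWS}, the folded root system and matching the $s_K$ to its simple roots.
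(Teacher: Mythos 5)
The paper's proof is a one-line citation: each admissible diagram is a Tits index in the sense of M\"uhlherr--Petersson--Weiss, and the claim is exactly \cite[Theorem~20.32]{MPW:15}. Your reflection-subgroup argument is therefore a genuinely different route, but as written it does not work. The first problem is that your key claim --- that $w_{S\backslash J}$ and $w_K$ commute, so $s_K$ is an involution --- is false. Take $(\Pi,J,\sigma)=(\sA_3,\{2\},1)$ (a valid admissible diagram, with anisotropic kernel $\{1,3\}$). Then $w_{S\backslash J}=s_1s_3$ and $w_K=s_2$ do not commute, and $s_K=s_1s_3s_2=(1\,2\,4\,3)$ in $S_4$ has order~$4$, not~$2$. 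This in fact exposes a typo in the statement you were given: the standard generator of the relative Weyl group of a Tits index is $s_K=w_{S\backslash J}\,w_{(S\backslash J)\cup K}$, not $w_{S\backslash J}\,w_K$. With that correction $s_K$ \emph{is} an involution (because $w_{(S\backslash J)\cup K}$ normalises $W_{S\backslash J}$ and $w_{(S\backslash J)\cup K}\,w_{S\backslash J}\,w_{(S\backslash J)\cup K}=w_{S\backslash J}$), but your argument, which is aimed at the wrong formula, does not establish this.

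The second, more fundamental, problem is that even with the corrected $s_K$, the element is not a reflection of $W$: in the $\sA_3$ example one gets $s_K=(1\,3)(2\,4)$, whose $(-1)$-eigenspace on the reflection representation $V$ is $2$-dimensional. So $W_D$ is not a reflection subgroup of $W$, and Dyer's theorem on reflection subgroups simply does not apply. The relative space is not $V$ or $V^\sigma$ but $V^\sigma\cap V_{S\backslash J}^{\perp}$ (the elements $s_K$ do act as reflections there), and $W_D$ sits inside $N_W(W_{S\backslash J})/W_{S\backslash J}$ rather than inside $W$ as a reflection subgroup. Getting from ``a group generated by involutions acting as reflections on a subquotient'' to ``this is a Coxeter system with the $s_K$ as simple system'' is exactly what the Tits-index machinery (or Steinberg's folding analysis) provides, and is what the paper invokes. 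Your prediction that the authors' real work is a case-by-case matching against Table~\ref{AllWS} is also not what happens: their proof is just the citation. In short, step~(i) fails (both the involution claim and the $1$-dimensional $(-1)$-eigenspace claim are wrong), so step~(ii) is inapplicable, and the argument needs to be recast in the relative/Tits-index framework rather than the reflection-subgroup one.
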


\begin{proof}
Each admissible diagram is a \textit{Tits index} in the sense of \cite[Definition~20.1]{MPW:15}, and the result follows from \cite[Theorem~20.32]{MPW:15} (see also \cite[\S2.5]{Tits:66}).
\end{proof}

\begin{defn}\label{defn:relativetype}
The \textit{relative type} of an admissible diagram $D$ is the type of the associated Coxeter system $(W_D,S_D)$. 
\end{defn}

For example, the relative type of the admissible diagram $\sE_{7;4}$ is $\sF_4$. 

\subsection{Bi-capped classes}\label{sec:bicapped} Let $\bC$ be a class of $\sigma$-involutions in a spherical Coxeter group. We call $\bC$ \textit{lower capped} (respectively \textit{upper capped}) if $\bC$ has a unique minimal (respectively maximal) length element. We call $\bC$ \textit{bi-capped} if it is both lower capped and upper capped. 

A subset $J\subseteq S$ is called \textit{$\sigma$-rigid} if $w_Jsw_J=s^{\sigma}$ for all $s\in J$, and if $wJw^{-\sigma}\subseteq S$ for some $w\in W$ then $wJw^{-\sigma}=J$. Theorem~\ref{thm:downwardsclosure} implies that the set of lower capped $\sigma$-involution classes corresponds bijectively to the set of $\sigma$-rigid subsets of $S$ (this bijection is given by $\bC\leftrightarrow J$ if and only if $w_J\in \bC$). Moreover, by Lemma~\ref{lem:pairing} it follows that the set of upper capped $\sigma$-involution classes corresponds bijectively to the set of $\sigma\sigma_0$-rigid subsets of $S$ (with the bijection given by $\bC\leftrightarrow J'$ if and only if $w_{J'}w_0\in \bC$). 

If $\bC$ is bi-capped then there exists a unique $\sigma$-rigid set $J\subseteq S$ and a unique $\sigma\sigma_0$-rigid set $J'\subseteq S$ such that $w_J,w_{J'}w_0\in \bC$.

\begin{thm}\label{thm:bicapped} Let $(W,S)$ be an irreducible spherical Coxeter system with Coxeter graph~$\Pi$, and let $\bC$ be a bi-capped class of $\sigma$-involutions. 
\begin{compactenum}[$(1)$]
\item If $J$ is the unique $\sigma$-rigid set with $w_J\in \bC$ and $J'$ is the unique $\sigma\sigma_0$-rigid set  with $w_{J'}w_0\in \bC$ then the triples $(\Pi,S\backslash J,\sigma)$ and $(\Pi,S\backslash J',\sigma\sigma_0)$ are admissible diagrams. 
\item The relative type of $(\Pi,S\backslash J,\sigma)$ has rank $\FixRk(\bC)$, and the relative type of $(\Pi,S\backslash J',\sigma\sigma_0)$ has rank~$\OppRk(\bC)=\FixRk(\psi(\bC))$. 
\item The complete list of bi-capped twisted involution classes is given in \emph{\cref{fig:bicapped}} up to the duality $\psi$ (see \emph{\cref{rem:conv}}). In the table we list the classes via their associated admissible diagram, with the class $\bC=\Cl^{\sigma}(w_J)$ identified with the admissible diagram $(\Pi,S\backslash J,\sigma)$. Moreover, we list the dual class $\psi(\bC)$, the relative type of $\bC$ and the relative type of $\psi(\bC)$ (c.f.\ \emph{\cref{defn:relativetype}}).
\end{compactenum}
\begin{table}[h!]
\renewcommand{\arraystretch}{1.5}
\begin{tabular}{c||c|c||c|c||c}
\mbox{\emph{Type}} & \mbox{\emph{Class}}&\mbox{\emph{Dual class}} & \mbox{\emph{Rel. type}}& \mbox{\emph{Dual rel. type}} &\mbox{\emph{Remarks}}\\ 
\hline\hline
\multirow{2}{*}{$\mathsf{X}_n$} &  $\Cl(1)$ & $\Cl^{\sigma_0}(w_0)$ & $\mathsf{X}_n$ & $\mathsf{X}_0$ &\multirow{2}{*}{\emph{all types $\mathsf{X}_n$}} \\
 &  $\Cl^{\sigma_0}(w_0)$ & $\Cl(1)$ & $\mathsf{X}_0$ & $\mathsf{X}_n$ & \\
\hline
\multirow{1}{*}{$\sA_n$} &  ${^1}\sA^2_{n;(n-1)/2}$ & ${^2}\sA_{n;(n+1)/2}^1$ & $\sA_{(n-1)/2}$ & $\sB_{(n+1)/2}$ & \emph{$n$ odd}\\
\hline
\multirow{2}{*}{$\sB_n$} & ${^1}\sB_{n;j}^1$ & ${^1}\sB_{n;n-j}^1$ & $\sB_j$ & $\sB_{n-j}$ & $0\leq j\leq n$\\
& ${^1}\sB_{n;n/2}^2$ & ${^1}\sB_{n;n/2}^2$ & $\sB_{n/2}$ & $\sB_{n/2}$ & \emph{$n$ even}\\
\hline
\multirow{2}{*}{$\sD_n$} & $\sD_{n;j}^1$ & $\sD_{n;n-j}^1$ & $\sB_j$ & $\sB_{n-j}$ & $0\leq j\leq n$\\
& $\sD_{n;n/2}^2$ & $\sD_{n,n'/2}^2$ & $\sB_{n/2}$ & $\sB_{n/2}$ & \emph{$n$ even}\\
\hline
\multirow{1}{*}{$\sE_6$} & $\sE_{6;2}$ & ${^2}\sE_{6;4}$ & $\sA_2$ & $\sF_4$ &\\
\hline 
\multirow{1}{*}{$\sE_7$} & $\sE_{7;3}$ & $\sE_{7;4}$ & $\sB_3$ & $\sF_4$ &\\
\hline
\multirow{1}{*}{$\sE_8$} & $\sE_{8;4}$ & $\sE_{8;4}$ & $\sF_4$ & $\sF_4$ & \\
\hline
\multirow{2}{*}{$\sF_4$} & $\sF_{4;2}$ & $\sF_{4;2}$ & $\sB_2$ & $\sB_2$ &\\
& ${^2}\sF_{4;2}$ & ${^2}\sF_{4;2}$ & $\mathsf{I}_2(8)$ & $\mathsf{I}_2(8)$ &\\
\hline
\multirow{2}{*}{$\mathsf{I}_2(2m)$} & $\mathsf{I}_{2;1}^j(2m)$ & $\mathsf{I}_{2;1}^j(2m)$ & $\sA_1$ & $\sA_1$ & \emph{$j\in\{1,2\}$}\\
& ${^2}\mathsf{I}_{2;1}(2m)$ & ${^2}\mathsf{I}_{2;1}(2m)$ & $\sA_1$ & $\sA_1$ &\\
\hline
\end{tabular}
\vspace{6pt}
\caption{Bi-capped $\sigma$-involution classes of irreducible spherical $(W,S)$}
\label{fig:bicapped}
\end{table}
\end{thm}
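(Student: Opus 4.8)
The plan is to prove the three parts in sequence, with parts (1) and (2) feeding into the case analysis needed for (3).

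For part (1), I would start from Theorem~\ref{thm:downwardsclosure}: the unique minimal length element of a lower capped class is $w_J$ for a $\sigma$-rigid set $J$, and the unique maximal length element of an upper capped class is $w_{J'}w_0$ for a $\sigma\sigma_0$-rigid set $J'$ (using Lemma~\ref{lem:pairing}, which shows $\psi(\bC)=\bC w_0$ is a $\sigma\sigma_0$-involution class, so its minimal length element is $w_{J'}$). The substantive point is that $(\Pi,S\backslash J,\sigma)$ satisfies the axioms of an admissible diagram from \cite{PVM:19a}. Rather than reprove the axioms one at a time, I would invoke the identification (made explicit in Remark~\ref{rem:change}) of admissible diagrams with a suitable class of Tits indices: the $\sigma$-rigidity condition $w_Jsw_J=s^{\sigma}$ for all $s\in J$ is exactly what is needed to check that $(\Pi,S\backslash J)$ together with the $\sigma$-action defines a Tits index in the sense of \cite[Definition~20.1]{MPW:15}, and then admissibility follows. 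One must also check the extra ``non-degeneracy'' condition in the definition of $\sigma$-rigid (that $wJw^{-\sigma}\subseteq S$ implies $wJw^{-\sigma}=J$) corresponds to the class being \emph{lower} capped rather than merely lower bounded; this is precisely Theorem~\ref{thm:downwardsclosure}(2).

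For part (2), the relative type of $(\Pi,S\backslash J,\sigma)$ is by Definition~\ref{defn:relativetype} the type of $(W_D,S_D)$ where $S_D=\{w_{S\backslash(S\backslash J)}w_K:K\in\Orb^{\sigma}(S\backslash(S\backslash J))\}$. Wait --- I should be careful here with the indexing convention: in the admissible diagram $(\Pi,S\backslash J,\sigma)$ the relevant circled subset is $S\backslash J$, so the relative type has rank $|\Orb^{\sigma}(S\backslash J)|=\FixRk(\bC)$ by the very definition of $\FixRk$. The dual statement, that the relative type of $(\Pi,S\backslash J',\sigma\sigma_0)$ has rank $\OppRk(\bC)$, is identical with $\sigma$ replaced by $\sigma\sigma_0$ and $\bC$ by $\psi(\bC)$, together with $\OppRk(\bC)=\FixRk(\psi(\bC))$ which was observed right before Proposition~\ref{prop:ranksum}. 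So part (2) is essentially bookkeeping once the definitions are unwound.

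Part (3) is the computational heart. The strategy is: bi-capped classes correspond to pairs $(J,J')$ with $J$ being $\sigma$-rigid, $J'$ being $\sigma\sigma_0$-rigid, and $w_J$ and $w_{J'}w_0$ lying in the \emph{same} class $\bC$. First, enumerate all $\sigma$-rigid subsets $J\subseteq S$ for each irreducible spherical type and each $\sigma\in\Aut(\Pi)$: the condition $w_Jsw_J=s^{\sigma}$ for $s\in J$ forces $J$ to be a union of components each of which is either stabilized pointwise by the opposition-on-$W_J$-twisted-by-$\sigma$ map, a strong restriction that is routine to check type by type (e.g.\ in type $\sA_m$ it forces $W_J$ to be a product of $\sA_1$'s when $\sigma|_J=1$, etc.). This produces the list of lower capped classes. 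Then, among these, identify which are also upper capped --- equivalently, for which the class $\Cl^{\sigma}(w_J)w_0$ is again lower capped, i.e.\ contains a unique minimal length element. This is where Proposition~\ref{prop:ranksum} ($\FixRk+\OppRk=|S|$) and the duality $\psi$ do the work: $\psi$ pairs the list with itself, and one reads off the dual class and dual relative type directly. I expect the main obstacle to be the exceptional types $\sE_6,\sE_7,\sE_8,\sF_4$: there one must explicitly identify the $\sigma$-rigid sets (e.g.\ checking that $J=\{1,3,4,6\}$ in $\sE_7$ gives a $\sigma$-rigid set with $w_J=w_{\sD_4}$-type element and relative type $\sF_4$, or pinning down the relative type $\sB_3$ partner), verify bi-cappedness by a length/eigenvalue computation, and compute the duals --- these are finite but delicate checks, best organized by using the eigenspace description of $\FixRk$ from the proof of Proposition~\ref{prop:ranksum} to quickly rule out non-bi-capped candidates and to confirm the relative ranks. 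The non-crystallographic type $\mathsf{I}_2(2m)$ case is easy and handled directly by hand, as is the observation that $\Cl(1)$ and $\Cl^{\sigma_0}(w_0)$ are bi-capped in every type.
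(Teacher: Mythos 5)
Your overall strategy matches the paper's: enumerate $\sigma$-rigid subsets $J$, for each one locate a minimal-length element of the dual class $\psi(\bC)=\bC w_0$, retain those for which the minimal element is $w_{J'}$ with $J'$ being $\sigma\sigma_0$-rigid, and read off the table. Your treatment of part (2) as definitional bookkeeping is also correct and is what the paper does.

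The weak point is your argument for part (1). You assert that Remark~\ref{rem:change} ``makes explicit'' an identification of admissible diagrams with Tits indices, and that the condition $w_J s w_J = s^{\sigma}$ is ``exactly what is needed'' to verify the Tits index axioms, from which admissibility would follow. Neither claim holds up. Remark~\ref{rem:change} is a purely notational statement about replacing the diagram automorphism $\sigma\sigma_0$ by $\sigma$ in the labelling of triples; it says nothing about Tits indices. And the direction you actually need---that a triple arising from a $\sigma$-rigid set is automatically admissible in the sense of \cite[Theorem~2.3]{PVM:19a}---is not established by the cited proposition in the paper, which only gives the implication \emph{admissible}$\Rightarrow$\emph{Tits index}, not a converse and certainly not a characterization in terms of $\sigma$-rigidity. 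The paper sidesteps this entirely: once part (3) has produced the finite list, part (1) is verified by inspection. If you want an abstract proof of (1), you would have to check the actual axioms of admissibility from \cite{PVM:19a} against $\sigma$-rigidity; that verification is the substantive content and is not supplied.

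A smaller point on part (3): invoking Proposition~\ref{prop:ranksum} and ``the eigenspace description of $\FixRk$'' to ``rule out non-bi-capped candidates'' gives you only a consistency check (the ranks must add to $|S|$), not a criterion for bi-cappedness. Bi-cappedness is the statement that $\psi(\bC)$ has a unique minimal-length element, and deciding that requires actually locating the minimal elements of $\bC w_0$---which the paper does via the tables of \cite{GKP:00} or explicit computation (for the classical types one can use the signed-permutation models). You should be explicit that this step is where the nontrivial work resides and cannot be bypassed by the rank-sum identity alone.
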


\begin{proof}
We first sketch the proof of (3). The strategy is as follows. First determine all $\sigma$-balanced subsets $J$ (these are easily determined from the Coxeter graph, and it turns out that for each type there are relatively few such sets). For each such subset, find a minimal length element $w_{J'}$ of the class $\psi(\bC)=\bC w_0$. The bi-capped classes are then the classes for which $J'$ is $\sigma\sigma_0$-rigid. The most complicated part of this procedure is determining a minimal length element $w_{J'}$ of $\bC w_0$. One approach is to use the tables in~\cite{GKP:00} (alternatively, in the classical types one can use concrete descriptions of the groups as permutations or signed permutations, and in the exceptional types some basic computation achieves the goal). We omit the explicit details. 

It is then a direct consequence of the classification that each triple $(\Pi,S\backslash J,\sigma)$ and $(\Pi,S\backslash J',\sigma\sigma_0)$ is admissible. We record the corresponding admissible diagrams in Table~\ref{fig:bicapped}. 
\end{proof}

\begin{remark}\label{rem:conv}
The following conventions are in force for Table~\ref{fig:bicapped}.
\begin{compactenum}[$(1)$]
\item $\mathsf{X}_0$, for $\mathsf{X}\in\{\sA,\sB,\ldots\}$, denotes the empty type.
\item We adopt the naming conventions for the diagrams of classical type from \cite[Tables 1 and 2]{PVMclass} and of exceptional type from \cite[Tables 1 and 2]{PVMexc}. The symbols for the non-crystallographic diagrams are given in Example~\ref{ex:noncrystallographic}.
\item In the type $\sD$ diagrams the twisting index is omitted, as described in the introduction to~\cite{PVMclass}. 
\item The diagram $\sD_{n;n/2}^2$ ($n$ even) has nodes $\{2,4,\ldots,n-2,n\}$ encircled, and the diagram $\sD_{n;n'/2}^2$ has nodes $\{2,4,\ldots,n-2,n-1\}$ encircled. 
\end{compactenum}
\end{remark}

\begin{example}
The $\sE_7$ row in \cref{fig:bicapped} reads $\sE_{7;3}$, $\sE_{7;4}$, $\sB_3$, $\sF_4$. This says that $\Cl(w_{\{2,3,4,5\}})$ and $\Cl(w_{\{2,5,7\}})$ are dual classes, and that the relative Coxeter groups associated to these classes have types $\sB_3$ and $\sF_4$ respectively. In diagrams, this row gives:
\begin{align*}
&\begin{tikzpicture}[scale=0.5]
\node at (0,0.3) {};
\node [inner sep=0.8pt,outer sep=0.8pt] at (-2,0) (1) {$\bullet$};
\node [inner sep=0.8pt,outer sep=0.8pt] at (-1,0) (3) {$\bullet$};
\node [inner sep=0.8pt,outer sep=0.8pt] at (0,0) (4) {$\bullet$};
\node [inner sep=0.8pt,outer sep=0.8pt] at (1,0) (5) {$\bullet$};
\node [inner sep=0.8pt,outer sep=0.8pt] at (2,0) (6) {$\bullet$};
\node [inner sep=0.8pt,outer sep=0.8pt] at (3,0) (7) {$\bullet$};
\node [inner sep=0.8pt,outer sep=0.8pt] at (0,-1) (2) {$\bullet$};
\draw (-2,0)--(3,0);
\draw (0,0)--(0,-1);
\draw [line width=0.5pt,line cap=round,rounded corners] (1.north west)  rectangle (1.south east);
\draw [line width=0.5pt,line cap=round,rounded corners] (6.north west)  rectangle (6.south east);
\draw [line width=0.5pt,line cap=round,rounded corners] (7.north west)  rectangle (7.south east);
\end{tikzpicture}\hspace{2cm}
\begin{tikzpicture}[scale=0.5]
\node [inner sep=0.8pt,outer sep=0.8pt] at (-2,0) (1) {$\bullet$};
\node [inner sep=0.8pt,outer sep=0.8pt] at (-1,0) (3) {$\bullet$};
\node [inner sep=0.8pt,outer sep=0.8pt] at (0,0) (4) {$\bullet$};
\node [inner sep=0.8pt,outer sep=0.8pt] at (1,0) (5) {$\bullet$};
\node [inner sep=0.8pt,outer sep=0.8pt] at (2,0) (6) {$\bullet$};
\node [inner sep=0.8pt,outer sep=0.8pt] at (3,0) (7) {$\bullet$};
\node [inner sep=0.8pt,outer sep=0.8pt] at (0,-1) (2) {$\bullet$};
\draw (-2,0)--(3,0);
\draw (0,0)--(0,-1);
\draw [line width=0.5pt,line cap=round,rounded corners] (1.north west)  rectangle (1.south east);
\draw [line width=0.5pt,line cap=round,rounded corners] (3.north west)  rectangle (3.south east);
\draw [line width=0.5pt,line cap=round,rounded corners] (4.north west)  rectangle (4.south east);
\draw [line width=0.5pt,line cap=round,rounded corners] (6.north west)  rectangle (6.south east);
\end{tikzpicture}\\
&\begin{tikzpicture}[scale=0.5]
\node at (0,0.3) {};
\node [inner sep=0.8pt,outer sep=0.8pt] at (-2,0) (1) {$\bullet$};
\node [inner sep=0.8pt,outer sep=0.8pt] at (2,0) (6) {$\bullet$};
\node [inner sep=0.8pt,outer sep=0.8pt] at (3,0) (7) {$\bullet$};
\draw (-2,0)--(2,0);
\draw (2,0.15)--(3,0.15);
\draw (2,-0.15)--(3,-0.15);
\end{tikzpicture}\hspace{2.1cm}
\begin{tikzpicture}[scale=0.5]
\node [inner sep=0.8pt,outer sep=0.8pt] at (-2,0) (1) {$\bullet$};
\node [inner sep=0.8pt,outer sep=0.8pt] at (-1,0) (3) {$\bullet$};
\node [inner sep=0.8pt,outer sep=0.8pt] at (0,0) (4) {$\bullet$};
\node [inner sep=0.8pt,outer sep=0.8pt] at (2,0) (6) {$\bullet$};
\draw (-2,0)--(-1,0);
\draw (0,0)--(2,0);
\draw (-1,-0.15)--(0,-0.15);
\draw (-1,0.15)--(0,0.15);
\end{tikzpicture}
\end{align*}
We have $\FixRk(\Cl(w_{\{2,3,4,5\}}))=3$ and $\OppRk(\Cl(w_{\{2,5,7\}}))=4$. The fact that these ranks add to the rank of $\sE_7$ is explained by Proposition~\ref{prop:ranksum}.
\end{example}

\begin{defn}\label{defn:psiondiagrams}
We shall call the admissible diagrams appearing in Figure~\ref{fig:bicapped} the \textit{bi-capped admissible diagrams}. That is, an admissible diagram is bi-capped if and only if it corresponds to a bi-capped class of $\sigma$-involutions, as described in Theorem~\ref{thm:bicapped}. Thus the duality $\psi$ restricted to the class of bi-capped classes gives rise to a duality $\psi$ on the set of all bi-capped admissible diagrams. For example, $\psi({^2}\sE_{6;4})=\sE_{6;2}$.  
\end{defn}

\subsection{Buildings} Our main references for the theory of buildings are \cite{AB:09,Tits:74}, and we assume that the reader is already acquainted with the theory. Let $\Delta$ be a building of type $(W,S)$, regarded as a simplicial complex, with chamber set $\Ch(\Delta)$ and $W$-distance function~$\delta:\Ch(\Delta)\times\Ch(\Delta)\to W$. 

A building is called \textit{thick} if $|\{D\in\Ch(\Delta)\mid \delta(C,D)=s\}|\geq 3$ for all $C\in\Ch(\Delta)$ and $s\in S$, and \textit{thin} if $|\{D\in\Ch(\Delta)\mid \delta(C,D)=s\}|=2$ for all $C\in\Ch(\Delta)$ and $s\in S$. Note that a non-thick building is \textit{not} the same as a thin building.

Let $\tau:\Delta\to 2^S$ be a fixed type map on the simplicial complex~$\Delta$.  The \textit{residue} of a simplex $\alpha$ of $\Delta$ is the set $\Res(\alpha)$ of all simplices of $\Delta$ which contain $\alpha$, together with the order relation induced by that on~$\Delta$. Then $\Res(\alpha)$ is a building whose Coxeter diagram is obtained from the Coxeter diagram $\Pi$ of $\Delta$ by removing all nodes which belong to $\tau(\alpha)$. 

The \textit{projection} onto a simplex $\alpha$ is the map $\proj_{\alpha}:\Delta\to\Res(\alpha)$ where $\proj_{\alpha}(\beta)$ is the unique simplex $\gamma$ of $\Res(\alpha)$ which is maximal subject to the property that every minimal length gallery from a chamber of $\Res(\beta)$ to $\Res(\alpha)$ ends in a chamber containing~$\gamma$.

Suppose that $\Delta$ is spherical (that is, $|W|<\infty$). Chambers $A,B\in\Ch(\Delta)$ are \textit{opposite} if they are at maximum distance in the chamber graph, or equivalently if $\delta(A,B)=w_0$. Simplices $\alpha,\beta$ of $\Delta$ are \textit{opposite} if $\tau(\beta)=w_0\tau(\alpha)w_0$ and there exist a chamber $A$ containing $\alpha$ and a chamber $B$ containing $\beta$ such that $A$ and $B$ are opposite.

We call the thick irreducible spherical buildings of rank at least $3$ with no Fano plane residues \textit{large buildings}, and those containing at least one Fano plane residue are called \textit{small buildings}. 

\emph{Generalised polygons} are the point-line geometries associated to spherical buildings of rank 2. More precisely, a \emph{generalised $d$-gon, $d\geq 2$}, is a point-line geometry for which the incidence graph has diameter $d$ and girth $2d$. The chamber set of the associated type $\mathsf{I}_2(d)$ building consists of the set of pairs $\{p,L\}$ with $p$ a point incident with a line~$L$.

\subsection{Automorphisms of spherical buildings and opposition diagrams}\label{sec:automorphisms}

Let $\Delta=(\Ch(\Delta),\delta)$ be a building of type $(W,S)$, where $\Ch(\Delta)$ is the set of chambers of the building, and $\delta:\Ch(\Delta)\times\Ch(\Delta)\to W$ is the Weyl distance function. If $\theta$ is an automorphism of $\Delta$ then there is an associated automorphism $\sigma\in\Aut(\Pi)$ of the Coxeter graph given by $\delta(C,D)=s$ if and only if $\delta(C^{\theta},D^{\theta})=s^{\sigma}$. We call $\sigma$ the \textit{companion diagram automorphism} of~$\theta$. Note that if $(W,S)$ is irreducible and spherical then $\sigma$ has order $1$, $2$ or $3$, with order $3$ only occurring for trialities of $\sD_4$. 


\begin{defn}
Let $\Delta$ be a spherical building of type $(W,S)$. An automorphism $\theta$ of $\Delta$ is called:
\begin{compactenum}[$(1)$]
\item \textit{domestic} if $\delta(C,C^{\theta})\neq w_0$ for all $C\in\Ch(\Delta)$;
\item \textit{anisotropic} if $\delta(C,C^{\theta})=w_0$ for all $C\in\Ch(\Delta)$;
\item \textit{lower capped} if whenever there exist type $J$ and type $J'$ simplices fixed by~$\theta$, there exists a type $J\cup J'$ simplex fixed by~$\theta$;
\item \textit{upper capped} (or simply \textit{capped}) if whenever there exist type $J$ and type $J'$ simplices mapped onto opposite simplices by $\theta$, there exists a type $J\cup J'$ simplex mapped onto an opposite simplex by~$\theta$;
\item \textit{bi-capped} if it is both upper and lower capped. 
\end{compactenum}
\end{defn}

In \cite{PVM:19a} we introduced the notion of cappedness (the notion of ``lower cappedness'' did not play a role in that paper, and so the property of being ``upper capped'' was simply referred to as being ``capped'', and we shall continue to use this terminology in the present paper). An automorphism that is not capped is called \textit{uncapped}. By the main result of \cite{PVM:19a} if $\Delta$ has rank at least $3$ and has no Fano plane residues then all automorphisms of $\Delta$ are capped. We note that the lower capped property does not satisfy such a statement.

If $\theta$ is anisotropic then necessarily the companion automorphism $\sigma$ is opposition, and $\disp(\theta)=\Cl^{\sigma}(w_0)=\{w_0\}$ (this is a very special case of \cite[Theorem~1.3]{DPV:13}). Thus anisotropic automorphisms are uniclass. 

Let $\theta$ be an automorphism of a spherical building $\Delta$ with companion automorphism $\sigma$. Let $\sigma_0\in\mathrm{Aut}(\Pi)$ be the automorphism of $\Pi$ induced by the longest element~$w_0$. The \textit{opposition diagram} $\Diag(\theta)$ of $\theta$ is the triple $(\Pi,J,\sigma\sigma_0)$ where $J$ is the union of all subsets $K\subseteq S$ for which there exists a type $K$ simplex of $\Delta$ mapped onto an opposite simplex by~$\theta$. 

In \cite{PVM:19a,PVM:19b} we showed that if $\theta$ is an automorphism of a thick irreducible spherical building then $\Diag(\theta)=(\Pi,J,\sigma\sigma_0)$ is an admissible diagram (c.f. Section~\ref{sec:admissible}). We emphasise that the diagram automorphism associated to this admissible diagram is $\sigma\sigma_0$ (c.f. Remark~\ref{rem:change}). For example, the admissible diagrams
\begin{center}
(a)\quad \begin{tikzpicture}[scale=0.5,baseline=-0.5ex]
\node at (1,0.8) {};
\node [inner sep=0.8pt,outer sep=0.8pt] at (0,0) (0) {$\bullet$};
\node [inner sep=0.8pt,outer sep=0.8pt] at (1,0) (1) {$\bullet$};
\node [inner sep=0.8pt,outer sep=0.8pt] at (2,0) (2) {$\bullet$};
\node [inner sep=0.8pt,outer sep=0.8pt] at (3,0) (3) {$\bullet$};
\node [inner sep=0.8pt,outer sep=0.8pt] at (4,0) (4) {$\bullet$};
\node [inner sep=0.8pt,outer sep=0.8pt] at (5,0.5) (5a) {$\bullet$};
\node [inner sep=0.8pt,outer sep=0.8pt] at (5,-0.5) (5b) {$\bullet$};
\draw (0,0)--(4,0);
\draw (4,0) to   (5,0.5);
\draw (4,0) to   (5,-0.5);
\draw [line width=0.5pt,line cap=round,rounded corners] (1.north west)  rectangle (1.south east);
\draw [line width=0.5pt,line cap=round,rounded corners] (3.north west)  rectangle (3.south east);
\end{tikzpicture}\qquad\qquad\qquad (b)\quad \begin{tikzpicture}[scale=0.5,baseline=-0.5ex]
\node at (0,0.8) {};
\node at (0,-0.8) {};
\node [inner sep=0.8pt,outer sep=0.8pt] at (0,0) (0) {$\bullet$};
\node [inner sep=0.8pt,outer sep=0.8pt] at (1,0) (1) {$\bullet$};
\node [inner sep=0.8pt,outer sep=0.8pt] at (2,0) (2) {$\bullet$};
\node [inner sep=0.8pt,outer sep=0.8pt] at (3,0) (3) {$\bullet$};
\node [inner sep=0.8pt,outer sep=0.8pt] at (4,0) (4) {$\bullet$};
\node [inner sep=0.8pt,outer sep=0.8pt] at (5,0.5) (5a) {$\bullet$};
\node [inner sep=0.8pt,outer sep=0.8pt] at (5,-0.5) (5b) {$\bullet$};
\draw (0,0)--(4,0);
\draw (4,0) to [bend left] (5,0.5);
\draw (4,0) to [bend right=45] (5,-0.5);
\draw [line width=0.5pt,line cap=round,rounded corners] (1.north west)  rectangle (1.south east);
\draw [line width=0.5pt,line cap=round,rounded corners] (3.north west)  rectangle (3.south east);
\draw [line width=0.5pt,line cap=round,rounded corners] (5a.north west)  rectangle (5b.south east);
\end{tikzpicture}
\end{center}
are the opposition diagrams of: (a) a non-type preserving automorphism of a $\sD_7$ building mapping type $2$ and type $4$ vertices to opposite vertices, and (b) a type preserving automorphism of a $\sD_7$ building mapping type $2$ and $4$ vertices onto opposite, and type $\{6,7\}$ simplices onto opposite simplices (recall that the opposition relation $\sigma_0$ on $\sD_n$ is type preserving if $n$ is even, and interchanges types $n-1$ and $n$ if $n$ is odd). 

Note that if $\Delta$ is capped then the opposition diagram completely determines the partially ordered set of types of simplices that are mapped to opposite simplices by~$\theta$. For uncapped automorphisms some additional decorations are required on the opposition diagram to capture the structure of this poset. In \cite{PVM:19b} we introduced these decorated opposition diagrams for uncapped automorphisms (of  necessarily small buildings). This diagram is obtained from the opposition diagram of the automorphism as follows. If $J$ denotes the set of encircled nodes of the opposition diagram, we shade those distinguished orbits $J'\subseteq J$ with the property that there exists a type $J\backslash J'$ simplex mapped onto an opposite simplex by $\theta$. For example, the diagram \begin{tikzpicture}[scale=0.5,baseline=-0.5ex]
\node at (0,0.3) {};
\node [inner sep=0.8pt,outer sep=0.8pt] at (-1.5,0) (1) {$\bullet$};
\node [inner sep=0.8pt,outer sep=0.8pt] at (-0.5,0) (2) {$\bullet$};
\node [inner sep=0.8pt,outer sep=0.8pt] at (0.5,0) (3) {$\bullet$};
\draw [line width=0.5pt,line cap=round,rounded corners,fill=ggrey] (1.north west)  rectangle (1.south east);
\draw [line width=0.5pt,line cap=round,rounded corners,fill=ggrey] (2.north west)  rectangle (2.south east);
\draw [line width=0.5pt,line cap=round,rounded corners] (3.north west)  rectangle (3.south east);
\draw (-1.5,0)--(-0.5,0);
\draw (-0.5,0.07)--(0.5,0.07);
\draw (-0.5,-0.07)--(0.5,-0.07);
\node [inner sep=0.8pt,outer sep=0.8pt] at (-1.5,0) (1) {$\bullet$};
\node [inner sep=0.8pt,outer sep=0.8pt] at (-0.5,0) (2) {$\bullet$};
\node [inner sep=0.8pt,outer sep=0.8pt] at (0.5,0) (3) {$\bullet$};
\end{tikzpicture} is the decorated opposition diagram of an uncapped automorphism of a $\sB_3$ building mapping vertices of each type to opposite vertices (as each node is encircled), and also mapping simplices of types $\{2,3\}$ and $\{1,3\}$ onto opposite simplices (because the first and second nodes are shaded), yet no simplex of type $\{1,2\}$ is mapped onto an opposite (because the third node is not shaded). 

We refer the reader to \cite{PVM:19b} for the complete list of possible decorated opposition diagrams of uncapped automorphisms. This list is very restricted, for example the only possible uncapped automorphisms of the building $\sE_7(2)$ have diagrams
 $$
\begin{tikzpicture}[scale=0.5,baseline=-1.5ex]
\node [inner sep=0.8pt,outer sep=0.8pt] at (-2,0) (1) {$\bullet$};
\node [inner sep=0.8pt,outer sep=0.8pt] at (-1,0) (3) {$\bullet$};
\node [inner sep=0.8pt,outer sep=0.8pt] at (0,0) (4) {$\bullet$};
\node [inner sep=0.8pt,outer sep=0.8pt] at (1,0) (5) {$\bullet$};
\node [inner sep=0.8pt,outer sep=0.8pt] at (2,0) (6) {$\bullet$};
\node [inner sep=0.8pt,outer sep=0.8pt] at (3,0) (7) {$\bullet$};
\node [inner sep=0.8pt,outer sep=0.8pt] at (0,-1) (2) {$\bullet$};
\draw [line width=0.5pt,line cap=round,rounded corners,fill=ggrey] (1.north west)  rectangle (1.south east);
\draw [line width=0.5pt,line cap=round,rounded corners,fill=ggrey] (3.north west)  rectangle (3.south east);
\draw [line width=0.5pt,line cap=round,rounded corners] (4.north west)  rectangle (4.south east);
\draw [line width=0.5pt,line cap=round,rounded corners] (6.north west)  rectangle (6.south east);
\node at (0,0) {$\bullet$};
\node at (2,0) {$\bullet$};
\node at (0,-1.3) {};
\node at (0,0.3) {};
\node [inner sep=0.8pt,outer sep=0.8pt] at (-2,0) (1) {$\bullet$};
\node [inner sep=0.8pt,outer sep=0.8pt] at (-1,0) (3) {$\bullet$};
\node [inner sep=0.8pt,outer sep=0.8pt] at (0,0) (4) {$\bullet$};
\node [inner sep=0.8pt,outer sep=0.8pt] at (1,0) (5) {$\bullet$};
\node [inner sep=0.8pt,outer sep=0.8pt] at (2,0) (6) {$\bullet$};
\node [inner sep=0.8pt,outer sep=0.8pt] at (3,0) (7) {$\bullet$};
\node [inner sep=0.8pt,outer sep=0.8pt] at (0,-1) (2) {$\bullet$};
\draw (-2,0)--(3,0);
\draw (0,0)--(0,-1);
\end{tikzpicture}
\qquad \text{or}\qquad 
\begin{tikzpicture}[scale=0.5,baseline=-1.5ex]
\node [inner sep=0.8pt,outer sep=0.8pt] at (-2,0) (1) {$\bullet$};
\node [inner sep=0.8pt,outer sep=0.8pt] at (-1,0) (3) {$\bullet$};
\node [inner sep=0.8pt,outer sep=0.8pt] at (0,0) (4) {$\bullet$};
\node [inner sep=0.8pt,outer sep=0.8pt] at (1,0) (5) {$\bullet$};
\node [inner sep=0.8pt,outer sep=0.8pt] at (2,0) (6) {$\bullet$};
\node [inner sep=0.8pt,outer sep=0.8pt] at (3,0) (7) {$\bullet$};
\node [inner sep=0.8pt,outer sep=0.8pt] at (0,-1) (2) {$\bullet$};
\draw [line width=0.5pt,line cap=round,rounded corners,fill=ggrey] (1.north west)  rectangle (1.south east);
\draw [line width=0.5pt,line cap=round,rounded corners,fill=ggrey] (3.north west)  rectangle (3.south east);
\draw [line width=0.5pt,line cap=round,rounded corners,fill=ggrey] (4.north west)  rectangle (4.south east);
\draw [line width=0.5pt,line cap=round,rounded corners,fill=ggrey] (6.north west)  rectangle (6.south east);
\draw [line width=0.5pt,line cap=round,rounded corners,fill=ggrey] (2.north west)  rectangle (2.south east);
\draw [line width=0.5pt,line cap=round,rounded corners,fill=ggrey] (5.north west)  rectangle (5.south east);
\draw [line width=0.5pt,line cap=round,rounded corners,fill=ggrey] (7.north west)  rectangle (7.south east);
\node [inner sep=0.8pt,outer sep=0.8pt] at (-2,0) (1) {$\bullet$};
\node [inner sep=0.8pt,outer sep=0.8pt] at (-1,0) (3) {$\bullet$};
\node [inner sep=0.8pt,outer sep=0.8pt] at (0,0) (4) {$\bullet$};
\node [inner sep=0.8pt,outer sep=0.8pt] at (1,0) (5) {$\bullet$};
\node [inner sep=0.8pt,outer sep=0.8pt] at (2,0) (6) {$\bullet$};
\node [inner sep=0.8pt,outer sep=0.8pt] at (3,0) (7) {$\bullet$};
\node [inner sep=0.8pt,outer sep=0.8pt] at (0,-1) (2) {$\bullet$};
\draw (-2,0)--(3,0);
\draw (0,0)--(0,-1);
\end{tikzpicture} 
$$
Using Bourbaki labelling (so that the node on the short arm is labelled ``$2$''), the first diagram means that there are vertices of types $1,3,4$ and $6$ mapped onto opposite vertices, and simplices of types $\{3,4,6\}$ and $\{1,4,6\}$ mapped onto opposite simplices, but no simplices of types $\{1,3,6\}$ or $\{1,3,4\}$ are mapped onto opposite simplices. 

The following proposition is a refinement of \cite[Proposition~4.2]{AB:09}.

\begin{prop}\label{prop:attaindisplacement}
Let $\Delta$ be a thick building of spherical type $(W,S)$ and let $\theta$ be an automorphism of $\Delta$. If $J$ is a maximal element in the partially ordered set of all types of simplices that are mapped onto opposite simplices then there exists a chamber $C\in\Ch(\Delta)$ with $\delta(C,C^{\theta})=w_{S\backslash J}w_0$. 
\end{prop}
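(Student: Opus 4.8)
The plan is to prove the statement by descending induction on the corank $|S \setminus J|$, using the fact that $\theta$ is an automorphism with companion $\sigma$ together with the folding/exchange-condition facts of Lemma~\ref{lem:foldingcondition}. The base case is when $J = S$: here some chamber $C$ is mapped to an opposite chamber, and $\delta(C, C^\theta) = w_0 = w_\emptyset w_0 = w_{S \setminus J} w_0$, as required (and $\sigma$ is forced to be opposition in this case). For the inductive step, suppose $J \subsetneq S$ is maximal among types of simplices mapped to an opposite, and pick $s \in S \setminus J$. By definition of $J$ there is a type-$J$ simplex $\alpha$ with $\alpha^\theta$ opposite to $\alpha$; I would work in the residue $R = \Res(\alpha)$, which is a thick spherical building of type $W_{S \setminus J}$, and study the induced action relating $R$ and $R^\theta = \Res(\alpha^\theta)$ via the projection maps $\proj_\alpha$ and $\proj_{\alpha^\theta}$ (using the standard fact that for opposite simplices these projections are mutually inverse isomorphisms $R \to R^\theta$ and $R^\theta \to R$).

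The key point is to control how the Weyl distance $\delta(C, C^\theta)$ behaves when one first passes down to a chamber $C$ containing $\alpha$ and then perturbs $C$ within $R$. Concretely, I would write any chamber $C \supseteq \alpha$ so that $\delta(C, C^\theta)$ decomposes compatibly with the coset $W_{S\setminus J} w_0 W_{S\setminus J}$; since $\alpha$ and $\alpha^\theta$ are opposite, the "bulk" of the displacement is forced to be the minimal double coset representative $w_{S\setminus J} w_0$ (this is exactly \cite[Proposition~4.2]{AB:09}), and the refinement we want is that the $W_{S\setminus J}$-part can be made trivial, i.e.\ there is a chamber for which no extra length is picked up inside the residue. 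To get this I would apply the maximality of $J$: if every chamber containing $\alpha$ had $\delta(C, C^\theta)$ strictly longer than $w_{S\setminus J}w_0$, one could use Lemma~\ref{lem:foldingcondition}(1) repeatedly (pulling a descent $t \in S \setminus J$ from the right of $\delta(C,C^\theta)$, noting $\ell(t\,\delta(C,C^\theta)) = \ell(\delta(C,C^\theta)\,t^\sigma)$ fails to produce the folding $t \cdot \delta \cdot t^\sigma = \delta$ in the thick building) to produce a simplex of type strictly larger than $J$ mapped to an opposite, contradicting maximality. This is the mechanism by which "no Fano obstruction is needed" — thickness gives the third chamber needed to realise the folding-versus-extension dichotomy.

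The main obstacle, and the step I expect to require the most care, is the bookkeeping of the companion automorphism $\sigma$ throughout: the displacement lives in a $\sigma$-twisted double coset, $\proj_{\alpha^\theta}$ is only defined up to the identification of the type of $R^\theta$ with $\tau(\alpha^\theta) = w_0 \tau(\alpha) w_0$, and the relevant "descent" computations must be done with $t^\sigma$ rather than $t$. I would handle this by fixing at the outset a minimal gallery from $\alpha$ to $\alpha^\theta$, using it to pin down a canonical chamber $C_0 \supseteq \alpha$ with $\delta(C_0, C_0^\theta) = w_{S\setminus J} w_0$ directly (this is essentially the content of \cite[Proposition~4.2]{AB:09} once one checks the double-coset minimal length representative is $w_{S\setminus J}w_0$ and that it is fixed by neither a left nor a right residue translation in a way that would force a larger opposite simplex). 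In fact, I expect the cleanest route is: take $C_0$ realising the minimum of $\ell(\delta(C, C^\theta))$ over $C \supseteq \alpha$; show $\delta(C_0, C_0^\theta) = w_{S\setminus J}w_0$ exactly (rather than merely lying in that double coset) by arguing that any $t \in D_R(\delta(C_0,C_0^\theta)) \cap (S\setminus J)$ would, via Lemma~\ref{lem:foldingcondition}, let us replace $C_0$ by a shorter-displacement chamber unless a type-$(J \cup \{t\}$-orbit$)$ simplex goes to an opposite — the latter contradicting maximality of $J$ — so no such $t$ exists, forcing $\delta(C_0, C_0^\theta)$ to be the minimal-length element $w_{S\setminus J}w_0$ of its double coset.
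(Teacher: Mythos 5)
Your proposal takes a different tack from the paper, and the central step does not work as stated. The paper's proof does \emph{not} minimise $\ell(\delta(C,C^\theta))$ over the residue of a fixed type-$J$ simplex $\alpha$; it takes a chamber $C$ of \emph{maximal} displacement length subject to the type-$J$ face of $C$ being mapped to an opposite, writes $\delta(C,C^\theta)=vw_0$ with $v\in W_{S\setminus J}$, and then shows (using thickness to manufacture a third chamber in a panel) that every $s\in S$ with $sws^{\sigma}\neq w$ must be a \emph{left} descent of $w=\delta(C,C^\theta)$. This gives $w=w_Kw_0$ for $K$ the set of non-descents, and maximality of $J$ then forces $K=S\setminus J$. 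The attempted moves in that argument always \emph{increase} the length, so either choice of $s\in J$ or $s\in S\setminus J$ contradicts a maximality hypothesis outright.

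Your minimum-length variant has two concrete problems. First, the stopping criterion is wrong: the minimal-length element of $W_{S\setminus J}w_0$ is $w_{S\setminus J}w_0$, which is characterised by having no \emph{left} descent in $S\setminus J$, equivalently no \emph{right} descent in $(S\setminus J)^{\sigma}=(S\setminus J)^{\sigma_0}$; its right-descent set is $J^{\sigma_0}$, which meets $S\setminus J$ whenever $J^{\sigma}\neq J$ (and the proposition does not assume $J$ is $\sigma$-stable — $\theta$ need not be capped). So ``no $t\in D_R(\delta(C_0,C_0^\theta))\cap(S\setminus J)$'' never holds in those cases, and as a bonus a right-descent $t$-move takes you to a $t^{\sigma^{-1}}$-adjacent chamber, which may leave $\Res(\alpha)$. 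Second, and more seriously, even after correcting to left descents (so that the move $D\sim_tC_0$ with $t\in S\setminus J$ stays in $\Res(\alpha)$) the dichotomy ``shorter displacement or larger opposite simplex'' fails. With $D$ the chamber of the $t$-panel on a minimal gallery to $C_0^\theta$, one only has $\delta(D,D^\theta)\in\{t\delta(C_0,C_0^\theta),\,t\delta(C_0,C_0^\theta)t^{\sigma}\}$, and the case $\ell(t\delta(C_0,C_0^\theta)t^{\sigma})=\ell(\delta(C_0,C_0^\theta))$ with $t\delta(C_0,C_0^\theta)t^{\sigma}\neq\delta(C_0,C_0^\theta)$ genuinely occurs (the folding Lemma~\ref{lem:foldingcondition}(2) does not rule it out, because $\ell(t\delta(C_0,C_0^\theta))$ and $\ell(\delta(C_0,C_0^\theta)t^{\sigma})$ need not be equal when $\delta(C_0,C_0^\theta)$ is not a $\sigma$-involution — and here we have no such hypothesis). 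In that ``neutral'' case you have merely bounced to a different chamber with the same displacement length, still inside $\Res(\alpha)$, contradicting neither minimality nor maximality of $J$; your plan gives no mechanism for escaping it. This is exactly the $\sigma$-bookkeeping you flag as the hard part, and it is where the argument breaks.
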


\begin{proof}
Let $C\in\Ch(\Delta)$ be such that $w=\delta(C,C^{\theta})$ is of maximal length subject to the condition that the type $J$-simplex of $C$ is mapped onto an opposite simplex. Thus $w=vw_0$ for some $v\in W_{S\backslash J}$ (see \cite[Lemma~2.5]{PVM:19a}). We claim that if $s\in S$ with $sws^{\sigma}\neq w$ then $\ell(sw)<\ell(w)$. For if $\ell(sw)>\ell(w)$ then:
\begin{compactenum}[$(a)$]
\item If $\ell(ws^{\sigma})>\ell(w)$ then since $\ell(ws^{\sigma})=\ell(sw)$ and $sw\neq ws^{\sigma}$ we have (by Lemma~\ref{lem:foldingcondition}) $\ell(sws^{\sigma})=\ell(w)+2$, and so if $D\sim_s C$ we have $\delta(D,D^{\theta})=sws^{\sigma}=svs^{\sigma w_0}w_0$. We have $J^{\sigma w_0}=J$ (by virtue of the fact that there is a simplex of this type mapped onto an opposite simplex). Thus if $s\in S\backslash J$ then $s^{\sigma w_0}\in S\backslash J$ and hence $svs^{\sigma w_0}\in W_{S\backslash J}$, and so the type $J$ simplex of $D$ is mapped onto an opposite simplex, contradicting maximality of $\ell(w)$. So $s\in J$, but then $\ell(sv)>\ell(v)$ and so $\ell(svs^{\sigma w_0})\geq \ell(v)$, contradicting the fact that $\ell(sws^{\sigma})=\ell(w)+2$. 
\item If $\ell(ws^{\sigma})<\ell(w)$ then there is a unique chamber $E\sim_{s^{\sigma}} C^{\theta}$ such that $\delta(C,E)=ws^{\sigma}$, and by thickness we can choose $D\sim_s C$ with $D\neq E^{\theta^{-1}}$. For any such $D$ we have $\delta(D,D^{\theta})=sw$. If $s\in S\backslash J$ then since $\delta(D,D^{\theta})=(sv)w_0$ we see that the type $J$ simplex of $D$ is mapped onto an opposite simplex, contradicting the maximality of $\ell(w)$. Thus $s\in J$, but then $\ell(sv)=\ell(v)+1$, contradicting the hypothesis $\ell(sw)>\ell(w)$. Hence the claim.
\end{compactenum} 
Let $K=\{s\in S\mid \ell(sw)>\ell(w)\}$. The above claim show that for all $s\in K$ we have $sw=ws^{\sigma}$. Then by \cite[Lemma~2.4]{AB:09} we have $w=w_Kw_0$, and so $v=w_K$. By the maximality of $J$ it follows that every reduced expression for $v$ contains all generators of $S\backslash J$ (for otherwise there is a simplex of larger type mapped onto an opposite simplex), and so $K=S\backslash J$, and the proof is complete. 
\end{proof}

\begin{remark}\label{rem:elementsofdisp}
Proposition~\ref{prop:attaindisplacement} allows one to immediately write down an element of $\disp(\theta)$ directly from the opposition diagram (in the capped case) or the decorated opposition diagram (in the uncapped case). If $\theta$ is capped then there is a unique maximal element $J$ in the partially ordered set of all types of simplices mapped onto opposite simplices, and $J$ is the set of all encircled nodes in the opposition diagram. For example, in the second $\sD_7$ diagram above (with $3$ orbits encircled) we have $s_1s_3s_5w_0\in\disp(\theta)$.

If $\theta$ is uncapped then the maximal elements $J$ are the sets $K\backslash\{j\}$ where $K$ is the set of all encircled nodes in the diagram, and $j$ is a shaded node. For example, in the first uncapped $\sE_7$ diagram listed above (with $4$ encircled nodes and $2$ shaded nodes) we have $s_1s_2s_5s_7w_0\in\disp(\theta)$ and $s_2s_3s_5s_7w_0\in \disp(\theta)$, while in the second uncapped $\sE_7$ diagram we have $s_iw_0\in\disp(\theta)$ for all $1\leq i\leq 7$. 
\end{remark}

\section{Displacement spectra}\label{sec:2}

This section develops fundamental properties of the set $\disp(\theta)$, and sets up the tools that will be applied in the classification results of the following sections. While the classification results in later sections are restricted to buildings of spherical type, we set up some of the machinery in this section for arbitrary Coxeter type. We note that $\disp(\theta)$ has also been studied in~\cite{Wer:15}, where the focus was mainly on buildings of infinite type.

\subsection{Fundamental properties of displacement spectra}

The following proposition shows that the displacement spectra $\disp(\theta)$ necessarily has an ``upwards $\sigma$-conjugacy closure'' property, and under additional assumptions $\disp(\theta)$ is closed under $\sigma$-conjugation. We note that part (2) of the proposition is also contained in \cite{AB:09}.

\begin{prop}\label{prop:basic}
Let $\Delta$ be a building (not necessarily thick) of arbitrary type, and let $\theta$ be an automorphism of~$\Delta$ with diagram automorphism~$\sigma$.
\begin{compactenum}[$(1)$]
\item If $\theta$ is an involution then each $w\in\disp(\theta)$ is a $\sigma$-involution.
\item If $w\in\disp(\theta)$ and $s\in S$ with $\ell(sws^{\sigma})=\ell(w)+2$ then $sws^{\sigma}\in\disp(\theta)$.
\item If $\disp(\theta)$ consists of $\sigma$-involutions in $W$ then $\disp(\theta)$ is a union of $\sigma$-conjugacy classes in~$W$. 
\end{compactenum}
\end{prop}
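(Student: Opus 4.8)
The plan is to prove the three parts in order, with (1) and (2) being the substantive work and (3) following formally from them together with Theorem~\ref{thm:downwardsclosure}.

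For part (1), suppose $\theta$ is an involution and let $w=\delta(C,C^\theta)\in\disp(\theta)$. Applying $\theta$ to the pair $(C,C^\theta)$ and using the definition of the companion automorphism $\sigma$, we get $\delta(C^\theta,C^{\theta^2})=w^\sigma$, i.e.\ $\delta(C^\theta,C)=w^\sigma$ since $\theta^2=\mathrm{id}$. But $\delta(C^\theta,C)=\delta(C,C^\theta)^{-1}=w^{-1}$, so $w^\sigma=w^{-1}$, which is exactly the statement that $w$ is a $\sigma$-involution (note this also forces $\sigma^2=1$: apply $\sigma$ to $w^\sigma=w^{-1}$ to get $w^{\sigma^2}=(w^{-1})^\sigma=(w^\sigma)^{-1}=w$ for every $w$ in the image of $\disp$, and separately $\sigma$ is visibly an involution on $\Pi$ since $\theta^2=\mathrm{id}$; one should be slightly careful and just argue $\sigma^2=1$ directly from $\theta^2=\mathrm{id}$).

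For part (2), this is the standard "folding/pushing" argument for buildings. Let $w=\delta(C,C^\theta)$ with $\ell(sws^\sigma)=\ell(w)+2$; in particular $\ell(sw)=\ell(w)+1$ and $\ell(ws^\sigma)=\ell(w)+1$. Pick any chamber $D$ with $\delta(D,C)=s$ (possible in any building). Then $\delta(D,C^\theta)=sw$ by the gate/reduced-word property since $\ell(sw)=\ell(w)+1$. Now $D^\theta$ satisfies $\delta(C^\theta,D^\theta)=s^\sigma$, so $\delta(D,D^\theta)$ is either $sw$ or $sws^\sigma$ depending on whether $\ell((sw)s^\sigma)$ decreases or increases, and since we assumed $\ell(sws^\sigma)=\ell(sw)+1$ we conclude $\delta(D,D^\theta)=sws^\sigma\in\disp(\theta)$. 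This needs no thickness. I expect this step to be the one requiring the most care, since one must correctly invoke the multiplicativity of $\delta$ along reduced galleries (\cite[Ch.~5]{AB:09}) and track which $s$ versus $s^\sigma$ appears on the left and right.

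For part (3), assume $\disp(\theta)$ consists of $\sigma$-involutions. By Lemma~\ref{lem:sigmainvolutions} it suffices to show that $\disp(\theta)$ is downward-closed under the operation $w\mapsto sws^\sigma$ whenever this shortens length by $2$, because then part (2) (upward closure) together with Theorem~\ref{thm:downwardsclosure}(3) gives that $\disp(\theta)$ is a union of full $\sigma$-conjugacy classes: Theorem~\ref{thm:downwardsclosure} shows every $\sigma$-involution is connected to a minimal element $w_J$ by a chain of such length-$2$ moves, and every element of a class is reached from $w_J$ by the reverse (length-increasing) moves, which are covered by part (2). But downward closure is automatic here: if $w\in\disp(\theta)$, say $w=\delta(C,C^\theta)$, and $\ell(sws^\sigma)=\ell(w)-2$, then since $w$ is a $\sigma$-involution Lemma~\ref{lem:sigmacondition} rules out the length-preserving case, and a dual version of the pushing argument in part (2) — choosing $D\sim_s C$ appropriately — produces a chamber with displacement $sws^\sigma$. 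Thus $\disp(\theta)$ is closed under passing to minimal-length elements and, via part (2), under the whole $\sigma$-conjugacy class generated by each of its members, so it is a union of $\sigma$-conjugacy classes. The only subtlety is the downward push when $\ell(ws^\sigma)=\ell(w)-1$: here one uses that $D\sim_s C$ has $\delta(D,C^\theta)\in\{sw,w\}$ and picks $D$ (using that $\delta(C,C^\theta)^{-1}=w^{-1}=w^\sigma$, a $\sigma$-involution again) so that $\delta(D,D^\theta)=sws^\sigma$; I expect this bookkeeping to be the main obstacle in part (3).
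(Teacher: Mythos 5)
Your parts (1) and (2) match the paper's argument almost exactly: (1) is the same one-line computation applying $\theta$ to the pair $(C,C^{\theta})$, and (2) is the same ``push along a panel'' argument, using only the building axioms (no thickness). These are fine.

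Part (3) is where you run into trouble, in two places. First, the global strategy. You argue: establish downward closure, then note that every element of a class descends to some minimal $w_J$ by Theorem~\ref{thm:downwardsclosure}(3), and ``every element of a class is reached from $w_J$ by the reverse moves,'' covered by part (2). But Theorem~\ref{thm:downwardsclosure}(3) only guarantees that each element of $\bC$ descends to \emph{some} minimal-length element, and by part (2) of that theorem there can be several such $w_J$'s, related by $J'=wJw^{-\sigma}$. Two elements $w,v\in\bC$ might descend to distinct minimal elements $w_J\neq w_{J'}$, and then knowing $w_J\in\disp(\theta)$ does not let you climb back up to $v$. The cleaner route --- which the paper takes --- is to show directly that $sws^{\sigma}\in\disp(\theta)$ for every $w\in\disp(\theta)$ and every $s\in S$, treating the three cases $\ell(sws^{\sigma})=\ell(w)+2$ (part (2)), $\ell(sws^{\sigma})=\ell(w)$ (forced equality $sws^{\sigma}=w$ by the $\sigma$-involution property and Lemma~\ref{lem:foldingcondition}), and $\ell(sws^{\sigma})=\ell(w)-2$. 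Since $\Cl^{\sigma}(w)$ is the orbit of $w$ under these generating moves, this immediately gives the union of classes, and the issue of non-unique minimal elements never arises. Your framework is rescuable by observing you have effectively shown closure under $s$-conjugation in all three length cases, but you should say that rather than route through Theorem~\ref{thm:downwardsclosure}(3).

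Second, the downward push itself. Your sketch says one ``picks $D\sim_s C$ appropriately ... so that $\delta(D,D^{\theta})=sws^{\sigma}$,'' with a parenthetical about $w^{-1}=w^{\sigma}$. This mis-identifies the mechanism. With $D$ the gate of the $s$-panel of $C$ towards $C^{\theta}$ (first chamber of a reduced gallery of type $svs^{\sigma}$, $w=svs^{\sigma}$), one has $\delta(D,C^{\theta})=sw$ and $\delta(C^{\theta},D^{\theta})=s^{\sigma}$, hence $\delta(D,D^{\theta})\in\{sws^{\sigma},\,sw\}$; no choice of $D$ lets you force the outcome. What eliminates the bad case is the \emph{hypothesis} of part (3): $sw$ is not a $\sigma$-involution (if $(sw)(sw)^{\sigma}=1$ then $sws^{\sigma}=w$, contradicting $\ell(sws^{\sigma})<\ell(w)$), so $\delta(D,D^{\theta})=sw$ would contradict the assumption that $\disp(\theta)$ consists of $\sigma$-involutions. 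Your parenthetical cites the $\sigma$-involution property of $w$ itself, which is not what is used; what is used is that $\delta(D,D^{\theta})$ must be a $\sigma$-involution and $sw$ is not. Spelling this out is what closes the gap.
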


\begin{proof}
(1) If $\theta$ is an involution then necessarily $\sigma^2=1$. For $C\in\Delta$ we have
$$
\delta(C,C^{\theta})^{-1}=\delta(C^{\theta},C)=\delta(C^{\theta^2},C^{\theta})^{\sigma^{-1}}
$$
(with the first equality a building axiom, and the second coming from applying $\theta$), and the result follows.

(2) Let $w\in \disp(\theta)$ and $s\in S$ with $\ell(sws^{\sigma})=\ell(w)+2$. Let $C\in \Delta$ with $\delta(C,C^{\theta})=w$, and let $D$ be any chamber with $\delta(C,D)=s$. Then $\delta(C^{\theta},D^{\theta})=s^{\sigma}$, and since $\ell(sws^{\sigma})=\ell(w)+2$ we have $\delta(D,D^{\theta})=sws^{\sigma}$. Thus $sws^{\sigma}\in \disp(\theta)$.

(3) It suffices to show that $\disp(\theta)$ is closed under $\sigma$-conjugation, and for this it suffices to show that if $w\in\disp(\theta)$ and $s\in S$ then $sws^{\sigma}\in\disp(\theta)$. If $\ell(sws^{\sigma})=\ell(w)+2$ then $sws^{\sigma}\in\disp(\theta)$ by~(2). If $\ell(sws^{\sigma})=\ell(w)$ then since $w$ is a $\sigma$-involution we have $(sw)^{-1}=w^{-1}s=w^{\sigma}s=(ws^{\sigma})^{\sigma}$ (as $\sigma$ has order $1$ or $2$) and so $\ell(sw)=\ell(ws^{\sigma})$ (as both inversion and application of $\sigma$ preserve lengths). Hence $sws^{\sigma}=w\in\disp(\theta)$ by Lemma~\ref{lem:foldingcondition}.

Suppose that $\ell(sws^{\sigma})=\ell(w)-2$. Write $w=svs^{\sigma}$, so that $\ell(svs^{\sigma})=\ell(v)+2$. Let 
$$C=C_0\sim_s C_1\sim\cdots \sim C_{n-1}\sim_{s^{\sigma}} C_n=C^{\theta}$$ be a (necessarily minimal length) gallery from $C$ to $C^{\theta}$ of type $svs^{\sigma}$. Since $\delta(C,C_1)=s$ we have $\delta(C^{\theta},C_1^{\theta})=s^{\sigma}$, and so either $\delta(C_1,C_1^{\theta})=sws^{\sigma}$ (in the case that $C_1^{\theta}=C_{n-1}$) or $\delta(C_1,C_1^{\theta})=sw$ (in the case that $C_1^{\theta}\neq C_{n-1}$). However we claim that $sw$ is not a $\sigma$-involution (which will eliminate the second case). To see this, note that if $(sw)^{-1}=(sw)^{\sigma}$ then $w^{-1}=s^{\sigma}w^{\sigma}s$ and so $w^{\sigma}=s^{\sigma}w^{\sigma}s$, and applying $\sigma$ (and using $\sigma^2=1$) gives $w=sws^{\sigma}$, contradicting $\ell(sws^{\sigma})=\ell(w)-2$. 
\end{proof}

\begin{example}\label{ex:SL3}
We provide an example of an automorphism whose displacement is not a union of $\sigma$-conjugacy classes. Let $G=\mathsf{SL}_3(\FF)$, with $\FF$ any field, and let $B$ the the subgroup upper triangular matrices in~$G$. Let $\Delta=G/B$ be the associated building of type $\sA_2$ (a projective plane). Let $a\in\FF$, and suppose that the polynomial $p(X)=X^3+aX^2-1$ is irreducible over~$\FF$. Consider the type preserving automorphism of $\Delta$ given by the matrix
$$
\theta=\begin{bmatrix}-a&0&1\\
-1&0&0\\
0&-1&0\end{bmatrix}
$$
(in Chevalley generators this element is $\theta=x_{\alpha_1}(a)s_1s_2$, where $s_i=x_{\alpha_i}(1)x_{-\alpha_i}(-1)x_{\alpha_i}(1)$). Irreducibility of $p(X)$ implies that $\theta$ has no fixed points and no fixed lines in the projective plane. It follows that $e,s_1,s_2\notin\disp(\theta)$. On the other hand, $\delta(B,\theta B)=s_1s_2$, $\delta(s_1B,\theta s_1B)=w_0$, and $\delta(s_2B,\theta s_2 B)=s_2s_1$, showing that $\disp(\theta)=\{s_1s_2,s_2s_1,w_0\}$. 
\end{example}

The following proposition is of independent interest.

\begin{prop}\label{prop:containinvolution}
Let $\theta$ be an automorphism of a thick spherical building. Then $\disp(\theta)$ contains an involution and a $\sigma$-involution.
\end{prop}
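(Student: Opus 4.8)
The plan is to produce, for a given automorphism $\theta$ of a thick spherical building $\Delta$ with companion automorphism $\sigma$, a single chamber $C$ such that $\delta(C,C^\theta)$ is a $\sigma$-involution; the statement about a plain involution then follows by applying the $\sigma$-involution case to an appropriate power or variant of $\theta$, or more directly by working with $\theta$ itself when $\sigma = 1$ and reducing to that case otherwise. The key observation is that $\sigma$-involutions in $W$ are exactly the elements $w$ with $ww^\sigma = 1$, and that $\disp(\theta)$ always enjoys the ``upwards $\sigma$-conjugacy closure'' of Proposition~\ref{prop:basic}(2): if $w \in \disp(\theta)$ and $\ell(sws^\sigma) = \ell(w)+2$ then $sws^\sigma \in \disp(\theta)$. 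So the strategy is to start from \emph{any} $w_0' = \delta(C,C^\theta)$ and walk ``down'' by replacing $w$ with a shorter displacement value whenever possible, and show the process terminates at a $\sigma$-involution.

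Concretely, first I would pick $C$ with $w = \delta(C,C^\theta)$ of minimal length in $\disp(\theta)$. I claim this minimal-length $w$ is a $\sigma$-involution. Suppose not; then $ww^\sigma \neq 1$, so there is $s \in S$ with $\ell(sw) < \ell(w)$ yet $sws^\sigma \neq w$ — more precisely, since $w$ is not a $\sigma$-involution, using the argument pattern in Lemma~\ref{lem:sigmacondition} and Lemma~\ref{lem:foldingcondition} one shows there must exist $s \in D_L(w)$ with $\ell(ws^\sigma) \ne \ell(sw)$, equivalently $\ell(sws^\sigma) = \ell(w)-2$. Then, writing $w = svs^\sigma$ with $\ell(svs^\sigma) = \ell(v)+2$ and taking a minimal gallery $C = C_0 \sim_s C_1 \sim \cdots \sim_{s^\sigma} C_n = C^\theta$ of type $svs^\sigma$, the first step of the argument in the proof of Proposition~\ref{prop:basic}(3) applies: either $\delta(C_1, C_1^\theta) = sws^\sigma$ (a shorter displacement, contradicting minimality of $\ell(w)$) or $\delta(C_1, C_1^\theta) = sw$. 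In the latter case I would use thickness: there are at least two chambers adjacent to $C$ along the $s$-panel other than $C_1$, and for a generic such chamber $D$, $\delta(D, D^\theta) = sw$, which is again shorter than $w$ — contradiction. Hence the minimal-length element of $\disp(\theta)$ is a $\sigma$-involution.

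For the plain involution: if $\sigma = 1$ we are done. If $\sigma \ne 1$ (so $\sigma$ has order $2$ or, for $\sD_4$, possibly $3$; but $\sigma$-involutions only exist when $\sigma^2 = 1$, so in the order-$3$ case one argues separately), I would pass to $\theta^2$, which has companion automorphism $\sigma^2 = 1$, and apply the first part to get a chamber $C$ with $\delta(C, (C)^{\theta^2})$ a (plain) involution — though one must be slightly careful, since $\theta^2$ might be trivial on $\Delta$, in which case $\disp(\theta^2) = \{1\}$ and $1$ is itself an involution, so the conclusion still holds. Alternatively, and perhaps more cleanly, one shows directly that a minimal-length element of $\disp(\theta)$ that happens to be a $\sigma$-involution is automatically an involution in $W$ precisely when $\sigma$ restricted to its support is trivial; combining with a descent argument on the support gives what we want. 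The main obstacle I anticipate is the thickness bookkeeping in the ``$\delta(C_1,C_1^\theta) = sw$'' branch: one must verify that among the $\ge 2$ remaining $s$-neighbours of $C$, at least one $D$ satisfies $D^\theta \ne C_{n-1}$ (so that the gallery $D \sim_s C \sim \cdots \sim_{s^\sigma} C^\theta$ genuinely has type $swvs^\sigma$ reduced, forcing $\delta(D,D^\theta) = sw$ with $\ell(sw) = \ell(w) - 1 < \ell(w)$), and handle the degenerate possibility that $s^\sigma = s$ and the panels coincide. This is exactly the kind of argument already executed in the proof of Proposition~\ref{prop:basic}(3), so I would lift it essentially verbatim and then note that the second disjunct there leads to a shorter displacement rather than a contradiction with $\sigma$-involutivity — a minor but necessary reframing.
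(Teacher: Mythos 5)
Your descent strategy has a fatal flaw at its foundation: the claim that a minimal-length element of $\disp(\theta)$ must be a $\sigma$-involution is false, and the paper itself contains a counterexample. In Example~\ref{ex:SL3}, one has a type-preserving automorphism $\theta$ of the thick $\sA_2$ building $\mathsf{SL}_3(\FF)/B$ with $\disp(\theta)=\{s_1s_2,\,s_2s_1,\,w_0\}$. Here $\sigma=1$, the minimal-length elements are $s_1s_2$ and $s_2s_1$, and neither is an involution; the only involution in the set is $w_0$, which has maximal length. So the minimality heuristic points you in exactly the wrong direction.

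The slip that makes your descent argument look plausible is a sign error in the length bookkeeping. You assert that for $s\in D_L(w)$, the condition $\ell(ws^\sigma)\neq\ell(sw)$ is ``equivalently $\ell(sws^\sigma)=\ell(w)-2$.'' This is backwards. If $s\in D_L(w)$ then $\ell(sw)=\ell(w)-1$, and $\ell(ws^\sigma)\neq\ell(sw)$ forces $\ell(ws^\sigma)=\ell(w)+1$; but then $\ell(sws^\sigma)=\ell\bigl((ws^\sigma)\bigr)\pm1\in\{\ell(w),\ell(w)+2\}$, which cannot equal $\ell(w)-2$. To get $\ell(sws^\sigma)=\ell(w)-2$ you need $s\in D_L(w)$ \emph{and} $s^\sigma\in D_R(w)$ simultaneously, and there is no general reason such an $s$ exists for a non-$\sigma$-involution $w$ (again, $w=s_1s_2$ in $\sA_2$ has no such $s$). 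Your fallback for the ordinary involution also does not close: passing to $\theta^2$ gives an involution in $\disp(\theta^2)$, not in $\disp(\theta)$, and there is no containment between these two sets in general.

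The paper's proof of \cref{prop:containinvolution} goes by a completely different route: it invokes the classification of (decorated) opposition diagrams from \cite{PVM:19a,PVM:19b}. In the capped case the encircled set $J$ is stable under both $\sigma$ and opposition, so $w_0w_{S\backslash J}\in\disp(\theta)$ (via Proposition~\ref{prop:attaindisplacement}) is automatically both an involution and a $\sigma$-involution; this is a \emph{maximal}-length element of the displacement class, not a minimal one. The uncapped case is handled by inspecting the short list of possible decorated diagrams, with a separate argument for exceptional domestic dualities of $\sA_{2n}$. If you want a proof from first principles that avoids the diagram machinery, you would need a mechanism for finding a chamber whose displacement is a $\sigma$-involution \emph{without} assuming closure of $\disp(\theta)$ under downward conjugation, and the building-theoretic obstruction shown in Example~\ref{ex:SL3} is exactly what makes that hard.
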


\begin{proof}
If $\theta$ is capped, and if $J$ denotes the set of all nodes encircled in the opposition diagram, then $w_0w_{S\backslash J}\in\disp(\theta)$. Since $J$ is stable under both opposition and $\sigma$ (for general reasons, but also easily checked from the diagrams), the set $S\backslash J$ is also stable under $\sigma$ and opposition. It follows that $w_0w_{S\backslash J}\in\disp(\theta)$ is an involution that is also a $\sigma$-involution.

Suppose now that $\theta$ is uncapped. Let $J$ denote the set of encircled nodes in the (decorated) opposition diagram, and let $K$ denote the set of shaded nodes. For each $k\in K$ the element $w_0w_{S\backslash(J\backslash \{k\})}$ lies in $\disp(\theta)$ by Proposition~\ref{prop:attaindisplacement}. By inspection, with the exception of exceptional domestic dualities of $\sA_{2n}$ with $n\geq 1$, there exists $k\in K$ such that $J\backslash\{k\}$ is stable under both $\sigma$ and opposition, and hence $\disp(\theta)$ contains an involution that is also a $\sigma$-involution (for example, for domestic dualities of $\sA_{2n-1}$, take $k=n$). 

Consider the excluded case of exceptional (hence strongly exceptional) domestic dualities of $\sA_{2n}$. Let $s=s_n$ and $t=s_{n+1}$ (so that $s^{\sigma}=t$). The element $w=w_0s$ lies in $\disp(\theta)$. This element is a $\sigma$-involution (but not an involution), and so it remains to prove that $\disp(\theta)$ contains an involution. Since $sws^{\sigma}=stsw_0$ we have $\ell(sws^{\sigma})=\ell(w)-2$, and so an argument as in Proposition~\ref{prop:full} shows that either $stsw_0\in\disp(\theta)$, or $sw=sw_0s=stw_0\in\disp(\theta)$. Both of these elements are involutions, completing the proof. 
\end{proof}

\subsection{Uniclass automorphisms}

Let $(W,S)$ be an arbitrary Coxeter system.

\begin{defn}
An automorphism $\theta$ of of a building with companion diagram automorphism~$\sigma$ is called \textit{uniclass} if $\disp(\theta)$ is contained in a single $\sigma$-conjugacy class. 
\end{defn}

\begin{example}\label{ex:thincase}
If $\Delta$ is thin then every automorphism is uniclass. To see this, recall that a thin building of type $(W,S)$ is isomorphic to the Coxeter complex of $(W,S)$, and hence we may take $\Ch(\Delta)=W$ and $\delta(u,v)=u^{-1}v$. If $\theta$ is an automorphism of $\Delta$ then (by thinness) we have $\delta(u,u^{\theta})=\delta(u,1)\delta(1,1^{\theta})\delta(1^{\theta},1^{\theta})=u^{-1}wu^{\sigma}$ where $w=\delta(1,1^{\theta})$, and hence $\disp(\theta)=\Cl^{\sigma}(w)$. 
\end{example}

In contrast, for general buildings the property of being uniclass is very rare (this is quantified by Theorem~\ref{thm:main1}). We note the following basic facts. 

\begin{lemma} Let $\theta$ be a uniclass automorphism with companion automorphism~$\sigma$.
\begin{compactenum}[$(1)$]
\item If $\sigma$ is the identity then $\theta$ is either the identity, or $\theta$ fixes no chamber.
\item If $(W,S)$ is spherical and $\sigma$ is the opposition relation then either $\theta$ is anisotropic, or $\theta$ is domestic. 
\item If $(W,S)$ is spherical and both the opposition relation and $\sigma$ are trivial then $\theta$ is either the identity, anisotropic, or is domestic with no fixed chamber.
\end{compactenum}
\end{lemma}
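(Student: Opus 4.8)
The plan is to prove the three parts in order, each a short deduction from the definition of uniclass together with basic facts already established.

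\textbf{Part (1).} Suppose $\sigma$ is the identity, so $\disp(\theta)\subseteq\Cl(w)$ for a single conjugacy class $\Cl(w)$ of $W$. If $\theta$ fixes some chamber $C$, then $\delta(C,C^\theta)=1\in\disp(\theta)$, so the relevant class is $\Cl(1)=\{1\}$, forcing $\disp(\theta)=\{1\}$ and hence $\delta(D,D^\theta)=1$ for every chamber $D$; that is, $\theta$ fixes every chamber, and since an automorphism of a building is determined by its action on chambers, $\theta=\id$. Thus if $\theta\neq\id$ it fixes no chamber. The only subtlety to record is that $1$ forms a singleton conjugacy class, so $1\in\disp(\theta)$ already pins down the whole displacement set.

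\textbf{Part (2).} Suppose $(W,S)$ is spherical and $\sigma=\sigma_0$ is the opposition relation. Then $\disp(\theta)\subseteq\Cl^{\sigma_0}(w)$ for some $w$. If $\theta$ is not domestic then $w_0\in\disp(\theta)$, so the class in question is $\Cl^{\sigma_0}(w_0)$. But $w_0$ is central-like for this twisted action: for any $v\in W$, $v^{-1}w_0 v^{\sigma_0}=v^{-1}w_0(w_0 v w_0)=v^{-1}vw_0=w_0$, so $\Cl^{\sigma_0}(w_0)=\{w_0\}$ (this is exactly the computation already noted in the excerpt just before the definition of uniclass). Hence $\disp(\theta)=\{w_0\}$, i.e. every chamber is mapped to an opposite chamber, which is the definition of anisotropic. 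So either $\theta$ is domestic or $\theta$ is anisotropic.

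\textbf{Part (3).} Now suppose $(W,S)$ is spherical with $w_0$ central (so $\sigma_0=\id$) and $\sigma=\id$ as well. Then both Part (1) and Part (2) apply. By Part (1), $\theta$ is either $\id$ or fixes no chamber. If $\theta=\id$ we are done. Otherwise $\theta$ fixes no chamber, and by Part (2) (whose hypothesis $\sigma=\sigma_0$ holds since both are trivial) $\theta$ is either anisotropic or domestic; in the domestic case it is in particular domestic with no fixed chamber, as required. I do not expect any genuine obstacle here: the whole lemma is a bookkeeping exercise combining the two singleton-class observations ($\Cl(1)=\{1\}$ and $\Cl^{\sigma_0}(w_0)=\{w_0\}$) with the fact that a building automorphism is determined by its chamber action. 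The only point deserving a sentence of care is the combination in (3): one must note that the hypotheses of (1) and (2) are \emph{both} met when $\sigma$ and $\sigma_0$ are both trivial, so the two conclusions can be intersected.
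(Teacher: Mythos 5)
Your proof is correct and is essentially the paper's argument, spelled out more fully: the paper's proof is just the observation that $\{1\}$ and $\{w_0\}$ are singleton $\sigma$-conjugacy classes in cases (1) and (2) respectively, and that both observations apply in case (3). Your two extra sentences of justification (that $\Cl^{\sigma_0}(w_0)=\{w_0\}$ by direct computation, and that a building automorphism fixing all chambers is the identity) are exactly the implicit steps the paper leaves to the reader.
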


\begin{proof}
(1) In this case $\{1\}$ is a $\sigma$-conjugacy class. (2) In this case $\{w_0\}$ is a $\sigma$-conjugacy class. (3) In this case both (1) and (2) apply.
\end{proof}

Example~\ref{ex:SL3} shows that for general $\theta$ the set $\disp(\theta)$ is not necessarily equal to a union of $\sigma$-conjugacy classes. The following proposition shows that the behaviour of uniclass automorphisms is more regular.

\begin{prop}\label{prop:full}
Let $(W,S)$ be an arbitrary Coxeter system. If $\theta$ is \uniclass, then $\disp(\theta)$ is a full $\sigma$-conjugacy class. Moreover, if $(W,S)$ is spherical and $\sigma$ has order $1$ or $2$ then $\disp(\theta)$ consists of $\sigma$-involutions.
\end{prop}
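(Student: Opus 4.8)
The plan is to prove the two assertions in turn. For the first assertion, the key point is to exploit Proposition~\ref{prop:basic}(2), which says that $\disp(\theta)$ is closed under \emph{upward} $\sigma$-conjugation by a simple reflection (whenever $\ell(sws^{\sigma})=\ell(w)+2$, membership propagates from $w$ to $sws^{\sigma}$). Now suppose $\theta$ is uniclass, so $\disp(\theta)\subseteq\bC$ for a single $\sigma$-conjugacy class $\bC$. I want to show $\disp(\theta)=\bC$. Pick any $w\in\disp(\theta)$ (the set is nonempty since it contains $\delta(C,C^\theta)$ for any chamber $C$) and any $v\in\bC$; I must produce $v\in\disp(\theta)$. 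Write $v=u^{-1}wu^{\sigma}$ for some $u\in W$, and take a reduced word $u=s_1\cdots s_k$. The strategy is to conjugate $w$ one simple reflection at a time, forming $w_0=w$, $w_i=s_i^{-1}w_{i-1}s_i^{\sigma}$ (indices on the $s$'s read in the appropriate order so that $w_k=v$). At each step $\ell(w_i)$ changes by $0$ or $\pm2$. In the $+2$ case, Proposition~\ref{prop:basic}(2) gives $w_i\in\disp(\theta)$ directly from $w_{i-1}\in\disp(\theta)$. In the $0$ case one must argue more carefully, and in the $-2$ case one needs a substitute for the ``downward'' direction.

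The clean way to handle the $0$ and $-2$ cases uniformly is the following observation: since the whole chain stays inside the single class $\bC$, every $w_i$ lies in $\bC$, and in particular $\disp(\theta)\cup\{w_i\}$ is still contained in $\bC$. So it suffices to reach \emph{some} element of $\disp(\theta)$ by an \emph{increasing} chain from a common refinement — concretely, I would instead argue by a standard ``connectivity of a conjugacy class via up-steps through a common maximal element'' lemma: any two elements of a $\sigma$-conjugacy class in $W$ can be joined to a maximal-length element of that class by sequences of elementary $\sigma$-conjugations that are non-decreasing in length (this is the twisted analogue of the cyclic-shift/Geck--Pfeiffer machinery, and for the parts of the paper we need it follows from Theorem~\ref{thm:downwardsclosure} in the involution case, but here one wants the general statement). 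Given such a maximal element $w_{\max}\in\bC$: from $w\in\disp(\theta)$ run up-steps to reach $w_{\max}$, concluding $w_{\max}\in\disp(\theta)$ by repeated application of Proposition~\ref{prop:basic}(2) (every step is a genuine $+2$ step, since the chain is non-decreasing and cannot be flat at an elementary $\sigma$-conjugation unless $sws^\sigma=w$ by Lemma~\ref{lem:foldingcondition}, in which case the element is literally unchanged). Then for the target $v\in\bC$, run up-steps from $v$ to $w_{\max}$ as well; reversing these gives a chain of \emph{down}-steps from $w_{\max}$ to $v$, but since $w_{\max}\in\disp(\theta)$ we can instead just observe that we have now shown $w_{\max}\in\disp(\theta)$ for the unique maximal element, and then a symmetric argument starting the building-side gallery construction from a chamber realizing $w_{\max}$ lets us walk \emph{down} to $v$: concretely, redo the proof of Proposition~\ref{prop:basic}(2)/Example~\ref{ex:SL3}-type analysis — if $\delta(C,C^\theta)=w_{\max}$ and $\ell(sw_{\max}s^\sigma)=\ell(w_{\max})-2$, write $w_{\max}=svs^\sigma$ and take a minimal gallery of that type; then the chamber $C_1\sim_s C$ satisfies $\delta(C_1,C_1^\theta)\in\{sv s^\sigma\text{-truncations}\}=\{sw_{\max},\,sw_{\max}s^\sigma\}$, and since both of these lie in $\bC$ only when... — this is exactly where one uses that $\disp(\theta)$ lives in a single class to rule out the ``wrong'' branch and force the descent. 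Iterating down the word yields $v\in\disp(\theta)$, so $\disp(\theta)=\bC$.

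For the second assertion, assume $(W,S)$ spherical and $\mathrm{ord}(\sigma)\in\{1,2\}$. By Proposition~\ref{prop:containinvolution}, $\disp(\theta)$ contains a $\sigma$-involution $w$ (here thickness is available since the statement is about buildings; if one wants the non-thick case it reduces via the results cited in the introduction, Theorem~\ref{thm:reducetothick}, but for the proof as stated invoking Proposition~\ref{prop:containinvolution} suffices). By the first part, $\disp(\theta)=\Cl^{\sigma}(w)$, and by Lemma~\ref{lem:sigmainvolutions} every element of $\Cl^{\sigma}(w)$ is then a $\sigma$-involution. Hence $\disp(\theta)$ consists of $\sigma$-involutions, as claimed.

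The main obstacle is the descent step: Proposition~\ref{prop:basic} only gives upward closure for free, and the length-decreasing direction genuinely requires the building (not just the Coxeter group), via the minimal-gallery/thickness argument sketched above together with the single-class hypothesis to eliminate the branch that would leave $\bC$. Getting this branch-elimination airtight — in particular verifying that $sw_{\max}$ (the ``other'' possible displacement at the neighbouring chamber) is \emph{not} in $\bC$, using a parity or length argument (e.g. Proposition~\ref{prop:basic1}(1): $\ell(sw_{\max})\not\equiv\ell(w_{\max})\pmod 2$, so $sw_{\max}\notin\bC$ automatically!) — is the crux, and in fact the parity observation makes it immediate: $sw_{\max}$ has the wrong length parity to lie in $\bC$, so the gallery is \emph{forced} to descend to $sw_{\max}s^\sigma$, which does lie in $\bC$ and hence in $\disp(\theta)$. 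That parity trick is what makes the whole descent go through cleanly.
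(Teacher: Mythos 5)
You have found the essential idea for the length-decreasing step: reading the minimal gallery from $C$ to $C^{\theta}$ one chamber in from the $s$-end, noting that $\delta(C_1,C_1^{\theta})\in\{sw,\,sws^{\sigma}\}$, and then using parity (Proposition~\ref{prop:basic1}(1)) to rule out $sw$. That is exactly the crux of the paper's argument. However, the surrounding framework has two genuine problems.

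First, the detour through a maximal-length element $w_{\max}$ cannot work in the stated generality: the proposition is for an \emph{arbitrary} Coxeter system, and a $\sigma$-conjugacy class in an infinite Coxeter group need not contain a maximal-length element at all. The ``connectivity via non-decreasing chains to $w_{\max}$'' lemma you invoke is not a result you have available, and even for finite $W$ it is not elementary. More importantly, it is entirely unnecessary. Since $\disp(\theta)$ is nonempty and, by hypothesis, contained in a single $\sigma$-class $\bC$, it suffices to prove that $\disp(\theta)$ is closed under \emph{each} elementary $\sigma$-conjugation $w\mapsto sws^{\sigma}$ with $s\in S$: any class is generated from any of its elements by such moves, so closure gives $\disp(\theta)=\bC$ directly, with no maximal element and no connectivity lemma. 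This is what the paper does, splitting into the three cases $\ell(sws^{\sigma})-\ell(w)\in\{+2,0,-2\}$.

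Second, your handling of the length-preserving case is wrong, and this case is unavoidable. You assert that $\ell(sws^{\sigma})=\ell(w)$ forces $sws^{\sigma}=w$ ``by Lemma~\ref{lem:foldingcondition},'' but part~(2) of that lemma has the additional hypothesis $\ell(sw)=\ell(ws^{\sigma})$, which need not hold (take $w=s_1s_2$ and $s=s_1$ in type $\sA_2$: $\ell(s_1ws_1)=\ell(w)=2$ but $s_1ws_1=s_2s_1\neq w$). The hypothesis $\ell(sw)=\ell(ws^{\sigma})$ \emph{would} be available if one already knew $w$ were a $\sigma$-involution (Lemma~\ref{lem:sigmacondition}), but that is precisely the second assertion of the proposition, which logically follows from the first, so you cannot use it here without circularity. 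The flat case must be handled directly: the paper does so by two gallery sub-cases according to whether $\ell(sw)=\ell(w)\mp1$, reading the reduced gallery from one end or the other and applying building axioms (no parity is even needed there, only the observation that a reduced step cannot fold back onto the starting chamber). Your treatment of the second assertion, via Proposition~\ref{prop:containinvolution} together with Lemma~\ref{lem:sigmainvolutions}, matches the paper's.
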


\begin{proof}
We show that $\disp(\theta)$ is closed under conjugation. It suffices to show that if $w=\delta(C,C^{\theta})\in\disp(\theta)$ and $s\in S$ then $sws^{\sigma}\in \disp(\theta)$. If $\ell(sws^{\sigma})=\ell(w)+2$ then apply Proposition~\ref{prop:basic}(2). Suppose that $\ell(sws^{\sigma})=\ell(w)-2$. Thus there is a reduced gallery from $C$ to $C^{\theta}$ with $C\sim_sC_1\sim\cdots\sim C_{n-1}\sim_{s^{\sigma}}C^{\theta}$. The \uniclass\ assumption forces $C_1^{\theta}=C_{n-1}$ (otherwise $\delta(C_1,C_1^{\theta})=sw$, which is not in the same $\sigma$-class by parity of length, see Proposition~\ref{prop:basic1}), and hence $sws^{\sigma}\in\disp(\theta)$. 

Now suppose that $\ell(sws^{\sigma})=\ell(w)$. There are two cases to consider. Suppose first that $\ell(sw)=\ell(w)-1$. Thus there is a reduced gallery from $C$ to $C^{\theta}$ starting with $C\sim_s C_1$. Since $C^{\theta}\sim_{s^{\sigma}}C_1^{\theta}$ and $\ell(sws^{\sigma})=\ell(sw)+1$ we have $\delta(C_1,C_1^{\theta})=sws^{\sigma}$ and so $sws^{\sigma}\in\disp(\theta)$. Suppose now that $\ell(sw)=\ell(w)+1$. In this case there is a reduced gallery from $C$ to $C^{\theta}$ ending with $C_{n-1}\sim_{s^{\sigma}}C^{\theta}$. Since $C\sim_s C_{n-1}^{\theta^{-1}}$ and $\ell(sws^{\sigma})=\ell(ws^{\sigma})+1$ we have $\delta(C_1^{\theta^{-1}},C_{n-1})=sws^{\sigma}$, and so again $sws^{\sigma}\in\disp(\theta)$. 

Thus we have shown that $\disp(\theta)$ is a full $\sigma$-conjugacy class. By Proposition~\ref{prop:containinvolution} there exists a $\sigma$-involution in this $\sigma$-conjugacy class, and if $\sigma$ has order $1$ or $2$ then Lemma~\ref{lem:sigmainvolutions} gives that every element of $\disp(\theta)$ is a $\sigma$-involution. 
\end{proof}

\begin{remark}
Following from Remark~\ref{rem:elementsofdisp}, note that if $\theta$ is uniclass then the $\sigma$-class $\disp(\theta)$ is completely determined from the opposition diagram (in the case of capped automorphisms) or the decorated opposition diagram (in the case of uncapped automorphisms).
\end{remark}

The following proposition allows us to restrict attention the case that $\Delta$ is irreducible.

\begin{prop}\label{prop:reduciblecase}
An automorphism of a (not necessarily irreducible) building is uniclass if and only if it preserves each component and is uniclass on each component. 
\end{prop}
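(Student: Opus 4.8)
The plan is to exploit the canonical product decomposition of a reducible building. Write $\Pi=\Pi_1\sqcup\cdots\sqcup\Pi_k$ for the decomposition of the Coxeter diagram into connected components, with $S=S_1\sqcup\cdots\sqcup S_k$ and $W=W_1\times\cdots\times W_k$, so that $\Delta=\Delta_1\times\cdots\times\Delta_k$ where $\Delta_i$ is a building of type $(W_i,S_i)$, $\Ch(\Delta)=\prod_i\Ch(\Delta_i)$, and $\delta$ acts coordinatewise. The first step is the (standard) structural observation that an automorphism $\theta$ of $\Delta$ with companion $\sigma$ induces a permutation $\pi$ of $\{1,\dots,k\}$ — the permutation by which $\sigma$ permutes the components of $\Pi$ — together with isomorphisms of buildings $\theta_i\colon\Delta_i\to\Delta_{\pi(i)}$ of companion $\sigma|_{S_i}$, such that the $\pi(i)$-th coordinate of $\theta(C_1,\dots,C_k)$ is $\theta_i(C_i)$. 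I would prove this by noting that $\theta$ sends $s$-adjacency to $s^{\sigma}$-adjacency for $s\in S$, hence carries the type-$S_i$ residue through a chamber to the type-$S_{\pi(i)}$ residue through its image (an isomorphism $\Delta_i\to\Delta_{\pi(i)}$), and that changing the $j$-th coordinate of $C$ for $j\neq i$ changes only the $\pi(j)$-th coordinate of $\theta(C)$, hence not the $\pi(i)$-th; so the $\pi(i)$-th coordinate of $\theta(C)$ depends only on $C_i$ and defines $\theta_i$.

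Granting this, the heart of the argument is the case $\pi=\mathrm{id}$, i.e. $\theta$ preserves every component. Then $\theta=\theta_1\times\cdots\times\theta_k$ with $\theta_i\in\Aut(\Delta_i)$ of companion $\sigma_i=\sigma|_{S_i}$, and since $\delta$ is coordinatewise, $\disp(\theta)=\disp(\theta_1)\times\cdots\times\disp(\theta_k)$. The key algebraic fact is that a $\sigma$-conjugacy class of $W=\prod_iW_i$ with $\sigma=\prod_i\sigma_i$ is exactly a product $\Cl^{\sigma_1}(x_1)\times\cdots\times\Cl^{\sigma_k}(x_k)$ of $\sigma_i$-classes, which is immediate from $v^{-1}xv^{\sigma}=(v_1^{-1}x_1v_1^{\sigma_1},\dots,v_k^{-1}x_kv_k^{\sigma_k})$. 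Combining these two facts, $\disp(\theta)$ lies in a single $\sigma$-class precisely when each $\disp(\theta_i)$ lies in a single $\sigma_i$-class, i.e. precisely when each $\theta_i$ is uniclass. This settles the proposition for component-preserving automorphisms.

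Finally I would treat $\pi\neq\mathrm{id}$, which I expect to be the main obstacle. The idea is to reduce a single nontrivial $\pi$-cycle $(i_1\,i_2\,\cdots\,i_r)$, $r\geq 2$, to the previous case by passing to the composite $g=\theta_{i_r}\circ\cdots\circ\theta_{i_1}\in\Aut(\Delta_{i_1})$, a suitable power of $\theta$ restricted to $\Delta_{i_1}$: substituting $D=C_{i_1}$ and $D=g(C_{i_1})$ into the displacement restricted to the block coordinates exhibits inside $\disp(\theta)$ two families whose $\sigma$-class invariants (a $\sigma$-class cycling $r$ isomorphic factors is pinned down by a single ordinary conjugacy class of $W_{i_1}$) can only coincide once $\disp(g)$ is confined to one class, so uniclassness of $\theta$ on the cyclic block is controlled by uniclassness of $g$ on the single irreducible building $\Delta_{i_1}$. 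Assembling the cases, and in the presence of a permutation identifying the resulting data with the component-preserving picture on a refined product, yields the claimed equivalence. The delicate points here are the bookkeeping of the companion automorphisms along the $\theta_i$ (tracking which generator of $S_{i_j}$ is sent where) and checking that the cyclic-block reduction is genuinely compatible with the statement; everything else is formal.
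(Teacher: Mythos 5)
The $\pi=\mathrm{id}$ case and the "if" direction are handled just as in the paper and are fine. The gap is in the cyclic case — and it is not a gap you can fill, because the statement is false as written. Your observation that uniclassness on a $\pi$-cycle is governed by the first-return map $g$ is heading in the right direction, but it does \emph{not} force $r=1$: take $\Delta=\Delta_1\times\Delta_1$ (two copies of an irreducible building) and $\theta(C_1,C_2)=(C_2,C_1)$, so $r=2$ and $g=\mathrm{id}$. Under the natural identification of the two copies of $W_1$, one has $\delta(C,C^\theta)=(w,w^{-1})$ with $w=\delta_1(C_1,C_2)$, while $\Cl^\sigma(1)=\{v^{-1}v^\sigma:v\in W\}=\{(u,u^{-1}):u\in W_1\}$. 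These two sets coincide, so $\theta$ is uniclass, yet it swaps the two components. (The component swap of a thin $\sA_1\times\sA_1$ building already does this, via Example~\ref{ex:thincase}.) So the final step of your outline — "assembling the cases … yields the claimed equivalence" — cannot succeed, and the vague claim about "two families whose $\sigma$-class invariants can only coincide once $\disp(g)$ is confined to one class" in fact points \emph{away} from the proposition rather than toward it.

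The paper's own proof stumbles at exactly this point: it exhibits $\delta(C,C^\theta)\in W_1$ and $\delta(C',C'^\theta)\notin W_1$ and asserts these cannot be $\sigma$-conjugate "because $\sigma$-conjugacy classes in $W$ are products of $\sigma$-conjugacy classes in the components." That factorisation holds only when $\sigma$ stabilises the components, which is the very hypothesis being refuted at that moment; when $\sigma$ cycles the factors, a $\sigma$-class supported on $W_1$ typically has full projection onto every $W_i$, as the computation above shows. (The proof also uses $C_{j-1}^\theta=C_1^{\theta^{j-1}}$, which fails for $j\geq 3$ once $C_{j-1}\neq C_1^{\theta^{j-2}}$ has been imposed, though this particular slip is harmless since $\delta_2(C_2,C_1^\theta)\neq 1$ already gives $\delta(C',C'^\theta)\notin W_1$.) The statement you should actually be proving — and the one your route through $g$ naturally yields — is: $\theta$ is uniclass if and only if, for each $\pi$-cycle, the first-return automorphism on a representative component is uniclass. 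That would require making precise the twisted-product invariant you gesture at, and is a worthwhile correction to record.
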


\begin{proof}
For each $1\leq j\leq k$ let $\Delta_j=(\Ch(\Delta_j),\delta_j)$ be a building with Weyl group $W_j$ and let $\Delta=(\Ch(\Delta),\delta)$ be the building with $\Delta=\Delta_1\times\cdots\times\Delta_k$ with $\delta(C,D)=\delta_1(C_1,D_1)\cdots\delta_k(C_k,D_k)\in W_1\times\cdots W_k$, where $C=(C_1,\ldots,C_k)$ and $D=(D_1,\ldots,D_k)$.

It is clear that if $\theta$ preserves each component of $\Delta$, and is uniclass on each component, then $\theta$ is uniclass on $\Delta$. On the other hand, suppose that $\theta$ is uniclass on $\Delta$, and assume (for a contradiction) that $\theta$ does not preserve each component. If $\theta$ preserves a sub-product $\Delta_{i_1}\times\cdots\times\Delta_{i_r}$ then $\theta$ is uniclass on this sub-product, and so up to taking a sub-product of $\Delta$ and relabelling the components we may assume that $\theta(\Delta_j)=\Delta_{j+1}$ for $1\leq j<k$ and $\theta(\Delta_k)=\Delta_1$. 

Let $C_1\in\Ch(\Delta_1)$ be any chamber of $\Delta_1$, and consider the chamber $C=(C_1,C_1^{\theta},C^{\theta^2},\ldots,C^{\theta^{k-1}})$ of $\Delta$. We have
$
C^{\theta}=(D_1,C_1^{\theta},\ldots,C_1^{\theta^{k-1}})
$ where $D_1=C_1^{\theta^k}$ (a chamber of $\Delta_1$), and so $\delta(C,C^{\theta})=\delta_1(C_1,D_1)\in W_1$. Now consider a chamber $C'=(C_1,C_2,\ldots,C_k)$ where $C_j\in\Ch(\Delta_j)\backslash\{C_1^{\theta^{j-1}}\}$ for $2\leq j\leq k$. Then 
$
C'^{\theta}=(C_k^{\theta},C_1^{\theta},C_2^{\theta},\ldots,C_{k-1}^{\theta})
$
and so 
$$
\delta(C',C'^{\theta})=\delta_1(C_1,C_k^{\theta})\delta_2(C_2,C_1^{\theta})\cdots\delta(C_k,C_{k-1}^{\theta}). 
$$
Since $C_j\neq C_{j-1}^{\theta}=C_1^{\theta^{j-1}}$ for $2\leq j\leq k$ we have $\delta(C',C'^{\theta})\notin W_1$. Combined with the fact that $\delta(C,C^{\theta})\in W_1$ we obtain a contradiction with the assumption that $\theta$ is uniclass (because if $\sigma$ is the companion automorphism of $\theta$ then $\sigma$-conjugacy classes in $W$ are products of $\sigma$-conjugacy classes in the components). 
\end{proof}

The following proposition is particularly useful in the finite case, as it places severe restrictions on uniclass automorphisms of finite spherical buildings due to the rarity of anisotropic automorphisms of such buildings (see \cite[Theorem~5.1]{DPV:13}).

\begin{prop}\label{prop:anisotropic}
Let $\theta$ be a nontrivial uniclass automorphism of a thick spherical building with companion automorphism $\sigma$ with $\sigma^2=1$. Then either $\theta$ fixes a chamber, or is anisotropic, or there is a proper residue $R\subseteq \Delta$ stabilised by~$\theta$ and the automorphism $\theta|_R:R\to R$ is anisotropic. 
\end{prop}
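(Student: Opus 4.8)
The plan is to analyze the displacement spectrum $\disp(\theta)$ and use the classification of $\sigma$-involution classes from \cref{thm:downwardsclosure}. By \cref{prop:full}, since $\theta$ is uniclass and $\sigma^2=1$, the set $\disp(\theta)$ is a full $\sigma$-conjugacy class $\bC$ consisting of $\sigma$-involutions. By \cref{thm:downwardsclosure}(1), every minimal length element of $\bC$ has the form $w_J$ for some (possibly empty) subset $J\subseteq S$ with $sw_J=w_Js^{\sigma}$ for all $s\in J$. We split into cases according to $J$.

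First, if $J=\emptyset$ then $w_J=1$, so $1\in\disp(\theta)$, meaning $\theta$ fixes a chamber. Second, if $J=S$ then $w_J=w_0$, so $w_0\in\disp(\theta)=\Cl^{\sigma}(w_0)=\{w_0\}$ (using \cref{lem:min1} or directly the fact that $\Cl^{\sigma}(w_0)$ is a singleton when the hypothesis $sw_0=w_0s^{\sigma}$ holds for all $s$), which forces $\disp(\theta)=\{w_0\}$, i.e.\ $\theta$ is anisotropic. The remaining case is $\emptyset\neq J\subsetneq S$. Here the idea is to produce a proper residue $R$ of type $J$ that is stabilised by $\theta$ and on which $\theta$ restricts to an anisotropic automorphism. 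First I would pick a chamber $C$ with $\delta(C,C^{\theta})=w_J$; let $\alpha$ be the simplex of $C$ of cotype $J$ (so $\Res(\alpha)$ has type $J$), and set $R=\Res(\alpha)$. Since $\delta(C,C^{\theta})=w_J\in W_J$, the chambers $C$ and $C^{\theta}$ lie in the common residue $R$, and in particular $\alpha^{\theta}=\alpha$, so $\theta$ stabilises $R$; the companion automorphism of $\theta|_R$ is $\sigma|_J$, and the hypothesis $sw_J=w_Js^{\sigma}$ says precisely that $\sigma|_J$ is the opposition relation on $W_J$. It remains to show $\theta|_R$ is anisotropic, i.e.\ $\delta(D,D^{\theta})=w_J$ for every chamber $D\in\Ch(R)$.

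To prove anisotropy on $R$, the cleanest route is to observe that $\disp(\theta|_R)\subseteq\disp(\theta)\cap W_J=\bC\cap W_J$, and then show $\bC\cap W_J=\{w_J\}$. For the latter: any $w\in\bC$ has even-or-odd length matching $w_J$ by \cref{prop:basic1}(1), and any $w\in\bC\cap W_J$ is a $\sigma$-involution lying in $W_J$ whose length is at most $\ell(w_J)$; since $w_J$ is the unique longest element of $W_J$, if $w\neq w_J$ then $\ell(w)<\ell(w_J)$. But $\theta|_R$ is itself a uniclass automorphism of the thick spherical building $R$ (its displacement set lies in a single $\sigma|_J$-class, being a subset of $\bC$), so by \cref{prop:full} again $\disp(\theta|_R)$ is a full $\sigma|_J$-conjugacy class in $W_J$; since $\sigma|_J$ is opposition on $W_J$ and $w_J\in\disp(\theta|_R)$ (as $\delta(C,C^{\theta})=w_J$), and the $\sigma|_J$-class of $w_J$ is $\{w_J\}$, we get $\disp(\theta|_R)=\{w_J\}$, i.e.\ $\theta|_R$ is anisotropic. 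This also re-covers the case $J=S$ uniformly (then $R=\Delta$).

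The main obstacle I anticipate is the bookkeeping around \emph{which} minimal length element $w_J$ to use and ensuring $R$ is genuinely proper and $\theta$-stable: one must be slightly careful that the three conclusions are exhaustive and that $\theta|_R$ inherits thickness (immediate, as residues of thick buildings are thick) and the uniclass property. A secondary subtlety is handling the degenerate possibility that $\theta|_R$ is trivial --- but if $\theta|_R$ is trivial then $w_J=\delta(C,C^{\theta})=1$, forcing $J=\emptyset$, so this collapses into the first case. Everything else reduces to the already-established \cref{prop:full} and \cref{thm:downwardsclosure}, so the argument is essentially a careful case split plus the standard residue/projection machinery for buildings.
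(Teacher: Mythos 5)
Your proposal follows essentially the same route as the paper's proof: apply \cref{prop:full} to see that $\disp(\theta)$ is a full $\sigma$-conjugacy class of $\sigma$-involutions, invoke \cref{thm:downwardsclosure} to obtain a minimal-length element $w_J$, and then, if $J\neq\emptyset$, pass to the $J$-residue $R$ of a chamber $C$ with $\delta(C,C^{\theta})=w_J$ and conclude $\disp(\theta|_R)=\{w_J\}$ from the minimality of $w_J$ in $\disp(\theta)$. Your core argument — $\disp(\theta|_R)\subseteq\bC\cap W_J$ and $\bC\cap W_J=\{w_J\}$ because any $w\in\bC\cap W_J$ with $w\neq w_J$ has $\ell(w)<\ell(w_J)$, contradicting minimality — is exactly the paper's.

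One small caveat: your secondary argument, in which you assert that $\theta|_R$ is uniclass \emph{because} $\disp(\theta|_R)\subseteq\bC$ and $\bC$ is a single $\sigma$-class, has a gap. Two elements of $W_J$ that are $\sigma$-conjugate in $W$ need not be $\sigma|_J$-conjugate in $W_J$ (e.g.\ $s_1$ and $s_3$ in type $\sA_3$ with $J=\{1,3\}$ and $\sigma=1$ are conjugate in $W$ but not in the abelian group $W_J$). This step would require the residual uniclass property proved in \cref{prop:residual} (which is established independently in the paper by a parity argument), not mere containment in $\bC$. In the present context the gap is harmless because your first argument already shows $\bC\cap W_J=\{w_J\}$, making the second argument redundant, but the inference as stated is not valid in isolation.
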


\begin{proof}
By Proposition~\ref{prop:full} $\disp(\theta)$ is a $\sigma$-conjugacy class consisting of $\sigma$-involutions. Thus by Theorem~\ref{thm:downwardsclosure} there is a subset $J\subseteq S$ with $sw_J=w_Js^{\sigma}$ for all $s\in J$ such that $w_J$ is a minimal length element in $\disp(\theta)$. If $J=\emptyset$ then $1\in\disp(\theta)$ and so $\theta$ fixes a chamber. Suppose that $J\neq\emptyset$. Let $C_0$ be a chamber with $\delta(C_0,C_0^{\theta})=w_J$, and let $R=\{C\in\Ch(\Delta)\mid \delta(C_0,C)\in W_J\}$ be the $J$-residue of $C_0$. Since the type $S\backslash J$ simplex of $C_0$ is fixed pointwise by $\theta$ the residue $R$ is stabilised by~$\theta$, and
$$
\disp(\theta|_R)=\{\delta(C,C^{\theta})\mid C\in R\}\subseteq \disp(\theta)\cap W_J=\{w_J\},
$$
where the final equality is because $w_J$ has minimal length in $\disp(\theta)$. Since $C_0\in R$ it follows that $\disp(\theta|_R)=\{w_J\}$, and so $\theta|_R$ is anisotropic. If $J\neq S$ then $R$ is a proper residue, and if $J=S$ then $R=\Delta$ and $\theta|_R=\theta$ is isotropic.
\end{proof}

The following proposition shows that uniclass automorphisms enjoy a useful residual property.

\begin{prop}\label{prop:residual}
Let $\theta$ be a uniclass automorphism of a building with companion automorphism $\sigma$. Suppose there is a subset $J\subseteq S$ with $J^{\sigma}=J$ and a chamber $C\in\Ch(\Delta)$ with $\delta(C,C^{\theta})\in W_J$ (that is, the type $S\backslash J$ simplex of $C$ is fixed by $\theta$). Let $R=\Res_J(C)$. Then the restriction $\theta|_R:R\to R$ is uniclass. 
\end{prop}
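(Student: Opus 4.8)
The key point is that $\theta$ restricts to an automorphism of the building $R = \Res_J(C)$, which has Coxeter system $(W_J, J)$, and that the companion automorphism of $\theta|_R$ is $\sigma|_J$. Given this, the statement that $\disp(\theta|_R)$ lies in a single $\sigma|_J$-conjugacy class of $W_J$ must be deduced from the fact that $\disp(\theta)$ lies in a single $\sigma$-conjugacy class of $W$. The natural strategy is to show that $\disp(\theta|_R) = \disp(\theta) \cap W_J$, since the right-hand side is visibly an intersection of a single $\sigma$-class of $W$ with $W_J$, and the point will be that this intersection is a single $\sigma|_J$-class of $W_J$.

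First I would establish that $R$ is $\theta$-stable: since $\delta(C, C^\theta) \in W_J$, the type $S\setminus J$ simplex of $C$ is fixed by $\theta$, so $\theta$ permutes the chambers of $R = \Res_J(C)$; moreover $\delta(D, E) \in W_J$ whenever $D, E \in R$, so $\theta|_R$ is an automorphism of $R$ as a building of type $(W_J, J)$ with companion automorphism the restriction $\sigma|_J$ (this uses $J^\sigma = J$). Then the inclusion $\disp(\theta|_R) \subseteq \disp(\theta) \cap W_J$ is immediate. For the reverse inclusion, suppose $w \in \disp(\theta) \cap W_J$, say $w = \delta(C', C'^\theta)$ with $C' \in \Ch(\Delta)$; since $w \in W_J$ and $C_0 := C$ has $\delta(C_0, C_0^\theta) = \delta(C,C^\theta) \in W_J$ lying in the same $\sigma$-class as $w$, I want to produce a chamber of $R$ realising displacement $w$. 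This is where I would invoke Proposition~\ref{prop:full}: $\disp(\theta)$ is a \emph{full} $\sigma$-conjugacy class, so every element of $\disp(\theta) \cap W_J$ is $\sigma$-conjugate (in $W$) to $\delta(C,C^\theta)$, and I can then transport along a minimal gallery inside $R$ — using the gallery-sliding arguments of Proposition~\ref{prop:full} and Proposition~\ref{prop:basic}(2), which only move through chambers $s$-adjacent for $s \in J$ and hence stay inside $R$ — to realise $w$ as a displacement within $R$.

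The cleanest way to organise the reverse inclusion is: take any $w \in \disp(\theta) \cap W_J$; by Proposition~\ref{prop:full} applied to $W$ we have $w \in \Cl^\sigma_W(\delta(C,C^\theta))$, and since $w, \delta(C,C^\theta) \in W_J$ with $J^\sigma = J$, standard Coxeter-group facts give $w \in \Cl^{\sigma|_J}_{W_J}(\delta(C,C^\theta))$ (one can write $w = u^{-1}\delta(C,C^\theta)u^\sigma$ and replace $u$ by its $W_J$-component without changing the conjugate, since the $W^J$-component commutes appropriately through — or more simply use that within $W_J$ one can pass from $\delta(C,C^\theta)$ to $w$ by a sequence of elementary steps $v \mapsto s v s^\sigma$ with $s \in J$ and $|\ell(svs^\sigma) - \ell(v)| \le 2$). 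Now run the displacement-propagation argument of Proposition~\ref{prop:full} starting from $C \in R$ with $\delta(C,C^\theta) = \delta(C,C^\theta)$: each elementary step replaces the current chamber by an $s$-adjacent one for some $s \in J$, hence stays in $R$, and after finitely many steps reaches a chamber $C'' \in R$ with $\delta(C'', C''^\theta) = w$. Thus $w \in \disp(\theta|_R)$, giving $\disp(\theta|_R) = \disp(\theta) \cap W_J = \Cl^{\sigma|_J}_{W_J}(\delta(C,C^\theta))$, a single $\sigma|_J$-conjugacy class, so $\theta|_R$ is uniclass.

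The main obstacle I expect is the bookkeeping in the reverse inclusion: one must check that the gallery-sliding moves used to prove $\disp(\theta)$ is a full $\sigma$-class (Proposition~\ref{prop:full}) never leave the residue $R$, i.e.\ that when the companion automorphism and lengths cooperate to force $C_1^\theta = C_{n-1}$ etc., the intermediate chambers $C_1, \dots, C_{n-1}$ all lie in $R$ — which holds because they differ from $C$ by galleries of type a word in $J$, but this needs the observation that a reduced gallery realising an element of $W_J$ between two chambers of $R$ stays inside $R$ (a basic property of residues and convexity). Everything else is routine: the $\theta$-stability of $R$, the identification of the companion automorphism of $\theta|_R$ with $\sigma|_J$, and the forward inclusion are all immediate.
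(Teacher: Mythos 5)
Your proof has a genuine gap. The key step asserts, as a ``standard Coxeter-group fact,'' that if $w,w'\in W_J$ (with $J^{\sigma}=J$) are $\sigma$-conjugate in $W$, then they are already $\sigma$-conjugate in $W_J$. This is false: with $W$ of type $\sA_3$, $\sigma=1$, and $J=\{s_1,s_3\}$, the reflections $s_1$ and $s_3$ are conjugate in $W$ but not in the abelian group $W_J\cong(\ZZ/2)^2$. Your sketched justification --- replace $u$ in $w=u^{-1}w'u^{\sigma}$ by its $W_J$-component --- breaks down because the $W^J$-part of $u$ need not commute past $w'$; and the fallback --- a path of elementary moves $v\mapsto svs^{\sigma}$ with $s\in J$ linking $w'$ to $w$ --- presupposes precisely the $W_J$-conjugacy being claimed. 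Consequently neither the intermediate equality $\disp(\theta|_R)=\disp(\theta)\cap W_J$ nor the assertion that $\disp(\theta)\cap W_J$ is a single twisted class of $W_J$ is justified; the equality is also more than the proposition requires, and is itself equivalent (via Proposition~\ref{prop:full} applied to $R$) to the false fact above.

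The fix --- which you gesture toward in your final paragraph but do not commit to --- is to drop the reverse inclusion entirely and argue directly with galleries inside $R$. Fix $D\in R$ and a gallery from $C$ to $D$ of type a reduced word in $J$; it stays in $R$ by convexity of residues. At each step $D'\sim_s D''$ with $s\in J$, the building axioms give $\delta(D'',D''^{\theta})\in\{sv,\,v,\,vs^{\sigma},\,svs^{\sigma}\}$ where $v=\delta(D',D'^{\theta})$, and the uniclass hypothesis on $\theta$ together with Proposition~\ref{prop:basic1}(1) (parity of lengths) excludes $sv$ and $vs^{\sigma}$. Thus $\delta(D'',D''^{\theta})\in\{v,svs^{\sigma}\}$, which is $\sigma$-conjugate to $v$ inside $W_J$, and induction along the gallery shows every chamber of $R$ has displacement $\sigma$-conjugate in $W_J$ to $\delta(C,C^{\theta})$. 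This is exactly the paper's (two-line) argument; it never needs to find a Coxeter-theoretic conjugacy path first and then realise it, and makes no claim about $\disp(\theta)\cap W_J$.
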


\begin{proof}
Let $w=\delta(C,C^{\theta})$. It suffices to show $\delta(D,D^{\theta})$ is conjugate in $W_J$ to $w$ for all chambers $D\in R$. By induction is suffices to consider the case when $D$ is adjacent to $C$, and so $\delta(C,D)=s\in J$. Since $\theta$ is uniclass we have $\delta(D,D^{\theta})\in\{w,sws^{\sigma}\}$ (because $sw$ and $ws^{\sigma}$ are not in the same $\sigma$-class as $w$ by parity of lengths). Hence the result. 
\end{proof}

We conclude this subsection with two lemmas that will be useful later. Recall that the \textit{convex hull} of chambers $C$ and $D$ of a building is the set $\mathsf{conv}\{C,D\}$ consisting of all chambers that lie on a minimal length gallery from $C$ to $D$.

\begin{lemma}\label{lem:determineimages}
Let $\theta$ be a uniclass automorphism with companion automorphism~$\sigma$. Let $C$ be any chamber, and let $\delta(C,C^{\theta})=w$. Suppose that $u\in W$ with $\ell(u^{-1}wu^{\sigma})=\ell(w)-2\ell(u)$. Then there exist unique chambers $D,D'\in\mathsf{conv}\{C,C^{\theta}\}$ with $\delta(C,D)=u$ and $\delta(C^{\theta},D')=u^{\sigma}$, and $D^{\theta}=D'$.
\end{lemma}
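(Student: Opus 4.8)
The plan is to argue by induction on $\ell(u)$. When $\ell(u)=0$ we have $u=1$, and the only chambers satisfying $\delta(C,D)=1$ and $\delta(C^\theta,D')=1$ are $D=C$ and $D'=C^\theta$, which also satisfy $D^\theta=D'$; so assume $\ell(u)\geq 1$, pick $s\in D_L(u)$ and write $u=su'$ with $\ell(u')=\ell(u)-1$. First I would record what the hypothesis gives: the equality $\ell(u^{-1}wu^\sigma)=\ell(w)-2\ell(u)$ says precisely that $w=u\,v\,u^{-\sigma}$ is a reduced product, where $v:=u^{-1}wu^\sigma$. Choosing a reduced word for $u$ beginning with $s$, the matching reduced word for $u^{-\sigma}$ ends with $s^\sigma$; deleting these two letters shows $\ell(sws^\sigma)=\ell(w)-2$ and, writing $w_1:=sws^\sigma$, that $w_1=u'\,v\,(u')^{-\sigma}$ is again a reduced product, i.e.\ $\ell((u')^{-1}w_1(u')^\sigma)=\ell(w_1)-2\ell(u')$.

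Next I would fold once. Since $\delta(C,C^\theta)=w$ and $\ell(sws^\sigma)=\ell(w)-2$, the argument in the proof of Proposition~\ref{prop:full} applies --- this is where uniclassness enters, through the parity statement Proposition~\ref{prop:basic1}(1) --- and shows that in a minimal gallery $C=C_0\sim_s C_1\sim\cdots\sim C_{n-1}\sim_{s^\sigma}C_n=C^\theta$ one necessarily has $C_1^\theta=C_{n-1}$, so that $\delta(C_1,C_1^\theta)=sws^\sigma=w_1$. Here $C_1=\proj_P(C^\theta)$ is the gate of the $s$-panel $P$ of $C$ and $C_1^\theta$ is the gate of the $s^\sigma$-panel of $C^\theta$ towards $C$; in particular both are independent of the chosen gallery. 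Applying the inductive hypothesis to the chamber $C_1$, the element $u'$, and $w_1=\delta(C_1,C_1^\theta)$ produces a unique pair $D,D'\in\mathsf{conv}\{C_1,C_1^\theta\}$ with $\delta(C_1,D)=u'$, $\delta(C_1^\theta,D')=(u')^\sigma$ and $D^\theta=D'$.

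It then remains to check that $(D,D')$ is the pair the lemma asserts for $C$ and $u$. Existence is routine: from $\delta(C,C_1)=s$, $\delta(C_1,D)=u'$ and $\ell(su')=\ell(u')+1$ we get $\delta(C,D)=su'=u$; dually $\delta(C^\theta,D')=u^\sigma$; we still have $D^\theta=D'$; and $\mathsf{conv}\{C_1,C_1^\theta\}\subseteq\mathsf{conv}\{C,C^\theta\}$, since $\mathsf{conv}\{C,C^\theta\}$ is convex and contains $C_1$ and $C_1^\theta$. For uniqueness, let $E,E'\in\mathsf{conv}\{C,C^\theta\}$ satisfy $\delta(C,E)=u$, $\delta(C^\theta,E')=u^\sigma$ and $E^\theta=E'$. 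Since $E\in\mathsf{conv}\{C,C^\theta\}$, $E\neq C$, and the gate $\proj_P(C^\theta)=C_1$ is distinct from $C$, the standard behaviour of panel projections along a convex set forces $\proj_P(E)=\proj_P(C^\theta)=C_1$, hence $\delta(C_1,E)=su=u'$; a short length computation (using that $C_1^\theta$ is the gate of the $s^\sigma$-panel of $C^\theta$) then places $E$ inside $\mathsf{conv}\{C_1,C_1^\theta\}$. The symmetric argument for $E'=E^\theta$, with $C^\theta,s^\sigma,u^\sigma$ in the roles of $C,s,u$, gives $\delta(C_1^\theta,E')=(u')^\sigma$ and $E'\in\mathsf{conv}\{C_1,C_1^\theta\}$. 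Thus $(E,E')$ is a pair of the kind controlled by the inductive hypothesis at $(C_1,u')$, so $(E,E')=(D,D')$. The step I expect to be the real obstacle is precisely this uniqueness argument: pinning down that the first move of any minimal gallery towards $E$ must coincide with the first move $C\sim_s C_1$ of the folded gallery, and then doing the length bookkeeping confining $E$ and $E^\theta$ to $\mathsf{conv}\{C_1,C_1^\theta\}$. Everything else is routine Coxeter length arithmetic together with the folding step from Proposition~\ref{prop:full}.
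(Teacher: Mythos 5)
Your proof is correct and is essentially the same argument as the paper's: both reduce to the single‑step fold — namely, that when $\ell(sws^\sigma)=\ell(w)-2$ the uniclass hypothesis together with the parity statement (Proposition~\ref{prop:basic1}(1)) forces $C_1^\theta=C_{n-1}$ — and then iterate. The paper lays out a reduced gallery of type $s_1\cdots s_m\,t_1\cdots t_n\,s_m^\sigma\cdots s_1^\sigma$ and inducts on the index $k$ along that gallery, whereas you organize the iteration as a top‑level induction on $\ell(u)$; these are the same induction in slightly different clothing.

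The one place where you do noticeably more work than the paper is the uniqueness of $D$ and $D'$. You take a hypothetical pair $(E,E')$, use a projection argument to pin $\proj_P(E)=C_1$, and do length bookkeeping to place $E,E'$ inside $\mathsf{conv}\{C_1,C_1^\theta\}$ so the inductive hypothesis applies. All of this is sound, but it is unnecessary: the convex hull $\mathsf{conv}\{C,C^\theta\}$ lies in a single apartment $A$, and since $\delta(C,\cdot)$ restricted to $\Ch(A)$ is a bijection onto $W$, there is exactly one chamber of $A$ at Weyl distance $u$ from $C$; the hypothesis $\ell(u^{-1}wu^\sigma)=\ell(w)-2\ell(u)$ guarantees this chamber lies on a minimal gallery from $C$ to $C^\theta$, hence in the convex hull. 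So uniqueness of $D$ (and of $D'$) is automatic, and the only real content of the lemma is the identity $D^\theta=D'$ — which is exactly what the parity/fold step delivers. You flagged the uniqueness step as ``the real obstacle'', but it is in fact the trivial part; the substance is the fold.
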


\begin{proof}
Let $u=s_1\cdots s_m$ and $u^{-1}wu^{\sigma}=t_1\cdots t_n$ be reduced expressions. The condition $\ell(u^{-1}wu^{\sigma})=\ell(w)-2\ell(u)$ implies that 
$
w=s_1\cdots s_mt_1\cdots t_ns_m^{\sigma}\cdots s_1^{\sigma}
$
is a reduced expression, and hence there exists a gallery $C=C_0\sim C_1\sim\cdots \sim C_{2m+n}=C^{\theta}$ of this reduced type from $C$ to $C^{\theta}$. Then $D=C_m$ and $D'=C_{n+m}$ are the unique chambers of $\mathsf{conv}\{C,C^{\theta}\}$ with $\delta(C,D)=u$ and $\delta(C^{\theta},D')=u^{\sigma}$. We claim that $C_k^{\theta}=C_{2m+n-k}$ for $k=0,1,\ldots,m$. The case $k=0$ is true by assumption, and the claim follows by inductively applying the following observation: Suppose that $\theta$ is \uniclass, and that $\delta(E,E^{\theta})=v$ and $s\in S$ with $\ell(svs^{\sigma})=\ell(v)-2$. Let $E=E_0\sim_s E_1\sim\cdots \sim E_{\ell-1}\sim_{s^{\sigma}}E_{\ell}=E^{\theta}$ be a reduced gallery. Since $\delta(E,E_1)=s$ we have $\delta(E^{\theta},E_1^{\theta})=s^{\sigma}$. If $E_1^{\theta}\neq E_{\ell-1}$ then $\delta(E_1,E_1^{\theta})=sv$, which is not in the same $\sigma$-conjugacy class as $v$ (by parity), a contradiction. Thus $E_1^{\theta}=E_{\ell-1}$. 
\end{proof}

The following lemma will be used extensively in proving that certain automorphisms are uniclass. 

\begin{lemma}\label{ext} Let $\theta$ be an automorphism of an arbitrary building $\Delta$ with companion automorphism $\sigma$. Let $C,D$ be two chambers with the property that  $\{C,C^\theta\}\subseteq\mathsf{conv}\{D,D^\theta\}$. Then $\delta(D,D^\theta)\in\Cl^\sigma(\delta(C,C^\theta))$.\end{lemma}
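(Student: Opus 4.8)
The plan is to reduce the statement to the basic $\sigma$-conjugacy closure machinery already established, specifically Proposition~\ref{prop:basic}(2) and Lemma~\ref{lem:determineimages}. The hypothesis $\{C,C^\theta\}\subseteq\mathsf{conv}\{D,D^\theta\}$ says that both $C$ and $C^\theta$ lie on some minimal length gallery from $D$ to $D^\theta$. Write $w=\delta(D,D^\theta)$, $u=\delta(D,C)$, and $v=\delta(D,C^\theta)$. The key observation is that lying on a minimal gallery from $D$ to $D^\theta$ is equivalent to $\ell(w)=\ell(u)+\ell(u^{-1}w)$ and $\ell(w)=\ell(v)+\ell(v^{-1}w)$, and moreover that $C$ and $C^\theta$ can be taken to lie on a \emph{common} minimal gallery, so that (after possibly swapping) $C$ comes before $C^\theta$, giving $\ell(w)=\ell(u)+\ell(u^{-1}v)+\ell(v^{-1}w)$.

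First I would pin down the relationship between $u$, $v$ and the companion automorphism. Applying $\theta$ to the equality $\delta(D,C)=u$ gives $\delta(D^\theta,C^\theta)=u^\sigma$, hence $\delta(C^\theta,D^\theta)=u^{-\sigma}$, and combining with $\delta(D,C^\theta)=v$ and $\delta(D,D^\theta)=w$ along the minimal gallery yields $w=v\,u^{-\sigma}$ with lengths adding: $\ell(w)=\ell(v)+\ell(u^\sigma)=\ell(v)+\ell(u)$. Wait — more carefully, since $C^\theta$ lies on a minimal gallery from $D$ to $D^\theta$, the segment from $C^\theta$ to $D^\theta$ has type $v^{-1}w$, and this must equal $\delta(C^\theta,D^\theta)=u^{-\sigma}$; so $v^{-1}w=u^{-\sigma}$, i.e. $w=vu^{-\sigma}$. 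Similarly the segment from $C$ to $C^\theta$ has type $u^{-1}v=\delta(C,C^\theta)$. So with $x:=\delta(C,C^\theta)=u^{-1}v$ we get $w=u x u^{-\sigma}$ wait that's not right either since $v=ux$ gives $w=vu^{-\sigma}=uxu^{-\sigma}$. Hmm, I want $w$ to be $\sigma$-conjugate to $x$, i.e. $w=g^{-1}xg^{\sigma}$ for some $g$; here I have $w=uxu^{-\sigma}=(u^{-1})^{-1}x(u^{-1})^{\sigma}$, so indeed $w=g^{-1}xg^{\sigma}$ with $g=u^{-1}$. That gives $w\in\Cl^\sigma(x)$, which is exactly the claim (the statement is symmetric in that $\delta(D,D^\theta)\in\Cl^\sigma(\delta(C,C^\theta))$).

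So the real content is purely combinatorial: from $\{C,C^\theta\}\subseteq\mathsf{conv}\{D,D^\theta\}$ I must extract that $C$, $C^\theta$ can be ordered on a common minimal gallery and that the type-identities above hold, using $\delta(C,D^\theta)=\delta(C,C^\theta)\delta(C^\theta,D^\theta)$ with lengths adding (the gate property / projection, \cite[Prop.~5.34]{AB:09} or the convexity of $\mathsf{conv}$). I would phrase it as: let $\Gamma$ be a minimal gallery from $D$ to $D^\theta$ passing through $C$; since $C^\theta\in\mathsf{conv}\{D,D^\theta\}$ and $\mathsf{conv}\{D,D^\theta\}$ is contained in an apartment, we may assume $\Gamma$ passes through $C^\theta$ as well (both $C$ and $C^\theta$ lie in a common apartment containing $D,D^\theta$). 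The main obstacle I anticipate is the bookkeeping of which of $C,C^\theta$ precedes the other on $\Gamma$: if $C^\theta$ precedes $C$, one runs the same computation with the roles reversed, or equivalently applies $\theta^{-1}$, and obtains $\delta(D,D^\theta)\in\Cl^{\sigma}(\delta(C,C^\theta))$ just the same (using that $\sigma$-conjugacy is an equivalence relation and $\ell$ is $\sigma$-invariant). Once the apartment reduction is in place, the rest is the short algebraic identity $w=u^{-1}\cdot x\cdot (u^{-1})^{-\sigma}$ wait I need to recheck the exponent — with $g=u^{-1}$ we have $g^{-1}xg^\sigma = u\,x\,u^{-\sigma}$, matching $w=uxu^{-\sigma}$. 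Hence $\delta(D,D^\theta)=w\in\Cl^\sigma(x)=\Cl^\sigma(\delta(C,C^\theta))$, as required. $\qed$
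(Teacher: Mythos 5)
Your proof is correct and follows essentially the same route as the paper's: both exploit that $C$, $C^\theta$, $D$, $D^\theta$ all lie in a common apartment (since $C,C^\theta\in\mathsf{conv}\{D,D^\theta\}$), where the $W$-distance is multiplicative along any gallery, yielding $\delta(D,D^\theta)=u\,\delta(C,C^\theta)\,u^{-\sigma}$ with $u=\delta(D,C)$ — the paper phrases this via the concatenated gallery $\gamma'\gamma''\gamma^\theta$, you phrase it directly in terms of $W$-distances. One side remark: your preliminary assertion that $C$ and $C^\theta$ ``can be taken to lie on a common minimal gallery'' from $D$ to $D^\theta$ is neither justified nor in fact used; the final computation relies only on the common-apartment multiplicativity, which is exactly what the paper invokes.
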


\begin{proof}
Let $\gamma$ be a minimal length gallery from $C$ to $D$, of type $(s_1,\ldots,s_n)$. Then $\gamma^{\theta}$ is a minimal length gallery from $C^{\theta}$ to $D^{\theta}$ of type $(s_1^{\sigma},\ldots,s_n^{\sigma})$. Let $\gamma'$ be the reverse gallery of $\gamma$ (hence $\gamma'$ is a gallery of type $(s_n,\ldots,s_1)$ from $D$ to $C$). Let $\gamma''$ be a minimal length gallery from $C$ to $C^{\theta}$. Since $C,C^{\theta},D,D^{\theta}$ all lie in $\mathrm{conv}\{D,D^{\theta}\}$ all chambers of the galleries $\gamma'$, $\gamma''$, and $\gamma^{\theta}$ lie in $\mathrm{conv}\{D,D^{\theta}\}$. Thus every chamber of the gallery $\gamma'\gamma''\gamma^{\theta}$ from $D$ to $D^{\theta}$ lies in $\mathrm{conv}\{D,D^{\theta}\}$, and in particular every chamber of this gallery lies in a common apartment of $\Delta$. Thus $\delta(D,D^{\theta})=s_n\cdots s_2s_1\delta(C,C^{\theta})s_1^{\sigma}s_2^{\sigma}\cdots s_n^{\sigma}\in\Cl^{\sigma}(\delta(C,C^{\theta}))$.
\end{proof}

\subsection{Fixed and opposition diagrams of uniclass automorphisms}\label{Fixed-Opposite}

Let $\theta$ be an automorphism of a spherical building, with companion automorphism $\sigma$. The \textit{fixed diagram} of $\theta$ is the triple $\Fix(\theta)=(\Pi,J,\sigma)$, where $J$ is the union of all distinguished $\sigma$-orbits $K\in\Orb^{\sigma}(S)$ for which there is a type $K$ simplex stabilised by~$\theta$. 

A remarkable consequence of our classification of uniclass automorphisms of thick spherical buildings (Theorem~\ref{thm:main1}) is that the associated twisted conjugacy classes are precisely the bi-capped classes of $\sigma$-involutions in the Coxeter group. The following proposition shows that this property implies a beautiful connection between the fixed and opposition diagrams of a uniclass automorphism.

\begin{prop}\label{prop:duality}
Let $\theta$ be a uniclass automorphism of a thick spherical building of type $(W,S)$, with companion automorphism~$\sigma\in\Aut(\Pi)$. Assume that $\disp(\theta)$ is a bi-capped class of $\sigma$-involutions. Let $J$ (respectively $J'$) be the unique $\sigma$-rigid (respectively $\sigma\sigma_0$-rigid) subset of $S$ such that $w_J,w_{J'}w_0\in \disp(\theta)$. Then
$$
\Fix(\theta)=(\Pi,S\backslash J,\sigma)\quad\text{and}\quad \Opp(\theta)=(\Pi,S\backslash J',\sigma\sigma_0).
$$
In particular, $\Opp(\theta)=\psi(\Fix(\theta))$, where $\psi$ is the duality on the set of bi-capped admissible diagrams given in Definition~\ref{defn:psiondiagrams}. Moreover if $\Fix(\theta)$ has relative type of rank $r$ and $\Opp(\theta)$ has relative type of rank $r'$ (see Definition~\ref{defn:relativetype}) then $r+r'=|S|$. 
\end{prop}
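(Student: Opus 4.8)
The plan is to prove \cref{prop:duality} by separately identifying the fixed diagram and the opposition diagram, and then invoking the already-established machinery. For the opposition diagram, note that by \cref{prop:full} the set $\disp(\theta)$ is a full $\sigma$-conjugacy class of $\sigma$-involutions, and by hypothesis it is bi-capped. The element $w_{J'}w_0$ is then the unique maximal length element of $\disp(\theta)$. On the other hand, by \cite{PVM:19a,PVM:19b} the opposition diagram $\Opp(\theta)=(\Pi,L,\sigma\sigma_0)$ is admissible, where $L$ is the set of all nodes lying in some distinguished orbit of a simplex mapped to an opposite. Let $K$ be a maximal element in the poset of types of simplices mapped to opposite simplices; since $\theta$ is capped (see \cref{thm:uncapped}, or note we may freely assume cappedness as all uniclass automorphisms are capped) this $K$ is unique and equals $L$. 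By \cref{prop:attaindisplacement} there is a chamber $C$ with $\delta(C,C^\theta)=w_{S\backslash L}w_0$, so $w_{S\backslash L}w_0\in\disp(\theta)$. Since $w_{S\backslash L}w_0$ has length $\ell(w_0)-\ell(w_{S\backslash L})$ and this is the largest possible displacement (being capped, nothing of type properly containing $L$ is mapped to an opposite), $w_{S\backslash L}w_0$ is the maximal length element of $\disp(\theta)$. By uniqueness of the maximal element in a bi-capped class, $w_{S\backslash L}w_0=w_{J'}w_0$, hence $S\backslash L=J'$, i.e.\ $\Opp(\theta)=(\Pi,S\backslash J',\sigma\sigma_0)$.

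For the fixed diagram, the argument is dual. Write $\Fix(\theta)=(\Pi,M,\sigma)$ where $M$ is the union of distinguished $\sigma$-orbits $K$ for which some type $K$ simplex is stabilised by $\theta$. First, $w_J\in\disp(\theta)$ witnesses a chamber $C$ whose type $S\backslash J$ simplex is fixed, so $S\backslash J\subseteq M$. Conversely, suppose a type $K$ simplex is fixed for some distinguished $\sigma$-orbit $K$; then $\theta$ restricts to an automorphism of the residue $R$ of type $S\backslash K$, and by \cref{prop:residual} this restriction $\theta|_R$ is uniclass with $\disp(\theta|_R)\subseteq\disp(\theta)\cap W_{S\backslash K}$. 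In particular $\disp(\theta|_R)$ is nonempty and contained in $\disp(\theta)$, so there is an element of $\disp(\theta)$ lying in $W_{S\backslash K}$. Now the key point: by \cref{thm:downwardsclosure}(3), every element of the class $\disp(\theta)$ can be reduced by $\sigma$-conjugation down to the unique minimal length element $w_J$ (unique since the class is bi-capped), and $w_J$ is a $\sigma$-rigid parabolic longest element; the support (set of generators appearing in a reduced word) of $w_J$ is exactly $J$. Since the element found above lies in $W_{S\backslash K}$, its support is contained in $S\backslash K$, and one checks (using the folding/descent structure that drives \cref{thm:downwardsclosure}) that the support of the minimal element $w_J$ is contained in the support of any class member, so $J\subseteq S\backslash K$, i.e.\ $K\subseteq S\backslash J$. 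Therefore $M=S\backslash J$ and $\Fix(\theta)=(\Pi,S\backslash J,\sigma)$.

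With both diagrams identified, the remaining assertions are immediate. That $\Opp(\theta)=\psi(\Fix(\theta))$ follows directly from \cref{defn:psiondiagrams}: the duality $\psi$ on bi-capped admissible diagrams is exactly the map sending the class $\Cl^\sigma(w_J)$ to $\Cl^{\sigma\sigma_0}(w_{J'}w_0)=\psi(\disp(\theta))$, i.e.\ sending $(\Pi,S\backslash J,\sigma)$ to $(\Pi,S\backslash J',\sigma\sigma_0)$, which is precisely $\Opp(\theta)$. Finally, if $\Fix(\theta)$ has relative type of rank $r$ and $\Opp(\theta)$ of rank $r'$, then by \cref{thm:bicapped}(2) we have $r=\FixRk(\disp(\theta))$ and $r'=\OppRk(\disp(\theta))=\FixRk(\psi(\disp(\theta)))$, and by \cref{prop:ranksum} these satisfy $r+r'=\FixRk(\disp(\theta))+\OppRk(\disp(\theta))=|S|$.

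The main obstacle I anticipate is the converse direction for the fixed diagram: showing that a fixed simplex of type $K$ forces $K\subseteq S\backslash J$, equivalently that no generator of $J$ can lie in the complement of a fixed simplex's type. The clean way to handle this is to observe that if the type $S\backslash K$ residue $R$ is stabilised and contains a fixed chamber would be too strong; instead one uses that $\disp(\theta|_R)\subseteq\disp(\theta)\cap W_{S\backslash K}$ is nonempty, and then that every element of the bi-capped class $\disp(\theta)$ has full support $\supseteq J$ — this last fact should be extracted from the reduction procedure in the proof of \cref{thm:downwardsclosure}, where each $\sigma$-conjugation step $w\mapsto twt^\sigma$ with $\ell(twt^\sigma)=\ell(w)-2$ only removes generators from the support, never adds the generators of the eventual minimal element $w_J$. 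If a direct support argument proves delicate, an alternative is to argue contrapositively using \cref{prop:anisotropic}: a fixed simplex of type $K$ with $K\cap J\neq\emptyset$ would yield, upon restriction, a uniclass automorphism of a residue whose displacement class is a proper sub-class, contradicting minimality of $w_J$ in $\disp(\theta)$.
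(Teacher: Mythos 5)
Your proposal is essentially correct and its overall structure closely tracks the paper's argument, but there is one step in the opposition direction whose stated justification does not actually hold as written, and a comparison of routes is worthwhile.

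For the fixed diagram your argument is in substance the paper's: one shows that a displacement in $W_{S\setminus K}$ (with $K$ a $\sigma$-stable orbit) can be reduced by the downward $\sigma$-conjugation of \cref{thm:downwardsclosure}(3) to the unique minimal element $w_J$ \emph{without leaving} $W_{S\setminus K}$, since the reduction uses only left descents and those of an element of $W_{S\setminus K}$ lie in $S\setminus K$; hence $w_J\in W_{S\setminus K}$ and $J\subseteq S\setminus K$. Your fallback argument via \cref{lem:min1} is also valid and perhaps cleaner: the minimal element $w_{J'}$ of $\disp(\theta|_R)$ in $W_{S\setminus K}$ satisfies $sw_{J'}=w_{J'}s^{\sigma}$ for all $s\in J'$, so by \cref{lem:min1} it is minimal in the full class $\disp(\theta)$, and uniqueness forces $w_{J'}=w_J$, i.e.\ $J\subseteq S\setminus K$.

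For the opposition diagram, the paper treats it by the exact mirror argument: if $\disp(\theta)\cap W_K w_0\neq\emptyset$ then, passing to $\psi(\disp(\theta))=\disp(\theta)w_0$ and reducing $ww_0\in W_K$ downward under $\sigma\sigma_0$-conjugation, one stays in $W_K$ and lands on $w_{J'}$, giving $J'\subseteq K$; this is what forces the encircled set to be $S\setminus J'$. You instead invoke cappedness (\cref{thm:uncapped}) and \cref{prop:attaindisplacement}. That is a legitimate alternative route, but the justification you give for the key equality is not sound: the parenthetical ``(being capped, nothing of type properly containing $L$ is mapped to an opposite)'' does not imply that $w_{S\setminus L}w_0$ is the longest element of $\disp(\theta)$, since a general $w\in\disp(\theta)$ need not have the form $vw_0$ with $v$ a standard parabolic longest element. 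The correct way to close this is to run the implication in the other direction: the unique maximal element $w_{J'}w_0$ is realised by some chamber, whose type-$(S\setminus J')$ simplex is thereby mapped to an opposite; hence $S\setminus J'\subseteq L$ by cappedness, so $\ell(w_{S\setminus L})\le\ell(w_{J'})$. Combined with $w_{S\setminus L}w_0\in\disp(\theta)$ (so $\ell(w_{S\setminus L})\ge\ell(w_{J'})$), this forces $\ell(w_{S\setminus L})=\ell(w_{J'})$ and, since $S\setminus L\subseteq J'$ and both are supports of parabolic longest elements of equal length, $S\setminus L=J'$. With that fix your argument is complete. The paper's route avoids cappedness entirely by exploiting the symmetry between the downward and upward conjugations in a bi-capped class; your route trades that symmetry for the geometric fact that uniclass automorphisms are capped, which is also available by \cref{thm:uncapped}. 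Both are fine; the paper's is slightly more self-contained.
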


\begin{proof}
Note that for $C\in\Ch(\Delta)$ and $K\subseteq S$ preserved under $\sigma$ and $\sigma_0$ we have $\delta(C,C^{\theta})\in W_K$ if and only if the type $S\backslash K$ simplex of $C$ is fixed by~$\theta$, and $\delta(C,C^{\theta})\in W_Kw_0$ if and only if the type $S\backslash K$ simplex of $C$ is mapped onto an opposite simplex by~$\theta$. By Theorem~\ref{thm:downwardsclosure}(3) and the bi-capped property it follows that for each $w\in \disp(\theta)$ there exists $x,y\in W$ such that $x^{-1}wx^{\sigma}=w_J$ and $y^{-1}wy^{\sigma}=w_{J'}w_0$ with $\ell(x^{-1}wx^{\sigma})=\ell(w)-2\ell(x)$ and $\ell(y^{-1}wy^{\sigma})=\ell(w)+2\ell(y)$. It follows that if $K\subseteq S$ then: 
\begin{compactenum}[$(1)$]
\item if $\disp(\theta)\cap W_K\neq\emptyset$ then $J\subseteq K$, and
\item if $\disp(\theta)\cap W_Kw_0\neq\emptyset$ then $J'\subseteq K$
\end{compactenum}
and hence $\Fix(\theta)=(\Pi,S\backslash J,\sigma)$ and $\Opp(\theta)=(\Pi,S\backslash J',\sigma\sigma_0)$. The remaining statements follow from Theorem~\ref{thm:bicapped} and Proposition~\ref{prop:ranksum}.
\end{proof}

\subsection{The finite case}\label{sec:counting}

Let $\Delta=(\Ch(\Delta),\delta)$ be a finite thick spherical building of type $(W,S)$ and for each $s\in S$ let $q_s=|\{D\in\Ch(\Delta)\mid \delta(C,D)=s\}|$ (this cardinality is independent of the particular $C\in\Ch(\Delta)$ chosen). Moreover, if $\theta$ is an automorphism of $\Delta$ then $q_s=q_{s^{\sigma}}$, where $\sigma$ is the companion automorphism of~$\theta$. It is easy to see that for all $C\in\Ch(\Delta)$ and $w\in W$,
$$
|\{D\in\Ch(\Delta)\mid \delta(C,D)=w\}|=q_{s_1}\cdots q_{s_n}
$$
where $w=q_{s_1}\cdots q_{s_n}$ is any reduced expression for $w$. We define $q_w=q_{s_1}\cdots q_{s_n}$, with $w=s_1\cdots s_n$ reduced.

For $w\in W$ and $\theta\in\mathrm{Aut}(\Delta)$ consider the set
$$
\Delta_w(\theta)=\{C\in\Ch(\Delta)\mid \delta(C,C^{\theta})=w\}.
$$
In general computing the cardinality of these sets is a very complicated problem, however it turns out that for uniclass elements the counts are simplified due to the following theorem and its corollary. We thank Arun Ram for helpful discussions related to this theorem -- in particular it arises in a very natural way from class sums in the group algebra of a finite group of Lie type (see \cite[\S5]{DRS:23}).

\begin{thm}\label{thm:counting}
Let $\Delta=(\Ch(\Delta),\delta)$ be a finite thick spherical building and let $\theta$ be an automorphism with companion automorphism $\sigma$. If $w\in W$ and $s\in S$ with $\ell(sws^{\sigma})=\ell(w)+2$ then 
$$
|\Delta_{ws^{\sigma}}(\theta)|=|\Delta_{sw}(\theta)|\quad\text{and}\quad |\Delta_{sws^{\sigma}}(\theta)|=q_s|\Delta_w(\theta)|+(q_s-1)|\Delta_{sw}(\theta)|.
$$
\end{thm}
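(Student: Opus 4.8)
The plan is to prove the two identities by a direct chamber-counting argument, partitioning the chambers according to their relationship with a given chamber under the $s$- and $s^\sigma$-adjacency. First I would set up notation: fix $w\in W$ and $s\in S$ with $\ell(sws^\sigma)=\ell(w)+2$, and consider for each chamber $C$ the chamber $C^\theta$ together with the $s$-panel on $C$ and the $s^\sigma$-panel on $C^\theta$. The key structural fact is that if $\delta(C,C^\theta)=w$, then for any $D\sim_s C$ we have $\delta(C^\theta,D^\theta)=s^\sigma$, so $\delta(D,D^\theta)$ is one of $sws^\sigma$, $sw$ (these two possibilities since $\ell(sw)=\ell(w)+1$), and the case $\delta(D,D^\theta)=ws^\sigma$ is impossible by length; conversely, starting from a chamber $D$ with $\delta(D,D^\theta)=sw$ or $\delta(D,D^\theta)=sws^\sigma$, the chamber $C$ obtained by projecting onto the appropriate panel recovers a chamber at distance $w$ or at a controlled distance.

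The cleaner route for the first identity, $|\Delta_{ws^\sigma}(\theta)|=|\Delta_{sw}(\theta)|$, is to exhibit an explicit bijection. Given $C\in\Delta_{ws^\sigma}(\theta)$, so $\delta(C,C^\theta)=ws^\sigma$ with $\ell(ws^\sigma)=\ell(w)-1$; I would use the opposite of the move above: there is a unique chamber $E\sim_{s^\sigma}C^\theta$ with $\delta(C,E)=w$, namely $E=\proj$ onto the $s^\sigma$-panel of $C^\theta$, and then set $C':=E^{\theta^{-1}}$. One checks $\delta(C',C'^\theta)=\delta(E^{\theta^{-1}},E)$; since $E\sim_{s^\sigma}C^\theta$ gives $E^{\theta^{-1}}\sim_s C$, and $\delta(C',C)=s$ with $\delta(C,E)=w$, $\ell(sw)=\ell(w)+1$, we get $\delta(C',C'^\theta)=sw$. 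This map and its evident inverse (from $C'\in\Delta_{sw}(\theta)$ take the chamber $C$ on the $s$-panel of $C'$ with $\delta(C,C'^\theta)=ws^\sigma$) give the bijection. The main obstacle here is purely bookkeeping: making sure the panels and projections are the right ones and that $\ell$-computations are consistent with $\ell(sws^\sigma)=\ell(w)+2$.

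For the second identity I would count pairs $(C,D)$ with $D\sim_s C$ and $\delta(C,C^\theta)=w$, in two ways. Summing over $C\in\Delta_w(\theta)$: each such $C$ has exactly $q_s+1$ chambers $D\sim_s C$ (including $C$ itself), so $q_s+1$ ordered incident pairs, but to land in $\Delta_{sws^\sigma}(\theta)$ we need $D\neq C$ and $D^\theta\neq$ the unique chamber $E\sim_{s^\sigma}C^\theta$ with $\delta(C,E)=w$ (equivalently $D\neq E^{\theta^{-1}}$), while the chambers $D$ with $D^\theta=E$, i.e.\ $D=E^{\theta^{-1}}$, land in $\Delta_{ws^\sigma}(\theta)$. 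Wait — more carefully: of the $q_s$ chambers $D\sim_s C$ with $D\neq C$, exactly one satisfies $D^\theta=E$ (giving $\delta(D,D^\theta)=ws^\sigma$) wait this needs $D^\theta \sim_{s^\sigma} C^\theta$, which holds, and $D^\theta = E$ forces $\delta(D,D^\theta) = \delta(D, E)$; since $D \sim_s C$ and $\delta(C,E) = w$... I would instead count via $\Delta_{sws^\sigma}(\theta)$ directly: a chamber $B\in\Delta_{sws^\sigma}(\theta)$ determines, via the reduced word $sws^\sigma$, a unique chamber $C_B\sim_s B$ with $\delta(C_B,B^\theta)=ws^\sigma$; then $\delta(C_B,C_B^\theta)\in\{w,sw\}$ by the same dichotomy (and $sw^{-1}$ case excluded). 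Thus $\Delta_{sws^\sigma}(\theta)$ fibers over $\Delta_w(\theta)\sqcup\Delta_{sw}(\theta)$ via $B\mapsto C_B$. Counting the fibers: over $C\in\Delta_w(\theta)$ the fiber has size $q_s$ (the chambers $B\sim_s C$, $B\neq C$, with $B\neq E^{\theta^{-1}}$ excluded — one checks this is $q_s$ after accounting for the excluded chamber landing in $\Delta_{ws^\sigma}$); over $C\in\Delta_{sw}(\theta)$ the fiber has size $q_s-1$. Assembling, $|\Delta_{sws^\sigma}(\theta)|=q_s|\Delta_w(\theta)|+(q_s-1)|\Delta_{sw}(\theta)|$. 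The main obstacle is getting the exact fiber sizes right — precisely identifying which single chamber on each $s$-panel must be excluded, using thickness (so $q_s\geq 2$) and Lemma~\ref{lem:foldingcondition} — but this is the same style of argument as in Example~\ref{ex:SL3} and Proposition~\ref{prop:basic}, so no new ideas are required.
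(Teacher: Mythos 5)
Your strategy---decompose $\Delta_{sws^{\sigma}}(\theta)$ by a projection to a distinguished chamber on each $s$-panel---is essentially the same as the paper's, which tallies chamber displacements panel by panel (your $C_B$ is the paper's chamber $C_0$ in the $s$-panel through $B$). The problem is a recurring confusion between $sw$ and $ws^{\sigma}$ that makes the second part of your argument incorrect as written. Your map takes $C_B\sim_s B$ with $\delta(C_B,B^{\theta})=ws^{\sigma}$, i.e.\ $C_B$ is the gate of $B^{\theta}$ on the $s$-panel of $B$. Since then $C_B^{\theta}\sim_{s^{\sigma}}B^{\theta}$, the possible values of $\delta(C_B,C_B^{\theta})$ are $ws^{\sigma}\cdot s^{\sigma}=w$ or $ws^{\sigma}$; they are \emph{not} $\{w,sw\}$ as you assert. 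So $B\mapsto C_B$ fibers $\Delta_{sws^{\sigma}}(\theta)$ over $\Delta_w(\theta)\sqcup\Delta_{ws^{\sigma}}(\theta)$ (with fibers of size $q_s$ and $q_s-1$ respectively), not over $\Delta_w(\theta)\sqcup\Delta_{sw}(\theta)$. You reach the stated formula only after invoking your first identity $|\Delta_{ws^{\sigma}}(\theta)|=|\Delta_{sw}(\theta)|$, which you should say explicitly---or instead use the map $B\mapsto (\text{gate of }B\text{ on the }s^{\sigma}\text{-panel of }B^{\theta})^{\theta^{-1}}$, which genuinely lands in $\Delta_w\sqcup\Delta_{sw}$.

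There are also two smaller slips you should fix: the hypothesis $\ell(sws^{\sigma})=\ell(w)+2$ forces $\ell(sw)=\ell(ws^{\sigma})=\ell(w)+1$, so your opening claim $\ell(ws^{\sigma})=\ell(w)-1$ is wrong (though it does not propagate); and in describing the inverse of your first bijection you should take $C$ on the $s$-panel of $C'$ with $\delta(C,C'^{\theta})=w$ (the gate), not $ws^{\sigma}$, and then \emph{check} that this $C$ satisfies $\delta(C,C^{\theta})=ws^{\sigma}$. In the forward direction you should also record that $C'\neq C$ (equivalently $E\neq C^{\theta}$, which follows from $\delta(C,E)=w\neq ws^{\sigma}$), since this is exactly what forces $\delta(C',E)=sw$ rather than $w$. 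The paper's proof sidesteps all of this by making the panel $P$, the chambers $C_0,D_0$, and the dichotomy $C_0^{\theta}=D_0$ versus $C_0^{\theta}\neq D_0$ explicit, which yields both identities from a single case analysis.
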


\begin{proof}
Let $\cP_s$ denote the set of all $s$-panels of $\Delta$ (that is, residues of type $S\backslash\{s\}$). For $v\in W$ and $P\in\cP_s$ let $\Delta^P_v(\theta)=\Delta_v(\theta)\cap P$. If $P\in\cP_s$ then
$$
\delta(P,P^{\theta})=\{\delta(C,C^{\theta})\mid C\in P\}=\langle s\rangle w\langle s^{\sigma}\rangle
$$
for some $w\in W$, and we may take $w$ to be minimal length in the double coset $\langle s\rangle w\langle s^{\sigma}\rangle$. Thus if $v\in \langle s\rangle w\langle s^{\sigma}\rangle$ we have
\begin{align}
\label{eq:summing}|\Delta_v(\theta)|=\sum_{P\in\cP_s}|\Delta_v^P(\theta)|=\sum_{\{P\in\cP_s\,:\,\delta(P,P^{\theta})=\langle s\rangle w\langle s^{\sigma}\rangle\}}|\Delta_v^P(\theta)|.
\end{align}

Suppose that $\ell(sws^{\sigma})=\ell(w)+2$, and that $\delta(P,P^{\theta})=\langle s\rangle w\langle s^{\sigma}\rangle$. In particular this implies that $P\cap P^{\theta}=\emptyset$. Let $C_0\in P$ and $D_0\in P^{\theta}$ be the unique chambers with $\delta(C_0,D_0)=w$. If $C\in P\backslash\{C_0\}$ and $D\in P^{\theta}\backslash\{D_0\}$ then, from the building axioms, $\delta(C,D_0)=sw$, $\delta(C_0,D)=ws^{\sigma}$, and $\delta(C,D)=sws^{\sigma}$. There are two distinct possibilities to consider.

Firstly, if $C_0^{\theta}=D_0$ then $\Delta^P_w(\theta)=\{C_0\}$, $\Delta^P_{sws^{\sigma}}(\theta)=P\backslash\{C_0\}$, and $\Delta^P_{sw}(\theta)=\Delta^P_{ws^{\sigma}}(\theta)=\emptyset$. Thus $|\Delta^P_w(\theta)|=1$, $|\Delta^P_{sws^{\sigma}}(\theta)|=q_s$, and $|\Delta_{sw}^P(\theta)|=|\Delta_{ws^{\sigma}}^P(\theta)|=0$.

Secondly, if $C_0^{\theta}\neq D_0$ let $C_1=D_0^{\theta^{-1}}\in P\backslash\{C_0\}$. Then $\Delta^P_{ws^{\sigma}}(\theta)=\{C_0\}$, $\Delta^P_{sw}(\theta)=\{C_1\}$, and $\Delta^P_{sws^{\sigma}}(\theta)=P\backslash\{C_0,C_1\}$. Thus $|\Delta^P_{ws^{\sigma}}(\theta)|=|\Delta^P_{sw}(\theta)|=1$, $|\Delta^P_{sws^{\sigma}}(\theta)|=q_s-1$, and $|\Delta^P_w(\theta)|=0$.

Let $N_1$ (respectively $N_2$) denote the number of $s$-panels $P$ with the property that $\delta(P,P^{\theta})=\langle s\rangle w\langle s^{\sigma}\rangle$ and $C_0^{\theta}=D_0$ (respectively, $C_0^{\theta}\neq D_0$) with $C_0,D_0$ as above. Then by~(\ref{eq:summing}) we have
$$
|\Delta_w(\theta)|=N_1,\quad |\Delta_{ws^{\sigma}}(\theta)|=|\Delta_{sw}(\theta)|=N_2,\quad \text{and}\quad |\Delta_{sws^{\sigma}}(\theta)|=q_sN_1+(q_s-1)N_2,
$$
completing the proof.
\end{proof}

\begin{cor}\label{cor:counting}
If $\disp(\theta)$ consists of $\sigma$-involutions, and $w\in W$ and $s\in S$ with $\ell(sws^{\sigma})=\ell(w)+2$, then
$$
|\Delta_{ws^{\sigma}}(\theta)|=|\Delta_{sw}(\theta)|=0\quad\text{and}\quad |\Delta_{sws^{\sigma}}(\theta)|=q_s|\Delta_w(\theta)|.
$$
\end{cor}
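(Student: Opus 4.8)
The plan is to derive Corollary~\ref{cor:counting} as an immediate specialization of Theorem~\ref{thm:counting}, using the additional hypothesis that $\disp(\theta)$ consists of $\sigma$-involutions to kill the relevant terms. First I would observe that, by Proposition~\ref{prop:basic1}(1), all elements of a single $\sigma$-conjugacy class have lengths of the same parity; more to the point, $sw$ and $ws^{\sigma}$ cannot be $\sigma$-involutions whenever $w$ is. Indeed, if $w$ is a $\sigma$-involution then $(ws^{\sigma})^{-1} = s^{\sigma}w^{-1} = s^{\sigma}w^{\sigma} = (s w)^{\sigma}$ (using $\sigma^2=1$), so $ws^{\sigma}$ being a $\sigma$-involution would force $ws^{\sigma} = (ws^{\sigma})^{-1} = (sw)^{\sigma}$, i.e.\ $w = sws^{\sigma}$ after applying $\sigma$, contradicting $\ell(sws^{\sigma}) = \ell(w)+2$. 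The same computation handles $sw$.

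Next I would use this to conclude that $ws^{\sigma} \notin \disp(\theta)$ and $sw \notin \disp(\theta)$: since $\disp(\theta)$ consists entirely of $\sigma$-involutions and neither $ws^{\sigma}$ nor $sw$ is a $\sigma$-involution, they cannot lie in $\disp(\theta)$. Hence $\Delta_{ws^{\sigma}}(\theta) = \Delta_{sw}(\theta) = \emptyset$, giving $|\Delta_{ws^{\sigma}}(\theta)| = |\Delta_{sw}(\theta)| = 0$. (Alternatively, one can cite Proposition~\ref{prop:basic1}(1) together with $\ell(sw) = \ell(w)+1 \not\equiv \ell(w) \bmod 2$ to get $sw \notin \Cl^{\sigma}(w) \supseteq \disp(\theta)$ whenever $\disp(\theta)$ lies in one class, but since here we only assume $\disp(\theta)$ is a union of $\sigma$-involution classes rather than a single class, the $\sigma$-involution argument above is the clean one; in fact a $\sigma$-involution argument suffices regardless, since any $v \in \disp(\theta)$ with $\ell(v)$ odd-versus-$\ell(w)$ still needn't be excluded by parity alone if $\disp(\theta)$ is disconnected — so I will phrase the proof via the non-involution observation.)

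Finally I would substitute $|\Delta_{sw}(\theta)| = 0$ into the second identity of Theorem~\ref{thm:counting}, namely $|\Delta_{sws^{\sigma}}(\theta)| = q_s|\Delta_w(\theta)| + (q_s-1)|\Delta_{sw}(\theta)|$, to obtain $|\Delta_{sws^{\sigma}}(\theta)| = q_s|\Delta_w(\theta)|$, completing the proof. There is essentially no obstacle here — the only point requiring any care is the short length/inverse computation showing that $sw$ and $ws^{\sigma}$ fail to be $\sigma$-involutions, and this is exactly the argument already appearing in the proofs of Proposition~\ref{prop:basic}(3) and Proposition~\ref{prop:full} (the claim there that "$sw$ is not a $\sigma$-involution"), so it can be invoked or reproduced in one line.

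\begin{proof}
By Theorem~\ref{thm:counting} it suffices to show $|\Delta_{sw}(\theta)| = 0$, for then $|\Delta_{ws^{\sigma}}(\theta)| = |\Delta_{sw}(\theta)| = 0$ and $|\Delta_{sws^{\sigma}}(\theta)| = q_s|\Delta_w(\theta)| + (q_s-1)\cdot 0 = q_s|\Delta_w(\theta)|$. Since $\disp(\theta)$ consists of $\sigma$-involutions, it is enough to check that $sw$ is not a $\sigma$-involution (so that $sw \notin \disp(\theta)$, giving $\Delta_{sw}(\theta) = \emptyset$). Suppose, for a contradiction, that $(sw)^{-1} = (sw)^{\sigma}$. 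As $w$ is a $\sigma$-involution we have $w^{-1} = w^{\sigma}$, so $w^{-1}s^{\sigma} = (sw)^{\sigma} = (sw)^{-1} = w^{-1}s$, whence $s^{\sigma} = s$ and thus $(sw)^{-1} = w^{-1}s = w^{\sigma}s$. Applying $\sigma$ and using $\sigma^2 = 1$ gives $(sw)^{-\sigma} = ws^{\sigma}$; but $(sw)^{-\sigma} = ((sw)^{-1})^{\sigma} = (sw) $ by our assumption, so $sw = ws^{\sigma}$, i.e.\ $w = sws^{\sigma}$, contradicting $\ell(sws^{\sigma}) = \ell(w)+2$. Hence $sw$ is not a $\sigma$-involution and the result follows.
\end{proof}
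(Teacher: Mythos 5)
Your overall strategy matches the paper's own proof exactly: show that $sw$ cannot be a $\sigma$-involution (hence $sw\notin\disp(\theta)$ and $|\Delta_{sw}(\theta)|=0$), then feed this into both identities of Theorem~\ref{thm:counting}. However, the displayed chain of equalities in your formal proof contains an error. You write $w^{-1}s^{\sigma}=(sw)^{\sigma}$, but since $\sigma$ acts as a group automorphism we have $(sw)^{\sigma}=s^{\sigma}w^{\sigma}=s^{\sigma}w^{-1}$, which is not the same as $w^{-1}s^{\sigma}$ unless $s^{\sigma}$ and $w$ happen to commute. Consequently the intermediate conclusion ``whence $s^{\sigma}=s$'' is unjustified, and indeed false in general.

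Fortunately this wrong step is also superfluous: everything from ``$(sw)^{-1}=w^{-1}s=w^{\sigma}s$'' onward uses only $w^{\sigma}=w^{-1}$ and $\sigma^2=1$ and never invokes $s^{\sigma}=s$, so you still correctly arrive at $sw=ws^{\sigma}$ and hence $sws^{\sigma}=w$, the desired contradiction with $\ell(sws^{\sigma})=\ell(w)+2$. The paper reaches the same point more directly: from $(sw)^{\sigma}=(sw)^{-1}$ and $w^{\sigma}=w^{-1}$ one gets $s^{\sigma}w^{-1}=w^{-1}s$, i.e.\ $ws^{\sigma}=sw$, i.e.\ $sws^{\sigma}=w$. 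Delete the spurious line about $s^{\sigma}=s$ and your proof is both correct and essentially identical to the paper's.
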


\begin{proof}
Note that $sw$ is not a $\sigma$-involution (as $(sw)^{\sigma}=(sw)^{-1}$, combined with the condition $w^{\sigma}=w^{-1}$, implies that $sws^{\sigma}=w$, contradicting $\ell(sws^{\sigma})=\ell(w)+2$). Hence by assumption $|\Delta_{sw}(\theta)|=|\Delta_{ws^{\sigma}}(\theta)|=0$, and the result follows from Theorem~\ref{thm:counting}.
\end{proof}

For any subset $U\subseteq W$, we write $U(q)=\sum_{w\in U}q_w$ and $U(q^{1/2})=\sum_{w\in U}q_w^{1/2}$. Thus
$$
W(q)=\sum_{w\in W}q_w=\sum_{w\in W}|\{D\in\Ch(\Delta)\mid \delta(C,D)=w\}|=|\Ch(\Delta)|
$$
counts the total number of chambers in $\Delta$ (this is called the \textit{Poincar\'e polynomial} of $W$, and there are product formulae available for this polynomial for each irreducible spherical Coxeter group, see \cite{Mac:72}).

The following theorem computes the cardinalities $|\Delta_w(\theta)|$ for uniclass automorphisms in terms of the ``class sum'' $\bC(q^{1/2})$ where $\bC=\disp(\theta)$. We shall give another formula, making use of Corollary~\ref{cor:bicapped}, in Theorem~\ref{thm:counts2}.

\begin{thm}\label{thm:counts}
Let $\Delta$ be a finite spherical building with parameters $(q_s)_{s\in S}$. Let $\theta$ be a uniclass automorphism with companion automorphism~$\sigma$ and suppose that $\sigma$ has order $1$ or $2$. Then $\bC=\disp(\theta)$ is a full $\sigma$-conjugacy class, and for $w\in\bC$ we have 
$$
|\Delta_w(\theta)|=\frac{W(q)}{\bC(q^{1/2})}q_w^{1/2}.
$$
\end{thm}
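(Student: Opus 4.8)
## Proof proposal

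The plan is to use Corollary~\ref{cor:counting} to show that the function $w \mapsto |\Delta_w(\theta)|$ on $\bC$ is essentially determined by $q_w^{1/2}$ up to a global constant, and then to fix that constant by summing over all chambers. First I would note that by Proposition~\ref{prop:full} the set $\bC = \disp(\theta)$ is indeed a full $\sigma$-conjugacy class, and since $\sigma$ has order $1$ or $2$ and $(W,S)$ is spherical, $\bC$ consists of $\sigma$-involutions. Thus Corollary~\ref{cor:counting} applies: whenever $w \in \bC$ and $s \in S$ satisfy $\ell(sws^{\sigma}) = \ell(w)+2$, we have $sws^{\sigma} \in \bC$ (by Proposition~\ref{prop:basic}(2), or Lemma~\ref{lem:sigmainvolutions}) and
$$
|\Delta_{sws^{\sigma}}(\theta)| = q_s |\Delta_w(\theta)|.
$$

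The key step is to deduce from this relation that $|\Delta_w(\theta)| = c\, q_w^{1/2}$ for all $w \in \bC$, where $c$ is a constant independent of $w$. To see this, observe that $q_{sws^{\sigma}} = q_s^2 q_w$ (since $sws^{\sigma}$ is reduced as written and $q_{s^{\sigma}} = q_s$), so $q_{sws^{\sigma}}^{1/2} = q_s q_w^{1/2}$, matching exactly the recursion satisfied by $|\Delta_w(\theta)|$. Hence the ratio $|\Delta_w(\theta)| / q_w^{1/2}$ is unchanged under the moves $w \leftrightarrow sws^{\sigma}$. The remaining point is that any two elements of $\bC$ can be connected by a chain of such moves (possibly passing up and down in length): by Theorem~\ref{thm:downwardsclosure}(3) every $w \in \bC$ can be brought down, via moves of the form $w \mapsto t w t^{\sigma}$ with $\ell$ decreasing by $2$, to a fixed minimal-length element $w_J$ of $\bC$. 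Running this in reverse shows $|\Delta_w(\theta)| / q_w^{1/2} = |\Delta_{w_J}(\theta)| / q_{w_J}^{1/2} =: c$ for all $w \in \bC$, so $|\Delta_w(\theta)| = c\, q_w^{1/2}$.

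To pin down $c$, I would sum the defining partition $\Ch(\Delta) = \bigsqcup_{w \in W} \Delta_w(\theta)$. Since $\disp(\theta) = \bC$, the sets $\Delta_w(\theta)$ are empty for $w \notin \bC$, so
$$
|\Ch(\Delta)| = \sum_{w \in \bC} |\Delta_w(\theta)| = c \sum_{w \in \bC} q_w^{1/2} = c\, \bC(q^{1/2}).
$$
As recorded in the excerpt, $|\Ch(\Delta)| = W(q)$, and therefore $c = W(q)/\bC(q^{1/2})$, giving $|\Delta_w(\theta)| = \dfrac{W(q)}{\bC(q^{1/2})} q_w^{1/2}$ as claimed. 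The only mild subtlety — the step I would be most careful about — is the connectivity claim: one must check that the ``raise and lower by conjugation'' moves within $\bC$ suffice to link any $w$ to $w_J$, which follows from Theorem~\ref{thm:downwardsclosure}(3) applied to the lowering moves, together with the observation that a lowering move $w \mapsto twt^{\sigma}$ with $\ell$ decreasing by $2$ is, read backwards, a move $w' \mapsto t w' t^{\sigma}$ with $\ell(t w' t^{\sigma}) = \ell(w') + 2$, so Corollary~\ref{cor:counting} governs it. Everything else is a short computation with $q$-parameters.
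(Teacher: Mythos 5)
Your overall strategy — establish that $|\Delta_w(\theta)|/q_w^{1/2}$ is constant on $\bC$ using Corollary~\ref{cor:counting}, then normalize by summing over all chambers — is exactly the paper's. However, there is a genuine gap in the connectivity step, and it lies precisely in the spot you flag as the ``mild subtlety.''

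You assert that by Theorem~\ref{thm:downwardsclosure}(3) every $w\in\bC$ can be lowered via length-decreasing moves to ``a fixed minimal-length element $w_J$.'' This misreads the theorem: it guarantees that each $w$ descends to \emph{some} minimal-length element $w_J$, but the target $J$ depends on $w$, and $\bC$ can contain several minimal-length elements $w_{J_1},w_{J_2},\ldots$ (already in type $\sA_2$ the class of $s_1$ has two distinct minima $s_1,s_2$, and $s_1s_2s_1$ lowers to either depending on the chosen first letter). Your lowering chains therefore only show $|\Delta_w(\theta)|/q_w^{1/2}=|\Delta_{w_J(w)}(\theta)|/q_{w_J(w)}^{1/2}$ where the right-hand side may a priori depend on which minimum $w$ descends to. Nothing in your argument links the different minima, so the ``constant'' $c$ is not yet well-defined. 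Note also that you cannot patch this by invoking Corollary~\ref{cor:bicapped} (uniqueness of the minimum for displacement classes), since that corollary is a consequence of Theorem~\ref{thm:main1}, whose proof (via Lemma~\ref{lem:lowrank} and Theorem~\ref{thm:uncapped}) uses the very formula you are trying to prove — that would be circular.

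The paper sidesteps the issue by not routing through the minimum at all: given $w,v\in\bC$, pick any $x$ with $v=x^{-1}wx^\sigma$, take a reduced expression $x=s_1\cdots s_n$, and walk $w=w_0,\,w_j=s_jw_{j-1}s_j^\sigma,\,\ldots,\,w_n=v$. At each step the length changes by $-2$, $0$, or $+2$. The $\pm 2$ cases are handled by Corollary~\ref{cor:counting} (and its reverse) exactly as in your argument; the new ingredient is that in the length-preserving case one has $w_j=w_{j-1}$ by Lemma~\ref{lem:sigmacondition} (using that $w_{j-1}$ is a $\sigma$-involution), so the ratio is trivially unchanged. This handles multiple minima automatically. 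Your approach can be repaired by adding exactly this direct-walk argument to connect any two minimal-length elements, but once you do, you have essentially reproduced the paper's proof. The summation step to determine $c$ is fine as you wrote it.
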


\begin{proof}
By Proposition~\ref{prop:full} we have that $\bC=\disp(\theta)$ is a full $\sigma$-conjugacy class consisting of $\sigma$-involutions. We claim that $q_w^{-1/2}|\Delta_w(\theta)|=q_v^{-1/2}|\Delta_v(\theta)|$ for all $w,v\in\bC$. Since $w,v\in\bC$ there exists $x\in W$ with $v=x^{-1}wx^{\sigma}$. Let $x=s_1\cdots s_n$ be a reduced expression, and define $w_0=w$ and $w_j=s_jw_{j-1}s_j^{\sigma}$ for $1\leq j\leq n$, so that $v=w_n$. If $\ell(w_j)=\ell(w_{j-1})$ then $w_j=w_{j-1}$ by Lemma~\ref{lem:sigmacondition}, and thus it follows from Corollary~\ref{cor:counting} that
$$
|\Delta_{w_j}(\theta)|=q_{s_j}^{\epsilon_j}|\Delta_{w_{j-1}}(\theta)|
$$
where $\epsilon_j\in\{-1,0,1\}$ satisfies $\ell(w_j)=\ell(w_{j-1})+2\epsilon_j$. Moreover, we have $q_{w_j}=q_{s_j}^{2\epsilon_j}q_{w_{j-1}}$, and hence
$$
|\Delta_v(\theta)|=q_{s_1}^{\epsilon_1}\cdots q_{s_n}^{\epsilon_n}|\Delta_w(\theta)|\quad\text{and}\quad q_v=q_{s_1}^{2\epsilon_1}\cdots q_{s_n}^{2\epsilon_n}q_w
$$
and the claim follows.

Let $w\in \bC$. Since $W(q)$ is the total number of chambers, we have (using the above claim)
$$
W(q)=\sum_{v\in \bC}|\Delta_v(\theta)|=q_w^{-1/2}|\Delta_w(\theta)|\sum_{v\in \bC}q_v^{1/2}
$$
and hence the result.
\end{proof}

\subsection{The non-thick case}\label{sec:nonthick}

Proposition~\ref{prop:reduciblecase} reduces the classification of uniclass automorphisms of spherical buildings to the irreducible case. In this section we reduce the classification to the thick case. We refer to \cite{Scharlau} for the construction of the ``thick-frame'' associated to a non-thick spherical building outlined briefly below.

Let $\Delta$ be a non-thick spherical building of irreducible type $(W,S)$. Adjacent chambers $C,D\in\Ch(\Delta)$ are called \textit{thin-adjacent} if the panel containing $C\cup D$ contains precisely two chambers. Chambers $C,D\in\Ch(\Delta)$ are said to be in the same \textit{thin class} if they can be joined by a gallery involving only thin adjacencies. 

The \textit{thick-frame} of $\Delta$ is the thick spherical building $\Delta'$ with chamber set $\Ch(\Delta')$ being the set of all thin classes of $\Delta$. Thin classes $C',D'\in\Ch(\Delta')$ are adjacent if and only if there exists adjacent chambers $C,D\in\Ch(\Delta)$ with $C\in C'$ and $D\in D'$. We shall describe the Coxeter type $(W',S')$ of $\Delta'$, along with the associated $W'$-valued distance function $\delta':\Ch(\Delta')\times\Ch(\Delta')\to W'$ in the paragraphs below. 

Fix, once and for all, a choice of apartment $A_0$ of $\Delta$ and a chamber $C_0$ of $A_0$. Let $C_0'$ be the thin class containing $C_0$ (and hence $C_0'$ is contained in $A_0$ by thinness in the thin class). Let $W'$ be the reflection subgroup of $W$ generated by the reflections in the thin walls of $A_0$, and let $S'$ be the reflections in the (necessarily thin) walls bounding $C_0'$. Then $(W',S')$ is a Coxeter system. Identify the set $\Ch(A_0)$ with $W$ in the unique way so that $C_0$ corresponds to $1$ and the unique $s$-adjacent chamber to $C_0$ in $A_0$ corresponds to $s$, for each $s\in S$. Under this identification, let $U$ denote the set of elements of $W$ contained in~$C_0'$. 

Let $C'\in\Ch(\Delta')$, and let $A$ be an apartment (of $\Delta$) containing $C_0'$ and $C'$. Then $W'$ acts simply transitively on the set of thin classes in $A$ (with $S'$ being the reflections in the walls bounding $C_0'$). For $u\in U$ Let $C'[u]\in\Ch(\Delta)$ denote the unique chamber of the thin class $C'$ in the orbit of $u\in C_0'$ under the action of $W'$. It is easy to see that this does not depend on the choice of apartment~$A$. It follows that for any thin classes $C',D'\in\Ch(\Delta')$ we have $\delta(C'[1],D'[1])\in W'$, and we define
$$
\delta'(C',D')=\delta(C'[1],D'[1]).
$$

We shall frequently use the following fact. If $C',D'\in\Ch(\Delta')$ are thin classes, and if $C=C'[u]$ and $D=D'[v]$ with $u,v\in U$, then by thinness in the thin classes $C'$ and $D'$ we have
\begin{align}\label{eq:freq}
\delta(C,D)=\delta(C'[u],C'[1])\delta(C'[1],D'[1])\delta(D'[1],D'[v])=u^{-1}\delta'(C',D')v.
\end{align}

Let $\theta$ be an automorphism of $\Delta$ with companion automorphism $\sigma\in\Aut(\Pi)$. We define the induced automorphism $\theta'$ of $\Delta'$, along with its associated companion automorphism $\sigma'\in\Aut(\Pi')$ (with $\Pi'$ the Coxeter graph of $(W',S')$) below. We also define an associated bijection $\pi':U\to U$.
\begin{compactenum}[$(1)$]
\item For $C'\in \Ch(\Delta')$ let $C'^{\theta'}=\{C^{\theta}\mid C\in C'\}$. Since $\theta$ maps thin panels to thin panels, it follows that $C'^{\theta'}$ is a thin class, and that $\theta'$ is an automorphism of $\Delta'$.
\item Define $\sigma'\in\Aut(\Pi')$ by $\delta'(C',D')=s'$ if and only if $\delta'(C'^{\theta'},D'^{\theta'})=s'^{\sigma'}$, for $s'\in S'$. 
\item Define a bijection $\pi':U\to U$ by 
$$
C_0'[u]^{\theta}=C_0'^{\theta'}[\pi'(u)].
$$
That is, the $u$-chamber of the thin class $C_0'$ is mapped by $\theta$ to the $\pi'(u)$-chamber of the thin class of $C_0'^{\theta'}$. It follows that
$$
C'[u]^{\theta}=C'^{\theta'}[\pi'(u)]\quad\text{for all thin classes $C'\in\Ch(\Delta')$}.
$$
\end{compactenum}

\begin{remark}
It is possible for $\sigma'\in\Aut(\Pi')$ to be nontrivial even when $\sigma\in\Aut(\Pi)$ is trivial. For example, if $\theta$ is type preserving on a non-thick hexagon, then it is possible for $\theta'$ to be either a collineation or a duality of the thick-frame projective plane. This example illustrates the importance of the additional information contained in the bijection $\pi':U\to U$. 
\end{remark}

The following lemma gives a relation between $\sigma$, $\sigma'$, and $\pi'$. 

\begin{lemma}\label{lem:connection}
We have $\pi'(1)^{-1}w'^{\sigma'}\pi'(1)=w'^{\sigma}$ for all $w'\in W'$. 
\end{lemma}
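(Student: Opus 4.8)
The plan is to unwind the two definitions of the companion automorphisms $\sigma$ and $\sigma'$ together with the defining property of the bijection $\pi'$, using the fundamental identity~\eqref{eq:freq} to translate everything into statements about the $W'$-distance $\delta'$ in the thick-frame. The key point is that $\sigma'$ is defined by how $\theta'$ acts on $\delta'$-distances between thin classes, while $\sigma$ governs how $\theta$ acts on $\delta$-distances in $\Delta$, and these are linked precisely by~\eqref{eq:freq}: for $u,v\in U$ and thin classes $C',D'$ one has $\delta(C'[u],D'[v])=u^{-1}\delta'(C',D')v$.

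\textbf{Step 1.} Fix $w'\in W'$ and choose thin classes $C',D'\in\Ch(\Delta')$ with $\delta'(C',D')=w'$; such classes exist since $\delta'$ is surjective onto $W'$ (e.g.\ take $C'=C_0'$ and $D'$ the $w'$-translate of $C_0'$ in $A_0$). By definition of $\sigma'$, applying $\theta'$ gives $\delta'(C'^{\theta'},D'^{\theta'})=w'^{\sigma'}$.

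\textbf{Step 2.} Now compute $\delta(C'[1]^{\theta},D'[1]^{\theta})$ in two ways. On the one hand, $\delta(C'[1],D'[1])=\delta'(C',D')=w'$ by definition of $\delta'$, so the defining property of $\sigma$ (extended multiplicatively from $S$ to $W$, which is legitimate since $\sigma\in\Aut(\Pi)$ is a diagram automorphism) yields $\delta(C'[1]^{\theta},D'[1]^{\theta})=w'^{\sigma}$. On the other hand, using $C'[1]^{\theta}=C'^{\theta'}[\pi'(1)]$ and $D'[1]^{\theta}=D'^{\theta'}[\pi'(1)]$ together with~\eqref{eq:freq}, we get
$$
\delta(C'[1]^{\theta},D'[1]^{\theta})=\pi'(1)^{-1}\,\delta'(C'^{\theta'},D'^{\theta'})\,\pi'(1)=\pi'(1)^{-1}w'^{\sigma'}\pi'(1).
$$
Comparing the two expressions gives $w'^{\sigma}=\pi'(1)^{-1}w'^{\sigma'}\pi'(1)$, which is the claim. (One should note $\pi'(1)\in U\subseteq W$, so both sides lie in $W'$ — this uses that $w'^{\sigma}\in W'$, which follows because $\sigma$ preserves the thin walls of $A_0$ and hence normalises $W'$; this point may need a sentence of justification.)

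\textbf{The main obstacle} I anticipate is a bookkeeping subtlety rather than a conceptual one: one must be careful that $\pi'$ is defined only on $U$ and that $C'[u]$ is only defined for $u\in U$, so plugging $u=v=1$ is exactly what makes the argument clean — no extraneous factors appear. A secondary point worth checking is that the extension of $\sigma$ from $S$ to all of $W$ commutes appropriately with the building axiom $\delta(C^\theta,D^\theta)=\delta(C,D)^\sigma$; this is standard but should be invoked explicitly. Beyond that, the proof is a direct two-line computation once~\eqref{eq:freq} and the defining relations of $\theta'$, $\sigma'$, $\pi'$ are in hand.
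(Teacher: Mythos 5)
Your proof is correct and follows essentially the same route as the paper's: both reduce the claim to the chain of identities $\pi'(1)^{-1}w'^{\sigma'}\pi'(1)=\delta(C'^{\theta'}[\pi'(1)],D'^{\theta'}[\pi'(1)])=\delta(C'[1]^{\theta},D'[1]^{\theta})=\delta(C'[1],D'[1])^{\sigma}=w'^{\sigma}$, using \eqref{eq:freq} together with the defining properties of $\pi'$, $\sigma$, and $\sigma'$. The only cosmetic difference is that the paper first reduces by induction on $\ell'(w')$ to the generator case $w'=s'\in S'$, whereas you invoke directly that the companion automorphism satisfies $\delta(C^{\theta},D^{\theta})=\delta(C,D)^{\sigma}$ for arbitrary chambers — a standard fact, so your shortcut is perfectly legitimate; your closing parenthetical about needing $w'^{\sigma}\in W'$ is unnecessary, since the lemma is simply an identity in $W$.
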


\begin{proof}
By induction on $\ell'(w')$ (with $\ell'$ the length function on $(W',S')$) it suffices to consider the case $w'=s'\in S'$. Let $C'$ and $D'$ be thin classes with $\delta'(C',D')=s'$. Then $\delta'(C'^{\theta'},D'^{\theta'})=s'^{\sigma'}$, and so
\begin{align*}
\pi'(1)^{-1}s'^{\sigma'}\pi'(1)&=\delta(C'^{\theta'}[\pi(1)],C'^{\theta'}[1])\delta(C'^{\theta'}[1],D'^{\theta'}[1])\delta(D'^{\theta'}[1],D'^{\theta'}[\pi'(1)])\\
&=\delta(C'^{\theta'}[\pi'(1)],D'^{\theta'}[\pi'(1)])\\
&=\delta(C'[1]^{\theta},D'[1]^{\theta})\\
&=\delta(C'[1],D'[1])^{\sigma}\\
&=s'^{\sigma'},
\end{align*}
completing the proof.
\end{proof}

\begin{thm}\label{thm:reducetothick}
Let $\Delta$ be a non-thick building of type $(W,S)$ with thick frame $\Delta'$ of type $(W',S')$. Let $\theta$ be an automorphism of $\Delta$ and let $\theta'$ be the induced automorphism of $\Delta'$. Then $\theta$ is uniclass if and only if $\theta'$ is uniclass. 
\end{thm}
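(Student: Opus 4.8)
The plan is to prove the two implications of Theorem~\ref{thm:reducetothick} by passing back and forth between displacement data in $\Delta$ and in $\Delta'$ via the formula~\eqref{eq:freq} and the bijection $\pi'$. The key technical fact is: for a thin class $C'\in\Ch(\Delta')$ and $u\in U$, if $C=C'[u]$ then by~\eqref{eq:freq} and the definition of $\pi'$ we have
\begin{align*}
\delta(C,C^{\theta})=\delta(C'[u],C'^{\theta'}[\pi'(u)])=u^{-1}\delta'(C',C'^{\theta'})\pi'(u).
\end{align*}
So as $C$ ranges over all chambers of $\Delta$, writing $w'=\delta'(C',C'^{\theta'})\in\disp(\theta')$, the displacement $\delta(C,C^{\theta})$ ranges exactly over the set $\{u^{-1}w'\pi'(u)\mid w'\in\disp(\theta'),\ u\in U\}$. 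The set $U$ consists of the elements of $W$ lying in the thin class $C_0'$; since $W'$ acts simply transitively on thin classes in $A_0$, $U$ is a set of coset representatives and in particular $1\in U$.

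First I would prove the easy direction: if $\theta'$ is uniclass with companion $\sigma'$, then $\disp(\theta')\subseteq\Cl^{\sigma'}(w')$ for a single $w'$. Using Lemma~\ref{lem:connection}, which says $\pi'(1)^{-1}v'^{\sigma'}\pi'(1)=v'^{\sigma}$ for all $v'\in W'$, one checks that the map $v'\mapsto \pi'(1)^{-1}v'\pi'(1)$ conjugates the $\sigma'$-action on $W'$ into the $\sigma$-action, hence carries $\Cl^{\sigma'}(w')$ into a single $\sigma$-conjugacy class of $W$ (note $W'\le W$, but $\sigma$-conjugacy here is inside $W$, so we get containment in one $W$-class). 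Then for any $u\in U$ and $w'\in\disp(\theta')$, $u^{-1}w'\pi'(u)$ — rewritten using $\pi'(u)=\pi'(u)\pi'(1)^{-1}\pi'(1)$ and the relation connecting $\pi'(u)\pi'(1)^{-1}$ to the $W'$-action — lies in that same class; so $\disp(\theta)$ sits in one $\sigma$-class and $\theta$ is uniclass. Making the bookkeeping between $\pi'(u)$ and honest $\sigma$-twisted conjugation precise is the one place care is needed; the cleanest route is to show directly that $u^{-1}w'\pi'(u)$ and $u'^{-1}w'\pi'(u')$ are $\sigma$-conjugate in $W$ for any $u,u'\in U$, which reduces (taking $u'=1$) to showing $u^{-1}w'\pi'(u)\in\Cl^\sigma(w'\pi'(1))$, and then invoking Lemma~\ref{ext} with suitably chosen chambers of $\Delta$ in the convex hull.

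For the converse, suppose $\theta$ is uniclass. Taking $u=1$ in the displayed formula, $\delta(C'[1],C'[1]^{\theta})=\delta'(C',C'^{\theta'})\pi'(1)$ for every thin class $C'$; as $C'$ varies this shows $\disp(\theta')\pi'(1)\subseteq\disp(\theta)$, which is contained in a single $\sigma$-conjugacy class $\bC$ of $W$ by hypothesis. Hence $\disp(\theta')\subseteq\bC\pi'(1)^{-1}$. It remains to see that $\bC\pi'(1)^{-1}\cap W'$ lies in a single $\sigma'$-conjugacy class of $W'$. If $w_1',w_2'\in\disp(\theta')$ then $w_1'\pi'(1)$ and $w_2'\pi'(1)$ are $\sigma$-conjugate in $W$; but more is true — I would argue they are $\sigma$-conjugate by an element of $U$ (or, after translating, that $w_1',w_2'$ are related by conjugation by an element realising a gallery inside $\Delta'$), using Lemma~\ref{ext} applied in $\Delta$: pick chambers $C_1=C_1'[1]$ and $C_2=C_2'[1]$ realising $w_1'\pi'(1)$ and $w_2'\pi'(1)$; a minimal gallery in $\Delta'$ from $C_1'$ to $C_2'$ lifts (via the $[\,\cdot\,]$ construction) to a gallery in $\Delta$ whose convex hull contains $\{C_1,C_1^\theta\}$ when arranged appropriately, forcing the displacements into one $\sigma$-class in a way that descends to $W'$ through Lemma~\ref{lem:connection}.

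The main obstacle is the translation between the two twisting data: $\theta$ carries companion $\sigma$ on $\Pi$, $\theta'$ carries companion $\sigma'$ on $\Pi'$, and the link between them — Lemma~\ref{lem:connection} — involves the extra element $\pi'(1)\in U$, so a $\sigma'$-conjugacy class in $W'$ does not map to a $\sigma$-conjugacy class in $W$ on the nose but only after a fixed right translation by $\pi'(1)$ and an identification of $W'$ as a reflection subgroup of $W$. Keeping straight which conjugations happen inside $W'$ versus inside $W$, and verifying that the translation-by-$\pi'(1)$ is compatible with both $\psi$-type dualities and the parity/length constraints (so that, e.g., $\sigma'$-involutions in $\disp(\theta')$ correspond to $\sigma$-involutions in $\disp(\theta)$), is the delicate part. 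I expect the proof of Lemma~\ref{lem:connection} to do most of the heavy lifting here, so that the theorem follows by combining it with~\eqref{eq:freq}, Lemma~\ref{ext}, and Proposition~\ref{prop:full}.
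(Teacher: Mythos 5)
Your overall architecture — work with the formulas from~\eqref{eq:freq} and the bijection $\pi'$, and use Lemma~\ref{lem:connection} to translate between the $\sigma'$- and $\sigma$-actions — matches the paper. For the direction $\theta'$ uniclass $\Rightarrow$ $\theta$ uniclass you have the right idea, but the detour through Lemma~\ref{ext} is unnecessary (and the required convex-hull containment need not hold): the identity $\pi'(u)=\pi'(1)u^{\sigma}$ (which follows immediately from applying $\theta$ to $\delta(C'[1],C'[u])=u$ and using~\eqref{eq:freq}) rewrites $u^{-1}w'\pi'(u)$ as $u^{-1}\bigl(w'\pi'(1)\bigr)u^{\sigma}$, and then Lemma~\ref{lem:connection} shows $\Cl^{\sigma'}(w')\pi'(1)\subseteq\Cl^{\sigma}(w'\pi'(1))$; combining these two gives the direction purely algebraically.

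The converse is where you have a genuine gap. You reduce to showing that $\bC\pi'(1)^{-1}\cap W'$ lies in a single $\sigma'$-conjugacy class of $W'$, where $\bC=\disp(\theta)$. But this is exactly the crux of the theorem, and the tools you invoke do not deliver it: Lemma~\ref{ext} only produces $\sigma$-conjugacy \emph{in $W$}, whereas you need $\sigma'$-conjugacy \emph{in $W'$}, and the lift of a gallery in $\Delta'$ to $\Delta$ gives no control over which element of $W$ does the conjugating, so there is no mechanism to descend it to $W'$. The paper instead argues locally: for thin classes $C',D'$ with $\delta'(C',D')=s'\in S'$, choose adjacent chambers $C\in C'$, $D\in D'$ with $\delta(C,D)=s\in S$, and then establish the crucial structural fact that $C=C'[u]$ and $D=D'[u]$ for the \emph{same} $u\in U$ (because the wall separating $C$ from $D$ is thick, hence a wall of $C'$), with $usu^{-1}=s'$ and $\pi'(u)s^{\sigma}\pi'(u)^{-1}=s'^{\sigma'}$. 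Then the uniclass hypothesis on $\theta$ restricted to this single panel forces $\delta(D,D^{\theta})\in\{w,sws^{\sigma}\}$ by a parity argument, and the identities above convert this to $\delta'(D',D'^{\theta'})\in\{w',s'w's'^{\sigma'}\}$, giving $\sigma'$-conjugacy in $W'$ directly. Without this adjacency reduction and the ``same $u$'' observation, I don't see how to close the gap in your sketch.
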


\begin{proof}
Let $\sigma\in\Aut(\Pi)$ be the companion automorphism of $\theta$, and let $\sigma'\in\Aut(\Pi')$ be the companion automorphism of $\theta'$ and $\pi'$ the associated bijection of $U$. 

Suppose that $\theta'$ is uniclass. Let $C\in\Ch(\Delta)$ and let $C'\in\Ch(\Delta')$ be the thin class containing $C$. Write $w=\delta(C,C^{\theta})$ and $w'=\delta'(C',C'^{\theta'})$. We have 
$$
\delta(C'[1],C'[1]^{\theta})=w'\pi'(1).
$$
Writing $\delta(C'[1],C)=u$ (an element of $U$) we have 
$$
\delta(C,C^{\theta})=\delta(C,C'[1])\delta(C'[1],C'[1]^{\theta})\delta(C'[1]^{\theta},C^{\theta})=u^{-1}w'\pi'(1)u^{\sigma}.
$$
If $D$ is any other chamber of $\Delta$ the same argument gives $\delta(D,D^{\theta})=u_1^{-1}v'\pi'(1)u_1^{\sigma}$ for some $u_1\in U$ and $v'\in\disp(\theta')$. Since $\theta'$ is uniclass we have $v'=x'^{-1}w'x'^{\sigma'}$ for some $x'\in W'$. It follows that
$$
\delta(D,D^{\theta})=u_1^{-1}x'^{-1}u\delta(C,C^{\theta})u^{-\sigma}\pi'(1)^{-1}x'^{\sigma'}\pi'(1)u_1^{\sigma}.
$$
Thus by Lemma~\ref{lem:connection} we have that $\delta(D,D^{\theta})$ is $\sigma$-conjugate to $\delta(C,C^{\theta})$, and so $\theta$ is uniclass.

Suppose that $\theta$ is uniclass. To prove that $\theta'$ is uniclass it suffices to show that if $C',D'\in\Ch(\Delta')$ with $\delta'(C',D')=s'\in S'$ then $\delta'(D',D'^{\theta'})$ is $\sigma'$-conjugate (in $W'$) to $\delta'(C',C'^{\theta'})$. Choose chambers $C\in C'$ and $D\in D'$ such that $\delta(C,D)=s\in S$. Then $D=D'[u]$ and $C=C'[u]$ for some $u\in U$, and $usu^{-1}=s'$. To see this, choose an apartment of $\Delta$ containing $C$ and $D$, and let $W'$ act on this apartment with $S'$ being the reflections in the walls of $C'$. Since $C$ and $D$ lie in distinct thin classes the wall separating them is thick, and hence is a wall of $C'$. It follows that $D=s'C$, and hence $C=C'[u]$ and $D=D'[u]$ for some $u\in U$, and the formula $usu^{-1}=s'$ follows from~(\ref{eq:freq}). Moreover, by considering the images under $\theta$ we also have $\pi'(u)s^{\sigma}\pi'(u)^{-1}=s'^{\sigma'}$. 

Let $w=\delta(C,C^{\theta})$ and $w'=\delta(C',C'^{\theta'})$. We have
\begin{align*}
\delta'(C',C'^{\theta'})&=\delta(C'[1],C'^{\theta'}[1])\\
&=\delta(C'[1],C'[u])\delta(C'[u],C'[u]^{\theta})\delta(C'[u]^{\theta},C'^{\theta'}[1])\\
&=u\delta(C,C^{\theta})\delta(C'^{\theta'}[1],C'^{\theta'}[\pi'(u)])^{-1}\\
&=uw\pi'(u)^{-1},
\end{align*}
and similarly $\delta'(D',D'^{\theta'})=u\delta(D,D^{\theta})\pi'(u)^{-1}$. Now, since $\theta$ is uniclass we have $\delta(D,D^{\theta})\in\{w,sws^{\sigma}\}$ (because $sw$ and $ws^{\sigma}$ are not in the same $\sigma$-class as $w$ by parity), and it follows from the above calculations that
$$
\delta'(D',D'^{\theta'})\in\{w',(usu^{-1})w'(\pi'(u)s^{\sigma}\pi'(u)^{-1})\}=\{w',s'w's'^{\sigma'}\},
$$
and so $\theta'$ is uniclass. 
\end{proof}

\section{Uniclass automorphisms are capped}\label{sec:3}

In this section we will show that uniclass automorphisms are necessarily capped (Theorem~\ref{thm:uncapped}). This observation will be used extensively in the proof of Theorem~\ref{thm:main1}.  We begin with some background and preliminary results required for the proof of Theorem~\ref{thm:uncapped}.

Let $\Phi$ be a reduced irreducible crystallographic root system with simple roots $\alpha_1,\ldots,\alpha_n$ and Weyl group~$W$. The \textit{height} of a root $\alpha=k_1\alpha_1+\cdots+k_n\alpha_n$ is $\mathrm{ht}(\alpha)=k_1+\cdots +k_n$. There is a unique root $\varphi\in\Phi$ of maximal height (the \textit{highest root} of $\Phi$). The \textit{polar type} of $\Phi$ is the subset $\wp\subseteq\{1,2,\ldots,n\}$ given by 
$$
\wp=\{1\leq i\leq n\mid \langle\alpha_i,\varphi\rangle\neq 0\}.
$$
If $\Phi=\sA_n$ then $\wp=\{1,n\}$, while if $\Phi\neq \sA_n$ then $\wp=\{p\}$ is a singleton set, and in this case we often refer to the element $p$ as the \textit{polar node}. 

We have $w_0s_{\varphi}=w_{S\backslash\wp}$. To see this, note that $w_0\varphi=-\varphi$ (by properties of the highest root) and so $w_0s_{\varphi}(\alpha)=w_0\alpha+\langle\alpha,\varphi^{\vee}\rangle\varphi$ for all $\alpha\in\Phi^+$. Since $\langle\alpha,\varphi^{\vee}\rangle\in\{0,1\}$ for all $\alpha\in\Phi^+$ (see \cite[IV, \S1.8, Proposition~25]{Bou:00}) it follows that $\Phi(w_0s_{\varphi})=\Phi_{S\backslash\wp}^+$, and hence the result. 

\begin{lemma}\label{lem:highestroot}
Let $\Phi$ be an irreducible crystallographic root system with highest root~$\varphi$ with Coxeter system $(W,S)$ and let $\sigma$ be the opposition relation. Let $\beta\in\Phi$ be a long root (with all roots considered long in the simply laced case). The element $w_{S\backslash\wp}$ is of minimal length in the $\sigma$-conjugacy class $\Cl^{\sigma}(s_{\beta}w_0)$.
\end{lemma}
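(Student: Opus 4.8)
The statement claims that $w_{S\backslash\wp}$ has minimal length in $\Cl^{\sigma}(s_{\beta}w_0)$, where $\sigma$ is the opposition relation and $\beta$ is a long root. The natural strategy is to apply Lemma~\ref{lem:min1}: it suffices to check that $w_{S\backslash\wp}$ is of the form $w_J$ for a spherical $J$ (here $J=S\backslash\wp$, trivially spherical) satisfying $sw_J=w_Js^{\sigma}$ for all $s\in J$, together with the observation that $s_{\beta}w_0$ is genuinely $\sigma$-conjugate to $w_{S\backslash\wp}$. For the first point, since $\sigma$ is the opposition relation $\sigma=\sigma_0$, the condition $sw_J=w_Js^{\sigma_0}=w_Jw_0sw_0$ must be verified; I would rewrite this as $w_0 s w_0 \cdot w_J = w_J \cdot w_0 s w_0$ after conjugating, or more directly use that $w_0$ acts on $S$ by $\sigma_0$ and that $w_J$ for $J=S\setminus\wp$ commutes appropriately. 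Actually the cleanest route: the paper has already shown $w_0s_{\varphi}=w_{S\backslash\wp}$, so equivalently I want to show $w_{S\backslash\wp}$ (equivalently $w_0 s_\varphi$) lies in the class $\Cl^{\sigma_0}(s_\beta w_0)$ and is minimal there.

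\textbf{Key steps.} First I would establish that $s_{\beta}w_0$ and $s_{\varphi}w_0$ are $\sigma_0$-conjugate. By Lemma~\ref{lem:pairing}, $\Cl^{\sigma_0}(s_{\beta}w_0)w_0 = \Cl^{\sigma_0\sigma_0}(s_{\beta}w_0w_0)=\Cl(s_{\beta})$, so the claim reduces to showing that $s_{\beta}$ and $s_{\varphi}$ are conjugate in $W$ and that $s_\varphi$ — equivalently $w_0 s_\varphi w_0^{-1}\cdot(\text{something})$ — is of minimal length appropriately. Two reflections $s_\beta,s_\varphi$ are conjugate in $W$ precisely when $\beta,\varphi$ lie in the same $W$-orbit, which holds since both are long roots and $W$ is transitive on long roots (for irreducible $\Phi$). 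So $\Cl^{\sigma_0}(s_\beta w_0)=\Cl^{\sigma_0}(s_\varphi w_0)=\Cl(s_\varphi)w_0$. Thus it remains to show $s_\varphi w_0 = w_{S\setminus\wp}$ — wait, that's $w_0 s_\varphi$; since $w_0\varphi=-\varphi$ we have $w_0 s_\varphi = s_{w_0\varphi} w_0 = s_{-\varphi}w_0 = s_\varphi w_0$, so indeed $w_0s_\varphi = s_\varphi w_0 = w_{S\setminus\wp}$. Hence $w_{S\setminus\wp}\in\Cl^{\sigma_0}(s_\beta w_0)$, and minimality follows once I verify the hypothesis of Lemma~\ref{lem:min1}, i.e. $s\,w_{S\setminus\wp} = w_{S\setminus\wp}\,s^{\sigma_0}$ for all $s\in S\setminus\wp$. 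Since $s^{\sigma_0}=w_0sw_0$ and $w_{S\setminus\wp}=s_\varphi w_0$, the identity $s\,s_\varphi w_0 = s_\varphi w_0\, w_0 s w_0 = s_\varphi s w_0$ reduces to $s s_\varphi = s_\varphi s$, i.e. $\langle\alpha_s,\varphi^\vee\rangle=0$, which is exactly the defining property $s\in S\setminus\wp$.

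\textbf{Main obstacle.} The only subtle point is the transitivity of $W$ on long roots of an irreducible crystallographic root system (used to conjugate $s_\beta$ to $s_\varphi$); this is standard (e.g. Bourbaki) but should be cited. A second small care-point is the bookkeeping with $\sigma_0$ versus $\sigma$: the lemma is stated with "$\sigma$ the opposition relation," so $\sigma=\sigma_0$ and $\sigma^2=1$, and all the $\sigma$-conjugation manipulations via Lemma~\ref{lem:pairing} go through cleanly; I just need to be careful that when I pass $\Cl^{\sigma_0}(\cdot)w_0$ I land in an ordinary ($\sigma=1$) conjugacy class, which then makes the transitivity-on-long-roots argument directly applicable. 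Assembling these: Lemma~\ref{lem:pairing} to strip the $w_0$, transitivity on long roots to identify the conjugacy class of $s_\beta$ with that of $s_\varphi$, the identity $w_0s_\varphi=w_{S\setminus\wp}$ (already proved in the text), and finally Lemma~\ref{lem:min1} applied with $J=S\setminus\wp$ to conclude minimality — I expect the verification of Lemma~\ref{lem:min1}'s hypothesis to be the one genuinely computational step, and it is short.
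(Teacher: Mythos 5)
Your proof is correct and follows essentially the same route as the paper: translate by $w_0$ (you via Lemma~\ref{lem:pairing}; the paper cites Proposition~\ref{prop:basic1}, which appears to be a misreference for the same fact) to reduce $\Cl^{\sigma}(s_{\beta}w_0)$ to the ordinary class $\Cl(s_{\beta})w_0$, use $W$-transitivity on long roots, identify $w_0s_{\varphi}=w_{S\setminus\wp}$ (already established in the text), and invoke Lemma~\ref{lem:min1}. Your proposal is a bit more careful in that it explicitly verifies the hypothesis $s\,w_{S\setminus\wp}=w_{S\setminus\wp}\,s^{\sigma_0}$ for $s\in S\setminus\wp$ by reducing it to $ss_{\varphi}=s_{\varphi}s$, i.e.\ $\langle\alpha_s,\varphi^\vee\rangle=0$, a check the paper leaves implicit.
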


\begin{proof}
Since all roots of the same length are conjugate, Proposition~\ref{prop:basic1} gives
$$
\Cl^{\sigma}(s_{\beta}w_0)=w_0\Cl(w_0s_{\beta}w_0)=\{w_0s_{\alpha}\mid \alpha\in\Phi_L^+\}
$$
where $\Phi_L$ denotes the set of long roots of $\Phi$. Thus $w_{S\backslash\wp}=w_0s_{\varphi}\in\Cl^{\sigma}(s_{\beta}w_0)$, and it follows from Lemma~\ref{lem:min1} that this element has minimal length in the class.
\end{proof}

In the following lemma $W$ is of type $\sB_n$ or $\sD_n$, and we label the simple roots following Bourbaki conventions. For $1\leq i\leq n$ let $W_{\geq i}=W_{\{i,i+1,\ldots,n\}}$ and let $w_{\geq i}$ be the longest element of $W_{\geq i}$.

\begin{lemma}\label{lem:classfacts}
We have the following.
\begin{compactenum}[$(1)$]
\item Let $W$ be of type $\sD_n$ with $n\geq 5$. Let $\rho$ be the order $2$ diagram automorphism if $n$ is even, and let $\rho=1$ if $n$ is odd. Then $s_1w_{\geq 4}$ is a minimal length element of $\Cl^{\rho}(s_1w_0)$.
\item Let $W$ be of type $\sB_n$ and let $3\leq i<n$. Then $s_1w_{\geq n-i+3}$ is a minimal length element in $\Cl(s_1w_{\geq i+1}w_0)$. 
\item Let $W$ be of type $\sD_n$ and let $4\leq i< n-1$ with $i$ even. Then $s_1w_{\geq n-i+3}$ is a minimal length element in $\Cl(s_1w_{\geq i+1}w_0)$. 
\item Let $W$ be of type $\sD_n$ and let $3\leq i< n-1$ with $i$ odd. Let $\sigma$ be the order~$2$ diagram automorphism interchanging types~$n-1$ and $n$. If $i>3$ (respectively $i=3$) then $s_1w_{\geq n-i+3}$ (respectively $s_1$) is a minimal length element in $\Cl^{\sigma}(s_1w_{\geq i+1}w_0)$.
\end{compactenum}
\end{lemma}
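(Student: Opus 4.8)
The plan is to treat all four parts of Lemma~\ref{lem:classfacts} uniformly by reducing each to an application of Lemma~\ref{lem:min1}, which says that if $J\subseteq S$ is spherical with $sw_J=w_Js^{\rho}$ for all $s\in J$, then $w_J$ is of minimal length in $\Cl^{\rho}(w_J)$. Thus for each claimed minimal-length element $x$ I would first exhibit a subset $J\subseteq S$ together with an element $v\in W$ showing $x = v^{-1}(s_1w_0)v^{\rho}$ (respectively $x = v^{-1}(s_1 w_{\geq i+1}w_0)v^{\sigma}$), so that $x$ lies in the stated $\rho$- (or $\sigma$-) conjugacy class, and then verify that $x = w_J$ for a subset $J$ satisfying the hypothesis $sw_J = w_Js^{\rho}$ of Lemma~\ref{lem:min1}. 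The bulk of the work is bookkeeping with Bourbaki labelling and the structure of $W_{\geq i}$, which in type $\sB_n$ is a Weyl group of type $\sB_{n-i+1}$ and in type $\sD_n$ is of type $\sD_{n-i+1}$ (with the convention $\sD_2 = \sA_1\times\sA_1$, $\sD_3 = \sA_3$).

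For part (1): here $w_0 = w_S$, and since $\{1\}$ commutes with $W_{\geq 4}=W_{\{4,\ldots,n\}}$ (the nodes $1$ and $4$ are non-adjacent in the $\sD_n$ diagram), the element $s_1w_{\geq 4}$ is the longest element $w_J$ of the parabolic $J = \{1\}\cup\{4,5,\ldots,n\}$. One checks $J$ is preserved by $\rho$ and that $sw_J = w_Js^{\rho}$ for each $s\in J$: for $s=s_1$ this is $s_1w_J = w_Js_1$ since $s_1$ is central in $W_J$; for $s\in\{4,\ldots,n\}$ it follows from the fact that the longest element of $W_{\{4,\ldots,n\}}$ (of type $\sD_{n-3}$) conjugates $s$ to $s^{\rho}$ precisely when $\rho$ is the relevant $\sD$-opposition automorphism — this matches $\rho$ being the order-2 automorphism for $n$ even and trivial for $n$ odd, because opposition on $\sD_{n-3}$ is type-preserving iff $n-3$ is even. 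To place $s_1w_{\geq 4}$ in the class $\Cl^{\rho}(s_1w_0)$ one writes $w_0 = w_{\geq 4}\cdot u$ with appropriate $u\in W^{\{4,\ldots,n\}}$ of the right parity and applies the computation $s_\beta w_0$ analysis from Lemma~\ref{lem:highestroot}; concretely $s_1w_0$ and $s_1w_{\geq 4}$ differ by the reflection in a suitable long root, and Lemma~\ref{lem:highestroot} or a direct signed-permutation computation confirms they are $\rho$-conjugate.

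For parts (2)--(4): now one works relative to the parabolic $W_{\geq i+1}$ rather than all of $W$. The key observation is that $w_{\geq i+1}w_0 = w_{S\setminus\{i+1,\ldots,n\}}$-type products behave like "truncated longest elements", and conjugating $s_1w_{\geq i+1}w_0$ by an element of $W_{\{i+1,\ldots,n\}}$ (combined with $s_1$) shifts the "tail" parabolic from $\{i+1,\ldots,n\}$ to $\{n-i+3,\ldots,n\}$ — this is exactly the reflection-in-a-long-root move of Lemma~\ref{lem:highestroot} applied inside the appropriate Levi, accounting for the index $n-i+3 = n-(i-2)+1$. One then checks that $J = \{1\}\cup\{n-i+3,\ldots,n\}$ (for $i>3$) or $J=\{1\}$ (for $i=3$ in parts like (4)) satisfies $sw_J = w_Js^{\sigma}$: again $s_1$ is central in $W_J$, and for $s$ in the tail the condition reduces to opposition on the $\sB$- or $\sD$-parabolic of rank $i-2$, which is type-preserving or not according to the parity of $i-2$ — explaining why part (3) needs $i$ even (opposition on $\sD_{i-2}$ type-preserving) and part (4) needs $i$ odd with the nontrivial $\sigma$ (opposition on $\sD_{i-2}$ swaps the fork), while part (2) in type $\sB$ has opposition always central so no twist is needed. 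The degenerate case $i=3$ gives tail parabolic $\{n,\ldots,n\} = \{n\}$ or empty, and one checks directly that $s_1$ (or $s_1 s_n$, reducing to $s_1$ after noting the relevant conjugacy) is minimal.

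The main obstacle I expect is the purely combinatorial verification that the claimed conjugating elements actually land $s_1w_{\geq i+1}w_0$ onto $s_1w_{\geq n-i+3}$ with the correct length drop $\ell(s_1w_{\geq i+1}w_0) - \ell(s_1 w_{\geq n-i+3}) = 2\ell(v)$ — this requires care with the Bourbaki conventions and the fact that in type $\sD$ the longest element is or isn't central depending on parity, so several sign cases must be tracked. The cleanest route is probably to pass to the signed-permutation (or signed-permutation-with-even-sign-changes) realisation of $W$, where $s_1$, $w_{\geq i+1}$, $w_0$, and the diagram automorphisms all have transparent descriptions, verify the conjugacy and the minimal-length claim there, and only invoke Lemma~\ref{lem:min1} as the final packaging step; I would present this computation compactly rather than in full, since it is routine once set up. The honest statement in the paper will likely be of the form "a direct computation with signed permutations, together with Lemma~\ref{lem:min1}, gives the result", and I would structure the proof to make that computation as short as possible by isolating the single reflection-in-a-long-root move as the conceptual content.
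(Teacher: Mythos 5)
Your overall framework — exhibit an explicit conjugating element carrying the given class representative onto the claimed element, then invoke Lemma~\ref{lem:min1} via the subset $J$ — is exactly the paper's approach, and your parity analysis (why part (2) needs no twist, part (3) needs $i$ even, part (4) needs $i$ odd with the $\sD$-swap) is a correct and more conceptually explanatory account than the paper's one-line verification. However, the specific conjugating elements you propose cannot work, and this is not just loose bookkeeping.

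The central problem is in parts (2)--(4). You claim that conjugating $w_1 = s_1 w_{\geq i+1} w_0$ by an element of $W_{\{i+1,\ldots,n\}}$ (combined with $s_1$) shifts the tail parabolic. In type $\sB_n$ this conjugation is literally trivial: $s_1$ commutes with $W_{\geq i+1}$ (nodes $1$ and $i+1$ are non-adjacent for $i\geq 2$), $w_0$ is central in $W(\sB_n)$, and $w_{\geq i+1}$ is central in $W_{\geq i+1}$ (a $\sB$-type parabolic, so its longest element acts as $-1$). Hence $v^{-1}w_1v = w_1$ for all $v$ in the subgroup you propose. The paper's conjugating element is $w_{\{3,4,\ldots,n-1\}}w_{\geq 2}$, which involves the generators $s_2,\ldots,s_{n-1}$ and lies nowhere near $W_{\{i+1,\ldots,n\}}\cdot\langle s_1\rangle$. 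In type $\sD$ the same obstruction is present up to some parity caveats on centrality, but the claimed conjugating subgroup still cannot move the tail from $\{i+1,\ldots,n\}$ to $\{n-i+3,\ldots,n\}$.

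The appeal to Lemma~\ref{lem:highestroot} in part (1) also misfires. That lemma controls $\Cl^{\sigma_0}(s_\beta w_0)$ where $\sigma_0$ is opposition, and gives minimal element $w_{S\setminus\wp} = w_{\{1,3,4,\ldots,n\}}$ for $\sD_n$. But the class in part (1) is $\Cl^{\rho}(s_1w_0)$ with $\rho$ the order-$2$ automorphism iff $n$ is even, i.e.\ $\rho \neq \sigma_0$ for every $n$; and the claimed minimal element $s_1 w_{\geq 4} = w_{\{1,4,\ldots,n\}}$ is strictly smaller than $w_{\{1,3,4,\ldots,n\}}$ (it omits $s_3$), so Lemma~\ref{lem:highestroot} cannot be supplying the answer. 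The paper instead constructs the three-factor element $v = w_{\{3,4,\ldots,n-1\}}w_{\{1,2,\ldots,n-2,n-1\}}w_{\{1,2,\ldots,n-2,n\}}$ and verifies $vs_1w_0v^{-\rho} = s_1w_{\geq 4}$ by direct action on roots; this is not a reflection-in-a-long-root move, and the twist mismatch is precisely why a different construction is needed. To repair your sketch you would need to actually find the right $v$; your instinct to pass to signed permutations is the right tool for doing so, but the ``conjugate inside the tail parabolic'' heuristic should be dropped, since it does nothing.
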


\begin{proof} These statements may be deduced from the tables in~\cite{GKP:00}, however we provide direct calculations below. 

(1) Let $v=w_{\{3,4,\ldots,n-1\}}w_{\{1,2,\ldots,n-2,n-1\}}w_{\{1,2,\ldots,n-2,n\}}$. By considering the action on roots we have
$
vs_1w_0v^{-\rho}=s_1w_{\geq 4}.
$
Writing $J=\{1,4,5,\ldots,n\}$ we have $sw_J=w_Js^{\rho}$ for all $s\in J$, and so by Lemma~\ref{lem:min1} the element $w_J=s_1w_{\geq 4}$ is of minimal length in $\Cl^{\sigma}(s_1w_0)$.

(2) and (3). Let $w_1=s_1w_{\geq i+1}w_0$. By considering the action on roots we see that 
$$
w_{\{3,4,\ldots,n-1\}}w_{\geq 2}w_1w_{\geq 2}w_{\{3,4,\ldots,n-1\}}=s_1w_{\geq n-i+3}
$$
(using the fact that $i$ is even in the $\sD_n$ case) and the result follows from Lemma~\ref{lem:min1}.

(4) Let $w_1=s_1w_{\geq i+1}w_0$. If $i>3$ is odd then
$$
w_{\{3,4,\ldots,n-1\}}w_{\geq 2}w_1w_{\geq 2}^{\sigma}w_{\{3,4,\ldots,n-1\}}^{\sigma}=s_1w_{\geq n-i+3},
$$
while if $i=3$ then $w_{\{3,4,\ldots,n-1\}}w_{\geq 2}w_1w_{\geq 2}^{\sigma}w_{\{3,4,\ldots,n-1\}}^{\sigma}=s_1$, and the result follows from Lemma~\ref{lem:min1}.
\end{proof}

In the following lemma and theorem we will use the observation that if $\theta$ is uncapped, and if $J$ denotes the set of all encircled nodes, and $K$ denotes the set of all shaded nodes, then for for each $k\in K$ the element $w_{S\backslash (J\backslash\{k\})}w_0$ is in $\disp(\theta)$ (see Remark~\ref{rem:elementsofdisp}). 

\begin{lemma}\label{lem:lowrank}
Uncapped automorphisms with the following decorated opposition diagrams are not uniclass:
$$
\begin{tikzpicture}[scale=0.5,baseline=-0.5ex]
\node at (1,0.3) {};
\node at (1,-0.5) {};
\node [inner sep=0.8pt,outer sep=0.8pt] at (1,0) (1) {$\bullet$};
\node [inner sep=0.8pt,outer sep=0.8pt] at (2,0) (2) {$\bullet$};
\node [inner sep=0.8pt,outer sep=0.8pt] at (3,0) (3) {$\bullet$};
\draw [line width=0.5pt,line cap=round,rounded corners,fill=ggrey] (1.north west)  rectangle (1.south east);
\draw [line width=0.5pt,line cap=round,rounded corners,fill=ggrey] (2.north west)  rectangle (2.south east);
\draw [line width=0.5pt,line cap=round,rounded corners,fill=ggrey] (3.north west)  rectangle (3.south east);
\draw (1,0)--(2,0);
\draw (2,0)--(3,0);
\node [inner sep=0.8pt,outer sep=0.8pt] at (1,0) (1) {$\bullet$};
\node [inner sep=0.8pt,outer sep=0.8pt] at (2,0) (2) {$\bullet$};
\node [inner sep=0.8pt,outer sep=0.8pt] at (3,0) (3) {$\bullet$};
\end{tikzpicture}\qquad \begin{tikzpicture}[scale=0.5,baseline=-0.5ex]
\node at (1,0.3) {};
\node at (1,-0.5) {};
\node [inner sep=0.8pt,outer sep=0.8pt] at (1,0) (1) {$\bullet$};
\node [inner sep=0.8pt,outer sep=0.8pt] at (2,0) (2) {$\bullet$};
\node [inner sep=0.8pt,outer sep=0.8pt] at (3,0) (3) {$\bullet$};
\draw [line width=0.5pt,line cap=round,rounded corners,fill=ggrey] (1.north west)  rectangle (1.south east);
\draw [line width=0.5pt,line cap=round,rounded corners,fill=ggrey] (2.north west)  rectangle (2.south east);
\draw [line width=0.5pt,line cap=round,rounded corners] (3.north west)  rectangle (3.south east);
\draw (1,0)--(2,0);
\draw (2,0.07)--(3,0.07);
\draw (2,-0.07)--(3,-0.07);
\node [inner sep=0.8pt,outer sep=0.8pt] at (1,0) (1) {$\bullet$};
\node [inner sep=0.8pt,outer sep=0.8pt] at (2,0) (2) {$\bullet$};
\node [inner sep=0.8pt,outer sep=0.8pt] at (3,0) (3) {$\bullet$};
\end{tikzpicture}\qquad \begin{tikzpicture}[scale=0.5,baseline=-0.5ex]
\node [inner sep=0.8pt,outer sep=0.8pt] at (3,0) (3) {$\bullet$};
\node [inner sep=0.8pt,outer sep=0.8pt] at (4,0) (4) {$\bullet$};
\node [inner sep=0.8pt,outer sep=0.8pt] at (5,0.5) (5a) {$\bullet$};
\node [inner sep=0.8pt,outer sep=0.8pt] at (5,-0.5) (5b) {$\bullet$};
\draw [line width=0.5pt,line cap=round,rounded corners,fill=ggrey] (4.north west)  rectangle (4.south east);
\draw [line width=0.5pt,line cap=round,rounded corners,fill=ggrey] (3.north west)  rectangle (3.south east);
\draw [line width=0.5pt,line cap=round,rounded corners,fill=ggrey] (5a.north west)  rectangle (5a.south east);
\draw [line width=0.5pt,line cap=round,rounded corners,fill=ggrey] (5b.north west)  rectangle (5b.south east);
\draw (3,0)--(4,0);
\draw (4,0) -- (5,0.5);
\draw (4,0) --(5,-0.5);
\node at (5,0.5) {$\bullet$};
\node at (5,-0.5) {$\bullet$};
\node [inner sep=0.8pt,outer sep=0.8pt] at (3,0) (3) {$\bullet$};
\node [inner sep=0.8pt,outer sep=0.8pt] at (4,0) (4) {$\bullet$};
\node [inner sep=0.8pt,outer sep=0.8pt] at (5,0.5) (5a) {$\bullet$};
\node [inner sep=0.8pt,outer sep=0.8pt] at (5,-0.5) (5b) {$\bullet$};
\end{tikzpicture}\qquad
\begin{tikzpicture}[scale=0.5,baseline=-0.5ex]
\node [inner sep=0.8pt,outer sep=0.8pt] at (3,0) (3) {$\bullet$};
\node [inner sep=0.8pt,outer sep=0.8pt] at (4,0) (4) {$\bullet$};
\node [inner sep=0.8pt,outer sep=0.8pt] at (5,0.5) (5a) {$\bullet$};
\node [inner sep=0.8pt,outer sep=0.8pt] at (5,-0.5) (5b) {$\bullet$};
\draw [line width=0.5pt,line cap=round,rounded corners,fill=ggrey] (4.north west)  rectangle (4.south east);
\draw [line width=0.5pt,line cap=round,rounded corners,fill=ggrey] (3.north west)  rectangle (3.south east);
\draw [line width=0.5pt,line cap=round,rounded corners] (5a.north west)  rectangle (5b.south east);
\draw (3,0)--(4,0);
\draw (4,0) to [bend left] (5,0.5);
\draw (4,0) to [bend right=45] (5,-0.5);
\node at (5,0.5) {$\bullet$};
\node at (5,-0.5) {$\bullet$};
\node [inner sep=0.8pt,outer sep=0.8pt] at (3,0) (3) {$\bullet$};
\node [inner sep=0.8pt,outer sep=0.8pt] at (4,0) (4) {$\bullet$};
\node [inner sep=0.8pt,outer sep=0.8pt] at (5,0.5) (5a) {$\bullet$};
\node [inner sep=0.8pt,outer sep=0.8pt] at (5,-0.5) (5b) {$\bullet$};
\end{tikzpicture}\qquad
\begin{tikzpicture}[scale=0.5,baseline=-0.5ex]
\node [inner sep=0.8pt,outer sep=0.8pt] at (3,0) (3) {$\bullet$};
\node [inner sep=0.8pt,outer sep=0.8pt] at (4,0) (4) {$\bullet$};
\node [inner sep=0.8pt,outer sep=0.8pt] at (5,0.5) (5a) {$\bullet$};
\node [inner sep=0.8pt,outer sep=0.8pt] at (5,-0.5) (5b) {$\bullet$};
\draw [line width=0.5pt,line cap=round,rounded corners,fill=ggrey] (2.north west)  rectangle (2.south east);
\draw [line width=0.5pt,line cap=round,rounded corners,fill=ggrey] (4.north west)  rectangle (4.south east);
\draw [line width=0.5pt,line cap=round,rounded corners,fill=ggrey] (3.north west)  rectangle (3.south east);
\draw [line width=0.5pt,line cap=round,rounded corners] (5a.north west)  rectangle (5b.south east);
\draw (2,0)--(4,0);
\draw (4,0) to [bend left] (5,0.5);
\draw (4,0) to [bend right=45] (5,-0.5);
\node at (5,0.5) {$\bullet$};
\node at (5,-0.5) {$\bullet$};
\node [inner sep=0.8pt,outer sep=0.8pt] at (2,0) (2) {$\bullet$};
\node [inner sep=0.8pt,outer sep=0.8pt] at (3,0) (3) {$\bullet$};
\node [inner sep=0.8pt,outer sep=0.8pt] at (4,0) (4) {$\bullet$};
\node [inner sep=0.8pt,outer sep=0.8pt] at (5,0.5) (5a) {$\bullet$};
\node [inner sep=0.8pt,outer sep=0.8pt] at (5,-0.5) (5b) {$\bullet$};
\end{tikzpicture}
$$

$$
\begin{tikzpicture}[scale=0.5,baseline=-0.5ex]
\node at (0,0.3) {};
\node [inner sep=0.8pt,outer sep=0.8pt] at (-2,0) (2) {$\bullet$};
\node [inner sep=0.8pt,outer sep=0.8pt] at (-1,0) (4) {$\bullet$};
\node [inner sep=0.8pt,outer sep=0.8pt] at (0,-0.5) (5) {$\bullet$};
\node [inner sep=0.8pt,outer sep=0.8pt] at (0,0.5) (3) {$\bullet$};
\node [inner sep=0.8pt,outer sep=0.8pt] at (1,-0.5) (6) {$\bullet$};
\node [inner sep=0.8pt,outer sep=0.8pt] at (1,0.5) (1) {$\bullet$};
\draw [line width=0.5pt,line cap=round,rounded corners,fill=ggrey] (2.north west)  rectangle (2.south east);
\draw [line width=0.5pt,line cap=round,rounded corners,fill=ggrey] (4.north west)  rectangle (4.south east);
\draw [line width=0.5pt,line cap=round,rounded corners] (3.north west)  rectangle (5.south east);
\draw [line width=0.5pt,line cap=round,rounded corners] (1.north west)  rectangle (6.south east);
\draw (-2,0)--(-1,0);
\draw (-1,0) to [bend left=45] (0,0.5);
\draw (-1,0) to [bend right=45] (0,-0.5);
\draw (0,0.5)--(1,0.5);
\draw (0,-0.5)--(1,-0.5);
\node at (0,-0.5) {$\bullet$};
\node at (0,0.5) {$\bullet$};
\node at (1,-0.5) {$\bullet$};
\node at (1,0.5) {$\bullet$};
\node [inner sep=0.8pt,outer sep=0.8pt] at (-2,0) (2) {$\bullet$};
\node [inner sep=0.8pt,outer sep=0.8pt] at (-1,0) (4) {$\bullet$};
\node [inner sep=0.8pt,outer sep=0.8pt] at (0,-0.5) (5) {$\bullet$};
\node [inner sep=0.8pt,outer sep=0.8pt] at (0,0.5) (3) {$\bullet$};
\node [inner sep=0.8pt,outer sep=0.8pt] at (1,-0.5) (6) {$\bullet$};
\node [inner sep=0.8pt,outer sep=0.8pt] at (1,0.5) (1) {$\bullet$};
\end{tikzpicture}
\qquad\begin{tikzpicture}[scale=0.5,baseline=-1.5ex]
\node [inner sep=0.8pt,outer sep=0.8pt] at (-2,0) (1) {$\bullet$};
\node [inner sep=0.8pt,outer sep=0.8pt] at (-1,0) (3) {$\bullet$};
\node [inner sep=0.8pt,outer sep=0.8pt] at (0,0) (4) {$\bullet$};
\node [inner sep=0.8pt,outer sep=0.8pt] at (1,0) (5) {$\bullet$};
\node [inner sep=0.8pt,outer sep=0.8pt] at (2,0) (6) {$\bullet$};
\node [inner sep=0.8pt,outer sep=0.8pt] at (3,0) (7) {$\bullet$};
\node [inner sep=0.8pt,outer sep=0.8pt] at (0,-1) (2) {$\bullet$};
\draw [line width=0.5pt,line cap=round,rounded corners,fill=ggrey] (1.north west)  rectangle (1.south east);
\draw [line width=0.5pt,line cap=round,rounded corners,fill=ggrey] (3.north west)  rectangle (3.south east);
\draw [line width=0.5pt,line cap=round,rounded corners] (4.north west)  rectangle (4.south east);
\draw [line width=0.5pt,line cap=round,rounded corners] (6.north west)  rectangle (6.south east);
\node at (0,0) {$\bullet$};
\node at (2,0) {$\bullet$};
\node at (0,-1.3) {};
\node at (0,0.3) {};
\node [inner sep=0.8pt,outer sep=0.8pt] at (-2,0) (1) {$\bullet$};
\node [inner sep=0.8pt,outer sep=0.8pt] at (-1,0) (3) {$\bullet$};
\node [inner sep=0.8pt,outer sep=0.8pt] at (0,0) (4) {$\bullet$};
\node [inner sep=0.8pt,outer sep=0.8pt] at (1,0) (5) {$\bullet$};
\node [inner sep=0.8pt,outer sep=0.8pt] at (2,0) (6) {$\bullet$};
\node [inner sep=0.8pt,outer sep=0.8pt] at (3,0) (7) {$\bullet$};
\node [inner sep=0.8pt,outer sep=0.8pt] at (0,-1) (2) {$\bullet$};
\draw (-2,0)--(3,0);
\draw (0,0)--(0,-1);
\end{tikzpicture} \qquad\text{or}\qquad 
\begin{tikzpicture}[scale=0.5,baseline=-1.5ex]
\node at (0,0.3) {};
\node at (0,-1.3) {};
\node [inner sep=0.8pt,outer sep=0.8pt] at (-2,0) (1) {$\bullet$};
\node [inner sep=0.8pt,outer sep=0.8pt] at (-1,0) (3) {$\bullet$};
\node [inner sep=0.8pt,outer sep=0.8pt] at (0,0) (4) {$\bullet$};
\node [inner sep=0.8pt,outer sep=0.8pt] at (1,0) (5) {$\bullet$};
\node [inner sep=0.8pt,outer sep=0.8pt] at (2,0) (6) {$\bullet$};
\node [inner sep=0.8pt,outer sep=0.8pt] at (3,0) (7) {$\bullet$};
\node [inner sep=0.8pt,outer sep=0.8pt] at (4,0) (8) {$\bullet$};
\node [inner sep=0.8pt,outer sep=0.8pt] at (0,-1) (2) {$\bullet$};
\draw [line width=0.5pt,line cap=round,rounded corners] (1.north west)  rectangle (1.south east);
\draw [line width=0.5pt,line cap=round,rounded corners] (6.north west)  rectangle (6.south east);
\draw [line width=0.5pt,line cap=round,rounded corners,fill=ggrey] (7.north west)  rectangle (7.south east);
\draw [line width=0.5pt,line cap=round,rounded corners,fill=ggrey] (8.north west)  rectangle (8.south east);
\draw (-2,0)--(4,0);
\draw (0,0)--(0,-1);
\node at (-2,0) {$\bullet$};
\node at (2,0) {$\bullet$};
\node [inner sep=0.8pt,outer sep=0.8pt] at (-2,0) (1) {$\bullet$};
\node [inner sep=0.8pt,outer sep=0.8pt] at (-1,0) (3) {$\bullet$};
\node [inner sep=0.8pt,outer sep=0.8pt] at (0,0) (4) {$\bullet$};
\node [inner sep=0.8pt,outer sep=0.8pt] at (1,0) (5) {$\bullet$};
\node [inner sep=0.8pt,outer sep=0.8pt] at (2,0) (6) {$\bullet$};
\node [inner sep=0.8pt,outer sep=0.8pt] at (3,0) (7) {$\bullet$};
\node [inner sep=0.8pt,outer sep=0.8pt] at (4,0) (8) {$\bullet$};
\node [inner sep=0.8pt,outer sep=0.8pt] at (0,-1) (2) {$\bullet$};
\end{tikzpicture}
$$
\end{lemma}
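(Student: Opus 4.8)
The plan is to argue by contradiction. Suppose some uncapped uniclass automorphism $\theta$ has one of the listed decorated opposition diagrams, and let $\sigma$ be its companion automorphism (read off from the picture: the displayed diagram automorphism is $\sigma\sigma_0$, and $\sigma_0$ is known for each type, so a ``straight'' diagram gives $\sigma=\sigma_0$ and a ``bent'' one gives $\sigma\sigma_0\neq 1$). By Proposition~\ref{prop:full} the set $\bC:=\disp(\theta)$ is then a single $\sigma$-conjugacy class of $\sigma$-involutions, and by Remark~\ref{rem:elementsofdisp}, for each shaded distinguished orbit $j$ of the diagram the element $u_j:=w_{S\backslash(\tilde J\backslash\{j\})}w_0$ lies in $\bC$, where $\tilde J$ is the union of the encircled orbits. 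For the first diagram (type $\sA_3$, $\sigma=\sigma_0$) this already suffices: here $u_{\{1,3\}}=s_1s_3w_0$ and $u_{\{2\}}=s_2w_0$, and applying the duality $\psi$ of \S\ref{sec:paired} we get $s_1s_3,\,s_2\in\bC w_0$, which would be a single conjugacy class of $W(\sA_3)\cong\mathsf{S}_4$; but $(12)(34)$ and $(23)$ have different cycle types, a contradiction.

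For the remaining diagrams this direct comparison no longer works, because the shaded orbits there give displacements that are genuinely $\sigma$-conjugate (in the simply-laced or $\sB_3$ cases all relevant simple roots have equal length, hence are $W$-conjugate, and the conjugating element can be chosen in the $\sigma$-fixed parabolic, while in the $\sE_7$ and $\sE_8$ diagrams the two shaded displacements are conjugate products of orthogonal reflections). So instead I would pin down $\bC$ exactly: using Lemma~\ref{lem:highestroot}, Lemma~\ref{lem:classfacts}, Theorem~\ref{thm:downwardsclosure} (or, in a few cases, a short direct computation with signed permutations or in root coordinates), determine the unique minimal-length element of $\bC$ as $w_J$ for an explicit $J\subseteq S$, hence $\Fix(\theta)=(\Pi,S\backslash J,\sigma)$ and $\theta$ fixes a type $S\backslash J$ simplex of some chamber $C$ with $\delta(C,C^\theta)=w_J$. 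Proposition~\ref{prop:anisotropic} then yields a proper residue $R=\Res_J(C)$ on which $\theta|_R$ is anisotropic, and $R$ is of an explicit ``polar'' type in each case ($\sA_1^3$ for the $\sD_4$ and $\sE_6$ diagrams, $\sA_1\times\sA_1$ for $\sB_3$, $\sA_1\times\sA_3$ for $\sD_5$, and residues of types $\sD_6$ and $\sE_7$ for the $\sE_7$ and $\sE_8$ diagrams, respectively). Since every building in the lemma is small, $R$ again has Fano-plane residues, and one derives the contradiction from the anisotropic restriction: either no anisotropic automorphism of $R$ with the forced companion automorphism exists (by the classification of anisotropic automorphisms, \cite[Theorem~5.1]{DPV:13}), or the existence of such a $\theta|_R$ forces $\disp(\theta)$ to contain a further element outside $\bC$ — for instance a fixed chamber, so that $e\in\disp(\theta)$ — contradicting uniclassness; as a fallback one invokes the explicit (very short) list of uncapped automorphisms of small buildings from \cite{PVM:19b,TTVM2} and checks directly that their displacement spectra are never a single conjugacy class.

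The main obstacle is precisely this last reduction. Unlike the $\sA_3$ case, the remaining diagrams are ``self-dual enough'' that the manifest displacements coincide up to conjugacy, so no single length or cycle-type computation closes the argument; one must identify $\bC$ completely and then rule it out using finer structural information about $\theta$ — the residual/anisotropic analysis above, or the known classification of uncapped automorphisms — and this has to be done diagram by diagram. A secondary (bookkeeping) obstacle is the careful translation of the ad hoc node labels in the pictures into Bourbaki labels, distinguished $\sigma$-orbits, polar nodes, and the associated parabolic longest elements, which must be gotten right before any of the above can be applied.
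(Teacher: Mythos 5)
Your $\sA_3$ argument is based on a misreading of the decorated diagram. In the first picture all three nodes are encircled \emph{and} shaded, and the diagram is drawn straight (so the distinguished orbits of the opposition diagram, which are $\sigma\sigma_0$\nobreakdash-orbits, are the singletons $\{1\},\{2\},\{3\}$). Thus Remark~\ref{rem:elementsofdisp} gives $s_1w_0,\,s_2w_0,\,s_3w_0\in\disp(\theta)$ --- not $s_1s_3w_0$ and $s_2w_0$. Applying the duality $\psi$ one only learns that $s_1,s_2,s_3$ must lie in a single conjugacy class of $\mathsf{S}_4$, which they do; there is no cycle-type contradiction, and your first argument collapses.

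For the remaining diagrams your main tool is ``pin down $\bC$, locate the residue $R=\Res_J(C)$ on which $\theta$ acts anisotropically, and contradict using \cite[Theorem~5.1]{DPV:13}''. But this is precisely the argument that Theorem~\ref{thm:uncapped} already runs, and the cases in Lemma~\ref{lem:lowrank} are exactly those it fails on: for them the minimal-length element $w_J$ has $J$ a union of mutually non-adjacent singletons, so $R$ has no irreducible component of rank $\geq 2$ and the anisotropic classification says nothing (for instance, for the $\sE_7$ and $\sE_8$ diagrams the paper computes $w_J$ to be $s_1s_2s_7$ and $s_1s_2s_8$, giving $R$ of type $\sA_1^3$, not $\sD_6$ or $\sE_7$ as you state; for $\sA_3$ it is $J=\{2\}$, for $\sB_3$ it is $\{1,3\}$, both giving rank-$1$ factors). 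An anisotropic automorphism of a rank-$1$ residue over $\FF_2$ exists (a $3$-cycle on the Fano line), so no contradiction arises, nor does $e\in\disp(\theta)$ follow as you suggest. Your fallback of ``consult the explicit list of uncapped automorphisms and check directly'' is a gesture at a proof, not a proof.

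The paper's actual argument is orthogonal to yours: it exploits the thinness-counting formula of Theorem~\ref{thm:counts}, showing that $|\Delta_w(\theta)|=W(q)\,q_w^{1/2}/\bC(q^{1/2})$ fails to be an integer for each listed diagram at the admissible thickness parameters (with the one remaining $\sB_3$ case at uniform $q=2$ ruled out by direct computation in $\mathsf{Sp}_6(2)$). This integrality obstruction works precisely where the residual-anisotropic route does not, which is the reason the lemma exists separately from Theorem~\ref{thm:uncapped}. You should replace the residual reduction by this counting argument (or supply a genuinely different obstruction that closes the rank-$1$ residue cases).
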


\begin{proof}
These bounded rank cases can be dealt with using counting arguments. First note that the associated buildings are necessarily small. Thus for all cases except the $\sB_3$ case the building $\Delta$ has uniform thickness $q=2$. In the $\sB_3$ case the building either has uniform thickness~$q=2$, or has thickness parameters $q_1=q_2=2$ and $q_3=4$. Proposition~\ref{prop:attaindisplacement} gives an element of $\disp(\theta)$, and thus if $\theta$ is uniclass we know the class~$\bC=\disp(\theta)$. Since the rank is bounded, the class $\bC$ can be explicitly computed (for example, using the Coxeter group algorithms in $\mathsf{MAGMA}$~\cite{MAGMA}). Then Theorem~\ref{thm:counts} gives a formula for $|\Delta_w(\theta)|$ for each $w\in \bC$, and it turns out that for each of the listed diagrams, with one exception, this formula fails to give an integer, a contradiction. The one exception is the $\sB_3$ diagram with uniform thickness~$q=2$, and for this case we provide a different proof below.

We now give the details. Note that in each case the numerical value of the Poincar\'e polynomial can be found by well known factorisations of~$W(q)$. 

Consider the $\sA_3$ diagram. Here $\sigma$ is the order $2$ automorphism and we have $\bC=\disp(\theta)=\Cl^{\sigma}(s_1w_0)$. Explicitly
$
\bC=\{s_2,s_1s_2s_3,s_3s_2s_1,s_1s_2s_1s_3s_2,s_1s_2s_3s_2s_1,s_2s_1s_3s_2s_1\}.
$
Thus $\bC(2^{1/2})=17\sqrt{2}$ and $W(2)=315$. Hence the formula from Theorem~\ref{thm:counts} gives $|\Delta_{s_2}(\theta)|=315/17$, a contradiction.

Consider the $\sB_3$ diagram. The element $s_1s_3$ is of minimal length in $\bC=\disp(\theta)=\Cl(s_1w_0)$. If $\Delta$ has thickness $q_1=q_2=2$ and $q_3=4$ then $\bC(q^{1/2})=94\sqrt{2}$ and $W(q)=16065$. Then $W(q)q_{s_1s_3}^{1/2}/\bC(q^{1/2})$ fails to be an integer. (If $\Delta$ has uniform thickness $q=2$ we compute $\bC(2^{1/2})=54$ and $W(2)=2835$, and then $W(2)q_{s_1s_3}^{1/2}/\bC(2^{1/2})=105$ turns out to be integral -- we discuss this case below). 

Consider the $\sD_4$ diagram with $\sigma=1$. Then $s_1s_3s_4$ is minimal length in $\bC=\disp(\theta)=\Cl(s_1w_0)$. We have $\bC(2^{1/2})=206\sqrt{2}$ and $W(2)=42525$, but then $W(2)2^{3/2}/\bC(2^{1/2})$ is not an integer. For the $\sD_4$ diagram with $\sigma$ of order $2$ we have that $s_1$ is of minimal length in $\bC=\disp(\theta)=\Cl^{\sigma}(s_1w_0)$. Then $\bC(2^{1/2})=210\sqrt{2}$ and $W(2)=42525$, and $|\Delta_{s_1}(\theta)|=405/2$ fails to be an integer.  For the $\sD_5$ diagram we have $s_1s_4s_5$ minimal length in $\bC$, $\bC(2^{1/2})=5456\sqrt{2}$, and $W(2)=22410675$, again giving a contradiction.

Consider the $\sE_6$ diagram. The element $s_1s_2s_6$ is a minimal length element of $\bC=\disp(\theta)=\Cl(s_2w_0)$. We have $\bC(2^{1/2})=2083706\sqrt{2}$ and $W(2)=3126356394525$, giving a contradiction. 

For the $\sE_7$ diagram the element $s_1s_2s_7$ is a minimal length element of $\bC=\disp(\theta)=\Cl(s_1s_2s_5s_7w_0)$. We have $\bC(2^{1/2})=8877543572\sqrt{2}$ and $W(2)=867088089921935556675$, giving a contradiction. Similarly, for the $\sE_8$ diagram the element $s_1s_2s_8$ is of minimal length in $\bC=\disp(\theta)=\Cl(w_{\{2,3,4,5\}}s_8w_0)$. We have $\bC(2^{1/2})=141388830406973542\sqrt{2}$ and $W(2)=254136050560806452394291280512170128125$, a contradiction.

It remains to consider the $\sB_3$ case with uniform thickness $q=2$. By direct calculation the symplectic group $\mathsf{Sp}_6(2)$ has $30$ conjugacy classes, giving rise to the $30$ distinct automorphisms of the building $\Delta$. Discarding the identity, it can be seen by direct calculation (using $\mathsf{MAGMA}$) that $22$ of these automorphisms are non-domestic (by explicitly exhibiting a chamber mapped to opposite). The remaining $7$ automorphisms are domestic, and it turns out that there is a unique exceptional domestic automorphism that is not strongly exceptional domestic. However this automorphism can be conjugated into the Borel, and hence fixes a chamber, showing that it is not uniclass. 
\end{proof}

\begin{thm}\label{thm:uncapped}
Let $\theta$ be a unclass automorphism of a thick irreducible spherical building~$\Delta$. Then $\theta$ is capped.
\end{thm}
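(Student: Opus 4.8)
The plan is to prove the contrapositive: assume $\theta$ is uncapped and show it is not uniclass. By the main result of \cite{PVM:19b}, an uncapped automorphism of a thick irreducible spherical building has a very restricted decorated opposition diagram, and in fact the building must be ``small'' (of rank at least $3$ with a Fano plane residue, so over $\FF_2$ or with the exceptional $\sB_3$ thickness). So the strategy is: first invoke the classification of decorated opposition diagrams of uncapped automorphisms to reduce to a finite list of cases, and then dispatch each case. The cases of bounded rank are exactly those handled in \cref{lem:lowrank}, so those are already done. The real content of the theorem is therefore the \emph{unbounded} rank families, which by the classification in \cite{PVM:19b} are: uncapped automorphisms of $\sD_n$ buildings (over $\FF_2$) of a handful of diagram shapes, and possibly uncapped automorphisms of $\sB_n$ and $\sA_n$ type — I would first pin down from \cite{PVM:19b} exactly which infinite families of decorated opposition diagrams occur.

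For each such infinite family, the plan is to use \cref{rem:elementsofdisp}: from the decorated opposition diagram we can read off \emph{two distinct} elements of $\disp(\theta)$, namely $w_{S\setminus(J\setminus\{k\})}w_0$ for two different shaded nodes $k_1,k_2$ in the set $K$ of shaded nodes (an uncapped diagram has $|K|\geq 2$ essentially by definition — the poset of types mapped to opposite is not generated by its maximal element). If $\theta$ were uniclass these two elements would be $\sigma$-conjugate. I would then compute minimal-length representatives of the $\sigma$-conjugacy classes of these two elements using \cref{lem:highestroot} and especially \cref{lem:classfacts} (which is tailored precisely to computing minimal length elements of classes of the form $\Cl^{\sigma}(s_1 w_{\geq i+1}w_0)$ in types $\sB_n$ and $\sD_n$), and show the two minimal lengths — or the two $\sigma$-rigid subsets — differ, so the two elements lie in distinct classes, contradicting uniclass. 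For instance, for the $\sD_n$ family whose diagram has encircled nodes $\{1,i,i+1\}$ with the two outer nodes shaded, one shaded node gives an element $\sigma$-conjugate to $s_1 w_0$ (minimal length rep $s_1 w_{\geq 4}$ by \cref{lem:classfacts}(1)) while the other gives an element $\sigma$-conjugate to $s_1 w_{\geq i+1} w_0$ (minimal length rep $s_1 w_{\geq n-i+3}$ by \cref{lem:classfacts}(3) or (4)); these lengths are visibly different for the relevant range of $i$, so $\theta$ is not uniclass. The analogous computation in type $\sB_n$ uses \cref{lem:classfacts}(2).

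The main obstacle I expect is bookkeeping: correctly matching each decorated opposition diagram from the \cite{PVM:19b} classification to the pair of displacement elements it produces, and then correctly identifying the $\sigma$-conjugacy class of each such element — getting the companion automorphism $\sigma$ right (whether it is trivial or the order-$2$ diagram automorphism of $\sD_n$, which itself depends on parities), and applying the correct part of \cref{lem:classfacts}. There is also a boundary subtlety: when $i$ is at the extreme of its allowed range, the minimal length element may degenerate (e.g.\ to $s_1$ rather than $s_1 w_{\geq n-i+3}$, as in \cref{lem:classfacts}(4) with $i=3$), so the length comparison has to be checked at the endpoints separately. Once the case list is fixed, though, each individual comparison is a short length computation, and the bounded-rank cases are already handled by \cref{lem:lowrank}, so the proof should reduce to: (i) cite the classification to get the case list; (ii) for the finitely many bounded-rank diagrams, cite \cref{lem:lowrank}; (iii) for each unbounded family, read off two displacement elements via \cref{rem:elementsofdisp}, compute minimal-length class representatives via \cref{lem:classfacts} and \cref{lem:highestroot}, and observe the lengths disagree.
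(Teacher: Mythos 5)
Your overall structure (contrapositive, reduce to the finite list of decorated opposition diagrams of uncapped automorphisms from \cite{PVM:19b}, dispatch bounded-rank cases via \cref{lem:lowrank}, treat the infinite families) mirrors the paper, but the engine you propose for the infinite families does not fire. The difficulty is that for the actual decorated diagrams in the classification, the two elements $w_{S\setminus(J\setminus\{k_1\})}w_0$ and $w_{S\setminus(J\setminus\{k_2\})}w_0$ coming from two distinct shaded nodes are typically $\sigma$-conjugate, not in distinct classes. For instance, in the $\sB_n$ family with encircled nodes $\{1,\ldots,i\}$ and shaded nodes $\{1,\ldots,i-1\}$, the two elements are $s_1 w_{\geq i+1}w_0$ and $s_2 w_{\geq i+1}w_0$; conjugating by $v=s_2s_1\in W_{\{1,2\}}$ (which commutes with $W_{\geq i+1}$ once $i\geq 3$, and $w_0$ is central here) carries one to the other. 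The same collapse happens in the simply-laced strongly exceptional domestic case, where all the $s w_0$, $s\in S$, lie in a single class. So there is simply no pair of displacement elements that land in different classes, and the ``compare minimal-length representatives via \cref{lem:classfacts}'' step has nothing to compare. (Your specific $\sD_n$ example with encircled set $\{1,i,i+1\}$ is also not a diagram that occurs in the \cite{PVM:19b} classification, which is part of why the two classes you get from it look different.) The only uncapped cases where the ``two incompatible displacement elements'' idea works directly are the non-simply-laced strongly exceptional domestic ones, where $s$ and $s'$ of different lengths give genuinely non-conjugate $sw_0, s'w_0$.

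The argument the paper actually uses for the infinite families is of a different shape: granting that $\disp(\theta)$ is a \emph{single} class $\bC$ (the uniclass hypothesis), one computes a minimal-length element $w_J$ of $\bC$ (using \cref{lem:highestroot} in the simply-laced exceptional-domestic case, and \cref{lem:classfacts} in the $\sB_n/\sD_n$ families), and then invokes \cref{prop:anisotropic}: there is a residue $R$ of type $J$ on which $\theta$ acts anisotropically. Since uncapped automorphisms only occur on small (hence finite) buildings, and anisotropic automorphisms of finite thick irreducible spherical buildings of rank $\geq 2$ do not exist by \cite[Theorem~5.1]{DPV:13}, this is a contradiction whenever $J$ contains an irreducible component of rank $\geq 2$. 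When the minimal-length element degenerates (e.g.\ the boundary $i=3$ cases you flagged), the paper instead uses the residual property \cref{prop:residual} to descend to a smaller uncapped automorphism of the same shape and reduce inductively to \cref{lem:lowrank}. You should also note that the rank-$2$ (generalised polygon) case, where genuinely uncapped automorphisms exist, needs a separate purely geometric argument about geodesics and cycles in the incidence graph; neither \cite{PVM:19b}'s rank $\geq 3$ classification nor \cref{lem:lowrank} covers it.
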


\begin{proof}
Suppose that $\theta$ is an uncapped uniclass automorphism. We consider the following cases.
\smallskip

\noindent\textit{Case 1:} Suppose that $\Delta$ is a generalised $d$-gon. Since $\theta$ is uncapped it is necessarily domestic, and so if $d$ is even then $\theta$ is type preserving, and if $d$ is odd then $\theta$ is a duality (see \cite[Theorems~2.6 and 2.7]{PTV}). Thus $\theta$ is exceptional domestic, and so the elements $s_1w_0$ and $s_2w_0$ both lie in $\disp(\theta)$. If $d$ is even these elements are not conjugate in~$W$, and so $\theta$ is not uniclass. 

Thus $d=2n-1$ is odd. We shall regard $\Delta$ as a bipartite graph with diameter $d$ and girth $2d$ (the incidence graph of the point-line geometry). Let $v_0$ be a vertex mapped to an opposite vertex $v_d=v_0^{\theta}$, and let $(v_0,v_1,\ldots,v_{d-1},v_d)$ be a geodesic joining $v_0$ to $v_d$. Then $v_1^{\theta}=v_{d-1}$ (for otherwise the chamber $\{v_0,v_1\}$ is mapped to an opposite chamber, contradicting domesticity). It follows from Lemma~\ref{lem:determineimages} that $v_j^{\theta}=v_{d-j}$ for $j=0,1,\ldots,n-1$, and in particular the chamber $C=\{v_{n-1},v_n\}$ is fixed. Now let $(v_0,u_1,\ldots,u_{d-1},v_d\}$ be another geodesic between $v_0$ and $v_d$ with $u_1\neq v_1$. By the above argument the chamber $D=\{u_{n-1},u_n\}$ is fixed. Then the cycle $A=(v_0,v_1,\ldots,v_{d-1},v_d,u_{d-1},\ldots,u_1,v_0)$ is an apartment, and $C,D$ are opposite chambers in this apartment. Since these chambers are fixed, the apartment is preserved by $\theta$, and thus $v_d^{\theta}=v_0$. In particular $\theta$ is an involution on $A$, and since any two chambers of $\Delta$ lie in such an apartment we conclude that $\theta$ is an involution on $\Delta$, and hence is a polarity of the generalised polygon.

Choose a vertex $y_0\neq v_0,v_2$ adjacent to $v_1$. Thus $y_d:=y_0^{\theta}$ is opposite $y_0$. Let $y_1\neq v_1$ be adjacent to $y_0$ and let $(y_1,y_2,\ldots,y_d)$ be a geodesic. There is a geodesic $(y_n,y_{n-1},\ldots,y_0,v_1,v_0$, $u_1,\ldots,u_{n-3})$ of length $2n-1=d$ and so $y_n$ and $u_{n-3}$ are opposite vertices. Thus there is a unique geodesic $(y_n,a,\ldots,b,u_{n-2})$ of length $d-1$, and $b\neq u_{n-3}$. Since $u_{n-3}^{\theta}=u_{n+2}$ we obtain a closed walk 
$$
(y_n,a,\ldots,b,u_{n-3},u_{n-2},u_{n-1},u_n,u_{n+1},u_{n+2},b^{\theta},\ldots,a^{\theta},y_{n-1},y_n)
$$
of length $2d+2$, preserved by $\theta$. If $b=u_{n-1}$ then also $b^{\theta}=u_n$, and the closed walk gives rise to a cycle of length $2d-2$, a contradiction. Thus the above closed walk is a cycle. Rename the vertices of the cycle as $(z_0,z_1,\ldots,z_{2d+2})$ with $z_0=z_{2d+2}=y_n$ and $z_{2d+1}=y_{n-1}$. Then $u_{n-1}=z_d$ and $u_n=z_{d+1}$ and $z_d^{\theta}=z_{d+1}$ and $z_{d+1}^{\theta}=z_d$, and so the chamber $\{z_d,z_{d+1}\}$ is fixed by $\theta$. Since $\theta$ preserves the cycle it follows that $z_{n}^{\theta}=z_{d-(n-1)}^{\theta}=z_{d+1+(n-1)}=z_{3n-1}$ and $z_{n-1}^{\theta}=z_{d-n}^{\theta}=z_{d+1+n}=z_{3n}$. The distance between $z_n$ and $z_{3n-1}$ is $2n-1=d$ (using a path via $z_{d+1}$) and the distance between $z_{n-1}$ and $z_{3n}$ is also $d$ (using a path via $z_0$). Thus the chamber $\{z_{n-1},z_n\}$ is mapped to an opposite chamber, a contradiction.

$$
\begin{tikzpicture}[xscale=1.4,yscale=1.2]
\node [inner sep=0.8pt,outer sep=0.8pt] at (-4,0) (v0) {$v_0$};
\node [inner sep=0.8pt,outer sep=0.8pt] at (4,0) (vd) {$v_d$};
\node [inner sep=0.8pt,outer sep=0.8pt] at (-3,1) (v1) {$v_1$};
\node [inner sep=0.8pt,outer sep=0.8pt] at (-2,2) (v2) {$v_2$};
\node [inner sep=0.8pt,outer sep=0.8pt] at (-0.5,2) (vn-1) {$v_{n-1}$};
\node [inner sep=0.8pt,outer sep=0.8pt] at (0.5,2) (vn) {$v_n$};
\node [inner sep=0.8pt,outer sep=0.8pt] at (2,2) (vd-2) {$v_{d-2}$};
\node [inner sep=0.8pt,outer sep=0.8pt] at (3,1) (vd-1) {$v_{d-1}$};
\node [inner sep=0.8pt,outer sep=0.8pt] at (-3.25,-1) (u1) {$u_1$};
\node [inner sep=0.8pt,outer sep=0.8pt] at (-2.5,-2) (un-3) {$u_{n-3}$};
\node [inner sep=0.8pt,outer sep=0.8pt] at (-1.5,-3) (un-2) {$u_{n-2}$};
\node [inner sep=0.8pt,outer sep=0.8pt] at (-0.5,-3) (un-1) {$u_{n-1}$};
\node [inner sep=0.8pt,outer sep=0.8pt] at (0.5,-3) (un) {$u_n$};
\node [inner sep=0.8pt,outer sep=0.8pt] at (1.5,-3) (un+1) {$u_{n+1}$};
\node [inner sep=0.8pt,outer sep=0.8pt] at (2.5,-2) (un+2) {$u_{n+2}$};
\node [inner sep=0.8pt,outer sep=0.8pt] at (3.25,-1) (ud-1) {$u_{d-1}$};
\node [inner sep=0.8pt,outer sep=0.8pt] at (-3,0) (y0) {$y_0$};
\node [inner sep=0.8pt,outer sep=0.8pt] at (-2,0) (y1) {$y_1$};
\node [inner sep=0.8pt,outer sep=0.8pt] at (3,0) (yd) {$y_d$};
\node [inner sep=0.8pt,outer sep=0.8pt] at (2,0) (yd-1) {$y_{d-1}$};
\node [inner sep=0.8pt,outer sep=0.8pt] at (-0.5,0) (yn-1) {$y_{n-1}$};
\node [inner sep=0.8pt,outer sep=0.8pt] at (0.5,0) (yn) {$y_n$};
\node [inner sep=0.8pt,outer sep=0.8pt] at (-0.5,-1) (a) {$a$};
\node [inner sep=0.8pt,outer sep=0.8pt] at (0.5,-1) (atheta) {$a^{\theta}$};
\node [inner sep=0.8pt,outer sep=0.8pt] at (-1.5,-2) (b) {$b$};
\node [inner sep=0.8pt,outer sep=0.8pt] at (1.5,-2) (btheta) {$b^{\theta}$};
\draw (v0)--(v1)--(v2);
\draw [dashed] (v2)--(vn-1);
\draw (vn-1)--(vn);
\draw [dashed] (vn)--(vd-2);
\draw (vd-2)--(vd-1)--(vd);
\draw (v0)--(u1);
\draw [dashed] (u1)--(un-3);
\draw (un-3)--(un-2)--(un-1)--(un)--(un+1)--(un+2);
\draw [dashed] (un+2)--(ud-1);
\draw (ud-1)--(vd);
\draw (v1)--(y0)--(y1);
\draw [dashed] (y1)--(yn-1);
\draw (yn-1)--(yn);
\draw [dashed] (yn)--(yd-1);
\draw (yd-1)--(yd)--(vd-1);
\draw (yn-1)--(atheta);
\draw (yn)--(a);
\draw [dashed] (a)--(b);
\draw [dashed] (atheta)--(btheta);
\draw (b)--(un-2);
\draw (btheta)--(un+1);
\end{tikzpicture}
$$

%
%
Thus we may henceforth assume that $\Delta$ has rank at least $3$ and that $\theta$ is uncapped. In particular $\Delta$ is a small building. 
\smallskip

\noindent\textit{Case 2:} Suppose that $\theta$ is strongly exceptional domestic. That is, $\theta$ maps panels of each type to opposite panels, and so $sw_0\in \disp(\theta)$ for all $s\in S$. This forces the companion automorphism $\sigma$ to be opposition. Moreover, if $\Delta$ is not simply laced then $\disp(\theta)$ contains elements $sw_0$ and $s'w_0$ with $s,s'\in S$ not conjugate, and so $sw_0$ and $s'w_0$ are not $\sigma$-conjugate, a contradiction. Thus we may assume that $\Delta$ is simply laced, and $\disp(\theta)=\Cl^{\sigma}(sw_0)$ for any $s\in S$. Associate a crystallographic root system $\Phi$ to $(W,S)$. By Lemma~\ref{lem:highestroot} the element $w_{S\backslash\wp}$ is of minimal length in the class $\disp(\theta)$, and by Proposition~\ref{prop:anisotropic} there is a residue $R$ of type $S\backslash\wp$ such that $\theta|_R$ is anisotropic. 
If $R$ has an irreducible component of rank at least~$2$ then we obtain a contradiction with \cite[Theorem~5.1]{DPV:13} (since the building $\Delta$ is small, and in particular is finite). This leaves the $\sA_3$ and $\sD_4$ cases, and these are eliminated by Lemma~\ref{lem:lowrank}.
\smallskip

\noindent\textit{Case 3:} Suppose that $\theta$ has decorated opposition diagram
$$
\begin{tikzpicture}[scale=0.5,baseline=-0.5ex]
\node at (0,0.3) {};
\node at (0,-0.5) {};
\node [inner sep=0.8pt,outer sep=0.8pt] at (-5,0) (-5) {$\bullet$};
\node [inner sep=0.8pt,outer sep=0.8pt] at (-4,0) (-4) {$\bullet$};
\node [inner sep=0.8pt,outer sep=0.8pt] at (-3,0) (-3) {$\bullet$};
\node [inner sep=0.8pt,outer sep=0.8pt] at (-2,0) (-2) {$\bullet$};
\node [inner sep=0.8pt,outer sep=0.8pt] at (0,0) (-1) {$\bullet$};
\node [inner sep=0.8pt,outer sep=0.8pt] at (1,0) (1) {$\bullet$};
\node [inner sep=0.8pt,outer sep=0.8pt] at (2,0) (2) {$\bullet$};
\node [inner sep=0.8pt,outer sep=0.8pt] at (3,0) (3) {$\bullet$};
\draw [line width=0.5pt,line cap=round,rounded corners,fill=ggrey] (-5.north west)  rectangle (-5.south east);
\draw [line width=0.5pt,line cap=round,rounded corners,fill=ggrey] (-4.north west)  rectangle (-4.south east);
\draw [line width=0.5pt,line cap=round,rounded corners,fill=ggrey] (-3.north west)  rectangle (-3.south east);
\draw [line width=0.5pt,line cap=round,rounded corners,fill=ggrey] (-2.north west)  rectangle (-2.south east);
\draw [line width=0.5pt,line cap=round,rounded corners,fill=ggrey] (-1.north west)  rectangle (-1.south east);
\draw [line width=0.5pt,line cap=round,rounded corners,fill=ggrey] (1.north west)  rectangle (1.south east);
\draw [line width=0.5pt,line cap=round,rounded corners,fill=ggrey] (2.north west)  rectangle (2.south east);
\draw [line width=0.5pt,line cap=round,rounded corners] (3.north west)  rectangle (3.south east);
\draw (-5,0)--(-1.5,0);
\draw (-0.5,0)--(2,0);
\draw (2,0.07)--(3,0.07);
\draw (2,-0.07)--(3,-0.07);
\draw [style=dashed] (-1.5,0)--(-0.5,0);
\node [inner sep=0.8pt,outer sep=0.8pt] at (-5,0) (-5) {$\bullet$};
\node [inner sep=0.8pt,outer sep=0.8pt] at (-4,0) (-4) {$\bullet$};
\node [inner sep=0.8pt,outer sep=0.8pt] at (-3,0) (-3) {$\bullet$};
\node [inner sep=0.8pt,outer sep=0.8pt] at (-2,0) (-2) {$\bullet$};
\node [inner sep=0.8pt,outer sep=0.8pt] at (0,0) (-1) {$\bullet$};
\node [inner sep=0.8pt,outer sep=0.8pt] at (1,0) (1) {$\bullet$};
\node [inner sep=0.8pt,outer sep=0.8pt] at (2,0) (2) {$\bullet$};
\node [inner sep=0.8pt,outer sep=0.8pt] at (3,0) (3) {$\bullet$};
\end{tikzpicture}\quad\text{or}\quad 
\begin{tikzpicture}[scale=0.5,baseline=-0.5ex]
\node at (0,0.3) {};
\node at (0,-0.5) {};
\node [inner sep=0.8pt,outer sep=0.8pt] at (-1.5,0) (1) {$\bullet$};
\node [inner sep=0.8pt,outer sep=0.8pt] at (-0.5,0) (2) {$\bullet$};
\node [inner sep=0.8pt,outer sep=0.8pt] at (0.5,0) (3) {$\bullet$};
\node [inner sep=0.8pt,outer sep=0.8pt] at (1.5,0) (4) {$\bullet$};
\draw [line width=0.5pt,line cap=round,rounded corners,fill=ggrey] (1.north west)  rectangle (1.south east);
\draw [line width=0.5pt,line cap=round,rounded corners,fill=ggrey] (2.north west)  rectangle (2.south east);
\draw [line width=0.5pt,line cap=round,rounded corners] (3.north west)  rectangle (3.south east);
\draw [line width=0.5pt,line cap=round,rounded corners] (4.north west)  rectangle (4.south east);
\draw (-1.5,0)--(-0.5,0);
\draw (0.5,0)--(1.5,0);
\draw (-0.5,0.07)--(0.5,0.07);
\draw (-0.5,-0.07)--(0.5,-0.07);
\node [inner sep=0.8pt,outer sep=0.8pt] at (-1.5,0) (1) {$\bullet$};
\node [inner sep=0.8pt,outer sep=0.8pt] at (-0.5,0) (2) {$\bullet$};
\node [inner sep=0.8pt,outer sep=0.8pt] at (0.5,0) (3) {$\bullet$};
\node [inner sep=0.8pt,outer sep=0.8pt] at (1.5,0) (4) {$\bullet$};
\end{tikzpicture}
$$
We can associate a reduced crystallographic root system $\Phi$ to the decorated opposition diagram of $\theta$ in such a way that the nodes corresponding to the long simple roots are shaded (thus for Coxeter type $\sB_n$ we choose a $\sB_n$ root system, and so in this case $\wp=\{2\}$). Then $\disp(\theta)=\Cl(s_iw_0)$ for any $i$ with $\alpha_i$ long. As in Case~2 there is a residue $R$ of type $S\backslash\wp$ stabilised by $\theta$ such that $\theta|_R$ is anisotropic, giving a contradiction is $R$ has an irreducible component of rank at least~$2$. This leaves the $\sB_3$ case, which is eliminated by Lemma~\ref{lem:lowrank}.
\smallskip

\noindent\textit{Case 4:} Suppose that $\Delta$ is of type $\sD_n$ and that $\theta$ has decorated opposition diagram 
$$
\begin{tikzpicture}[scale=0.5,baseline=-0.5ex]
\node at (0,0.8) {};
\node at (0,-0.8) {};
\node [inner sep=0.8pt,outer sep=0.8pt] at (-5,0) (-5) {$\bullet$};
\node [inner sep=0.8pt,outer sep=0.8pt] at (-4,0) (-4) {$\bullet$};
\node [inner sep=0.8pt,outer sep=0.8pt] at (-3,0) (-3) {$\bullet$};
\node [inner sep=0.8pt,outer sep=0.8pt] at (-2,0) (-2) {$\bullet$};
\node [inner sep=0.8pt,outer sep=0.8pt] at (0,0) (-1) {$\bullet$};
\node [inner sep=0.8pt,outer sep=0.8pt] at (1,0) (1) {$\bullet$};
\node [inner sep=0.8pt,outer sep=0.8pt] at (2,0) (2) {$\bullet$};
\node [inner sep=0.8pt,outer sep=0.8pt] at (3,0) (3) {$\bullet$};
\node [inner sep=0.8pt,outer sep=0.8pt] at (4,0) (4) {$\bullet$};
\node [inner sep=0.8pt,outer sep=0.8pt] at (5,0.5) (5a) {$\bullet$};
\node [inner sep=0.8pt,outer sep=0.8pt] at (5,-0.5) (5b) {$\bullet$};
\draw [line width=0.5pt,line cap=round,rounded corners,fill=ggrey] (-4.north west)  rectangle (-4.south east);
\draw [line width=0.5pt,line cap=round,rounded corners,fill=ggrey] (-2.north west)  rectangle (-2.south east);
\draw [line width=0.5pt,line cap=round,rounded corners,fill=ggrey] (2.north west)  rectangle (2.south east);
\draw [line width=0.5pt,line cap=round,rounded corners,fill=ggrey] (4.north west)  rectangle (4.south east);
\draw [line width=0.5pt,line cap=round,rounded corners,fill=ggrey] (-5.north west)  rectangle (-5.south east);
\draw [line width=0.5pt,line cap=round,rounded corners,fill=ggrey] (-3.north west)  rectangle (-3.south east);
\draw [line width=0.5pt,line cap=round,rounded corners,fill=ggrey] (-1.north west)  rectangle (-1.south east);
\draw [line width=0.5pt,line cap=round,rounded corners,fill=ggrey] (1.north west)  rectangle (1.south east);
\draw [line width=0.5pt,line cap=round,rounded corners,fill=ggrey] (3.north west)  rectangle (3.south east);
\draw [line width=0.5pt,line cap=round,rounded corners] (5a.north west)  rectangle (5b.south east);
\draw (-5,0)--(-2,0);
\draw (0,0)--(4,0);
\draw (4,0) to [bend left] (5,0.5);
\draw (4,0) to [bend right=45] (5,-0.5);
\draw [style=dashed] (-2,0)--(0,0);
\node at (5,0.5) {$\bullet$};
\node at (5,-0.5) {$\bullet$};
\node [inner sep=0.8pt,outer sep=0.8pt] at (-5,0) (-5) {$\bullet$};
\node [inner sep=0.8pt,outer sep=0.8pt] at (-4,0) (-4) {$\bullet$};
\node [inner sep=0.8pt,outer sep=0.8pt] at (-3,0) (-3) {$\bullet$};
\node [inner sep=0.8pt,outer sep=0.8pt] at (-2,0) (-2) {$\bullet$};
\node [inner sep=0.8pt,outer sep=0.8pt] at (0,0) (-1) {$\bullet$};
\node [inner sep=0.8pt,outer sep=0.8pt] at (1,0) (1) {$\bullet$};
\node [inner sep=0.8pt,outer sep=0.8pt] at (2,0) (2) {$\bullet$};
\node [inner sep=0.8pt,outer sep=0.8pt] at (3,0) (3) {$\bullet$};
\node [inner sep=0.8pt,outer sep=0.8pt] at (4,0) (4) {$\bullet$};
\node [inner sep=0.8pt,outer sep=0.8pt] at (5,0.5) (5a) {$\bullet$};
\node [inner sep=0.8pt,outer sep=0.8pt] at (5,-0.5) (5b) {$\bullet$};
\end{tikzpicture}
$$
Thus $\disp(\theta)=\Cl^{\sigma}(s_1w_0)$ (with $\sigma$ the identity if $n$ is odd, and of order~$2$ if $n$ is even). If $n\geq 5$ then by Lemma~\ref{lem:classfacts}(1) and Proposition~\ref{prop:anisotropic} there is a type $\sA_1\times\sD_{n-3}$ residue $R$ such that $\theta|_R$ is anisotropic, a contradiction if $n>5$. The cases $n=4,5$ are eliminated by Lemma~\ref{lem:lowrank}.
\smallskip

\noindent\textit{Case 5:} Suppose that $\theta$ is an uncapped uniclass automorphism of a small building of type $\sB_n$ with decorated opposition diagram
$$
\begin{tikzpicture}[scale=0.5,baseline=-0.5ex]
\node at (0,0.5) {};
\node [inner sep=0.8pt,outer sep=0.8pt] at (-5,0) (-5) {$\bullet$};
\node [inner sep=0.8pt,outer sep=0.8pt] at (-4,0) (-4) {$\bullet$};
\node [inner sep=0.8pt,outer sep=0.8pt] at (-3,0) (-3) {$\bullet$};
\node [inner sep=0.8pt,outer sep=0.8pt] at (-2,0) (-2) {$\bullet$};
\node [inner sep=0.8pt,outer sep=0.8pt] at (0,0) (-1) {$\bullet$};
\node [inner sep=0.8pt,outer sep=0.8pt] at (1,0) (1) {$\bullet$};
\node [inner sep=0.8pt,outer sep=0.8pt] at (2,0) (2) {$\bullet$};
\node [inner sep=0.8pt,outer sep=0.8pt] at (3,0) (3) {$\bullet$};
\node [inner sep=0.8pt,outer sep=0.8pt] at (4,0) (4) {$\bullet$};
\node [inner sep=0.8pt,outer sep=0.8pt] at (5,0) (5) {$\bullet$};
\node at (2,-0.7) {$i$};
\draw [line width=0.5pt,line cap=round,rounded corners,fill=ggrey] (-5.north west)  rectangle (-5.south east);
\draw [line width=0.5pt,line cap=round,rounded corners,fill=ggrey] (-4.north west)  rectangle (-4.south east);
\draw [line width=0.5pt,line cap=round,rounded corners,fill=ggrey] (-3.north west)  rectangle (-3.south east);
\draw [line width=0.5pt,line cap=round,rounded corners,fill=ggrey] (-2.north west)  rectangle (-2.south east);
\draw [line width=0.5pt,line cap=round,rounded corners,fill=ggrey] (-1.north west)  rectangle (-1.south east);
\draw [line width=0.5pt,line cap=round,rounded corners,fill=ggrey] (1.north west)  rectangle (1.south east);
\draw [line width=0.5pt,line cap=round,rounded corners] (2.north west)  rectangle (2.south east);
\draw (-5,0)--(-2,0);
\draw (0,0)--(4,0);
\draw (4,0.07)--(5,0.07);
\draw (4,-0.07)--(5,-0.07);
\draw [style=dashed] (-2,0)--(0,0);
\node [inner sep=0.8pt,outer sep=0.8pt] at (-5,0) (-5) {$\bullet$};
\node [inner sep=0.8pt,outer sep=0.8pt] at (-4,0) (-4) {$\bullet$};
\node [inner sep=0.8pt,outer sep=0.8pt] at (-3,0) (-3) {$\bullet$};
\node [inner sep=0.8pt,outer sep=0.8pt] at (-2,0) (-2) {$\bullet$};
\node [inner sep=0.8pt,outer sep=0.8pt] at (0,0) (-1) {$\bullet$};
\node [inner sep=0.8pt,outer sep=0.8pt] at (1,0) (1) {$\bullet$};
\node [inner sep=0.8pt,outer sep=0.8pt] at (2,0) (2) {$\bullet$};
\node [inner sep=0.8pt,outer sep=0.8pt] at (3,0) (3) {$\bullet$};
\node [inner sep=0.8pt,outer sep=0.8pt] at (4,0) (4) {$\bullet$};
\node [inner sep=0.8pt,outer sep=0.8pt] at (5,0) (5) {$\bullet$};
\end{tikzpicture}\quad\text{ with $3\leq i< n$.} 
$$
By Proposition~\ref{prop:attaindisplacement} and the decorated opposition diagram we have $\disp(\theta)=\Cl(s_1w_{\geq i+1}w_0)$. Then by Lemma~\ref{lem:classfacts}(2) the element $s_1w_{\geq n-i+3}$ is minimal length in $\disp(\theta)$, and by Proposition~\ref{prop:anisotropic} there is a type $\sA_1\times  \sB_{i-2}$ residue $R$ stabilised by $\theta$ such that $\theta|_R$ is anisotropic, a contradiction if $i>3$. 

The case $i=3$ requires a different approach. In this case Lemma~\ref{lem:classfacts}(2) gives that $\disp(\theta)=\Cl(s_1w_{\geq 4}w_0)=\Cl(s_1s_n)$. Since $s_1s_n$ is conjugate to $s_2s_n$ there is a chamber $C$ with $\delta(C,C^{\theta})=s_2s_n$. In particular, the type $1$ vertex of $C$ is fixed, and so the residue $\Delta'=\Res_{S\backslash\{1\}}(C)$ (a building of type $\sB_{n-1}$) is stabilised by $\theta$. By Proposition~\ref{prop:residual} the automorphism $\theta|_R$ is uniclass, and hence $\disp(\theta|_R)=\Cl_{W_{\geq 2}}(s_2s_n)=\Cl_{W_{\geq 2}}(s_2w_{\geq 5}w_{\geq 2})$, 
where the final equality follows from Lemma~\ref{lem:classfacts}(2) applied to the group $W_{\geq 2}$ with $i=3$ (specifically, the fact that $s_2s_n$ is conjugate to $s_2w_{\geq 5}w_{\geq 2}$). Thus $\theta|_{R}$ is an uncapped automorphism of a $\sB_{n-1}$ building with the first three nodes encircled and the first two nodes shaded. That is, $\theta|_{R}$ is of the same ``type'' as $\theta$, but of rank one less (note that $i=3$ for both $\theta$ and $\theta|_{R}$). Continuing inductively we eventually obtain a restriction of $\theta$ to a residue of type $\sB_3$ with decorated opposition diagram \begin{tikzpicture}[scale=0.5,baseline=-0.5ex]
\node at (1,0.3) {};
\node at (1,-0.5) {};
\node [inner sep=0.8pt,outer sep=0.8pt] at (1,0) (1) {$\bullet$};
\node [inner sep=0.8pt,outer sep=0.8pt] at (2,0) (2) {$\bullet$};
\node [inner sep=0.8pt,outer sep=0.8pt] at (3,0) (3) {$\bullet$};
\draw [line width=0.5pt,line cap=round,rounded corners,fill=ggrey] (1.north west)  rectangle (1.south east);
\draw [line width=0.5pt,line cap=round,rounded corners,fill=ggrey] (2.north west)  rectangle (2.south east);
\draw [line width=0.5pt,line cap=round,rounded corners] (3.north west)  rectangle (3.south east);
\draw (1,0)--(2,0);
\draw (2,0.07)--(3,0.07);
\draw (2,-0.07)--(3,-0.07);
\node [inner sep=0.8pt,outer sep=0.8pt] at (1,0) (1) {$\bullet$};
\node [inner sep=0.8pt,outer sep=0.8pt] at (2,0) (2) {$\bullet$};
\node [inner sep=0.8pt,outer sep=0.8pt] at (3,0) (3) {$\bullet$};
\end{tikzpicture}, which is impossible by Lemma~\ref{lem:lowrank}.

\smallskip

\noindent\textit{Case 6:} Suppose that $\theta$ has decorated opposition diagram one of the following:
\begin{align*}
&\begin{tikzpicture}[scale=0.5,baseline=-0.5ex]
\node at (0,0.8) {};
\node [inner sep=0.8pt,outer sep=0.8pt] at (-5,0) (-5) {$\bullet$};
\node [inner sep=0.8pt,outer sep=0.8pt] at (-4,0) (-4) {$\bullet$};
\node [inner sep=0.8pt,outer sep=0.8pt] at (-3,0) (-3) {$\bullet$};
\node [inner sep=0.8pt,outer sep=0.8pt] at (-2,0) (-2) {$\bullet$};
\node [inner sep=0.8pt,outer sep=0.8pt] at (0,0) (-1) {$\bullet$};
\node [inner sep=0.8pt,outer sep=0.8pt] at (1,0) (1) {$\bullet$};
\node [inner sep=0.8pt,outer sep=0.8pt] at (2,0) (2) {$\bullet$};
\node [inner sep=0.8pt,outer sep=0.8pt] at (3,0) (3) {$\bullet$};
\node [inner sep=0.8pt,outer sep=0.8pt] at (4,0) (4) {$\bullet$};
\node [inner sep=0.8pt,outer sep=0.8pt] at (5,0.5) (5a) {$\bullet$};
\node [inner sep=0.8pt,outer sep=0.8pt] at (5,-0.5) (5b) {$\bullet$};
\draw [line width=0.5pt,line cap=round,rounded corners,fill=ggrey] (-5.north west)  rectangle (-5.south east);
\draw [line width=0.5pt,line cap=round,rounded corners,fill=ggrey] (-4.north west)  rectangle (-4.south east);
\draw [line width=0.5pt,line cap=round,rounded corners,fill=ggrey] (-3.north west)  rectangle (-3.south east);
\draw [line width=0.5pt,line cap=round,rounded corners,fill=ggrey] (-2.north west)  rectangle (-2.south east);
\draw [line width=0.5pt,line cap=round,rounded corners,fill=ggrey] (-1.north west)  rectangle (-1.south east);
\draw [line width=0.5pt,line cap=round,rounded corners,fill=ggrey] (1.north west)  rectangle (1.south east);
\draw [line width=0.5pt,line cap=round,rounded corners] (2.north west)  rectangle (2.south east);
\node [below] at (2,-0.25) {$i$};
\draw (-5,0)--(-2,0);
\draw (0,0)--(4,0);
\draw (4,0) to (5,0.5);
\draw (4,0) to   (5,-0.5);
\draw [style=dashed] (-2,0)--(0,0);
\node at (2,0) {$\bullet$};
\node [inner sep=0.8pt,outer sep=0.8pt] at (-5,0) (-5) {$\bullet$};
\node [inner sep=0.8pt,outer sep=0.8pt] at (-4,0) (-4) {$\bullet$};
\node [inner sep=0.8pt,outer sep=0.8pt] at (-3,0) (-3) {$\bullet$};
\node [inner sep=0.8pt,outer sep=0.8pt] at (-2,0) (-2) {$\bullet$};
\node [inner sep=0.8pt,outer sep=0.8pt] at (0,0) (-1) {$\bullet$};
\node [inner sep=0.8pt,outer sep=0.8pt] at (1,0) (1) {$\bullet$};
\node [inner sep=0.8pt,outer sep=0.8pt] at (2,0) (2) {$\bullet$};
\node [inner sep=0.8pt,outer sep=0.8pt] at (3,0) (3) {$\bullet$};
\node [inner sep=0.8pt,outer sep=0.8pt] at (4,0) (4) {$\bullet$};
\node [inner sep=0.8pt,outer sep=0.8pt] at (5,0.5) (5a) {$\bullet$};
\node [inner sep=0.8pt,outer sep=0.8pt] at (5,-0.5) (5b) {$\bullet$};
\end{tikzpicture}
\quad\text{with $n$ even, $i$ even, and $4\leq i< n-1$}\\
&\begin{tikzpicture}[scale=0.5,baseline=-0.5ex]
\node at (0,0.8) {};
\node [inner sep=0.8pt,outer sep=0.8pt] at (-5,0) (-5) {$\bullet$};
\node [inner sep=0.8pt,outer sep=0.8pt] at (-4,0) (-4) {$\bullet$};
\node [inner sep=0.8pt,outer sep=0.8pt] at (-3,0) (-3) {$\bullet$};
\node [inner sep=0.8pt,outer sep=0.8pt] at (-2,0) (-2) {$\bullet$};
\node [inner sep=0.8pt,outer sep=0.8pt] at (0,0) (-1) {$\bullet$};
\node [inner sep=0.8pt,outer sep=0.8pt] at (1,0) (1) {$\bullet$};
\node [inner sep=0.8pt,outer sep=0.8pt] at (2,0) (2) {$\bullet$};
\node [inner sep=0.8pt,outer sep=0.8pt] at (3,0) (3) {$\bullet$};
\node [inner sep=0.8pt,outer sep=0.8pt] at (4,0) (4) {$\bullet$};
\node [inner sep=0.8pt,outer sep=0.8pt] at (5,0.5) (5a) {$\bullet$};
\node [inner sep=0.8pt,outer sep=0.8pt] at (5,-0.5) (5b) {$\bullet$};
\draw [line width=0.5pt,line cap=round,rounded corners,fill=ggrey] (-5.north west)  rectangle (-5.south east);
\draw [line width=0.5pt,line cap=round,rounded corners,fill=ggrey] (-4.north west)  rectangle (-4.south east);
\draw [line width=0.5pt,line cap=round,rounded corners,fill=ggrey] (-3.north west)  rectangle (-3.south east);
\draw [line width=0.5pt,line cap=round,rounded corners,fill=ggrey] (-2.north west)  rectangle (-2.south east);
\draw [line width=0.5pt,line cap=round,rounded corners,fill=ggrey] (-1.north west)  rectangle (-1.south east);
\draw [line width=0.5pt,line cap=round,rounded corners] (1.north west)  rectangle (1.south east);
\node [below] at (1,-0.25) {$i$};
\draw (-5,0)--(-2,0);
\draw (0,0)--(4,0);
\draw (4,0) to [bend left] (5,0.5);
\draw (4,0) to [bend right=45] (5,-0.5);
\draw [style=dashed] (-2,0)--(0,0);
\node at (1,0) {$\bullet$};
\node [inner sep=0.8pt,outer sep=0.8pt] at (-5,0) (-5) {$\bullet$};
\node [inner sep=0.8pt,outer sep=0.8pt] at (-4,0) (-4) {$\bullet$};
\node [inner sep=0.8pt,outer sep=0.8pt] at (-3,0) (-3) {$\bullet$};
\node [inner sep=0.8pt,outer sep=0.8pt] at (-2,0) (-2) {$\bullet$};
\node [inner sep=0.8pt,outer sep=0.8pt] at (0,0) (-1) {$\bullet$};
\node [inner sep=0.8pt,outer sep=0.8pt] at (1,0) (1) {$\bullet$};
\node [inner sep=0.8pt,outer sep=0.8pt] at (2,0) (2) {$\bullet$};
\node [inner sep=0.8pt,outer sep=0.8pt] at (3,0) (3) {$\bullet$};
\node [inner sep=0.8pt,outer sep=0.8pt] at (4,0) (4) {$\bullet$};
\node [inner sep=0.8pt,outer sep=0.8pt] at (5,0.5) (5a) {$\bullet$};
\node [inner sep=0.8pt,outer sep=0.8pt] at (5,-0.5) (5b) {$\bullet$};
\end{tikzpicture}\quad\text{with $n$ odd, $i$ even, and $4\leq i< n-1$}
\end{align*}
Here the diagram automorphism associated to theta is $\sigma=1$. Applying Lemma~\ref{lem:classfacts}(3) and the argument of Case 5 we obtain a residue $R$ of type $\sA_1\times \sD_{i-2}$ with $\theta|_R$ anisotropic. If $i>4$ (so $i\geq 6$) we obtain a contradiction. If $i=4$ then an argument almost identical to the $i=3$ case of Case 5 gives a residue of type $\sD_5$ on which $\theta$ acts with decorated opposition diagram being the $\sD_5$ diagram listed in Lemma~\ref{lem:lowrank}, a contradiction.
\smallskip

\noindent\textit{Case 7:} Suppose that $\theta$ has decorated opposition diagram one of the following:
\begin{align*}
&\begin{tikzpicture}[scale=0.5,baseline=-0.5ex]
\node at (0,0.8) {};
\node [inner sep=0.8pt,outer sep=0.8pt] at (-5,0) (-5) {$\bullet$};
\node [inner sep=0.8pt,outer sep=0.8pt] at (-4,0) (-4) {$\bullet$};
\node [inner sep=0.8pt,outer sep=0.8pt] at (-3,0) (-3) {$\bullet$};
\node [inner sep=0.8pt,outer sep=0.8pt] at (-2,0) (-2) {$\bullet$};
\node [inner sep=0.8pt,outer sep=0.8pt] at (0,0) (-1) {$\bullet$};
\node [inner sep=0.8pt,outer sep=0.8pt] at (1,0) (1) {$\bullet$};
\node [inner sep=0.8pt,outer sep=0.8pt] at (2,0) (2) {$\bullet$};
\node [inner sep=0.8pt,outer sep=0.8pt] at (3,0) (3) {$\bullet$};
\node [inner sep=0.8pt,outer sep=0.8pt] at (4,0) (4) {$\bullet$};
\node [inner sep=0.8pt,outer sep=0.8pt] at (5,0.5) (5a) {$\bullet$};
\node [inner sep=0.8pt,outer sep=0.8pt] at (5,-0.5) (5b) {$\bullet$};
\draw [line width=0.5pt,line cap=round,rounded corners,fill=ggrey] (-5.north west)  rectangle (-5.south east);
\draw [line width=0.5pt,line cap=round,rounded corners,fill=ggrey] (-4.north west)  rectangle (-4.south east);
\draw [line width=0.5pt,line cap=round,rounded corners,fill=ggrey] (-3.north west)  rectangle (-3.south east);
\draw [line width=0.5pt,line cap=round,rounded corners,fill=ggrey] (-2.north west)  rectangle (-2.south east);
\draw [line width=0.5pt,line cap=round,rounded corners,fill=ggrey] (-1.north west)  rectangle (-1.south east);
\draw [line width=0.5pt,line cap=round,rounded corners] (1.north west)  rectangle (1.south east);
\node [below] at (1,-0.25) {$2j+1$};
\draw (-5,0)--(-2,0);
\draw (0,0)--(4,0);
\draw (4,0) to [bend left] (5,0.5);
\draw (4,0) to [bend right=45] (5,-0.5);
\draw [style=dashed] (-2,0)--(0,0);
\node at (1,0) {$\bullet$};
\node [inner sep=0.8pt,outer sep=0.8pt] at (-5,0) (-5) {$\bullet$};
\node [inner sep=0.8pt,outer sep=0.8pt] at (-4,0) (-4) {$\bullet$};
\node [inner sep=0.8pt,outer sep=0.8pt] at (-3,0) (-3) {$\bullet$};
\node [inner sep=0.8pt,outer sep=0.8pt] at (-2,0) (-2) {$\bullet$};
\node [inner sep=0.8pt,outer sep=0.8pt] at (0,0) (-1) {$\bullet$};
\node [inner sep=0.8pt,outer sep=0.8pt] at (1,0) (1) {$\bullet$};
\node [inner sep=0.8pt,outer sep=0.8pt] at (2,0) (2) {$\bullet$};
\node [inner sep=0.8pt,outer sep=0.8pt] at (3,0) (3) {$\bullet$};
\node [inner sep=0.8pt,outer sep=0.8pt] at (4,0) (4) {$\bullet$};
\node [inner sep=0.8pt,outer sep=0.8pt] at (5,0.5) (5a) {$\bullet$};
\node [inner sep=0.8pt,outer sep=0.8pt] at (5,-0.5) (5b) {$\bullet$};
\end{tikzpicture}\quad\text{with $n$ even and $3\leq 2j+1\leq n-3$ }\\
&\begin{tikzpicture}[scale=0.5,baseline=-0.5ex]
\node at (0,0.8) {};
\node [inner sep=0.8pt,outer sep=0.8pt] at (-5,0) (-5) {$\bullet$};
\node [inner sep=0.8pt,outer sep=0.8pt] at (-4,0) (-4) {$\bullet$};
\node [inner sep=0.8pt,outer sep=0.8pt] at (-3,0) (-3) {$\bullet$};
\node [inner sep=0.8pt,outer sep=0.8pt] at (-2,0) (-2) {$\bullet$};
\node [inner sep=0.8pt,outer sep=0.8pt] at (0,0) (-1) {$\bullet$};
\node [inner sep=0.8pt,outer sep=0.8pt] at (1,0) (1) {$\bullet$};
\node [inner sep=0.8pt,outer sep=0.8pt] at (2,0) (2) {$\bullet$};
\node [inner sep=0.8pt,outer sep=0.8pt] at (3,0) (3) {$\bullet$};
\node [inner sep=0.8pt,outer sep=0.8pt] at (4,0) (4) {$\bullet$};
\node [inner sep=0.8pt,outer sep=0.8pt] at (5,0.5) (5a) {$\bullet$};
\node [inner sep=0.8pt,outer sep=0.8pt] at (5,-0.5) (5b) {$\bullet$};
\draw [line width=0.5pt,line cap=round,rounded corners,fill=ggrey] (-5.north west)  rectangle (-5.south east);
\draw [line width=0.5pt,line cap=round,rounded corners,fill=ggrey] (-4.north west)  rectangle (-4.south east);
\draw [line width=0.5pt,line cap=round,rounded corners,fill=ggrey] (-3.north west)  rectangle (-3.south east);
\draw [line width=0.5pt,line cap=round,rounded corners,fill=ggrey] (-2.north west)  rectangle (-2.south east);
\draw [line width=0.5pt,line cap=round,rounded corners,fill=ggrey] (-1.north west)  rectangle (-1.south east);
\draw [line width=0.5pt,line cap=round,rounded corners,fill=ggrey] (1.north west)  rectangle (1.south east);
\draw [line width=0.5pt,line cap=round,rounded corners] (2.north west)  rectangle (2.south east);
\node [below] at (2,-0.25) {$2j+1$};
\draw (-5,0)--(-2,0);
\draw (0,0)--(4,0);
\draw (4,0) to  (5,0.5);
\draw (4,0) to (5,-0.5);
\draw [style=dashed] (-2,0)--(0,0);
\node at (2,0) {$\bullet$};
\node [inner sep=0.8pt,outer sep=0.8pt] at (-5,0) (-5) {$\bullet$};
\node [inner sep=0.8pt,outer sep=0.8pt] at (-4,0) (-4) {$\bullet$};
\node [inner sep=0.8pt,outer sep=0.8pt] at (-3,0) (-3) {$\bullet$};
\node [inner sep=0.8pt,outer sep=0.8pt] at (-2,0) (-2) {$\bullet$};
\node [inner sep=0.8pt,outer sep=0.8pt] at (0,0) (-1) {$\bullet$};
\node [inner sep=0.8pt,outer sep=0.8pt] at (1,0) (1) {$\bullet$};
\node [inner sep=0.8pt,outer sep=0.8pt] at (2,0) (2) {$\bullet$};
\node [inner sep=0.8pt,outer sep=0.8pt] at (3,0) (3) {$\bullet$};
\node [inner sep=0.8pt,outer sep=0.8pt] at (4,0) (4) {$\bullet$};
\node [inner sep=0.8pt,outer sep=0.8pt] at (5,0.5) (5a) {$\bullet$};
\node [inner sep=0.8pt,outer sep=0.8pt] at (5,-0.5) (5b) {$\bullet$};
\end{tikzpicture}\quad\text{with $n$ odd and $3\leq 2j+1\leq n-2$}
\end{align*}
In this case $\sigma$ has order $2$, and arguments similar to Cases 5 and 6 gives a contradiction (as in Case 6, the case $i>3$ and $i=3$ is treated separately -- in the latter case we reduce to the $\sD_4$ diagram with $\sigma$ of order $2$ listed in Lemma~\ref{lem:lowrank}).
\smallskip

We have now exhausted all possible uncapped opposition diagrams, and the proof is complete.
\end{proof}

\section{Classification of uniclass automorphisms}\label{sec:4}

In this section we prove the main theorem (Theorem~\ref{thm:main1}). By Theorem~\ref{thm:uncapped} no uncapped automorphism is uniclass, and so we may henceforth consider capped automorphisms. The analysis is case-by-case. In each instance we will work with concrete geometric models for the particular type of building, as described in the following subsection.

\subsection{Lie incidence geometries}

Let $\Delta$ be a building of spherical type~$(W,S)$. We shall adopt Bourbaki labelling \cite{Bou:00} for Dynkin diagrams. Associated to $\Delta$ there are various point-line geometries giving ``shadows'' of the building. For our purposes, this means that we consider one type of vertices of a building of type $\mathsf{X}_n$, say type $i$, as the point set of a point-line geometry, and then the line set is determined by the  panels of cotype $i$. We refer to such geometry as one \emph{of type $\mathsf{X}_{n,i}$} and call it a \emph{Lie incidence geometry}. When the diagram is simply laced, then the building in uniquely determined by the diagram and   a (skew) field $\K$, in which case we denote the point-line geometry of type $\mathsf{X}_{n,i}$ as $\mathsf{X}_{n,i}(\K)$. Each vertex of the building has an interpretation in the Lie incidence geometry, usually as a singular subspace, or a symplecton, or another convex subspace. We introduce these notions now. They are based on the fact that Lie incidence geometries are either projectie spaces, polar spaces or parapolar spaces. We provide a brief introduction, but refer the reader to the literature for more background (e.g.\ \cite{Shult}). 

All point-line geometries that we will encounter are \emph{partial linear spaces}, that is, two distinct points are contained at most one common line---and points that are contained in a common line are called \emph{collinear}; a point on a line is sometimes also called \emph{incident with that line}. We will also always assume that each line has at least three points. In a general point-line geometry $\Gamma=(X,\cL)$, where $X$ is the point set, and $\cL$ is the set of lines (which we consider here as a subset of the power set of $X$), one defines a \emph{subspace} as a set of points with the property that it contains all points of each line having at least two points with it in common. It is called \emph{singular} if each pair of points of it is collinear. It is called a \emph{hyperplane} if every line intersects it in at least one point---and then the line is either contained in it, or intersects it in exactly one point. The \emph{incidence graph} is the graph with vertices the points and lines, adjacent when incident. A subspace is called \emph{convex} if all points and lines of every shortest path between two members of the subspace are contained in the subspace. 

For a skew field $\K$, the \emph{projective space $\mathsf{A}_{n,1}(\K)$} is the point-line geometry with point set the $1$-spaces of an $(n+1)$-dimensional vector space over $\K$ (the \emph{underlying vector space}), and a typical line is the set of $1$-spaces contained in a $2$-space.  The family of singular subspaces is in one-to-one correspondence with the vertices of the building. An automorphism of a building of type $\mathsf{A}_n$ is either a \emph{collineation} of the corresponding projective space, that is, a permutation of the point set preserving the line set, or a \emph{duality}, that is, a bijection from the point set to the set of hyperplanes such that three collinear points are mapped onto three hyperplanes with pairwise the same intersection. Collineations and dualities induce a permutation of all subspaces. A duality acting on the set of subspaces as an order $2$ permutation is called a \emph{polarity}. 

A polar space, for our purposes, is just a Lie incidence geometry of type $\mathsf{B}_{n,1}$ or $\mathsf{D}_{n,1}$. There is an axiomatic approach in which the main axiom is the so-called \emph{one-or-all axiom} due to Buekenhout \& Shult \cite{Bue-Shu:74}: 
\begin{itemize}
\item[(BS)] For every point $p$ and every line $L$, either each point on $L$ or exactly one point on $L$ is collinear to $p$. 
\end{itemize}
We also require that no point is collinear to all other points, and, to ensure finite rank, that each nested sequence of singular subspaces is finite. Then there exists a natural number $r$ such that each maximal singular subspace is a projective space of dimension $r-1$. We call $r$ the \emph{rank} of $\Gamma$. We allow rank 1, in which case we just have a geometry without lines (and we assume at least three points).

We will require some special types of polar spaces when describing the finite case and their examples: a \emph{parabolic} polar space is the one related to a parabolic quadric, that is, a nondegenerate quadric of maximal Witt index in even dimensional projective space; a \emph{hyperbolic} polar space is related to a a building of type $\mathsf{D}_n$ and arises from a nondegenerate quadric  of maximal Witt index in odd dimensional projective space; an \emph{elliptic} polar space is the one related to a nondegenerate quadric of submaximal Witt index in odd dimensional projective space; a \emph{symplectic} polar space is related to a symplectic polarity. 

A \emph{parapolar space} is a point-line geometry with connected incidence graph such that (1) each pair of noncollinear points either are collinear with no or exactly one common point, or is contained in a convex subspace isomorphic to a polar space---called a \emph{symplecton}, and (2) each line is contained in a symplecton. We also require that there are at least two symplecta, and hence, the geometry is not a polar space. A pair of noncollinear points collinear to a unique common point is called \emph{special}; a pair of noncollinear points contained in a common symplecton is called \emph{symplectic}. All Lie incidence geometries which are not projective or polar spaces are parapolar spaces. 

A special type of parapolar spaces occurs when we consider the vertices of so-called \emph{polar type} of an irreducible spherical building as points. The polar type corresponds with the simple root not perpendicular with the highest root (unique in case the Dynkin diagram is not of type $\mathsf{A}_n$, see the start of Section~\ref{sec:3}). Such a Lie incidence geometry is often called a \emph{long root subgroup geometry}. We will only need those of type $\mathsf{D_{4,2}}$, $\mathsf{E_{7,1}}$, $\mathsf{E_{8,8}}$ and $\mathsf{F_{4,1}}$. In such parapolar spaces, point pairs are either identical, collinear, symplectic, special or opposite (the latter in the building theoretic sense---such points have distance $6$ in the incidence graph of the point-line geometry).  In such geometries, we have the notion of an \emph{equator} of two opposite points $p,q$, which is the set of points symplectic to both. This is turned into a geometry by letting the lines be defined by the symplecta through $p$ containing a given maximal singular subspace (maximal in both the symplecta and the whole geometry), and it is called the \emph{equator geometry}, denoted by $E(p,q)$. The equator geometry is isomorphic to the long root subgroup geometry of the residue of a point (in the building theoretic sense). It is also a \emph{fully embedded} subgeometry, that is, the point set forms a subspace.  An \emph{isometric} embedding is one in which each pair of points is collinear, symplectic and special in the embedded geometry if, and only if, it is collinear, symplectic and special, respectively, in the ambient geometry.

The terminology of symplecta stems from the theory of \emph{metasymplectic spaces}, that is, the parapolar spaces of types $\mathsf{F_{4,1}}$ and $\mathsf{F_{4,4}}$, which were introduced and investigated avant-la-lettre by Freudenthal. 

Let $\Gamma=(X,\cL)$ be a Lie incidence geometry and let $U$ be a nonmaximal singular subspace.  Then $U$ corresponds to a certain flag of the corresponding spherical building and we have a building theoretic notion of \emph{residue at $U$}. This is usually a reducible building. However,  in the geometry $\Gamma$ we distinguish the components of that residue by defining the \emph{lower residue at $U$}, denoted $\LRes_\Gamma(U)$, as the projective space defined by $U$ itself, whereas the \emph{upper residue at $U$}, denoted $\URes_\Gamma(U)$, as the point-line geometry with point set the set of singular subspaces of dimension $\dim U+1$ containing $U$, where a typical line is formed by those singular subspaces containing $U$ that are contained in a given singular subspace of dimension $\dim U+2$ containing $U$. It is again a Lie incidence geometry (possibly corresponding to a reducible spherical building). 

We now continue the proof of Theorem~\ref{thm:main1}. We consider each type of irreducible spherical building.

\subsection{Buildings of type $\mathsf{I}_2(d)$ (generalised polygons)}
Let $\Gamma$ be a generalised $d$-gon. Restricting to the thick case (c.f. Section~\ref{sec:nonthick}), we assume that each vertex of the incidence graph has valency at least $3$. In generalised polygons it is customary to call chambers (point-line) \emph{flags}. 

Recall that an \emph{ovoid} or \emph{spread} (also called \emph{distance-$d/2$ ovoid} or \emph{distance-$d/2$ spread}, respectively, in \cite{hvm}) of a generalised $d$-gon, with $d$ even, is a set of mutually opposite points or lines, respectively, such that every point and line is at distance at most $d/2$  from at least one member of the ovoid or spread, respectively.  An \emph{ovoid-spread pairing} is a set of flags such that the points of the flags form an ovoid, and the lines of the flags form a spread. A \textit{duality} is a non-type preserving automorphism of $\Gamma$, and a \emph{polarity} is a duality of order~$2$.

\begin{thm}\label{rank2}
Let $\theta$ be a nontrivial automorphism of a generalised $d$-gon $\Gamma$. Then~$\theta$ is \uniclass\ if and only if it is either anisotropic, or $d$ is even and either
\begin{compactenum}[$(1)$] \item $\theta$ is a collineation that elementwise fixes an ovoid or spread, or \item $\theta$ is a polarity (and its fixed point structure is an ovoid-spread pairing).  \end{compactenum}
\end{thm}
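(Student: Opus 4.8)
The plan is to prove both directions of the equivalence, treating the three possibilities (anisotropic, collineation fixing an ovoid/spread, polarity) separately, and using the general machinery from Sections~\ref{sec:1}--\ref{sec:3} as much as possible to reduce the work to genuinely rank-$2$ geometric statements.

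\textbf{The ``if'' direction.} Anisotropic automorphisms are uniclass by the remark in Section~\ref{sec:automorphisms} (a special case of \cite[Theorem~1.3]{DPV:13}), so assume $d$ is even and $\theta$ is as in $(1)$ or $(2)$. Write $W$ for the dihedral Weyl group of order $2d$ with generators $s_1$ (corresponding to points) and $s_2$ (corresponding to lines). In case $(1)$, say $\theta$ elementwise fixes an ovoid $\cO$ (the spread case is dual). First I would record that since $\theta$ fixes a point, $1\notin\disp(\theta)$ is false---rather $1\in\disp(\theta)$ is impossible because a fixed chamber would contain a line $L$; but a fixed point $p\in\cO$ is not incident to any fixed line unless $\theta$ fixes a chamber. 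The cleanest route is via Lemma~\ref{ext}: take any chamber $C=\{x,M\}$; I claim $x$ and its ``nearest'' ovoid point govern the displacement. Concretely, for a point $x$ at distance $k\le d/2$ from some $p\in\cO$ along a geodesic, the chamber on $x$ pointing towards $p$ has a controlled $\delta(C,C^\theta)$; one shows $\{C,C^\theta\}\subseteq\mathsf{conv}\{D,D^\theta\}$ where $D$ is a chamber realising the extremal displacement $s_2s_1s_2\cdots$ (the unique involution of the right parity with $S\backslash J$ being the fixed type), and concludes via Lemma~\ref{ext} that $\disp(\theta)\subseteq\Cl(w_J)$ where $w_J=s_1$ if points are fixed (relative type $\sA_1$, $J=\{s_1\}$, matching the $\Cl(s_i)$ entry in Table~\ref{AllWS}). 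For case $(2)$ a polarity interchanges $s_1,s_2$ so $\sigma\ne 1$; here $\disp(\theta)$ must consist of $\sigma$-involutions by Proposition~\ref{prop:full}, and since $d$ is even the only $\sigma$-involution class is $\Cl^\sigma(1)$ (as $ww^\sigma=1$ with $\sigma$ swapping the generators forces, by a length-parity argument, membership in the twisted class of $1$); one then verifies $\disp(\theta)=\Cl^\sigma(1)$ is forced, so $\theta$ is uniclass, and that the fixed structure being an ovoid-spread pairing is the known description of the fixed elements of a polarity with domestic-type behaviour (cf.\ \cite{PTV}).

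\textbf{The ``only if'' direction.} Suppose $\theta$ is nontrivial and uniclass but not anisotropic. By Theorem~\ref{thm:uncapped} $\theta$ is capped; by Proposition~\ref{prop:full}, $\disp(\theta)=\bC$ is a full $\sigma$-conjugacy class, and it consists of $\sigma$-involutions when $\sigma$ has order $\le 2$ (automatic in the $\mathsf{I}_2(d)$ case). By Proposition~\ref{prop:containinvolution} there is a genuine involution in $\bC$, and by Corollary~\ref{cor:bicapped}/Theorem~\ref{thm:bicapped} $\bC$ must be one of the bi-capped classes of $\sigma$-involutions in the dihedral group. Reading off the $\mathsf{I}_2(2m)$ rows of Table~\ref{fig:bicapped}: the non-anisotropic, non-trivial options are exactly $\mathsf{I}_{2;1}^1(2m)$, $\mathsf{I}_{2;1}^2(2m)$ (both with $\sigma=1$, relative type $\sA_1$, minimal element $s_1$ resp.\ $s_2$), and ${}^2\mathsf{I}_{2;1}(2m)$ (with $\sigma$ the flip, $d$ even, minimal element $1$). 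If $d$ is odd there is no diagram automorphism fixing the $\mathsf{I}_2(d)$ graph of order $2$ other than the point-line swap which is $\sigma_0$ itself; one checks the only bi-capped classes are $\Cl(1)$ and $\Cl^{\sigma_0}(w_0)$, forcing $\theta$ trivial or anisotropic---this handles $d$ odd. So $d$ is even, and $\sigma=1$ or $\sigma$ is the flip. If $\sigma=1$: $\disp(\theta)=\Cl(s_1)$ means $\theta$ fixes type-$1$ simplices (points) but no chamber and no line, by Proposition~\ref{prop:duality} applied with the fixed diagram having $s_1$ encircled; the set of fixed points must be pairwise opposite (else two collinear fixed points give a fixed line, so $1\in\disp(\theta)$, contradiction) and every point/line lies within distance $d/2$ of a fixed point (this is where I would use that the fixed-point diagram is exactly $(\Pi,\{s_1\},1)$, i.e.\ there is a point-simplex of type $\{1\}$ fixed but the type-$\{1,2\}$ simplex through each chamber is never fixed, combined with a covering argument on the incidence graph)---that is precisely an ovoid; the $\Cl(s_2)$ case gives a spread dually. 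If $\sigma$ is the flip: $\disp(\theta)=\Cl^\sigma(1)$, $\theta$ is a duality fixing a chamber's type-set trivially but with $\sigma$-fixed simplices of both point- and line-type, and the standard polarity analysis forces the fixed structure to be an ovoid-spread pairing.

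\textbf{Main obstacle.} The serious geometric content is entirely in the ``only if'' direction, step asserting that the fixed-point set of a collineation with $\disp(\theta)=\Cl(s_1)$ is an \emph{ovoid} (pairwise opposite \emph{and} distance-$d/2$ covering), and the dual polarity statement. The abstract building theory pins down the \emph{fixed diagram} $(\Pi,\{s_1\},1)$ and the \emph{opposition diagram}, which tell us ``type-$1$ simplices are fixed, chambers are not, and some panel is mapped to an opposite''; translating ``every point is within distance $d/2$ of a fixed point'' requires an argument that a point $x$ at distance $>d/2$ from all fixed points would produce, via a geodesic from $x$ towards a fixed point and Lemma~\ref{ext}, a chamber with displacement outside $\Cl(s_1)$ (e.g.\ of displacement $s_1s_2s_1\cdots$ of the wrong length or a chamber fixed). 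I expect this covering argument---and the parallel one showing a fixed-point pair cannot be collinear---to be the crux; everything else is bookkeeping against Tables~\ref{AllWS} and~\ref{fig:bicapped}. The polarity case additionally needs the classical fact (from the domesticity literature, \cite{PTV,TTVM2}) that a polarity of a generalised $2m$-gon which is uniclass has its absolute points forming an ovoid and absolute lines a spread, which I would cite rather than reprove.
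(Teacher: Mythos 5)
Your ``if'' direction is in the same spirit as the paper's (which simply observes that for these fixed structures the convex hull of any chamber and its image contains a chamber whose displacement lies in the right class, and then applies Lemma~\ref{ext}); your alternative route for polarities via ``there is only one class of $\sigma$-involutions when $\sigma\neq 1$ and $d$ is even'' together with Proposition~\ref{prop:basic}(1),(3) is a legitimate shortcut. The ``only if'' direction, however, has a genuine circularity that invalidates the plan.

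You invoke Corollary~\ref{cor:bicapped} to conclude that $\bC=\disp(\theta)$ must be a bi-capped class of $\sigma$-involutions, and then read off possibilities from Table~\ref{fig:bicapped}. But Corollary~\ref{cor:bicapped} is stated in the introduction as a \emph{consequence} of Theorem~\ref{thm:main1}, of which Theorem~\ref{rank2} is a part: the paper establishes Corollary~\ref{cor:bicapped} by comparing the finished classification of uniclass automorphisms with Theorem~\ref{thm:bicapped}. Nothing in Sections~\ref{sec:1}--\ref{sec:3} shows a priori that $\disp(\theta)$ is bi-capped. (Theorem~\ref{thm:bicapped} only classifies which classes \emph{are} bi-capped; it does not say that displacement sets must be.) The place where this bites is precisely the case you dismiss in half a sentence: for $d$ odd and $\sigma=1$, the class $\Cl(w_0)$ of all reflections is a perfectly good $\sigma$-conjugacy class of involutions, it is not bi-capped, and it is a priori a candidate for $\disp(\theta)$. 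Your argument does not rule it out. The paper does so by a genuine geometric argument: it cites the non-existence of domestic collineations of generalised $d$-gons with $d$ odd (so a nontrivial collineation must map a chamber to an opposite, forcing $\disp(\theta)=\Cl(w_0)$), then uses Lemma~\ref{lem:determineimages} along a geodesic to manufacture a fixed chamber, giving $1\in\disp(\theta)$ and a parity contradiction. Similarly for $d$ odd, $\sigma\neq 1$, the paper appeals to the fact that a domestic duality of an odd-gon is exceptional domestic, hence uncapped, hence not uniclass by Theorem~\ref{thm:uncapped}.

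Two further substantive points you flag as ``the crux'' but leave open are in fact where the real work lives. First, that a nontrivial fixed-point-free-on-chambers, line-domestic collineation with $d$ even has fixed-point set an ovoid: the paper does not derive this from $\disp(\theta)=\Cl(s_1)$ alone, but cites the domesticity classification for generalised polygons (Theorems~2.7 and~2.8 of \cite{PTV}) -- note also that ``pairwise non-collinear'' is strictly weaker than the ``mutually opposite'' required for an ovoid when $d>4$, so your sketch for that step is incomplete even in outline. Second, for $d$ even and $\sigma\neq 1$ you need to actually prove $\theta^2=1$; ``the standard polarity analysis forces\ldots'' is not an argument. The paper does this by first showing (via \cite[Theorem~2.7]{PTV}) that there is a chamber mapped to an opposite, then using Lemma~\ref{lem:determineimages} to locate a fixed flag in the middle of a geodesic, extending and comparing two opposite fixed flags to conclude $\theta$ stabilises an apartment and acts there as an involution, and finally propagating this to all chambers. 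That geodesic argument is the content of Case~3 and has no counterpart in your proposal.
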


\begin{proof}
If $\theta$ is anisotropic then it is clearly uniclass. Suppose that $d$ is even and (1) or (2) hold. These fixed element structures have the property that the for each flag, the convex closure of the flag and its image contains either a fixed flag (if $\theta$ is a polarity), or a flag that contains an member of the fixed ovoid or spread (if $\theta$ is a collineation). It follows from \cref{ext} that $\theta$ is uniclass.

Suppose now that $\theta$ is \uniclass. We divide the proof according to the parity of $d$ and the order of companion diagram automorphism~$\sigma$. We assume that points are the vertices of type 1 (and denote them with lower case $p,q,\ldots$) and lines those of type $2$ (denoted with upper case letters $L,M,\ldots$).

\emph{Case 1: $d$ odd and $\sigma=1$.} Then either $\theta$ is the identity, or it maps a chamber to an opposite (since there are no domestic collineations by Theorem 2.6 of \cite{PTV}). Assuming the latter, we have $\disp(\theta)=\Cl(w_0)$. Let $\{p,L\}$ be a chamber mapped to an opposite. Then, by \cref{lem:determineimages}, the first half of the unique shortest path between $p$ and $p^\theta$ is symmetrically mapped onto the second half. Hence the element in the middle is fixed. The same holds for each point on $L$ distinct from the projection onto $L$ of $L^\theta$. Consequently we obtain two fixed elements of the same type (clearly not equal), hence a fixed nontrivial path. Thus $1\in\disp(\theta)=\Cl(w_0)$, a contradiction (by parity of lengths of $1$ and $w_0$).

\emph{Case 2: $d$ be odd and $\sigma\neq 1$.} Then either $\theta$ is anisotropic, or domestic. If it is domestic, then by Theorem 2.6 of \cite{PTV}, it is exceptional domestic and hence not \uniclass\ by \cref{thm:uncapped}.

\emph{Case 3: $d$ is even and $\sigma\neq 1$.} Since there are no domestic dualities in this case by \cite[Theorem 2.7]{PTV}, $\theta$ maps at least one flag to an opposite, hence the distance between a flag and its image must be $\bC=\Cl^{\sigma}(w_0)$, which contains all rotations (and hence also the identity). We claim that $\theta$ is a polarity. To see this, let $\{p,L\}$ be an arbitrary flag. Without loss of generality we may assume that a shortest path between $\{p,L\}$ and $\{p^\theta,L^\theta\}$ is $(p,L,x_1,M_1,\ldots,x_k,M_k,L^\theta,p^\theta)$, for some $k$. It follows from Lemma~\ref{lem:determineimages} that $x_1^\theta=M_k$, and continuing like this we obtain a fixed flag in the middle. If $p$ is not opposite $p^{\theta}$ we can extend the above path on either end (appending a line $M\neq L$ through $p$ to the beginning of the path, and the image $M^{\theta}$ to the end). Thus we may assume that $\{p,L\}$ is opposite $\{p^\theta,L^\theta\}$. But then there is a second shortest path, reversing the roles of $p$ and $L$, and we obtain a second fixed flag in the middle opposite to the fixed flag we found earlier. Since these flags are fixed, this implies that the two shortest paths between them are interchanged. Hence $\{p^\theta,L^\theta\}$ is mapped back to $\{p,L\}$, and so $\theta$ is a polarity.

\emph{Case 4: $d$ even and $\sigma=1$.} Assume that $\theta$ is neither the identity nor anisotropic. Then it is domestic. Since $\theta$ is capped (by Theorem~\ref{thm:uncapped}) we may assume, without loss of generality, that $\theta$ is line-domestic. Then \cite[Theorems~2.7 and~2.8]{PTV} imply that, taking into account that $\theta$ cannot fix a chamber (for otherwise the uniclass property forces $\theta$ to be the identity), the fix structure of $\theta$ is a (distance-$d/2$) ovoid. 
\end{proof}

\subsection{Buildings of type $\sA_n$ (projective spaces)}

A \emph{symplectic polarity} of a projective space is a polarity such that every point is contained in its image. Symplectic polarities are always related to a nondegenerate alternating form in the underlying vector space, and hence only exist for projective spaces of odd rank over commutative fields (see \cite{TTVM3}). A \emph{line spread} of a projective space is a partition of the point set into lines.  A line spread is a \emph{composition spread} if it induces a line spread in every subspace spanned by members of the spread.

\begin{thm}\label{Anuni}
A nontrivial automorphism $\theta$ of a projective space~$\Sigma$ is uniclass if and only if it is either an anisotropic duality, or 
\begin{compactenum}[$(1)$]
\item a symplectic polarity, or 
\item it fixes a line spread elementwise (which is automatically a composition spread).
\end{compactenum}
\end{thm}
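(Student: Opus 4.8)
The plan is to prove both directions. For the ``if'' direction, I would argue that in each of the three listed cases the automorphism satisfies the hypothesis of Lemma~\ref{ext}, namely that for every chamber $C$ the convex hull $\mathsf{conv}\{C,C^\theta\}$ contains a chamber whose displacement is a fixed reference element. If $\theta$ is an anisotropic duality, then $\disp(\theta)=\{w_0\}$ and uniclassness is automatic. If $\theta$ fixes a line spread $\mathcal{S}$ elementwise, I would first check that such a spread is automatically a composition spread (this is a standard linear-algebra fact: the fixed subspace structure of a semisimple collineation is closed under taking spans, and the eigenvalue pattern forces the induced structure in any span to again be a partition into spread lines); then, given a chamber $C$, I would show that the convex hull of $C$ and $C^\theta$ meets a flag containing a spread line, so all displacements are conjugate. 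If $\theta$ is a symplectic polarity with associated symplectic polar space $\mathcal{W}$ of rank $n$ (for $\Sigma$ of rank $2n+1$, i.e.\ type $\sA_{2n+1}$), I would use the fact that the totally isotropic subspaces are exactly the simplices fixed by $\theta$, together with the geometry of the polarity, to show that every flag's convex hull with its image contains a fixed maximal isotropic flag; then Lemma~\ref{ext} gives uniclassness, with $\disp(\theta)=\Cl^{\sigma}(w_J)$ for $J$ the $\sigma$-rigid set whose diagram is ${^2}\sA_{2n+1;n}^1$.

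For the ``only if'' direction, I would start from Theorem~\ref{thm:uncapped}: $\theta$ is capped. Let $\sigma$ be the companion automorphism; it has order $1$ or $2$ since $\sA_n$ admits no triality. By Proposition~\ref{prop:full}, $\disp(\theta)$ is a full $\sigma$-conjugacy class of $\sigma$-involutions. Now split on $\sigma$.

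If $\sigma=1$ (i.e.\ $\theta$ is a collineation), then by the lemma after Proposition~\ref{prop:full} (part (1)) $\theta$ is either the identity or fixed-point-free; since $\theta$ is nontrivial it has no fixed chamber, so $1\notin\disp(\theta)$, and the minimal-length element $w_J$ of the bi-capped class $\disp(\theta)$ has $J\neq\emptyset$. I would invoke Proposition~\ref{prop:anisotropic}: there is a proper residue $R$, the residue of a fixed simplex of cotype $J$, with $\theta|_R$ anisotropic. Anisotropic automorphisms of $\sA_m$ buildings are extremely restricted — anisotropic collineations of projective spaces do not exist (a collineation of finite projective space always fixes a point, and even over general fields an anisotropic collineation forces strong constraints), which should force the components of $R$ to have rank $1$, i.e.\ $R$ is a product of lines; combined with the cappedness and the classification of fixed diagrams (Fix$(\theta)$ being a bi-capped admissible diagram of type $\sA_n$, which from Theorem~\ref{thm:bicapped} forces $J$ to be of the form ${^1}\sA^2_{n;(n-1)/2}$), this pins down that the fixed simplices form a line spread fixed elementwise, necessarily a composition spread. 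The main obstacle here is making precise the step ``anisotropic automorphism of a rank-$\geq 2$ residue is impossible'' over arbitrary (skew) fields — in the finite case this follows from \cite[Theorem~5.1]{DPV:13}, but in general one needs the structure theory of anisotropic automorphisms of projective spaces (no point-free collineation of a projective space of rank $\geq 2$ can send every chamber to an opposite one). I expect this to be the technical heart of the argument.

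If $\sigma\neq 1$, then $\theta$ is a duality. Since $\disp(\theta)$ consists of $\sigma$-involutions and is a full $\sigma$-class, either $\disp(\theta)=\{w_0\}$ (so $\sigma=\sigma_0$, $\theta$ anisotropic) or its minimal element $w_J$ has $\emptyset\neq J\subsetneq S$ and $w_Jsw_J=s^{\sigma}$ for all $s\in J$, with $(\Pi,S\setminus J,\sigma)$ bi-capped by Theorem~\ref{thm:bicapped}. Consulting Figure~\ref{fig:bicapped}, the only bi-capped $\sigma$-involution class of type $\sA_n$ with $\sigma\neq 1$ and $J\neq S,\emptyset$ is ${^2}\sA^1_{n;(n+1)/2}$ (forcing $n$ odd, $n=2k+1$), of relative type $\sB_{k+1}$. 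I would then argue geometrically that a capped duality $\theta$ whose fixed simplices realise this diagram must be a symplectic polarity: $\theta^2$ is a collineation fixing everything $\theta$ fixes; the fixed simplices of $\theta$ form (the simplices of) a symplectic polar space of rank $k+1$, and by Proposition~\ref{prop:anisotropic} there is a residue on which $\theta$ restricts to an anisotropic duality — in a line (type $\sA_1$) this is just an anisotropic duality, consistent with a polarity; one then checks that a duality pointwise fixing a full symplectic polar space of maximal rank must itself be the associated symplectic polarity (it agrees with it on a spanning set of flags), and conversely that such a polarity is uniclass by the ``if'' direction. That $\theta^2=\mathrm{id}$ follows because $\sigma^2=1$ forces the companion of $\theta^2$ to be trivial, and a nontrivial collineation fixing a spanning configuration of flags is impossible; hence $\theta$ is a polarity, and its fixed flag structure being a full polar space of rank $k+1$ in $\mathsf{PG}(2k+1,\mathbb{K})$ forces it to be symplectic (in particular $\mathbb{K}$ commutative, cf.\ \cite{TTVM3}). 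This case is more bookkeeping than obstacle, the real work being the anisotropic-collineation fact flagged above.
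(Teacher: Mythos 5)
Your overall scaffolding (Theorem~\ref{thm:uncapped} for cappedness, Proposition~\ref{prop:full} for a full class of $\sigma$-involutions, Lemma~\ref{ext} for the positive direction) is the right machinery, and your line-spread sufficiency argument essentially matches the paper's: induct on rank, show the point of $C$ is not fixed, pass to the fixed line $\langle p,p^{\theta}\rangle$ and apply Lemma~\ref{ext}. But there are genuine gaps in the other parts.

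The main problem is that in both the duality and collineation halves of the ``only if'' direction you appeal to Theorem~\ref{thm:bicapped} (and Figure~\ref{fig:bicapped}) to pin down the class $\disp(\theta)$, saying that $(\Pi,S\setminus J,\sigma)$ is ``bi-capped by Theorem~\ref{thm:bicapped}.'' Theorem~\ref{thm:bicapped} classifies bi-capped classes; it does not tell you that $\disp(\theta)$ is one of them. The fact that the displacement set of a uniclass automorphism is always bi-capped is precisely Corollary~\ref{cor:bicapped}, which is deduced \emph{from} the classification you are trying to prove, so invoking it here is circular. Cappedness of $\theta$ gives you control over the \emph{maximal}-length elements of $\disp(\theta)$; it does not automatically give uniqueness of the minimal-length representative $w_J$. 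The paper avoids this issue entirely. For dualities it simply observes that either $w_0\in\disp(\theta)$ (anisotropic) or $\theta$ is domestic, and then quotes the classification of domestic dualities of projective spaces from \cite[Theorems~3.5, 3.10, 3.11]{PVM:19a}: symplectic polarity or strongly exceptional domestic, the latter killed by Theorem~\ref{thm:uncapped}. For collineations it does not invoke diagram classifications at all; instead it proves directly that $\theta$ has no fixed points (via Proposition~\ref{prop:residual} and a short geometric contradiction inside a fixed plane) and that no line meets its image (by producing two chambers through such a line whose displacements would have to be both an $n$-cycle and an $(n-1)$-cycle, which cannot be conjugate), and then reads off the composition line spread. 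Your sketch jumps from ``some residue is anisotropic of rank $\leq 1$'' to ``the fixed simplices form a line spread'' without supplying either of these steps; in particular ``no fixed chamber'' does not by itself rule out fixed points, which is the nontrivial thing to show. (As a side remark, your worry about anisotropic collineations of projective spaces is overblown: a uniclass collineation with $\disp(\theta)=\{w_0\}$ would contradict Proposition~\ref{prop:full} already whenever $w_0$ is not central, so no residue of rank $\geq 2$ and type $\sA$ can arise; but the paper doesn't even route through Proposition~\ref{prop:anisotropic} in this case.)

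For the symplectic-polarity ``if'' direction, your geometric plan is also different from the paper's and is underspecified. Lemma~\ref{ext} needs a pair $\{E,E^{\theta}\}\subseteq\mathsf{conv}\{C,C^{\theta}\}$ with $\delta(E,E^{\theta})$ already known; merely locating a fixed simplex inside the convex hull does not by itself produce such a pair. The paper's argument is a short algebraic count: since $\theta$ is an involution and $\sigma=\sigma_0$, $\disp(\theta)$ is a union of classes $w_0\Cl(v_i)$ with $v_i=s_1s_3\cdots s_{2i-1}$, and for $i<(n+1)/2$ the class contains an element of length exceeding what the opposition diagram ${^1}\sA^2_{n;(n-1)/2}$ permits; so only the class with $i=(n+1)/2$ survives. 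If you want a geometric proof here you will have to carry out the convex-hull analysis carefully; as stated, it is a gap.
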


\begin{proof}
Let $\theta$ be a uniclass automorphism of a projective space~$\Sigma$. Suppose first that the companion diagram automorphism $\sigma$ of $\theta$ has order $2$, that is, $\theta$ is a duality. Since $\{w_0\}$ is a $\sigma$-conjugacy class we either have $\disp(\theta)=\{w_0\}$ (in which case $\theta$ is anisotropic) or $w_0\notin\disp(\theta)$ (in which case $\theta$ is domestic). In the latter case, by \cite[Theorems~3.5,~3.10 and~3.11]{PVM:19a} $\theta$ is either a symplectic polarity, or $\theta$ is strongly exceptional domestic, and the latter case is eliminated by \cref{thm:uncapped}.

Now suppose that $\theta$ is a nontrivial collineation. Suppose that $\theta$ fixes a point~$x$. Since the uniclass property is residual (see Proposition~\ref{prop:residual}) we may assume, by induction, that the restriction of $\theta$ to the residue of $x$ is either the identity, or fixes a line spread. In the former case $\theta$ fixes a chamber of $\Sigma$, and hence is the identity (by the uniclass assumption). In the latter case there is a fixed plane $\alpha\ni x$. Each chamber of $\alpha$ containing $x$ is mapped onto an $s_2$-adjacent chamber. Not all lines not through $x$ of the plane $\alpha$ are fixed, and so, since no line through $x$ is fixed, one can map a chamber of $\alpha$ to an $s_1s_2$-adjacent chamber, a contradiction. Hence there are no fixed points. 

We claim that $\theta$ maps no line to an intersecting one. Let $\Sigma$ have dimension $d$. Suppose that $\theta$ maps a line $L$ to a line $L^\theta$ intersecting $L$ at $x_2$. By the previous paragraph, $x_2$ is not fixed. So we may find a point $x_1\in L\setminus\{x_1\}$ with $x_1^\theta=x_2$. Define inductively $x_{i+1}=x_i^\theta$. Then $x_1,\ldots,x_{d+1}$ generate $\Sigma$. The chain of nested subspaces $x_1,\<x_1,x_2\>,\<x_1,x_2,x_3\>,\ldots,\<x_1,\ldots,x_d\>$ is a chamber of $\Sigma$, and clearly the distance between this chamber and its image is a $(d+1)$-cycle, say $(1~2~\cdots~d+1)$. However, if we replace $x_1$ with any other point of $L\setminus\{x_2\}$, then the Weyl distance between the corresponding chamber and its image is the $d$-cycle $(1~3~4~\cdots~d+1)$, leaving $2$ fixed, a contradiction. Hence we have the property that there are no fixed points, and the line $\<x,x^\theta\>$ is stabilised for all points $x$. The set of all lines $\<x,x^\theta\>$ is clearly a composition line spread. This completes the proof of the `only if' of the theorem.

We now show that if $\theta$ is a symplectic polarity then $\theta$ is uniclass.  Since $\sigma$ has order $2$ and $\theta$ is an involution, Proposition~\ref{prop:basic} (parts (1) and (3)) imply that $\disp(\theta)$ is a union of $\sigma$-conjugacy classes. Since $\sigma$ is also the opposition diagram automorphism, for any $w\in W$ we have $\Cl^{\sigma}(w_0w)=w_0\Cl(w)$. Thus the $\sigma$-involution classes are precisely the sets $w_0\Cl(w)$ with either $w$ an involution or $w=1$. Since $n$ is odd there are $(n+1)/2$ classes of involutions in $W$, with representatives $v_i=s_1s_3s_5\cdots s_{2i-1}$ for $1\leq i\leq (n+1)/2$ (this is clear as $W$ is the symmetric group, and conjugacy classes are given by cycle type). If $i<(n+1)/2$ then $\Cl^{\sigma}(w_0v_i)$ contains an element of length strictly exceeding $\ell(w_0)-(n+1)/2$, contradicting the symplectic polarity opposition diagram ${^1}\sA_{n,(n-1)/2}^2$. Hence $\disp(\theta)=\Cl^{\sigma}(w_0v_m)$ as required.

Finally assume that $\theta$  fixes a line spread elementwise (this line spread is then automatically a composition spread by \cite[Proposition~3.3]{PVMclass}). We will show that the displacement of each chamber is contained in the class defined by a fixed point free involution on the set of type 1 vertices of the Coxeter complex of type $\mathsf{A}_n$. We argue by induction on $n$ (which is odd). If $n=1$, then this is trivial.  Now suppose $n\geq 3$. Let $C$ be an arbitrary chamber. Let $p\in C$ be the point (type 1 vertex) of $C$. We claim that $p$ is not fixed. Indeed, if $p$ were fixed, then each line $M$ through $p$ distinct from the unique spread line would be pointwise fixed, as otherwise the spread lines through the points of $M$, being contained in the plane $\<M,M^\theta\>$, would mutually intersect, a contradiction. Hence $\theta$ is the identity, a contradiction. 

Now the line $L$ generated by $p$ and $p^\theta$ is contained in $\mathsf{conv}\{C,C^\theta\}$, and so are the chambers $D$ and $D^\theta$ obtained by projecting $C$ and $C^\theta$ onto (the residue of) $L$. By the induction hypothesis, the displacement of $C$ in $\Res(L)$ is in the class given by a fixed point free involution of the planes through $L$ in the appropriate Coxeter complex. Extending this complex with $L$ and the points $p$ and $p^\theta$, the assertion now follows from \cref{ext}.
\end{proof}

\subsection{Buildings of types $\sB_n$ and $\sD_n$ (polar spaces)}

In this section we consider collineations of polar spaces. Note that this includes the case of automorphisms of type $\sD_n$ buildings interchanging types $n-1$ and $n$. The case of trialities of $\sD_4$ is considered in the next section.

We begin with some preliminaries. Let $\Gamma$ be a polar space of rank $n\geq 2$ and let $U$ be a singular subspace of dimension $i\leq n-2$. In the theory of polar spaces it is customary to denote by $\Res(U)$ the polar space obtained from $\Gamma$ and $U$ by taking as point set the set of singular subspaces of dimension $i+1$. The lines are then determined by the singular subspaces of dimension $i+2$ (if any) containing $U$. 

Let $\Gamma$ be a polar space. An \emph{ovoid} is a set of points intersecting every maximal singular subspace in exactly one point. A subspace of $\Gamma$ is called \emph{ideal} if it induces an ovoid in the residue of each of its submaximal subspaces. A subspace of \emph{corank $i$} has the property that each singular subspace of dimension $i$ intersects the subspace nontrivially, and there exists a singular subspace of dimension $i-1$ disjoint from it. 
\begin{lemma}
Each ideal subspace $S$ of corank $i$ of a polar space $\Gamma$ of rank $n$ is itself a polar space of rank $n-i$, $0\leq i\leq n-1$.
\end{lemma}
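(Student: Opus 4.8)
The plan is to show first that an ideal subspace $S$ of corank $i$ is itself a (possibly degenerate) polar space by verifying the Buekenhout--Shult one-or-all axiom (BS) for $S$ with its induced lines, and then to compute its rank. Since $S$ is a subspace of $\Gamma$, any line of $\Gamma$ meeting $S$ in at least two points lies entirely in $S$, so the lines of $S$ are exactly the lines of $\Gamma$ contained in $S$; moreover each such line still has at least three points. For axiom (BS): given a point $p\in S$ and a line $L\subseteq S$, apply (BS) in $\Gamma$ to $p$ and $L$. If $p$ is collinear in $\Gamma$ with all points of $L$, then since $S$ is a subspace the singular subspace $\langle p,L\rangle$ lies in $S$, so $p$ is collinear with all of $L$ inside $S$. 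If $p$ is collinear with exactly one point $q$ of $L$ in $\Gamma$, then in $S$ it is collinear with exactly that point. So (BS) holds for $S$. I also need that no point of $S$ is collinear with all other points of $S$ (non-degeneracy): since $\Gamma$ is non-degenerate, for $p\in S$ there is a point $r\in\Gamma$ not collinear with $p$; the singular span condition and ideality should force the existence of such a point already inside $S$ --- this is where the ideal (ovoid-in-residue) hypothesis does real work and will need care. The finiteness of nested chains of singular subspaces of $S$ is inherited from $\Gamma$. Hence $S$ is a polar space, of some rank $m$, i.e.\ its maximal singular subspaces are projective spaces of dimension $m-1$.

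The remaining step is the rank count $m=n-i$. A maximal singular subspace $M$ of $S$ is a singular subspace of $\Gamma$; I claim $\dim M = (n-1)-i$, equivalently $M$ has dimension one less than the corank index $i$ subtracted from the maximal dimension $n-1$ of a singular subspace of $\Gamma$. For the upper bound: if $\dim M \geq n-i$, then $M$ contains a singular subspace $U$ of dimension $n-i-1$ disjoint from... wait, rather I use the corank hypothesis directly --- $S$ has corank $i$ means there is a singular $(i-1)$-subspace $U_0$ of $\Gamma$ disjoint from $S$, but every singular $i$-subspace meets $S$; a dimension/incidence count in $\Res(U_0)$ (a polar space of rank $n-i$, or in the flag geometry) forces maximal singular subspaces of $S$ to have dimension exactly $n-1-i$. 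For the lower bound I use ideality: taking a submaximal singular subspace $W$ of $S$ (dimension $\dim M -1$), ideality says $S$ induces an ovoid in $\Res(W)$; an ovoid is nonempty, so there is a point of $S$ in $\Res(W)$ extending $W$, contradicting maximality unless $\dim M$ is already as large as allowed --- combined with the corank bound this pins $\dim M = n-1-i$, hence $m = n-i$.

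The main obstacle I expect is making the corank bookkeeping precise: the definitions of ``ideal'', ``corank $i$'', and the interaction with residues $\Res(U)$ need to be combined carefully to get both inequalities $\dim M \le n-1-i$ and $\dim M \ge n-1-i$ without circular reasoning, and to correctly handle the boundary cases $i=0$ (where $S=\Gamma$ and there is nothing to prove) and $i=n-1$ (where $S$ is an ovoid, a rank-$1$ polar space with no lines, so one must check the statement degenerates correctly and the induced line set is empty). A clean way to organize the count is to fix a singular subspace $U$ of $\Gamma$ of dimension $i-1$ disjoint from $S$ (exists by corank), pass to $\Res(U)$ which is a polar space of rank $n-i$, observe that $S$ maps into $\Res(U)$ (each point of $S$ together with $U$ spans a singular $i$-space since every singular $i$-space meets $S$ --- here I need that $U$ and a point of $S$ are collinear, which follows because $U$ disjoint-from-$S$ plus the span of $U$ with that point being singular of dimension $i$, forced by the corank condition), and that this map is an isomorphism of $S$ onto an ideal corank-$0$ subspace of $\Res(U)$, i.e.\ onto all of $\Res(U)$; then $\mathrm{rank}(S)=\mathrm{rank}(\Res(U))=n-i$ by induction on $i$, with base case $i=0$ trivial.
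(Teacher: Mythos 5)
Your verification of the one-or-all axiom (BS) for $S$ is fine and essentially the same reasoning as the paper uses, but the rest of your argument has two genuine gaps, and your ``clean way'' of counting rank is actually wrong.

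First, you explicitly leave the non-degeneracy of $S$ (no point of $S$ collinear with all of $S$) as something that ``will need care.'' The paper closes this with a short argument: if $x\in S$ were collinear with all of $S$, then in the residue of a submaximal singular subspace $U$ of $\Gamma$ not containing $x$, $S$ would meet the maximal singular subspaces through $U$ in exactly one point, hence not in an ovoid, contradicting ideality. You correctly identify that ideality is needed here but never supply the argument.

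Second, and more seriously, your proposed ``clean'' rank count is not correct as stated. You want to fix a singular $(i-1)$-subspace $U$ disjoint from $S$ and send each point $p$ of $S$ to the $i$-space $\langle U,p\rangle\in\Res(U)$. But that span is singular only when $p$ is collinear with all of $U$, and \emph{not every point of $S$ has this property} — for example, if $S$ is an ovoid (corank $n-1$) and $U$ is a singular subspace of dimension $n-2$ disjoint from $S$, then only one point of $S$ is collinear with all of $U$. The corank hypothesis says that every singular $i$-subspace \emph{of $\Gamma$} meets $S$; it does not say that $U\cup\{p\}$ is singular for every $p\in S$. So your claimed embedding $S\hookrightarrow\Res(U)$ is not defined on all of $S$, and the induction on $i$ built on it breaks down. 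The paper's induction goes the other way: one inducts on $n-i$ (rather than $i$), passes to the residue $\Res(p)$ of a point $p\in S$ (rather than of a singular subspace disjoint from $S$), and uses that $S$ induces there an ideal subspace of the \emph{same} corank $i$ in a polar space of rank one less. That choice of residue is exactly what makes the ideality hypothesis transfer cleanly and avoids the well-definedness problem you run into. Your first (``dimension count'') route could in principle be made to work but is, as you note, sketchy and more delicate than the residual induction in the paper.
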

\begin{proof}
We show this by induction on $n-i$. If $n-i=1$, then $S$ is an ovoid and so the assertion follows. 

Now suppose $n-i\geq 2$. Since $S$ is a subspace, each line is thick. Also, if some point $x\in S$ were collinear to all points of $S$, then $S$ induces in the residue of a submaximal singular subspace $U$ not containing $x$ exactly one point, hence certainly not an ovoid, a contradiction.  The one-or-all axiom holds because it holds in $\Gamma$. It remains to show that the rank is $n-i$. Obviously $S$ induces an ideal subspace of corank $i$ in each point residue at a point of $S$. By induction, $S$ induces in each such residue a polar space of rank $n-i-1$. Thus $S$ is a polar space of rank $n-i$.
\end{proof}

\begin{thm}\label{Bn1uni}
Let $\theta$ be a collineation of a polar space. Then $\theta$ is uniclass if and only if $\theta$ is either anisotropic, or
\begin{compactenum}[$(1)$]
\item the fixed points of $\theta$ form an ideal subspace, or
\item it fixes no point and it fixes elementwise a line spread.
\end{compactenum}
\end{thm}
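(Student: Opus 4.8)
The strategy is to split into the two directions and, for the ``only if'' direction, to further subdivide according to whether $\theta$ fixes a point. First I would dispose of the ``if'' direction: suppose $\theta$ is anisotropic (trivially uniclass), or (1) holds, or (2) holds. In case (1), if $S=\Fix(\theta)$ is an ideal subspace of corank $i$, then $S$ is itself a polar space of rank $n-i$ by the preceding lemma, and the displacement of any chamber $C$ can be computed by induction on the corank: the convex hull $\mathsf{conv}\{C,C^\theta\}$ contains a chamber meeting $S$, and applying \cref{ext} together with \cref{prop:residual} reduces the displacement to the anisotropic case inside a suitable residue, giving $\disp(\theta)=\Cl(w_J)$ for the appropriate $\sigma$-rigid $J$. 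In case (2), $\theta$ fixes a line spread $\cS$ elementwise; the argument mirrors the $\sA_n$ case (\cref{Anuni}): for any chamber $C$ with point $p$, the spread line $L$ through $p$ is fixed, $\mathsf{conv}\{C,C^\theta\}$ contains the projections of $C$ and $C^\theta$ onto $\Res(L)$, and by induction on $n$ (base case $n=2$, a generalised quadrangle with a fixed spread) plus \cref{ext} the displacement lies in a single class; that $\cS$ is automatically a composition spread would be cited from the relevant result in \cite{PVMclass}.

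For the ``only if'' direction, let $\theta$ be a uniclass collineation of a polar space $\Gamma$ of rank $n$, with companion automorphism $\sigma$ (of order $1$, or order $2$ in the $\sD_n$ case interchanging the two families of maximal singular subspaces). By \cref{thm:uncapped} we know $\theta$ is capped, so its fixed and opposition diagrams are admissible, and by \cref{prop:full} the set $\disp(\theta)$ is a full $\sigma$-conjugacy class of $\sigma$-involutions. The first case to treat is when $\theta$ is anisotropic --- then we are done. Otherwise $\theta$ fixes some simplex, and we consider the subcase where $\theta$ fixes a point $p$. Then $\Res(p)$ is a polar space of rank $n-1$ stabilised by $\theta$, and by \cref{prop:residual} the restriction $\theta|_{\Res(p)}$ is uniclass, so by induction on the rank its fixed structure is anisotropic (impossible since then $\theta$ would fix a chamber of $\Res(p)$, forcing a fixed chamber of $\Gamma$ and hence $\theta=\id$ unless that chamber's displacement is nontrivial --- need to argue carefully here), or an ideal subspace, or it elementwise fixes a line spread of $\Res(p)$. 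One then has to show these local fixed structures patch together across all fixed points into a single global ideal subspace (this uses that ``ideal'' is a residue-local condition). The genuinely new work is the subcase where $\theta$ fixes a simplex but \emph{no point}, i.e.\ the lowest-type fixed vertex has type $\geq 2$; here the fixed diagram forces $\theta$ to behave like the line-spread situation, and one shows $\theta$ fixes no point and stabilises $\langle x,x^\theta\rangle$ for every point $x$, the collection of these lines forming a line spread --- again patterned on the $\sA_n$ argument, using the no-fixed-point hypothesis to rule out $\theta$ mapping a line to an intersecting one.

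\textbf{Main obstacle.} The hard part will be the case analysis for a uniclass collineation that fixes \emph{some} simplex but \emph{no point}: one must leverage the explicit list of admissible (indeed bi-capped) fixed diagrams from \cref{thm:bicapped} --- in types $\sB_n$ and $\sD_n$ the only bi-capped $\sigma$-involution classes are $\sB_{n;i}^1$ (ideal subspace of rank $i$) and $\sB_{n/2}^2$-type (composition line spread) --- to pin down that the fixed-point-free case forces exactly the line-spread structure, and then to produce the actual line spread geometrically. Establishing rigorously that every point $x$ has $\langle x, x^\theta\rangle$ stabilised, and that distinct such lines are disjoint (so they genuinely partition the point set), requires the same ``cycle versus shorter cycle'' displacement comparison used in \cref{Anuni}, but now carried out inside the polar space where one must also check compatibility with the form / the $\sigma$-action; ensuring the induction on rank is correctly seeded (rank $2$: generalised quadrangles, handled by \cref{rank2}) and that the ideal-subspace condition is preserved under passing to residues is the delicate bookkeeping. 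A secondary subtlety is correctly tracking the companion automorphism $\sigma$ in the $\sD_n$ case, where the line-spread fixing collineation may be fixed-point-free and non-type-preserving, landing in the class $\sD_{n;n'/2}^2$ rather than $\sD_{n;n/2}^2$.
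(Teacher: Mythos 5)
Your skeleton is right --- the dichotomy between fixed point / no fixed point, the two target structures, and the use of \cref{ext} and residue arguments --- but there are genuine gaps at the technical steps, and you propose a substantially more labour-intensive route for the ``only if'' direction than the paper actually takes.

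For the ``if'' direction, ideal-subspace case, you say the displacement reduces ``to the anisotropic case inside a suitable residue'', but nothing in your plan establishes \emph{why} $\theta$ acts anisotropically there. The paper's argument hinges on a key claim: \emph{no point is mapped onto a collinear one}. This claim is proved by a short geometric argument inside a maximal singular subspace containing $p$ and $p^\theta$ (projecting to reduce to the ovoid case and exhibiting a contradiction with the ovoid property). Once the claim is in hand, $S = M\cap M^\theta$ for the maximal singular subspace $M$ of $C$, which puts $M$ --- and hence the projection $D$ of $C$ onto $S$ --- inside $\mathsf{conv}\{C,C^\theta\}$, and the claim again shows that $\theta$ is anisotropic on $\URes_\Gamma(S)$ while trivial on $\LRes_\Gamma(S)$. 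Without this claim, the reduction you sketch does not close. Also note that \cref{prop:residual} only gives \emph{uniclass} on a residue, not anisotropic, so it cannot supply the missing ingredient.

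For the ``only if'' direction you propose a from-scratch induction on rank with a ``patching'' step, whereas the paper simply cites the classification of domestic automorphisms of polar spaces from \cite{PVMclass}: Proposition~3.16 there shows a point-domestic collineation with a fixed point fixes a chamber, Theorem~1 identifies the pointwise fixed subspace and its corank from the opposition diagram, Lemma~2.1 shows a domestic collineation with no fixed point is point-domestic, and Proposition~3.1 shows a point-domestic fixed-point-free collineation fixes a line spread elementwise. Your inductive/patching argument is not obviously wrong, but it is underspecified --- in particular, converting ``$\theta|_{\Res(p)}$ fixes an ideal subspace'' for every fixed $p$ into ``the global fixed-point set is an ideal subspace'' is not a formality. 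More seriously, your no-fixed-point case is missing the crucial step of establishing \emph{point-domesticity}. You write that $\theta$ ``stabilises $\langle x, x^\theta\rangle$ for every point $x$'', implicitly assuming $x$ and $x^\theta$ are collinear so that $\langle x, x^\theta\rangle$ is a line of the polar space. In a projective space (the $\sA_n$ argument you pattern this on) any two points span a line, so this is automatic; in a polar space a point may be mapped to a non-collinear point, and then there is no spread line to build. You must first show $\theta$ maps every point to a collinear one. This is exactly what the citation of \cite[Lemma~2.1]{PVMclass} supplies. Similarly, leveraging the bi-capped classification of \cref{thm:bicapped}, as you suggest, gives you the possible group-theoretic types of $\disp(\theta)$ but does not by itself produce the geometric fixed structure in the building; bridging that gap is precisely the content of the cited results in \cite{PVMclass}.
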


\begin{proof}
Let $\theta$ be a uniclass collineation of a polar space~$\Gamma$. Assume first that there is a fixed point. If $\theta$ is the identity then it fixes the ideal subspace $\Gamma$, and so assume that $\theta$ is not the identity. Since there is a fixed point, $\theta$ is not anisotropic, and hence it is domestic. Moreover, since it fixes a point it is not point-domestic, as otherwise it would fix a chamber by \cite[Proposition~3.16]{PVMclass}. Hence it has opposition diagram $\mathsf{B}_{n;i}^1$ or $\mathsf{D}_{n;i}^1$, $i<n$. This diagram tells us that we can find a flag $F$ of type $\{1,2,\ldots,i\}$ mapped to an opposite. Let $U$ be a maximal singular subspace of $F^\perp\cap(F^\theta)^\perp$. Then $\dim U=r-i-1$. Theorem~1 of \cite{PVMclass} tells us that there is a subspace $S$ of corank $i$ pointwise fixed and that $U$ belongs to it, hence is pointwise fixed. Hence within the upper residue of some fixed subspace of dimension $r-i-1$, we see the longest word as displacement. If the rank of the pointwise fixed subspace now were strictly larger than $r-i$, then we find a subspace of dimension $r-i$ pointwise fixed, which means in the upper residue of a subspace of dimension $r-i-1$, we see a displacement different from the longest word, a contradiction. Similarly, $\theta$ admits no other fixed points than those of $S$.   Now consider a pointwise fixed subspace $W$ of dimension $r-i-2$. For every maximal singular subspace $M$ through $W$, we can find a subspace $M'$ of $M$ complementary to $W$; this has dimension $i$ and hence contains a fixed point $u$. We conclude that the pointwise fixed subspaces of dimension $r-i-1$ through $W$ form an ovoid. Hence the fixed points of $\theta$ form an ideal subspace. 

Now assume that there are no fixed points. Then $\theta$ is not the identity. Suppose it is not anisotropic. Then it is domestic. Since it does not fix any point, it follows from \cite[Lemma 2.1]{PVMclass} that $\theta$ is point-domestic. Then by \cite[Proposition~3.1]{PVMclass} $\theta$ elementwise fixes a line spread.

We now prove the converse. Suppose first that the fixed points of $\theta$ form an ideal subspace. Let $C$ be a chamber of $\theta$. Let the ideal subspace $\cS$ have corank $i$. We first claim that no point is mapped onto a collinear one. Suppose for a contradiction that $p$ is a point mapped onto a collinear one. Projecting $p$ onto a pointwise fixed singular subspace of dimension $n-i-1$ and looking in the residue of a hyperplane of that projection, we may assume that $i=n-1$, so $\cS$ is an ovoid. Let $M$ be a maximal singular subspace containing $p$ and $p^\theta$. Then $M$ contains a unique point $s\in\cS$. Select a hyperplane $H$ of $M$ containing $p$ but neither $p^\theta$ nor $s$. Then any maximal singular subspace $M'\neq M$ containing $H$ contains a point $s'\in\cS$ not collinear to $p^\theta$, a contradiction since $s'\perp p$. The claim is proved.  Let $\bC$ be the class of displacements corresponding to the automorphism of the Coxeter complex of type $\mathsf{B}_n$ given by $i$ sign transpositions and the identity everywhere else. 

Now denote by $M$ the maximal singular subspace belonging to $C$. Then $M$ contains a pointwise fixed subspace $S$ of dimension $r-i-1$. Let $D$ be the (building-theoretic) projection of $C$ onto (the vertex) $S$. By our claim above, $S=M\cap M^\theta$, so $M$ belongs to $\mathsf{conv}\{C,C^\theta\}$ and hence $\{D,D^\theta\}\subseteq\mathsf{conv}\{C,C^\theta\}$. Since, by the claim above, $\theta$ acts anisotropically on $\URes_\Gamma(S)$, which is a polar space of rank $i$, and trivial on $\LRes_\Gamma(S)$, the assertion follows from \cref{ext}. 

Now assume that $\theta$ elementwise fixes a line spread $\cS$ of $\Gamma$. Let $C$ be an arbitrary chamber of $\Gamma$. We show that the displacement of $C$ belongs to the class $\bC$ defined by an involutive automorphism of the Coxeter complex of type $\mathsf{B}_n$ mapping each vertex of type $1$ onto a non-opposite distinct vertex. Let $p$ be the point of $C$. Set $L=pp^\theta$. Then $L\in\mathsf{conv}\{C,C^\theta\}$. If $D$ is the (building-theoretic) projection of $C$ onto $L$, then we infer $\{D,D^\theta\}\subseteq\mathsf{conv}\{C,C^\theta\}$. Since $\theta$ induces a collineation in the upper residue of $L$ elementwise fixing a line spread, and since it acts fixed point freely on $L$, the displacement of $D$ belongs to $\bC$. Now \cref{ext} implies that $\delta(C,C^\theta)\in\bC$. 
\end{proof}

\subsection{Trialities of $\sD_4$} 
A \emph{triality} is a type rotating automorphism of a building of type $\mathsf{D_4}$. We assume the labelling chosen such that the types are rotated like $1\mapsto 3\mapsto 4\mapsto 1$. We do not assume that a triality necessarily has order $3$.

We will work in the Lie incidence geometry $\sD_{4,2}(\KK)$, in which ``points'' are the type~$2$ vertices of the building $\sD_4(\KK)$. Equivalently, one may regard the points the cosets in $G/P$, where $G=\sD_4(\KK)$ and $P$ is the standard parabolic subgroup of $G$ of type $\{1,3,4\}$. This incidence geometry has diameter $3$. The set of minimal length $(W_{\{1,3,4\}},W_{\{1,3,4\}})$ double coset representatives in $W$ is 
$
\{e,2,2132,2342,2142,21342,213242132\}.
$
The representative $2$ corresponds to ``distance~$1$'' (or collinearity), in the sense that the points $gP$ and $hP$ are at distance $1$ if and only if $g^{-1}h\in Ps_2P$. Similarly, the representatives $2132,2342,2142$ all correspond to one form of distance $2$, the representative $21342$ corresponds to another form of distance $2$ (we call this distance $2'$), and $213242132$ corresponds to distance $3$ (or opposition).

\begin{lemma}\label{trial}
If $\theta$ is a non-domestic \uniclass\ triality of $\sD_4$ then $\theta$ maps no point of $\sD_{4,2}(\KK)$ to distance $0$, $1$, or $2'$ (that is, all points are mapped to either distance $2$ or distance $3$). 
\end{lemma}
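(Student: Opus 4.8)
The plan is to exploit the structure of the Weyl group $W=W(\sD_4)$ together with the triality $\sigma$ acting as the order-$3$ diagram automorphism rotating the three outer nodes, and to argue that if some point lands at distance $0$, $1$ or $2'$, then $\disp(\theta)$ cannot be a single $\sigma$-conjugacy class. Concretely: recall that points of $\sD_{4,2}(\KK)$ are cosets $gP$ with $P$ the parabolic of type $\{1,3,4\}$, and the possible ``point-distances'' are parametrised by the seven minimal-length $(W_{134},W_{134})$-double coset representatives listed in the excerpt, namely $e$ (distance $0$), $s_2$ (distance $1$), the three elements $2132$, $2342$, $2142$ (distance $2$, permuted cyclically by $\sigma$), $21342$ (distance $2'$, fixed by $\sigma$), and $213242132=w_0$ (distance $3$). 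The key observation is that for a chamber $C$ the value $\delta(C,C^\theta)$ always lies in a fixed $(W_{134},W_{134})$-double coset, determined by the point-distance between the point of $C$ and its image; so if two points realise two \emph{different} point-distances, then $\disp(\theta)$ meets two different double cosets, and one must check these elements are not $\sigma$-conjugate.

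First I would establish the following general principle, which is immediate from the definitions and Proposition~\ref{prop:full}: since $\theta$ is uniclass with companion automorphism $\sigma$ of order $3$, the set $\disp(\theta)$ is a single $\sigma$-conjugacy class $\bC$, and by Proposition~\ref{prop:basic1}(2) the order of $ww^{\sigma}w^{\sigma^2}$ is constant on $\bC$. Moreover, for any $w\in\bC$ and any $s\in S$ with $\ell(sws^\sigma)=\ell(w)+2$ we have $sws^\sigma\in\bC$ (Proposition~\ref{prop:basic}(2)), and the ``downward'' moves are controlled as in the proof of Proposition~\ref{prop:full}. Now if $\theta$ maps some point $p$ to distance $0$ or $1$, then $\theta$ fixes a point or maps a point to a collinear one; projecting a chamber $C$ through $p$ appropriately, and using Lemma~\ref{lem:determineimages}, one finds a chamber with $\delta(C,C^\theta)\in W_{134}$ (in the distance-$0$ case this already gives a fixed simplex of type $\{1,3,4\}$, forcing $\theta$ domestic-on-a-residue arguments) — in any event $\disp(\theta)$ would then contain an element in the double coset $W_{134}eW_{134}=W_{134}$, whereas non-domesticity forces $\disp(\theta)$ also to contain $w_0$ (by Proposition~\ref{prop:attaindisplacement}, taking $J$ maximal among types mapped to opposite simplices, which for a non-domestic automorphism includes the empty residue giving $w_0\in\disp(\theta)$ directly, since $\theta$ non-domestic means some chamber maps to an opposite). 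Elements of $W_{134}$ and $w_0$ have different parities of length (as $\ell(w_0)=12$ is even but elements of $W_{134}$ range over all parities), so this alone is not yet a contradiction; the real point is that $w_0$ and any element of a proper parabolic $W_{134}$ have different orders of $ww^\sigma w^{\sigma^2}$, which I would compute explicitly: $w_0 w_0^\sigma w_0^{\sigma^2}=w_0^3=w_0\neq e$ has order $2$, while for $w\in W_{134}$ one checks $ww^\sigma w^{\sigma^2}$ can be made trivial (indeed the minimal element $e\in\bC$ would give order $1$), contradicting constancy.

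For the distance-$2'$ case, which is the genuinely delicate one, I would argue as follows. Distance $2'$ corresponds to the double coset of $x:=s_2s_1s_3s_4s_2=21342$, which is $\sigma$-stable as a double coset. If some chamber $C$ has its point at distance $2'$ from its image, I would choose $C$ so that $\delta(C,C^\theta)$ has minimal length subject to this, and show (via Lemma~\ref{lem:foldingcondition} and the folding arguments of Proposition~\ref{prop:full}) that $\delta(C,C^\theta)=x$ itself, the minimal-length double coset representative; but then $xx^\sigma x^{\sigma^2}$ has some order $k$, and I would compute $k$ and compare it with the order of $w_0w_0^\sigma w_0^{\sigma^2}=w_0$, which is $2$. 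A short computation in $W(\sD_4)$ (the group is small, $|W|=192$; one can do this by hand in the Coxeter group or by the signed-permutation model) should show these orders differ, contradicting Proposition~\ref{prop:basic1}(2). Alternatively, and perhaps more robustly, I would observe that $x=w_{\{1,3,4\}}\cdot(\text{something})$ — more precisely compare $\ell(x)=5$ with the length constraints: since $\disp(\theta)\ni w_0$ with $\ell(w_0)=12$, the class $\bC=\Cl^\sigma(w_0)=\{w_0\}$ (as $\sigma$ here is a triality, and $w_0$ is central so $\Cl^\sigma(w_0)=\{w_0\}$), whence $\disp(\theta)=\{w_0\}$, i.e.\ $\theta$ is anisotropic — contradicting non-domesticity. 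Wait: this last line in fact shows the cleanest route. The hard part, and the step I expect to be the main obstacle, is precisely pinning down that $\Cl^\sigma(w_0)=\{w_0\}$ for the triality $\sigma$ (true, since $w_0$ is central and $\sigma$-invariant, so $v^{-1}w_0v^\sigma=w_0 v^{-1}v^\sigma$, and one needs $v^{-1}v^\sigma=e$ forced, which is \emph{not} automatic) — so actually $\Cl^\sigma(w_0)$ need not be a singleton, and the argument must instead genuinely use that a chamber at point-distance $2'$ or $0$ or $1$ produces a displacement element in a double coset that cannot be $\sigma$-conjugate to one in the $w_0$-coset, which I would verify by the length-parity invariant of Proposition~\ref{prop:basic1}(1) combined with the fact that the double coset $W_{134}w_0W_{134}$ (distance $3$) contains only even-length elements while the coset of $21342$ contains only odd-length elements, so they lie in different $\sigma$-classes; and similarly $W_{134}$ (distance $0$) contains elements of both parities but one checks directly via $\Orb^\sigma$ that its $\sigma$-classes are distinct from that of $w_0$. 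I would finish by noting that non-domesticity guarantees, by Proposition~\ref{prop:attaindisplacement}, a chamber with $\delta(C,C^\theta)=w_{S\setminus J}w_0$ for $J$ the (nonempty, $\sigma$-stable) set of the opposition diagram, and cross-referencing the possible opposition diagrams for trialities (all of which have $w_0$ or an odd-length element in the associated double coset structure) to rule out coexistence with a distance-$0$, $1$, or $2'$ point.
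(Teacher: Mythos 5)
Your high-level strategy coincides with the paper's: use non-domesticity to conclude $\disp(\theta)=\Cl^{\sigma}(w_0)$, and then show this class misses the $(W_{\{1,3,4\}},W_{\{1,3,4\}})$-double cosets of $e$, $s_2$, and $21342$. But the invariants you propose to separate the cosets do not actually work, and you never commit to a complete argument. First, the order of $ww^{\sigma}w^{\sigma^2}$ from Proposition~\ref{prop:basic1}(2) fails to separate the $2'$-coset from the $w_0$-coset: with $x=s_2s_1s_3s_4s_2$, the element $x$ is $\sigma$-invariant (as $s_1,s_3,s_4$ commute), so $xx^{\sigma}x^{\sigma^2}=x^3=x$ is an involution, and $w_0w_0^{\sigma}w_0^{\sigma^2}=w_0$ is also an involution; the same happens for $s_2$. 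Second, your parity claim about the double cosets is false: since $w_0$ is central in $W(\sD_4)$ we have $W_{\{1,3,4\}}w_0W_{\{1,3,4\}}=w_0W_{\{1,3,4\}}$, whose elements have lengths $9,10,11,12$, both parities; and the double coset of $21342$ contains $s_1\cdot(21342)=s_1s_2s_1s_3s_4s_2$ of length $6$, which is even. So neither the parity of lengths nor the cubic invariant suffices, and the expression of confidence that ``a short computation should show these orders differ'' is unwarranted --- they do not differ.

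The paper's proof avoids the search for a slick invariant entirely: given $\disp(\theta)=\Cl^{\sigma}(w_0)$, it lists the $16$ elements of $\Cl^{\sigma}(w_0)$ explicitly (a small finite computation in a group of order $192$, verified by checking closure under $\sigma$-conjugation by each generator) and observes by inspection that every one lies in the double cosets of $2132$, $2342$, $2142$, or $w_0$. Because the forbidden double cosets contain elements of both parities and because $ww^{\sigma}w^{\sigma^2}$ does not distinguish them from the $w_0$-coset, enumeration appears to be unavoidable here; your proposal does not supply a substitute for it.
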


\begin{proof}
Let $\sigma$ be the triality diagram automorphism with $s_1^{\sigma}=s_3$, $s_3^{\sigma}=s_4$, and $s_4^{\sigma}=s_1$. Since $\theta$ is not domestic we have $w_0\in\disp(\theta)$, and hence $\disp(\theta)=\Cl^{\sigma}(w_0)$. Direct calculation gives that 
\begin{align*}
&\Cl^{\sigma}(w_0)=\{121324, 321421, 121321, 123242 , 142132, 214213, 232421, 232423, 121421,\\
&\quad  12324213, 13421324, 13214213 , 2132421324 , 2132142132 ,   1 2 1 3 2 1 4 2 1 3 2 4 , 1 2 1 3 2 4 2 1 3 2 \}
\end{align*}
(one may directly verify that this set is closed under $\sigma$-conjugation by~$S$). In particular the elements of $\Cl^{\sigma}(w_0)$ all lie in the double cosets with representatives $2132,2342,2142$ or $213242132$, hence the result.
\end{proof}

\begin{thm}\label{nonondom}
There are no uniclass trialities.
\end{thm}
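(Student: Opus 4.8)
The strategy is to combine the structural facts already established in this section with the dichotomy provided by Lemma~\ref{trial}. First note that by Theorem~\ref{thm:uncapped} any uniclass triality $\theta$ is capped, and by the anisotropic-or-domestic lemma for uniclass automorphisms together with the classification of domestic trialities of $\sD_4$ (see \cite{TTVM2}) we may split into three cases: $\theta$ is anisotropic, $\theta$ is domestic (hence fixes a geometric configuration described in \cite{TTVM2}), or $\theta$ is non-domestic and non-anisotropic. I would dispose of each in turn. The anisotropic case: a triality of order $3$ cannot be anisotropic because the companion diagram automorphism of an anisotropic automorphism must be opposition (see the discussion after the definition of anisotropic), but opposition on $\sD_4$ is trivial (type preserving) and in particular is not a triality; more directly, $\sigma^{\sigma_0}=\sigma$ here since $\sigma_0=1$ on $\sD_4$, so $\sigma$ of order $3$ is incompatible with $\disp(\theta)=\{w_0\}$ which would force $\sigma=\sigma_0=1$. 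Hence $\theta$ is not anisotropic.

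\textbf{The domestic case.} Here I would invoke the classification of domestic trialities, which shows that a domestic triality of $\sD_4(\KK)$ either fixes a chamber, or has an explicitly described fixed structure. If $\theta$ fixes a chamber then the uniclass property forces $1\in\disp(\theta)$ to be the whole $\sigma$-conjugacy class, so $\disp(\theta)=\Cl^{\sigma}(1)=\{v^{-1}v^{\sigma}\mid v\in W\}$; but this class contains elements of nonzero length (for instance $s_2^{-1}s_2^{\sigma}=s_2s_1$ is already of length $2$), contradicting $1\in\disp(\theta)$ having minimal length $0$ while also needing $\disp(\theta)=\{1\}$ for a fixed-chamber automorphism that is uniclass — more precisely, if a chamber is fixed then any chamber adjacent to it is displaced by an $s$ of length $1$, which is not $\sigma$-conjugate to $1$ by parity, so $\theta$ is forced to be the identity, and the identity is not a triality. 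For the remaining domestic trialities, the fixed structure has positive-dimensional pieces (a fixed hexagon or fixed split building of type $\mathsf{G}_2$, etc.), and one produces two chambers whose displacements lie in different $\sigma$-conjugacy classes (typically by exhibiting both a chamber close to the fixed geometry and a chamber far from it), again contradicting uniclass; this is essentially the same style of argument as in the proof of Theorem~\ref{rank2}, Case~4, and in Theorem~\ref{Anuni}.

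\textbf{The non-domestic, non-anisotropic case.} This is where Lemma~\ref{trial} does the work. By that lemma every point of $\sD_{4,2}(\KK)$ is mapped to distance $2$ or distance $3$; in particular no point is collinear to its image and no point is fixed. Now I would take a chamber $C$ and analyse the convex hull $\mathsf{conv}\{C,C^\theta\}$ using Lemma~\ref{ext} and Lemma~\ref{lem:determineimages}: choosing a point $p$ of $C$ at distance $2$ (of the ``generic'' kind, corresponding to one of $2132,2342,2142$) and projecting appropriately, one locates inside $\mathsf{conv}\{C,C^\theta\}$ a residue on which $\theta$ acts, and by comparing the induced displacements with the explicit list of $\Cl^{\sigma}(w_0)$ computed in the proof of Lemma~\ref{trial} one finds a chamber whose displacement is forced into a double coset not represented in $\Cl^{\sigma}(w_0)$ — or, alternatively, one exhibits a point at distance $2'$, directly contradicting Lemma~\ref{trial}. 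Concretely: pick a point $p$ of $C$ with $p^\theta$ at distance $2$; among the lines through $p$, since $\theta$ fixes no point and no line can be fixed (a fixed line would produce a fixed point in its residue under the triality structure, impossible), one can choose a neighbour configuration whose image lands at distance $2'$ or $3$ from a related point, producing via $\theta$-iteration a point at the forbidden distance $2'$. The main obstacle I expect is precisely this last step: making rigorous the claim that ``there must exist a point at distance $2'$'' requires carefully tracking how $\theta$ permutes the three types in the $\sD_4$ geometry and using the triality structure (not just the polar-space structure), so the argument is genuinely more delicate than the $\sB_n$/$\sD_n$ collineation case and cannot be borrowed verbatim from Theorem~\ref{Bn1uni}. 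One clean way around it: since $\disp(\theta)=\Cl^{\sigma}(w_0)$ has a unique minimal-length element (one checks from the list in Lemma~\ref{trial} that the minimum length is $6$ and is attained uniquely, up to the duality $\psi$, by one of $121324$, etc.), Proposition~\ref{prop:anisotropic} yields a proper residue $R$ stabilised by $\theta$ with $\theta|_R$ anisotropic; but the type of $R$ read off from that minimal element is a rank-$\leq 2$ or reducible residue, and anisotropic automorphisms of such small buildings over $\KK$ are ruled out (a rank-$1$ residue admits no nontrivial anisotropic automorphism compatible with the surrounding triality, and a rank-$2$ residue would be a generalised polygon whose anisotropic automorphisms are excluded by the constraint that the ambient companion automorphism is a triality). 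Either route reaches the contradiction, and I would present whichever is shorter once the minimal-length element of $\Cl^{\sigma}(w_0)$ is pinned down. This completes the proof that there are no uniclass trialities.
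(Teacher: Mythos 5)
Your proposal contains a genuine gap, and in two places relies on lemmas that do not apply to trialities.

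\textbf{The anisotropic-or-domestic dichotomy does not apply.} You invoke ``the anisotropic-or-domestic lemma for uniclass automorphisms'' to split into cases. But that dichotomy (Lemma on uniclass automorphisms, item~(2)) requires the companion automorphism $\sigma$ to be the opposition relation. For a triality of $\sD_4$ the opposition relation is trivial while $\sigma$ has order $3$, so the dichotomy gives nothing. A uniclass triality can a priori be non-domestic and non-anisotropic, with $\disp(\theta)=\Cl^{\sigma}(w_0)$ a non-singleton class, and this is exactly the case that requires work.

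\textbf{Your fallback via Proposition~\ref{prop:anisotropic} fails.} For the non-domestic case you propose, as a ``clean way around'' the geometric difficulty, to apply Proposition~\ref{prop:anisotropic} to obtain a proper residue on which $\theta$ restricts anisotropically. That proposition explicitly hypothesises $\sigma^2=1$, which is false for a triality. Moreover, as you could check from the explicit list in Lemma~\ref{trial}, $\Cl^{\sigma}(w_0)$ has several minimal-length elements (nine of length~$6$), so the ``unique minimal-length element'' structure underlying your argument is also absent. Your primary route, producing a point at the forbidden distance $2'$ by ``$\theta$-iteration,'' is where you yourself flag a gap, and indeed it remains unresolved: Lemma~\ref{trial} forbids $2'$ as a point displacement, so the contradiction must come from a \emph{chamber} displacement outside $\Cl^{\sigma}(w_0)$, not from a point at distance $2'$. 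The paper's proof achieves this by a concrete construction in the polar space $\mathsf{D}_{4,1}$: starting from a non-absolute point (whose existence is itself a nontrivial lemma inside the proof), it builds an explicit gallery of type $123421$ from a chamber to its image and observes that $123421\notin\Cl^{\sigma}(w_0)$. None of that construction appears in your proposal.

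\textbf{The domestic case is also not quite right.} Your claim that ``if a chamber is fixed then any chamber adjacent to it is displaced by an $s$ of length $1$'' is false: a chamber $s$-adjacent to a fixed chamber has displacement in $\{1, s s^{\sigma}\}$ when $s\neq s^{\sigma}$ (length $0$ or $2$, both $\sigma$-conjugate to $1$), and only the $s_2$-adjacent chambers can potentially yield an odd-length displacement --- and whether they actually do requires an additional argument. The paper avoids this entirely: by \cite{Mal:14} the only domestic trialities are of type $\mathsf{I}_{\mathrm{id}}$, which fix a chamber, so $1\in\disp(\theta)$; and Proposition~\ref{prop:attaindisplacement} applied to the opposition diagram ${^3}\sD_{4;1}^2$ gives $s_2w_0\in\disp(\theta)$, which has odd length and hence cannot be $\sigma$-conjugate to $1$ by parity. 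That one-line parity comparison is what you should cite instead of the adjacency argument.

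In short, the overall structure (split into anisotropic/domestic/non-domestic and use Lemma~\ref{trial} for the last) is on the right track, but the tools you propose to close the non-domestic case are either inapplicable ($\sigma^2\neq 1$) or not carried out, and the geometric heart of the paper's proof --- producing a chamber with displacement $123421$ --- is missing.
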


\begin{proof}
Let $\theta$ be a triality. If $\theta$ is domestic then by \cite{Mal:14} $\theta$ is a triality of type $\mathsf{I}_{\mathsf{id}}$ (that is, $\theta$ is a triality of order $3$ and its fixed point structure is a split Cayley hexagon). In particular, $\theta$ fixes a chamber of $\Delta$, and so $1\in\disp(\theta)$. The opposition diagram of $\theta$ is ${^3}\sD_{4;1}^2$ (see \cite{PVMclass}) and hence by Proposition~\ref{prop:attaindisplacement} we have $s_2w_0\in\disp(\theta)$. However $1$ and $s_2w_0$ are not $\sigma$-conjugate (by parity of length). 

Suppose now that $\theta$ is not domestic, and so $w_0\in \disp(\theta)$. If $\theta$ is uniclass then $\disp(\theta)=\Cl^{\sigma}(w_0)$. We argue in the polar space $\mathsf{D_{4,1}}$. We first claim that there is some non-absolute point (that is, there exists a point contained in its image). Indeed, suppose for a contradiction that all points are absolute. Let $p$ be an arbitrary point and choose $q\in p^\theta$, $q\neq p$. Then $p^\theta\cap q^\theta$ is a line $L^\theta$ that contains $q$, with $L=\<p,q\>$. Hence, 
by \cref{trial}, this implies $L=L^\theta$. Hence $p^{\theta^2}$ contains $L^\theta$. Since this is true for each line $L$ in $p^\theta$ through $p$ (by the arbitrariness of $q$), this implies $p^\theta=p^{\theta^2}$, a contradiction and the claim follows.  

Hence we may assume that a point $p$ is not absolute. Set $p^\theta=W_+$ and let $W_-=\<p,p^\perp\cap W_+\>$. Then $W_-^\theta$ is a point $q$ that is contained in $W_+$. Suppose for a contradiction that $q\in W_-$ (then $q\perp p$). The lines through $p$ inside $W_-$ are mapped onto lines through $q$ inside $W_+$. Set $\pi=p^\perp\cap W_+$. Then $\<p,q\>^\theta$ is not contained in $\pi$, as this would imply that $\<p,q\>$ and its image are collinear in $\mathsf{D_{4,2}}$, contradicting \cref{trial}. Now  pick a line $K^\theta$ through $q$ in $W_+$ not contained in $\pi$ and distinct from $\<p,q\>^\theta$. Then $K$ intersects $\pi$ in some point $r\in \pi\setminus\{q\}$. Since $r\perp K^\theta$, $r\notin K^\theta$, but $p^\perp\cap K^\theta=\{r\}$, we deduce that $K$ and $K^\theta$ are at distance $2'$ from each other in $\mathsf{D_{4,2}}$, again contradicting \cref{trial}. We conclude that $q\notin W_-$. Now let $M$ be an arbitrary line through $p$ in $W_-$. As the distance from $M$ to $M^\theta$ in $\mathsf{D_{4,2}}$ is not $2'$, we see as before that $M\cap M^\theta$ is nonempty (and contained in $\pi$). Consequently, if $L$ is a line in $\pi$, and we denote the planes $\<p,L\>$ and $\<q,L\>$ by $\alpha$ and $\beta$, respectively, then the lines in $\alpha$ through $p$ are mapped onto the lines of $\beta$ through~$q$.  

Let $U_+$ be the solid of type $3$ containing $\alpha$ and let $U_-$ be the solid of type $4$ containing $\beta$.  Note that, as $W_-$ and $U_-$ share the line $L$, they share a plane. Also $U_+^\theta=U_-$.  Pick an arbitrary point $r$ on $L$ and set $L_p=\<p,r\>$ and $L_q=\<q,r\>$. Then we have the following gallery:
\begin{multline*}\{p,L_p,U_+,W_-\}\stackrel{1}{\sim}\{r,L_p,U_+,W_-\}\stackrel{2}{\sim}\{r,L,U_+,W_-\}\stackrel{3}{\sim} \{r,L,W_+,W_-\}\stackrel{4}{\sim}\\ \{r,L,W_+,U_-\}\stackrel{2}{\sim}\{r,L_q,W_+,U_-\}\stackrel{1}{\sim}\{q,L_q,W_+,U_-\}=\{W_-^\theta,L_p^\theta,p^\theta,U_+^\theta\},\end{multline*}

implying that the displacement $123421$ is attained, however this does not lie in the twisted conjugacy class of $w_0$ (see the proof of Lemma~\ref{trial}), a contradiction.
\end{proof}

\subsection{Buildings of type $\sF_4$}

We first deal with dualities, and then collineations. 

\subsubsection{Dualities}

We have the following basic lemma.

\begin{lemma}\label{lem:uniqueclass}
Let $\sigma$ be the duality diagram automorphism of $\sF_4$. There exists a unique $\sigma$-conjugacy class $\bC$ of $\sigma$-involutions, and $\bC=\Cl^{\sigma}(1)=\Cl^{\sigma}(w_0)$. Moreover, for each element $w\in \bC\backslash\{w_0\}$ there exists $s\in S$ with $\ell(sws^{\sigma})=\ell(w)+2$.
\end{lemma}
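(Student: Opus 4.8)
The plan is to combine the classification of classes of $\sigma$-involutions (Theorem~\ref{thm:downwardsclosure}) with the duality $\psi$ of Section~\ref{sec:paired}, exploiting the fact that for type $\sF_4$ the longest element $w_0$ is central, so that the opposition diagram automorphism $\sigma_0$ is trivial.

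For the first assertion, I would first note that both $1$ and $w_0$ are $\sigma$-involutions (trivially $1\cdot 1^{\sigma}=1$, and $w_0w_0^{\sigma}=w_0^2=1$ since $w_0$ is fixed by every diagram automorphism). By Theorem~\ref{thm:downwardsclosure}(1) every $\sigma$-class of $\sigma$-involutions has a minimal length element of the form $w_J$ for some $J\subseteq S$ with $sw_J=w_Js^{\sigma}$ for all $s\in J$; in particular such a $J$ is $\sigma$-stable. Since the distinguished $\sigma$-orbits of $S=\{1,2,3,4\}$ are $\{1,4\}$ and $\{2,3\}$, the $\sigma$-stable subsets of $S$ are $\emptyset$, $\{1,4\}$, $\{2,3\}$ and $S$, and I would rule out all but $\emptyset$ by direct check: for $J=\{1,4\}$ we have $W_J\cong(\ZZ/2\ZZ)^2$ and $w_J=s_1s_4$, so $s_1w_J=s_4\neq s_1=w_Js_1^{\sigma}$; for $J=\{2,3\}$ the element $w_J$ is the longest element of a dihedral group of order $8$, hence central in $W_J$, so $s_2w_J=w_Js_2\neq w_Js_3=w_Js_2^{\sigma}$; and for $J=S$ the element $w_0$ is central in $W$, so $sw_0=w_0s\neq w_0s^{\sigma}$ because $\sigma$ moves every node. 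Hence there is a unique $\sigma$-class of $\sigma$-involutions, namely $\bC=\Cl^{\sigma}(w_{\emptyset})=\Cl^{\sigma}(1)$, and since $w_0$ is a $\sigma$-involution we also get $\bC=\Cl^{\sigma}(w_0)$. (The same computation shows that $1$ is moreover the unique minimal length element of $\bC$.)

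For the second assertion I would first record that, since $\sigma_0=1$ for type $\sF_4$, Lemma~\ref{lem:pairing} gives $\psi(\bC)=\bC w_0=\Cl^{\sigma}(1)w_0=\Cl^{\sigma}(w_0)=\bC$; in particular $ww_0\in\bC$ whenever $w\in\bC$. Now fix $w\in\bC$ with $w\neq w_0$ and put $x=ww_0$. Then $x\in\bC$ (so $x$ is a $\sigma$-involution) and $x\neq 1$, so $x$ is not of minimal length in $\bC$. By Theorem~\ref{thm:downwardsclosure}(3) there is $v\in W$ with $v\neq 1$ and $\ell(v^{-1}xv^{\sigma})=\ell(x)-2\ell(v)$; writing $v=sv'$ with $\ell(v')=\ell(v)-1$ and applying the elementary inequality $\ell(u^{-1}yu^{\sigma})\geq\ell(y)-2\ell(u)$ to $y=sxs^{\sigma}$ and $u=v'$, one gets $\ell(sxs^{\sigma})\leq\ell(x)-2$, and hence $\ell(sxs^{\sigma})=\ell(x)-2$. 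Finally, since $w_0$ is central in $W$ we have $sxs^{\sigma}=(sws^{\sigma})w_0$, and using $\ell(yw_0)=\ell(w_0)-\ell(y)$ for all $y\in W$ this identity rearranges to $\ell(sws^{\sigma})=\ell(w)+2$, as required.

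The computations here are all routine; the one point that deserves care is the enumeration of $\sigma$-stable subsets, specifically the observation that the longest element of the rank-$2$ parabolic $W_{\{2,3\}}$ is central and therefore does \emph{not} conjugate $s_2$ to $s_3$, so that $J=\{2,3\}$ fails the balancedness condition --- without this one might wrongly expect a second class of $\sigma$-involutions with minimal element $w_{\{2,3\}}$. As a cross-check, the unique class $\bC$ should correspond under Theorem~\ref{thm:bicapped} to the admissible diagram ${}^{2}\sF_{4;2}$ (both $\sigma$-orbits encircled, bent), which is self-dual under $\psi$ and has relative type $\mathsf{I}_2(8)$, consistent with $\FixRk(\bC)=|\Orb^{\sigma}(S)|=2$.
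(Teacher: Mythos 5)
Your proof is correct and follows the same route as the paper's: use Theorem~\ref{thm:downwardsclosure} to identify minimal-length elements $w_J$ of $\sigma$-involution classes, check that $J=\emptyset$ is the only $\sigma$-stable subset satisfying the balancedness condition (hence there is a unique class $\Cl^{\sigma}(1)$, which contains $w_0$), and then derive the final claim from self-duality of $\bC$ together with Theorem~\ref{thm:downwardsclosure}(3). You have simply made explicit the case analysis of the possible $J$'s and the passage from the upward-length claim about $w$ to the downward-length claim about $ww_0$ that the paper compresses into ``the final claim follows using Theorem~\ref{thm:downwardsclosure} (since the class is self dual).''
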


\begin{proof}
By Theorem~\ref{thm:downwardsclosure} each $\sigma$-conjugacy class of $\sigma$-involutions contains an element $w_J$ with $sw_J=w_Js^{\sigma}$ for all $s\in J$. The only possibility is $J=\emptyset$, and hence $\Cl^{\sigma}(\id)$ is the only class of $\sigma$-involutions. Since $w_0$ is a $\sigma$-involution it follows that $w_0\in\Cl^{\sigma}(1)$, and the final claim follows using Theorem~\ref{thm:downwardsclosure} (since the class is self dual).
\end{proof}

We can now prove Theorem~\ref{thm:main1} for dualities of $\sF_4$ building. 

\begin{thm}\label{thm:F41}
A duality $\theta$ of a thick $\sF_4$ building is \uniclass\ if and only if it is a polarity, and hence its fixed element structure consists of type $\{1,4\}$ and type $\{2,3\}$ simplices forming a Moufang octagon. 
\end{thm}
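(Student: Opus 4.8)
The plan is to argue in both directions. For the ``if'' direction, suppose $\theta$ is a polarity, so $\theta^2=1$ and the companion automorphism $\sigma$ is the duality diagram automorphism of $\sF_4$. By Proposition~\ref{prop:basic}(1) every element of $\disp(\theta)$ is a $\sigma$-involution, and by Lemma~\ref{lem:uniqueclass} there is only one $\sigma$-conjugacy class of $\sigma$-involutions, namely $\bC=\Cl^\sigma(1)=\Cl^\sigma(w_0)$. Hence $\disp(\theta)\subseteq\bC$ automatically, which is exactly the uniclass condition. (It remains to describe the fixed structure; this follows from the theory of domestic dualities of $\sF_4$ buildings in the literature, since a polarity that is uniclass cannot fix a chamber — otherwise $1\in\disp(\theta)$ would force, via Theorem~\ref{thm:uncapped} and Proposition~\ref{prop:attaindisplacement}, a second displacement of incompatible parity unless $\disp(\theta)=\{1\}$, i.e.\ $\theta$ is the identity, contradicting that $\theta$ is a duality. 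So the fixed point structure is the Ree--Tits octagon, matching the opposition diagram ${^2}\sF_{4;2}$.)

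For the ``only if'' direction, let $\theta$ be a uniclass duality that is not a polarity, i.e.\ $\theta^2\ne 1$. I would first dispose of the anisotropic possibility: if $\theta$ is anisotropic then $\disp(\theta)=\{w_0\}$, but $w_0$ is a $\sigma$-involution, so $w_0w_0^\sigma=1$ forces... actually $w_0^\sigma=w_0$ since $w_0$ is central in $\Aut(\Pi)$ up to the induced map, so $w_0^2=1$ is consistent — this does not immediately give a contradiction at the level of the Coxeter group. Instead the anisotropic case must be excluded geometrically: by \cite[Theorem 2.7]{PTV} (used in the rank $2$ case) or the corresponding result for dualities of $\sF_4$ buildings, an anisotropic duality would be an anisotropic automorphism with companion $\sigma\ne\sigma_0$, but since $\sigma_0=1$ for $\sF_4$ and $\sigma$ is nontrivial, anisotropy forces $\sigma$ to equal opposition (by the remark after the anisotropic definition in Section~\ref{sec:automorphisms}), i.e.\ $\sigma=\sigma_0=1$, contradicting that $\theta$ is a duality. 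So $\theta$ is not anisotropic, hence by Theorem~\ref{thm:uncapped} $\theta$ is capped, and by the classification of domestic dualities of $\sF_4$ buildings $\theta$ is domestic with some opposition diagram. Since $\disp(\theta)=\bC$ (the unique $\sigma$-involution class, by Lemma~\ref{lem:uniqueclass} and Proposition~\ref{prop:full}), and $1\in\bC$, the automorphism $\theta$ fixes a chamber.

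Now the key step: $\theta$ fixes a chamber but $\theta^2\ne 1$, and $\theta^2$ is a type-preserving automorphism fixing that chamber. I would use Proposition~\ref{prop:residual} and the residual structure to derive a contradiction — or more directly, one can invoke that a duality fixing a chamber of an $\sF_4$ building has order dividing... here the obstruction is that Lemma~\ref{lem:uniqueclass} says that for every $w\in\bC\setminus\{w_0\}$ there is $s$ with $\ell(sws^\sigma)=\ell(w)+2$, so by Proposition~\ref{prop:basic}(2) the whole class $\bC$ is ``upward-reachable''; combined with $1\in\disp(\theta)$ this shows $\disp(\theta)$ contains elements of every length in $\bC$ up to $w_0$, in particular $w_0\in\disp(\theta)$, so $\theta$ is in fact not domestic — contradicting cappedness plus the classification (a non-domestic duality of $\sF_4$ has opposition diagram with all nodes encircled, forcing $\disp(\theta)$ to contain $s w_0$ for non-conjugate $s$... but all generators are $\sigma$-conjugate here since $\sigma$ swaps the two ends, so this particular route needs care). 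The cleanest contradiction: a non-domestic duality fixing a chamber cannot exist because fixing a chamber means $\theta$ stabilises an apartment around it and acts on it; since $\theta^2\neq 1$ but $\theta^2$ fixes a chamber and is type-preserving, one shows $\theta^2$ is nontrivial domestic, yet then $\disp(\theta^2)\not\subseteq\{1\}$ while the uniclass property of $\theta$ does not transfer to $\theta^2$ directly — so I would instead appeal to the geometric classification of dualities of $\sF_4$ buildings to conclude that the only domestic dualities fixing a chamber have order $2$, a contradiction.

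\textbf{Main obstacle.} The hard part will be ruling out dualities of order $>2$ that are uniclass. The Coxeter-group bookkeeping (Lemma~\ref{lem:uniqueclass}) tells us $\disp(\theta)$ must be the full self-dual class $\bC$ containing both $1$ and $w_0$, so $\theta$ simultaneously fixes a chamber and maps some chamber to an opposite. Showing this is geometrically impossible for a duality of order $>2$ of an $\sF_4$ building is the crux; I expect to invoke the detailed structure theory of (domestic) dualities of $\sF_4$ buildings from \cite{PVMexc,PVMclass} — specifically that a non-domestic duality has a rigidly constrained fixed structure incompatible with having both a fixed chamber and an opposite-mapped chamber unless it is an involution — rather than proving it from scratch.
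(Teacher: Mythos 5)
Your ``if'' direction is essentially correct and matches the paper: since $\theta^2=1$, Proposition~\ref{prop:basic}(1) makes every displacement a $\sigma$-involution, and Lemma~\ref{lem:uniqueclass} shows there is a unique such class, so $\theta$ is uniclass. (Your parenthetical about the fixed structure, however, is confused: you claim a uniclass polarity cannot fix a chamber via a parity argument, but $\ell(w_0)=24$ is even, so $1$ and $w_0$ lie in the same $\sigma$-class $\Cl^\sigma(1)=\Cl^\sigma(w_0)$ and there is no parity obstruction. Polarities of $\sF_4$ buildings \emph{do} fix chambers setwise — these are the chambers of the Moufang octagon. The conclusion about the octagon should simply be cited from \cite[Theorem~2.5.2]{hvm}.)

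The ``only if'' direction has a genuine gap, which you half-acknowledge. You repeatedly invoke ``the classification of domestic dualities of $\sF_4$ buildings,'' but this is a false premise: by \cite[Lemma~4.1]{PVM:19a} \emph{no} duality of a thick $\sF_4$ building is domestic. (This non-existence is in fact the starting point of the paper's proof: it forces $w_0\in\disp(\theta)$, hence $\disp(\theta)=\Cl^\sigma(w_0)$.) As a result your proposed route — reduce to a classification of domestic dualities, then rule out order $>2$ — cannot work. What is missing is the actual mechanism for showing $\theta^2=1$. The paper does this directly: starting from a chamber $C$ with $\delta(C,C^\theta)=w_0$, it applies Lemma~\ref{lem:determineimages} with the specific element $v=s_1s_2s_1s_3s_2s_1s_3s_4s_3s_2s_1s_3$ satisfying $w_0=vv^{-\sigma}$ to find two opposite chambers $C_v, C_{v^\sigma}$ in the apartment $\cA$ of $\{C,C^\theta\}$ that are each fixed by $\theta$; this forces $\cA$ to be stabilised and $\theta|_\cA$ to be an involution. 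Then, using Lemma~\ref{lem:uniqueclass}, any chamber $D$ can be ``pushed up'' (via Proposition~\ref{prop:basic}(2) and the upward-reachability of $\bC$) into such a stabilised apartment, so $\theta$ is an involution everywhere. This apartment-stabilisation argument is the crux of the theorem and is entirely absent from your proposal; without it, or an equivalent replacement, the ``only if'' direction is unproved.
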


\begin{proof}
Suppose that $\theta$ is \uniclass. We claim that $\theta$ is a polarity (that is, has order~$2$), from which the result follows by \cite[Theorem~2.5.2]{hvm}. 

By \cite[Lemma~4.1]{PVM:19a} no duality of a thick $\sF_4$ building is domestic, and hence by Proposition~\ref{prop:full} we have $\disp(\theta)=\Cl^{\sigma}(w_0)$. Let $C$ be a chamber with $\delta(C,C^{\theta})=w_0$, and let $\cA$ be the unique apartment containing $C$ and $C^{\theta}$. For $w\in W$ write $C_w$ for the unique chamber of $\cA$ with $\delta(C,C_w)=w$. 

By Lemma~\ref{lem:determineimages}, if $u\in W$ with $\ell(u^{-1}w_0u^{\sigma})=\ell(w_0)-2\ell(u)$ then $C_u^{\theta}=C_{w_0u^{-\sigma}}$. Taking $
v=s_1s_2s_1s_3s_2s_1s_3s_4s_3s_2s_1s_3$ and noting that $w_0=vv^{-\sigma}$ it follows that $C_v^{\theta}=C_{w_0v^{\sigma}}=C_v$, and similarly $C_{v^{\sigma}}^{\theta}=C_{v^{\sigma}}$. But $\delta(C_v,C_{v^{\sigma}})=v^{-1}v^{\sigma}=w_0$, and so $C_v$ and $C_{v^{\sigma}}$ are opposite chambers in $\cA$. Thus every chamber of $\cA$ lies in the convex hull of $\{C_v,C_{v^{\sigma}}\}$, and it follows that $C_w^{\theta}=C_{w_0w^{\sigma}}$ for all $w\in W$. Thus $\cA$ is stabilised by $\theta$, and $\theta|_{\cA}$ has order~$2$.

Now let $D$ be any chamber of $\Delta$. If $w=\delta(D,D^{\theta})\neq w_0$ then by Lemma~\ref{lem:uniqueclass} there is $s\in S$ with $\ell(sws^{\sigma})=\ell(w)+2$. Let $E$ be any chamber with $D\sim_s E$. Then $\delta(E,E^{\theta})=sws^{\sigma}$ and the chambers $D,D^{\theta}$ lie in $\mathsf{conv}\{E,E^{\theta}\}$. Continuing inductively we see that there exists a chamber $C$ with $\delta(C,C^{\theta})=w_0$ such that $D,D^{\theta},C,C^{\theta}$ lie in a common apartment. The argument of the previous paragraph shows that this apartment is stabilised, and that $\theta$ has order $2$ on this apartment. Hence $\theta$ is a polarity on $\Delta$, and hence $\theta$ fixes a Moufang octagon by \cite[Theorem~2.5.2]{hvm}.

Conversely, suppose that $\theta$ is a polarity. 
Thus each $w\in\disp(\theta)$ is a $\sigma$-involution, and hence $\disp(\theta)$ is a union of $\sigma$-conjugacy classes of $\sigma$-involutions by Proposition~\ref{prop:basic}. However there is a unique such class (by Lemma~\ref{lem:uniqueclass}), and so $\theta$ is \uniclass. 
\end{proof}

\subsubsection{Collineations}

We now turn to type preserving automorphisms (collineations). We begin with some preliminaries. A building of type $\mathsf{F_4}$ is not determined by a field alone, but by a pair $(\K,\AA)$, where $\AA$ is a quadratic alternative division ring over $\K$. It is customary to choose the types so that residues of type $\{1,2\}$ (which are residues of flags of type $\{3,4\}$) correspond to projective planes coordinatised by $\K$, and those of type $\{3,4\}$ correspond to projective planes coordinatised by~$\AA$. In this way, the vertices of type $1$ are centres of the long root elations (c.f. \cite[\S2.1]{PVMexc}). We denote the corresponding building by $\mathsf{F_4}(\K,\AA)$. The split case corresponds to $\AA=\K$, the trivial one-dimensional algebra over $\K$. 

Let $\Gamma=(X,\cL)$ be an embeddable polar space, and let $O\subseteq X$ be an ovoid of $\Gamma$. Then we say that $O$ is \emph{flat} if it arises as the intersection of $X$ with a subspace of some ambient projective space in which $\Gamma$ is embedded. Also, we say that $O$ is \emph{linear} if for any pair of points $x,y$ of $O$, the intersection of the line through $x$ and $y$ in any ambient projective space in which $\Gamma$ is embedded with $X$ is fully contained in $O$. Now, a set of vertices of type $1$ and $4$ of a building of type $\mathsf{F_4}$ forming a Moufang quadrangle, with the property that the fixed vertices of type $1$ or $4$ incident with a fixed vertex $v$ of type $4$ or $1$, respectively, form an ovoid in polar space corresponding to the residue at $v$, which is flat or linear, respectively, will be called an \emph{ideal quadrangular Veronesean}. 

The following is shown in \cite[Main Result]{Lam-Mal:23}.

\begin{prop}\label{newlemma}
A collineation $\theta$ of a thick $\sF_4$ building has opposition diagram $\mathsf{F_{4;2}}$ and fix diagram $\mathsf{F_{4;2}}$ if, and only if, its fix structure is an ideal quadrangular Veronesean. 
 In particular this means that no such collineation exists for $\AA=\K$ or $\AA$ non-associative. 
\end{prop}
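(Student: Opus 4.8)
The proposition is the Main Result of \cite{Lam-Mal:23}, so the plan is to indicate how that proof runs and how it fits the machinery of the present paper. Throughout $\theta$ is a collineation, so its companion automorphism is $\sigma=1$, and $\sigma_0=1$ for $\sF_4$, so $\sF_{4;2}$ is the self-dual bi-capped diagram with nodes $1$ and $4$ encircled, corresponding to the $\sigma$-involution class $\Cl(w_{\{2,3\}})$ (relative type $\sB_2$), cf.\ \cref{fig:bicapped}. For the ``if'' direction, assume the fixed structure is an ideal quadrangular Veronesean $\mathcal V$, with underlying Moufang quadrangle $\mathcal Q$ on the fixed type-$1$ and type-$4$ vertices. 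First one shows $\Fix(\theta)=\sF_{4;2}$: a fixed type-$2$ or type-$3$ vertex $v$ would, in its residue (a building of rank $\le 3$), carry a fixed substructure too large to be compatible with $\mathcal V$ --- the fixed type-$1$ (resp.\ type-$4$) vertices through a fixed type-$4$ (resp.\ type-$1$) vertex of $\mathcal Q$ incident with $v$ cannot fill out a proper flat (resp.\ linear) ovoid if such a $v$ is fixed --- so only types $1$ and $4$ carry fixed vertices and $\Fix(\theta)=(\Pi,\{1,4\},\mathrm{id})$. Next one shows $\Opp(\theta)=\sF_{4;2}$ by showing $\theta$ is uniclass with $\disp(\theta)=\Cl(w_{\{2,3\}})$: for an arbitrary chamber $C$ one checks $\mathsf{conv}\{C,C^\theta\}$ contains a fixed vertex $u$ of $\mathcal Q$; restricting $\theta$ to $\Res(u)$, where by the flat/linear ovoid property $\theta$ fixes an ovoid (an ``ideal subspace of rank $0$'' in the sense of \cref{Bn1uni}(1)), an induction via \cref{ext} forces $\delta(C,C^\theta)\in\Cl(w_{\{2,3\}})$, whose admissible diagram is $\sF_{4;2}$.

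\textbf{The converse.} For the ``only if'' direction, assume $\Opp(\theta)=\Fix(\theta)=\sF_{4;2}$. From $\Opp(\theta)=\sF_{4;2}$ one knows $\theta$ is capped (\cref{thm:uncapped}), domestic, not point-domestic, with some type-$1$ and some type-$4$ vertex mapped to an opposite but no chamber mapped to an opposite; combined with $\Fix(\theta)=\sF_{4;2}$ (fixed vertices exist only in types $1$ and $4$, and both occur), the classification of domestic collineations of $\sF_4$ buildings (from \cite{PVMexc} and related work) pins down the displacement spectrum as the single class $\Cl(w_{\{2,3\}})$, so $\theta$ is uniclass. Then one runs the residue analysis: restricting $\theta$ to the residue of a fixed type-$1$ vertex $p$ (a rank-$3$ polar space over $\AA$) gives, by \cref{Bn1uni} and \cref{prop:residual}, a collineation fixing a set of type-$4$ vertices; the constraints on the diagrams together with the embedding force this set to be a linear ovoid, and dually the residue of a fixed type-$4$ vertex yields a flat ovoid in the corresponding $\mathsf B_3(\K)$ polar space. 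Finally the incidence structure on all fixed type-$1$ and type-$4$ vertices has no digons or triangles (by the polar-space axioms in each residue), hence is a generalised quadrangle, and it inherits the Moufang property from the root groups of the ambient $\sF_4$ building; so the fixed structure is an ideal quadrangular Veronesean.

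\textbf{The non-existence clause and the main obstacle.} The ``in particular'' statement follows because an ideal quadrangular Veronesean requires the residues of fixed type-$1$ vertices (polar spaces over $\AA$, Veronese-embedded over $\K$) to admit linear ovoids and the residues of fixed type-$4$ vertices (polar spaces over $\K$) to admit flat ovoids, with compatible parameters; a parameter and structure count shows this forces $\AA$ to be a separable or inseparable quadratic field extension of $\K$, or a quaternion division algebra over $\K$. In particular it excludes $\AA=\K$ (the split case, where the relevant split polar spaces admit no such ovoids and the would-be quadrangle degenerates) and $\AA$ a Cayley--octonion division ring (non-associative, where the linear-ovoid condition in the type-$1$ residue cannot be satisfied). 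The hard part is the ``only if'' direction: reconstructing the global fixed structure from only the two combinatorial diagrams demands the full classification of domestic collineations of $\sF_4$ buildings with opposition diagram $\sF_{4;2}$, a careful residue-by-residue identification of the induced automorphisms, and control of the asymmetry between the two node types (flatness at one end, linearity at the other) together with the Moufang-theoretic bookkeeping --- this is the technical core carried out in \cite{Lam-Mal:23}.
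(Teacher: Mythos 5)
The paper offers no proof of this proposition: it is imported wholesale as the Main Result of \cite{Lam-Mal:23}, so your opening move of resting the statement on that citation is exactly what the paper does. Your supplementary sketch is broadly consistent with the machinery visible elsewhere in the paper --- the identification of the class as $\Cl(w_{\{2,3\}})$ with relative type $\sB_2$ is correct, and your ``if'' direction parallels the later argument of \cref{thm:F42} via \cref{ext} and residues --- while the technical core (the ``only if'' direction and the flat/linear ovoid bookkeeping) is, as in the paper itself, deferred entirely to \cite{Lam-Mal:23}, so there is nothing further to check here.
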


Then the following additional properties are shown in \cite{Lam-Mal:23}, where we let $\theta$ act on the parapolar space $\mathsf{F_{4,4}}(\K,\AA)$.

\begin{lemma}\label{symplfixed}
Let $\theta$ be an automorphism of $\mathsf{F_4}(\K,\AA)$ with opposition diagram $\mathsf{F_{4;2}}$ and fix diagram $\mathsf{F_{4;2}}$. 
\begin{compactenum}[$(1)$]
\item If a point $p$ is mapped onto a symplectic one, then the unique symp containing $p$ and $p^\theta$ is stabilised.
\item A point mapped to an opposite by $\theta$ is symplectic with at least two mutually opposite fixed points.
\end{compactenum}
\end{lemma}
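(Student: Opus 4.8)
\textbf{Plan for proving Lemma~\ref{symplfixed}.} The lemma has two parts, and in both I would exploit two structural facts about an automorphism $\theta$ with opposition diagram $\mathsf{F_{4;2}}$ and fix diagram $\mathsf{F_{4;2}}$: first, by Proposition~\ref{newlemma} the fixed structure is an ideal quadrangular Veronesean, so in particular $\AA$ is an associative quadratic division algebra over $\K$, and the fixed points of $\theta$ incident with a fixed type-$4$ vertex form a flat ovoid of the corresponding polar space (a symplectic quadrangle, since the residue at a type-$4$ vertex of $\mathsf{F_4}(\K,\AA)$ is a symplectic polar space of rank $2$); and second, by Proposition~\ref{prop:duality} (using that $\disp(\theta)$ is the bi-capped class $\mathsf{F_{4;2}}$, which is self-dual) we know $\disp(\theta)=\Cl(s_2w_0)$ explicitly, so the only displacements $\delta(p,p^\theta)$ that can occur for a point $p$ of $\mathsf{F_{4,4}}$ are the $\langle s_2\rangle$-double-coset types realised inside this class — concretely, a point is fixed, collinear to its image, symplectic to its image, or opposite its image, and the ``special'' relation is excluded. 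This last observation is the engine: it is the parapolar-space analogue of Lemma~\ref{trial}, and I would prove it the same way, by listing the elements of $\Cl(s_2w_0)$ and checking which $(W_{\{1,3,4\}},W_{\{1,3,4\}})$-double cosets they meet (equivalently, which point-to-point relations in $\mathsf{F_{4,4}}$ they encode).

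\textbf{Part (1).} Suppose $p$ is mapped to a symplectic point $p^\theta$, and let $\xi$ be the unique symp (convex rank-$2$ polar subspace) containing $p$ and $p^\theta$. I would first note that $p$ cannot be fixed or collinear to $p^\theta$ (those are different relations), and $p,p^\theta$ are not special by the double-coset restriction above, so ``symplectic'' is genuinely the relation and $\xi$ is well-defined. The symp $\xi$ corresponds to a type-$4$ vertex $v$ of the building, and $\xi=\mathsf{conv}\{p,p^\theta\}$ lies in every apartment through a chamber on a minimal gallery from a chamber on $p$ to a chamber on $p^\theta$; in particular $\xi^\theta$ is the unique symp through $p^\theta$ and $p^{\theta^2}$. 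If $\xi\neq\xi^\theta$ then $v\neq v^\theta$, and I would derive a contradiction by producing a point $q$ in $\xi$ whose displacement $\delta(q,q^\theta)$ falls outside $\Cl(s_2w_0)$ — the cleanest route is to take $q\in\xi$ collinear to $p^\theta$ (such $q$ exist since $\xi$ is a polar space of rank $\geq 2$ and $p^\theta\in\xi$), note $q^\theta\in\xi^\theta$, and analyse the relation between $q$ and $q^\theta$ using that $\xi\cap\xi^\theta$ is a singular subspace of $\xi$ (possibly empty) together with the one-or-all axiom, much as in the proof of Theorem~\ref{nonondom} for trialities of $\sD_4$. This forces $\xi=\xi^\theta$.

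\textbf{Part (2).} Now let $p$ be a point mapped to an opposite point $p^\theta$ (distance $6$ in the incidence graph of $\mathsf{F_{4,4}}$). I would use Lemma~\ref{lem:determineimages}: fixing $\delta(C,C^\theta)=w$ for a chamber $C$ on $p$ with $p^\theta$ on $C^\theta$ and $w$ the longest element of the relevant double coset, the convex hull $\mathsf{conv}\{C,C^\theta\}$ contains a fixed chamber in its ``middle,'' and in geometric terms this yields a fixed point $r$ symplectic to both $p$ and $p^\theta$ — the standard ``fold the geodesic in half'' argument, exactly as used in Case~1 of Theorem~\ref{thm:uncapped} and in the rank-$2$ arguments of Theorem~\ref{rank2}. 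To get a \emph{second} such fixed point opposite the first, I would vary the geodesic: choose a different minimal gallery from (a chamber on) $p$ to (a chamber on) $p^\theta$, producing a second fixed point $r'$ symplectic to $p$ and $p^\theta$; the two fixed points $r,r'$ lie in the fixed Veronesean, and genericity of the choice — together with the fact that the fixed structure is ``large'' (a full Moufang quadrangle's worth of fixed points in each fixed symp, by the ideal Veronesean description) — lets me arrange $r$ and $r'$ to be opposite within the fixed structure. I expect the main obstacle to be this last genericity/opposition claim: one must verify, using the ovoid description of the fixed points inside a fixed symp and the concrete geometry of $\mathsf{F_{4,4}}$, that the two independently-constructed fixed points can always be taken non-equal and in fact opposite, rather than accidentally collinear or equal. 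I would handle this by working inside a single fixed symp $\zeta$ containing $p$ (which exists since, by Part (1)'s method, the symps through $p$ meeting the fixed structure are controlled) and using that a flat ovoid of a symplectic quadrangle contains pairs of opposite points, so the fixed points of $\theta$ collinear-to-$p$-in-$\zeta$ already give the required opposite pair in the residue, whence opposite points of $\mathsf{F_{4,4}}$ after one more incidence step.
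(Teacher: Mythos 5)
Your proposal does not match the paper's approach: the paper does not prove Lemma~\ref{symplfixed} internally at all — both parts are quoted from the companion paper \cite{Lam-Mal:23} (Corollary~5.2.2$(iv)$ for part~$(1)$ and the proof of Proposition~4.2.1 for part~$(2)$). So you are attempting a self-contained proof where the authors simply import the result.

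Unfortunately, the route you choose is circular. Your argument for both parts leans on knowing $\disp(\theta)$ explicitly as a single twisted conjugacy class: you invoke Proposition~\ref{prop:duality} (which has the standing hypothesis that $\theta$ is uniclass and $\disp(\theta)$ is a bi-capped $\sigma$-involution class), you argue ``the only displacements that can occur are those realised inside $\Cl(s_2w_0)$,'' and for part~$(2)$ you appeal to Lemma~\ref{lem:determineimages}, whose hypothesis is again ``Let $\theta$ be a uniclass automorphism.'' But Lemma~\ref{symplfixed} assumes only the opposition and fix diagrams, and it is then \emph{used} in the converse direction of Theorem~\ref{thm:F42} as one of the ingredients needed to \emph{establish} that such a $\theta$ is uniclass. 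You may not presuppose that the displacement spectrum is one class; at this stage you know only which types of simplices are mapped to opposites and which types admit fixed simplices, not the full $\langle s\rangle$-double-coset distribution of $\delta(C,C^\theta)$. The analogy with Lemma~\ref{trial} is also misleading: there the automorphism is assumed \emph{non-domestic}, which pins down $\disp(\theta)=\Cl^{\sigma}(w_0)$ for free (since $w_0$ is a $\sigma$-involution and the class is forced); here $\theta$ is domestic and no such shortcut is available.

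Two secondary issues. In part~$(1)$ you say the residue at a type-$4$ vertex is ``a symplectic polar space of rank $2$'' — in fact it is a rank-$3$ polar space (residues of a single vertex in $\sF_4$ are rank $3$), and the symps of $\mathsf{F_{4,4}}$ are likewise rank-$3$ polar spaces; the rank-$2$ object is the fixed Moufang quadrangle, which is the \emph{global} fixed structure, not a residue. In part~$(2)$, the claim that ``genericity'' lets you arrange the two fixed points to be \emph{opposite} (not merely distinct) is exactly the hard geometric content of the cited Proposition~4.2.1 in \cite{Lam-Mal:23}, and cannot be taken for granted: you need to exhibit, within the ovoid of fixed points in a fixed symp, a pair of points that are opposite in $\mathsf{F_{4,4}}$ — this requires an argument about the ovoid's interaction with the ambient polar-space geometry, not just that the ovoid is nonempty.

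A correct self-contained proof would have to argue from the opposition/fix diagrams and the ideal-Veronesean fixed structure alone, without passing through any uniclass-dependent proposition — which is essentially what \cite{Lam-Mal:23} does.
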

\begin{proof}
(1) is \cite[Corollary~5.2.2$(iv)$]{Lam-Mal:23}, and (2) follows from the proof of \cite[Proposition 4.2.1]{Lam-Mal:23}, in particular the third of that proof.
\end{proof}

We will also use the notion of an \emph{extended equator geometry}.
\begin{defn}\label{equator}
Let $p,q$ be two opposite points of $\mathsf{F_{4,4}}(\K,\AA)$. The \emph{equator geometry} $E(p,q)$ is the point-line geometry with point set the points symplectic to $p$ and $q$, and line set the sets of points corresponding to symplecta through a fixed plane through $p$. Then the \emph{extended equator} $\wE(p,q)$ is the union of all equators $E(x,y)$, for $x$ and $y$ opposite points of $E(p,q)$. It gets the structure of a geometry when endowed with the intersections with the symps determined by any two symplectic points in it. It contains the equator geometry $E(x,y)$ or any pair $(x,y)$ of opposite points in it. It is isomorphic to the rank 4 polar space whose point residue is isomorphic to a symp of $\mathsf{F_{4,1}}(\K,\AA)$.   
\end{defn}

It follows from \cite{Lam-Mal:23} that, as soon as an extended equator geometry contains two fixed points, it contains a lot of them. More precisely, the following lemma will be useful as a reduction result. Let $\theta$ be as in \cref{symplfixed}.
\begin{lemma}\label{reduction}
Let $p,q$ be two opposite fixed points of $\theta$. Then the fixed points in $\wE(p,q)$ form an ideal subspace of corank $2$.
\end{lemma}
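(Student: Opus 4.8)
\textbf{Proof plan for Lemma~\ref{reduction}.}
The plan is to work inside the extended equator geometry $\wE(p,q)$, which by Definition~\ref{equator} is a thick polar space of rank~$4$ stabilised by $\theta$ (since $p$ and $q$ are both fixed, so is the set of points symplectic to both, and hence the whole extended equator built from it). So the first step is to record that $\theta$ restricts to a collineation $\bar\theta$ of this rank~$4$ polar space $\Gamma'=\wE(p,q)$, and that $\Gamma'$ contains at least two fixed points, namely $p$ and $q$ (in fact the whole equator $E(p,q)$ lies in $\Gamma'$, but I only need that the fixed point set is nonempty and spans enough). The goal is then precisely the statement that this fixed point set is an ideal subspace of corank~$2$ in the rank~$4$ polar space, i.e.\ case~(1) of Theorem~\ref{Bn1uni} with $i=2$.

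Next I would identify the opposition diagram of $\bar\theta$ on $\Gamma'$. Using the point residue description in Definition~\ref{equator} (a point residue of $\Gamma'$ is a symp of $\mathsf{F_{4,1}}(\K,\AA)$, i.e.\ a rank~$3$ polar space, and the geometry of symplecta through a plane through $p$ gives the lines of the equator), together with Lemma~\ref{symplfixed}: part~(1) says a point of $\mathsf{F_{4,4}}$ mapped to a symplectic point has its symp stabilised, which translated into $\Gamma'$ says points are never mapped to collinear points unless the connecting line behaves controllably, and part~(2) says a point mapped to an opposite in $\mathsf{F_4}$ is symplectic to two opposite fixed points — which is exactly the hypothesis needed to feed into the analysis. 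From this I expect to deduce that $\bar\theta$ is not point-domestic on $\Gamma'$ but fixes a point, so by the classification in Theorem~\ref{Bn1uni} (and the domesticity results of \cite{PVMclass} cited there) the fixed point set of $\bar\theta$ must be an ideal subspace; the remaining task is to pin the corank to be exactly~$2$.

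To pin down the corank I would argue as in the converse direction of the proof of Theorem~\ref{Bn1uni}: the opposition diagram of $\theta$ on $\mathsf{F_4}$ is $\mathsf{F_{4;2}}$, and the fact that $\theta$ maps some point of $\mathsf{F_{4,4}}$ to an opposite point (which exists because the diagram is nontrivial, and is symplectic to two opposite fixed points by Lemma~\ref{symplfixed}(2)) forces a flag of the appropriate type in $\Gamma'$ to be mapped to an opposite, so the corank is at least~$2$; conversely the fixed structure of $\theta$ being exactly an ideal quadrangular Veronesean (Proposition~\ref{newlemma}) bounds it from above, giving corank exactly~$2$. Assembling: fixed points of $\bar\theta$ in $\Gamma'$ form an ideal subspace (Theorem~\ref{Bn1uni}(1)) of corank~$2$, which is the claim.

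The main obstacle I anticipate is the bookkeeping translating the $\mathsf{F_4}$-geometric information (symplectic pairs, symps, the behaviour in Lemma~\ref{symplfixed}) into the intrinsic polar-space language of $\Gamma'=\wE(p,q)$ — in particular verifying that ``$\theta$ maps no point of $\Gamma'$ to a collinear point'' (the crucial ``no fixed line'' / anisotropic-on-upper-residue property that drives the ideal subspace argument) genuinely follows from Lemma~\ref{symplfixed}(1) rather than needing an independent argument. If that step is not immediate from the cited corollary, one would have to reprove it by the kind of explicit incidence chase already used in the proof of Theorem~\ref{nonondom}, choosing a putative collinear image $p\mapsto p'$ with $p\sim p'$ inside $\Gamma'$ and deriving a contradiction with the fixed structure via a gallery argument. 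Everything else is a citation-driven assembly of Theorem~\ref{Bn1uni}, Proposition~\ref{newlemma}, and Lemma~\ref{symplfixed}.
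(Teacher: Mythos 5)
Your plan has a genuine gap, and it is essentially the one you flagged at the end. The step you describe as "the main obstacle" -- translating the $\mathsf{F}_4$-level information in Lemma~\ref{symplfixed} into the intrinsic statement that $\theta$ restricted to $\wE(p,q)$ is (plane- and solid-)domestic and has the right fixed structure -- is not a bookkeeping exercise but is precisely the substantive content of the lemma. The paper does not derive this from Lemma~\ref{symplfixed} at all; instead it cites Theorems~5.3.2$(iii)$ and~5.4.3$(ii)$ of \cite{Lam-Mal:23}, which are distinct, stronger statements that directly show the fixed points of $\theta$ in $\wE(p,q)$ form a rank-$2$ polar subspace and that the induced collineation is plane- and solid-domestic. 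Lemma~\ref{symplfixed}, which speaks of symps and symplectic pairs in $\mathsf{F_{4,4}}$, does not by itself yield these conclusions about the intrinsic polar space $\wE(p,q)$, where collinearity, planes and solids are defined via a nontrivial dictionary with the ambient parapolar geometry. So the ``citation-driven assembly'' you envisage is missing its central citation, and the gallery-chase fallback you mention is exactly what one would need to supply.

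There is also a structural problem with invoking Theorem~\ref{Bn1uni} to conclude. The ``only if'' direction of that theorem -- the one that extracts ``fixed points form an ideal subspace'' -- has \emph{uniclass} as its hypothesis, and its proof uses that hypothesis (e.g.\ to bound the rank of the fixed subspace via the displacement class). But Lemma~\ref{reduction} sits inside the \emph{converse} branch of Theorem~\ref{thm:F42}: at this point we are trying to \emph{prove} that $\theta$ is uniclass, so you are not entitled to assume it for the restriction $\bar\theta$ to $\wE(p,q)$ either. What you actually need is a statement that goes from domesticity data (plus a fixed point) to ``ideal subspace'' without a uniclass hypothesis; that is what Theorem~6.1 of \cite{TTVM} provides, and it is what the paper cites. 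The cleanest fix for your proposal is therefore to replace the appeal to Theorem~\ref{Bn1uni} by an appeal to \cite{TTVM}, and to replace the attempted derivation from Lemma~\ref{symplfixed} by the appropriate results of \cite{Lam-Mal:23} that establish the domesticity type of $\bar\theta$ on $\wE(p,q)$.
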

\begin{proof}
In the proofs of Theorem~5.3.2$(iii)$ and Theorem~5.4.3$(ii)$ in \cite{Lam-Mal:23}, it is shown that the fixed point structure of $\theta$ in $\wE(p,.q)$ is a rank 2 polar subspace, and that $\theta$ induces a plane- and solid-domestic collineation in it. Then Theorem~6.1 of \cite{TTVM} yields the assertion. 
\end{proof}
The next lemma follows immediately from Lemma~2.11.4 of \cite{Lam-Mal:23}.
\begin{lemma}\label{apaF4}
Apartments of $\wE(p,q)$ correspond to apartments of $\Delta$ having at least two opposite points in $\wE(p,q)$. 
\end{lemma}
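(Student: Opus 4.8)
The plan is to identify $\wE(p,q)$ with a polar space of rank $4$ and then invoke the structural result of \cite{Lam-Mal:23} describing how the apartments of this polar subspace sit inside the ambient building $\Delta$. First I would recall, from Definition~\ref{equator}, that $\wE(p,q)$ equipped with the lines given by the traces of the symplecta determined by its symplectic point pairs is a thick embeddable polar space of rank $4$, whose point residues are symps of $\mathsf{F_{4,1}}(\K,\AA)$. Consequently the apartments of $\wE(p,q)$ are precisely its thin rank $4$ polar subspaces, i.e.\ the Coxeter complexes of type $\mathsf{B}_4$ contained in it, each determined by a hyperbolic frame of eight points.

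Next I would set up the correspondence in both directions. Given an apartment $\Sigma$ of $\Delta$ containing two points $x,y\in\wE(p,q)$ that are opposite in the parapolar space $\mathsf{F_{4,4}}(\K,\AA)$, note first that, by \cite{Lam-Mal:23}, opposition in $\mathsf{F_{4,4}}(\K,\AA)$ restricts to opposition within $\wE(p,q)$, so $x$ and $y$ are opposite in $\wE(p,q)$ as well; Lemma~2.11.4 of \cite{Lam-Mal:23} then states precisely that the trace $\Sigma\cap\wE(p,q)$ on the point set of $\wE(p,q)$ is an apartment of $\wE(p,q)$. Conversely, given an apartment $\cA'$ of $\wE(p,q)$, choose opposite points $x,y$ of $\cA'$: they are opposite in $\Delta$, so some apartment of $\Delta$ passes through them, and the same lemma shows that exactly one apartment of $\Delta$ through $x$ and $y$ has trace $\cA'$ on $\wE(p,q)$. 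These two assignments are mutually inverse, yielding the claimed correspondence between apartments of $\wE(p,q)$ and apartments of $\Delta$ having (at least) two opposite points in $\wE(p,q)$.

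The only real work is bookkeeping: matching the notion of apartment for the abstractly defined rank $4$ polar space $\wE(p,q)$ with the notion of apartment for the $\sF_4$ building $\Delta$, and verifying that opposition in $\mathsf{F_{4,4}}(\K,\AA)$ specialises to opposition in $\wE(p,q)$ for the point pairs under consideration. Both points are exactly what Lemma~2.11.4 of \cite{Lam-Mal:23}, together with the discussion of traces of apartments that precedes it, supplies; once these identifications are in place the lemma follows immediately, with no further computation required.
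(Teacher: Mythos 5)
Your proposal is correct and takes the same route as the paper: the paper's entire proof is the single sentence that the lemma ``follows immediately from Lemma~2.11.4 of \cite{Lam-Mal:23}'', which is precisely the citation your argument hinges on. Your write-up merely unpacks the two-way correspondence that the paper leaves implicit, so the content and key reference coincide.
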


We can now prove Theorem~\ref{thm:main1} for collineations of $\sF_4$ buildings. 

\begin{thm}\label{thm:F42}
A collineation $\theta$ of a thick $\sF_4$ building is \uniclass\ if, and only if, either it is the identity, or it is anisotropic, or its fix structure is an ideal quadrangular Veronesean.
\end{thm}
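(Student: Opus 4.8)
The plan is to prove both directions by combining the available domesticity results with the uniclass machinery developed in Sections~\ref{sec:1}--\ref{sec:3}, splitting according to whether $\theta$ fixes a point, maps some point to a symplectic point, or maps every point to an opposite point.

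\textbf{The ``if'' direction.} If $\theta$ is the identity or anisotropic there is nothing to do (anisotropic automorphisms are uniclass, as recorded in Section~\ref{sec:automorphisms}). So suppose the fixed structure of $\theta$ is an ideal quadrangular Veronesean; by Proposition~\ref{newlemma} this means $\theta$ has opposition diagram $\mathsf{F}_{4;2}$ and fix diagram $\mathsf{F}_{4;2}$, and in particular $\AA$ is an associative division ring different from $\K$. The strategy is the same as in the polar space and projective space cases: show that for an arbitrary chamber $C$ there is a chamber $D$ with $\delta(D,D^\theta)$ the longest element $w_{\{2,3,4\}}w_0$ of the class and with $\{C,C^\theta\}\subseteq\mathsf{conv}\{D,D^\theta\}$, then invoke Lemma~\ref{ext}. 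Working in $\mathsf{F}_{4,4}(\K,\AA)$, take the point $p$ of $C$. By Proposition~\ref{newlemma} (and Lemma~\ref{symplfixed}(1)) $p$ is either fixed, collinear to a fixed point, or symplectic to a fixed point, or mapped to an opposite; in the last case Lemma~\ref{symplfixed}(2) gives two opposite fixed points symplectic with $p$. In every case one produces an opposite pair $(x,y)$ of fixed points with $p\in\wE(x,y)$ (using Lemma~\ref{reduction} to populate the extended equator with fixed points), after which Lemma~\ref{apaF4} lets us find an apartment $\cA$ of $\Delta$ through $C$ and $C^\theta$ that is stabilised by $\theta$, with $\theta|_\cA$ of the correct displacement class; the residual/convex-hull argument then finishes via Lemma~\ref{ext}. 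The integer-free structure here is: the displacement class $\bC$ is the bi-capped class $\sF_{4;2}$ with minimal element $w_{\{2,3,4\}}$, and one checks $\delta(C,C^\theta)$ lies in it by climbing up from a sub-residue on which $\theta$ acts anisotropically (the upper residue of a fixed $\sB_2$-subspace, i.e.\ the equator geometry, on which $\theta$ acts anisotropically by Proposition~\ref{newlemma}).

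\textbf{The ``only if'' direction.} Assume $\theta$ is a nontrivial uniclass collineation that is not anisotropic. By Theorem~\ref{thm:uncapped} it is capped, so by Proposition~\ref{prop:full} $\disp(\theta)=\bC$ is a full $\sigma$-conjugacy class of involutions with $\sigma=1$ (collineations), hence $\bC$ is a bi-capped involution class; by Theorem~\ref{thm:bicapped} the only candidates for $(W,S)$ of type $\sF_4$ are $\Cl(1)$ (excluded: $\theta\neq\mathrm{id}$ and uniclass forces no fixed chamber, see the lemma in Section~\ref{sec:paired} on uniclass automorphisms, so $1\notin\bC$), $\Cl^{\sigma_0}(w_0)$ (the anisotropic case, excluded by assumption), $\sF_{4;2}$, and ${}^2\sF_{4;2}$. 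The last is the polarity case and requires $\sigma\neq 1$, so it is excluded for collineations. Therefore $\bC=\sF_{4;2}$, the opposition diagram is $\mathsf{F}_{4;2}=\psi(\mathrm{Fix}(\theta))$ with $\mathrm{Fix}(\theta)=(\Pi,S\setminus J,\sigma)$ for the $\sigma$-rigid $J$ with $w_J\in\bC$ (Proposition~\ref{prop:duality}); one computes $J=\{2,3,4\}$, so $S\setminus J=\{1\}$, giving fix diagram $\mathsf{F}_{4;2}$. Now apply Proposition~\ref{newlemma} directly: a collineation with opposition diagram $\mathsf{F}_{4;2}$ and fix diagram $\mathsf{F}_{4;2}$ has fix structure an ideal quadrangular Veronesean. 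This completes the classification.

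\textbf{Main obstacle.} The genuinely delicate part is the ``if'' direction: verifying that \emph{every} chamber, not just those near a fixed point, lies in the convex hull of an apartment-stabilising chamber pair of displacement class $\sF_{4;2}$. This requires tracking how $\theta$ acts on the extended equator geometry $\wE(p,q)$ and carefully chaining Lemmas~\ref{reduction}, \ref{apaF4}, and~\ref{ext} — in spirit this mirrors the polar-space argument of Theorem~\ref{Bn1uni}, but the metasymplectic geometry of $\mathsf{F}_4$ makes the reduction to a rank-two anisotropic sub-action (inside an equator geometry) the step where one must be most careful. The ``only if'' direction, by contrast, is essentially a bookkeeping exercise once Theorems~\ref{thm:uncapped}, \ref{thm:bicapped}, \ref{thm:main1}-machinery (Proposition~\ref{prop:duality}) and Proposition~\ref{newlemma} are in hand.
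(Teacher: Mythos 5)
Your overall structure mirrors the paper's — domesticity classification for one direction, convex-hull/residue reduction for the other — but there are genuine gaps in both directions.

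In the \emph{only if} direction, the step ``$\bC$ is a full conjugacy class of involutions, hence $\bC$ is a bi-capped involution class'' is false: not every conjugacy class of involutions in $W(\sF_4)$ is bi-capped (the reflection classes $\Cl(s_1)$, $\Cl(s_4)$ are not, since they have several minimal-length representatives). Theorem~\ref{thm:bicapped} classifies only bi-capped classes, so you cannot use it to list the candidates until you have independently established bi-cappedness of $\disp(\theta)$. Note also that using Proposition~\ref{prop:duality} here is dangerously close to circular, since its hypothesis is exactly the bi-cappedness of $\disp(\theta)$, which in the paper is a \emph{consequence} of the classification (Corollary~\ref{cor:bicapped}), not an input to it. The paper avoids all of this by directly citing the classification of domestic collineations of $\sF_4$ buildings from \cite{Lam-Mal:23}: once $\theta$ is domestic, capped, and chamber-free, that classification hands you the fix structure.

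In the \emph{if} direction there are several issues. First, the minimal element of $\sF_{4;2}=\Cl(w_J)$ is $w_{\{2,3\}}$ (so the maximal element is $w_{\{2,3\}}w_0$), not $w_{\{2,3,4\}}$: the encircled nodes of $\sF_{4;2}$ are $\{1,4\}$ and $J=S\setminus\{1,4\}=\{2,3\}$. Second, your plan is to produce for every chamber $C$ a chamber $D$ with $\{C,C^\theta\}\subseteq\mathsf{conv}\{D,D^\theta\}$ and $\delta(D,D^\theta)$ maximal, via a $\theta$-\emph{stabilised} apartment through $C$ and $C^\theta$. But $\theta$ does not stabilise apartments through an arbitrary pair $\{C,C^\theta\}$; Lemma~\ref{apaF4} only identifies apartments of $\wE(p,q)$ with certain apartments of $\Delta$, it does not give $\theta$-invariance. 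The paper runs Lemma~\ref{ext} in the opposite direction: it \emph{projects} $C,C^\theta$ onto a fixed symp or onto a fixed type-$4$ vertex, obtaining $D$ with $\{D,D^\theta\}\subseteq\mathsf{conv}\{C,C^\theta\}$, and the class of $\delta(D,D^\theta)$ is then computed inside that residue. Finally, the proposal to funnel every case through a single extended equator $\wE(x,y)$ is not justified: if the type-$4$ vertex of $C$ is fixed or symplectic to its image, the paper uses a fixed-symp / residue-of-$x$ argument rather than an extended equator, and it is not clear how to place such chambers into a $\wE(x,y)$ that simultaneously controls the whole convex hull.
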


\begin{proof}
Suppose first that $\theta$ is \uniclass, and assume that it is neither the identity nor anisotropic. Then it is domestic and it does not fix any chamber. It follows from  \cite{Lam-Mal:23} that the fix structure consists of vertices of type $1$ and $4$ forming a Moufang quadrangle, with the property that the fixed vertices of type $1$ or $4$ incident with a fixed vertex $v$ of type $4$ of $1$, respectively, form an ovoid in polar space corresponding to the residue of $v$, which is flat or linear, respectively.

Now the converse. Let $\bC$ be the class of displacements determined by the opposition diagram $\mathsf{F_{4;2}}$ (hence determined by an involution of the Coxeter  system of type $\mathsf{F_4}$ fixing exactly four type 1 and four type 4 vertices (in a quadrangle)). We argue in $\mathsf{F_{4,4}}(\K,\AA)$. By \cref{newlemma}, $\theta$ is domestic with opposition and fix diagram $\mathsf{F_{4;2}}$. Let $C$ be any chamber, and suppose first that the vertex $x$ of type $4$ of $C$ is mapped onto an opposite.  Then $x$ is a point of $\mathsf{F_{4,4}}(\K,\AA)$. \cref{symplfixed} yields two mutually opposite fixed points $p,q$ symplectic to $x$, and hence also to $x^\theta$. It follows from \cref{reduction} and \cref{apaF4} that the displacement of $C$ is contained in the class of Weyl distances induced by the map $\kappa$ on a Coxeter complex of type $\mathsf{B_{4}}$ corresponding to a uniclass coliineation fixing an ideal subspace of corank 2.  Clearly $\kappa$ induces an involution of the Coxeter complex of type $\mathsf{F_4}$ fixing exactly four points and four symps. Hence the displacement of $C$ is in  $\bC$.

Next, suppose that the vertex $x$ of type $4$ of $C$ is mapped onto a symplectic vertex. By \cref{symplfixed} the symplection $\xi$ determined by $x$ and $x^\theta$ is fixed. By assumption, the fix structure of $\theta$ in $\xi$ is an ovoid, hence $\theta$ induces a \uniclass\ collineation in $\xi$, and clearly that class uniquely determines $\bC$. Projecting $C$ and $C^\theta$ onto $\xi$, the assertion follows from \cref{ext}. 

So we may assume $x$ is fixed. Then we project $C$ and $C^\theta$ onto $x$ and the same argument as in the previous paragraph---now with the symp of $\mathsf{F_{4,1}}(\K,\AA)$ corresponding to $x$--- shows the assertion.
\end{proof}

\subsection{Buildings of type $\mathsf{E_6}$}

We begin with some preliminaries. 

A \textit{symplectic polarity} of a building of type $\sE_6$ is a polarity whose fixed point structure is a building of type $\sF_4$ containing residues isomorphic to symplectic polar spaces (such an $\sF_4$ building is a \textit{standard split metasymplectic space}).

Let $\Gamma=\mathsf{E_{6,1}}(\K)$, and let $\Gamma^*:=\mathsf{E_{6,6}}(\K)$ be its \emph{dual}. We can view an apartment $\cA$ of $\Gamma$ as the unique generalised quadrangle $\GQ(2,4)$ with $3$ points per line and $5$ lines per point, as follows from \cite[p.~202]{Cox:63} (see also \cite[Section~10.10]{Bro-Mal:22}). Two points are non-collinear in $\GQ(2,4)$ if and only if the corresponding vertices of $\cA$ form an edge in $\cA$ (that is, are collinear in $\Gamma$). 

\begin{lemma}\label{gq24}
Let $\kappa$ be a collination of $\GQ(2,4)$. Then the set of domestic points (that is, points that are not mapped to opposite points) forms a subspace of $\GQ(2,4)$. In particular, if all points collinear to some point $x$ are domestic, and one additional point  not collinear to $x$ is domestic, then all points are domestic and $\kappa$ is point-domestic.  In the latter case $\kappa$ is either an axial elation (in which case it has exactly $3$ fixed points), or fixed a spread linewise (in which case is has no fixed points at all).
\end{lemma}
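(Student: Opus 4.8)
The plan is to analyze a collineation $\kappa$ of $\GQ(2,4)$ and prove the three assertions: that the domestic points form a subspace, the ``one-or-all''-style consequence, and the classification of point-domestic collineations. First I would recall that in $\GQ(2,4)$, opposition of points means being non-collinear (at distance $4$ in the incidence graph), so a point $x$ is \emph{domestic} precisely when $x^{\kappa}$ is collinear to or equal to $x$; equivalently, $x$ lies in $(x^{\kappa})^{\perp}$. To show the set $D$ of domestic points is a subspace, I would take a line $L$ with at least two domestic points on it, say $x,y$, and show every point of $L$ is domestic. Since $L$ has three points, only one further point $z$ remains. The key observation is that a collineation of a generalised quadrangle either fixes $L$ setwise or maps it to a line meeting or disjoint from $L$; in all cases, knowing $x^{\kappa}\in x^{\perp}$ and $y^{\kappa}\in y^{\perp}$ constrains $L^{\kappa}$ so strongly (using that two points of $L$ already have non-opposite images) that $L^{\kappa}$ must meet $L$, or even equal it, forcing $z^{\kappa}\in z^{\perp}$. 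A cleaner route: if $x,y\in L$ are domestic but $z\in L$ is mapped to an opposite point $z^{\kappa}$, then $z^{\kappa}$ is non-collinear with $z$; but $x^{\kappa}$ and $y^{\kappa}$ are on $L^{\kappa}$ and non-opposite to $x,y$ respectively, and a short incidence-geometry argument (using that a point opposite $z$ is collinear with exactly one point of any line through $z$, applied to $L^{\kappa}$ versus the line $L$) yields a contradiction. I would carry this out using the standard ``regulus/trace'' properties of $\GQ(2,4)$.

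Next, for the second assertion, suppose all points of $x^{\perp}$ are domestic and some point $w\notin x^{\perp}$ (i.e.\ $w$ opposite $x$) is also domestic. I would argue that every point of $\GQ(2,4)$ is then domestic. The span of $x^{\perp}$ together with $w$ under the subspace-closure from the first part already covers the whole geometry: indeed in $\GQ(2,4)$ the perp of a point together with any point outside it generates everything, because each point $v$ opposite $x$ lies on a line meeting $x^{\perp}$ in a point, and by connectivity one propagates domesticity through the subspace closure. More carefully, since $D$ is a subspace containing $x^{\perp}$ and $w$, and since $x^{\perp}\cup\{w\}$ is not contained in any proper subspace of $\GQ(2,4)$ (a proper subspace of $\GQ(2,4)$ is a point, a line, or the perp of a point, and none of these contains all of $x^{\perp}$ plus a point opposite $x$), we get $D$ equals the whole point set, so $\kappa$ is point-domestic.

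Finally, for the classification of point-domestic collineations: I would invoke the known results on domestic collineations of finite generalised quadrangles (e.g.\ from the references \cite{TTVM2,PTV} on domesticity in rank $2$), which say that a point-domestic collineation of a thick generalised quadrangle with parameters $(s,t)$ that is not the identity either fixes a point-perp $x^{\perp}$ pointwise — an axial elation — or is line-domestic and fixes a spread of lines; in $\GQ(2,4)$ an axial elation has exactly $1+s=3$ fixed points (the points on the axis), while a collineation fixing a spread of lines linewise has no fixed points (a fixed point would lie on a fixed line of the spread and force collinearities that do not occur). The main obstacle will be the first assertion: pinning down exactly why two non-opposite images on $L^{\kappa}$ force $z^{\kappa}$ to be non-opposite to $z$. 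This requires a careful case analysis of the position of the line $L^{\kappa}$ relative to $L$ (equal, concurrent, or opposite) combined with the ``exactly one point collinear'' axiom, and this is the step I would spend the most care on; the remaining two parts follow quickly from the subspace structure and from citing the existing domesticity classification.
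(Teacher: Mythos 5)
Your proposal has a genuine gap in the first (and, as you note yourself, hardest) assertion, namely that the set $D$ of domestic points is a subspace. Your ``cleaner route'' works when $L$ and $L^{\kappa}$ are disjoint: then the map $f\colon L^{\kappa}\to L$ sending each point to its unique perp on $L$ is a bijection, and combined with $\kappa|_L$ it yields a permutation of the three points of $L$ whose fixed points are exactly $D\cap L$; a permutation of a $3$-set cannot have exactly two fixed points, so $|D\cap L|\in\{0,1,3\}$. But when $L\cap L^{\kappa}=\{p\}$ the map $f$ is not defined at $p$ and the argument collapses. In that case $p$ is automatically domestic (both $p$ and $p^{\kappa}$ lie on $L^{\kappa}$), the point $q\in L$ with $q^{\kappa}=p$ is also domestic (since $q^{\kappa}=p\in L$), and the third point $r$ of $L$ is \emph{not} domestic: $r\perp p$, $r^{\kappa}\perp p$, $r\in L\setminus L^{\kappa}$, $r^{\kappa}\in L^{\kappa}\setminus L$, so $r\perp r^{\kappa}$ would create a triangle. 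Hence if $p\neq q$ (i.e.\ $p$ is not fixed), $L$ carries exactly two domestic points, contradicting the subspace claim. What you actually need is to rule this out, i.e.\ to show that in $\GQ(2,4)$ whenever $L\cap L^{\kappa}$ is a single point, that point is fixed; this is where the real content lies and your sketch does not touch it. (For involutions it is immediate: $q^{\kappa}=p$ forces $p^{\kappa}=q\in L\cap L^{\kappa}=\{p\}$, so $q=p$. For general $\kappa$ it is not.) The paper itself gives no explicit argument either --- its ``proof'' is a bare citation to the domesticity analysis in \cite{TTVM2}.

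There is also a factual error in your proof of the second assertion: proper subspaces of $\GQ(2,4)$ are not only points, lines, and point-perps --- ovoids ($9$ pairwise opposite points), $3\times3$ grids, and $W(2)$-subquadrangles are all proper subspaces. The conclusion you want is nevertheless true and provable directly: a subspace containing $x^{\perp}$ and a point $w\notin x^{\perp}$ contains $w^{\perp}$ (each of the $5$ lines through $w$ meets $x^{\perp}$ and hence has two points in the subspace), and then for any further point $v$ the triad $\{x,w,v\}$ has at most $3$ centres, so at least two of the $5$ lines through $v$ meet $x^{\perp}$ and $w^{\perp}$ in distinct points, putting $v$ in the subspace. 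Your third assertion correctly defers to the domesticity literature, which is essentially all the paper does for the whole lemma.
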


\begin{proof}
This follows easily from the analysis of domesticity in generalised quadrangles in~\cite{TTVM2}. 
\end{proof}

\begin{cor}\label{cor:gq24}
The automorphism of a Coxeter complex of type $\mathsf{E_6}$ induced by the longest word corresponds to an axial collineation of $\GQ(2,4)$. 
\end{cor}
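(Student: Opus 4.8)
The plan is to analyze the action of the longest element $w_0$ of the Weyl group of type $\mathsf{E}_6$ on an apartment, interpreted as the generalised quadrangle $\mathsf{GQ}(2,4)$ via the identification recalled just before \cref{gq24}. Recall that in type $\mathsf{E}_6$ the opposition relation $\sigma_0$ is the nontrivial diagram automorphism of order $2$, so $w_0$ acts on the set of $27$ type-$1$ vertices of the apartment by a bijection onto the $27$ type-$6$ vertices; but the point here is subtler, since we want to view $w_0$ as a collineation (an automorphism of the thin building, i.e. of the Coxeter complex, in its $\mathsf{GQ}(2,4)$ incarnation). The key observation is that $w_0$ (more precisely the map on the apartment induced by left multiplication by $w_0$ composed with the diagram automorphism, so as to obtain a type-preserving map) must by \cref{gq24} be determined by its set of domestic points. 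So first I would identify which vertices of the apartment are ``domestic'' for this map, i.e. which type-$1$ vertices $x$ of $\cA$ are such that $x$ and $x^{w_0\sigma_0}$ are \emph{not} opposite in $\Gamma$ (equivalently, not at distance $4$ in $\mathsf{GQ}(2,4)$, equivalently collinear or equal in $\mathsf{GQ}(2,4)$, equivalently: forming an edge of $\cA$).

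The concrete step is a root-system computation. Using the polar type machinery from the start of \cref{sec:3}: for $\mathsf{E}_6$ we have $\wp = \{p\}$ with $p$ the polar node (node $2$ in Bourbaki labelling), and $w_0 s_\varphi = w_{S\backslash\wp}$, where $\varphi$ is the highest root. Translating to the $\mathsf{GQ}(2,4)$ picture, the type-$2$ vertex of the apartment fixed by $w_0$ (up to the diagram twist) should correspond precisely to an \emph{axis}: the three type-$1$ vertices incident to it are exactly the points whose $w_0\sigma_0$-image is collinear to them (indeed fixed or forming a short displacement), and every line through such a point is stabilised. I would verify, by a direct but short count on the $27$ points of $\mathsf{GQ}(2,4)$, that $w_0\sigma_0$ fixes exactly $3$ mutually non-collinear-free configuration — namely the $3$ points of a single line $\ell$ — and that every point off $\ell$ is mapped to an opposite point. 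By the last sentence of \cref{gq24}, a collineation whose domestic set is a single line, with three fixed points on it, is precisely an axial elation with axis $\ell$.

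Concretely the argument runs: by \cref{prop:attaindisplacement} (or directly from $w_0 s_\varphi = w_{S\backslash\wp}$ and the fact that $w_{S\backslash\wp}$ is the displacement arising from a type-$\wp$ vertex mapped to an opposite one, as in \cref{lem:highestroot}), the map induced by $w_0$ on the apartment fixes the type-$\wp=\{2\}$ vertex $v$ of a suitable chamber and maps the residue at $v$ anisotropically. In the $\mathsf{GQ}(2,4)$ model, type-$2$ vertices of $\cA$ are lines of $\mathsf{GQ}(2,4)$, so $v$ is a fixed line $\ell$; the three type-$1$ vertices on $\ell$ are domestic (collinear to their images along $\ell$), while the anisotropy of $w_0$ on $\Res(v)$ forces every point not on $\ell$ to be mapped to an opposite point. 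Hence the domestic set is exactly $\ell$, and \cref{gq24} identifies the collineation as axial with axis $\ell$. Since $w_0$ has order $2$ and is not the identity on the apartment, it is a nontrivial axial elation, which is the assertion.

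The main obstacle I anticipate is making the identification between the abstract action of $w_0$ (a type-interchanging element of $W(\mathsf{E}_6)$) and a \emph{type-preserving collineation} of $\mathsf{GQ}(2,4)$ completely precise — one must compose with $\sigma_0$ to get a collineation, and check that this is the collineation relevant to \cref{cor:gq24}'s intended use (namely, the one describing the displacement of an anisotropic automorphism, or the ``$w_0$-part'' of a uniclass automorphism of $\mathsf{E}_6$ type). A secondary nuisance is confirming the exact number of fixed points: a priori $w_0\sigma_0$ could fix more than the three points of $\ell$, but the anisotropy of $w_0$ on the type-$\{1,3,4,5,6\}$ residue (which is of type $\mathsf{A}_5$, acting on $\mathsf{GQ}(2,4)$ as the stabiliser of $\ell$ acting on the affine part) rules this out — this is where \cref{lem:highestroot} and the structure of $w_{S\backslash\wp}$ do the real work. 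Once these two points are nailed down, the rest is immediate from \cref{gq24}.
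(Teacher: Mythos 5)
Your strategy---conjugate $w_0$ to the minimal-length representative $w_{S\setminus\wp}$ via $w_0s_\varphi=w_{S\setminus\wp}$, and read off the fixed and anisotropic structure from there---is a legitimate alternative starting point, but the translation into the $\GQ(2,4)$ model breaks down on the key identification. The claim that ``type-$2$ vertices of $\cA$ are lines of $\GQ(2,4)$'' is false: the Coxeter complex of type $\sE_6$ has $|W|/|W_{\sA_5}|=51840/720=72$ type-$2$ vertices, whereas $\GQ(2,4)$ has only $45$ lines, and indeed the $W$-stabiliser of a type-$2$ vertex has order $720$ while that of a line of $\GQ(2,4)$ must have order $51840/45=1152$. (The lines of $\GQ(2,4)$ correspond to tritangent planes of the $27$-lines configuration, which are not vertices of the Coxeter complex at all.) Further, a type-$2$ vertex is incident with $6$ type-$1$ vertices, not $3$, since its residue is a thin $\sA_5$ building. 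So there is no ``fixed line $\ell$ with three type-$1$ vertices on it'' to act as an axis, and the assertion that every point off $\ell$ is mapped to an opposite---which you derive from anisotropy of $w_{S\setminus\wp}$ on $\Res(v)$---is not set up correctly, since that residue only sees chambers through $v$ and not the $21$ points of $\GQ(2,4)$ you would need to control.

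The paper's proof avoids lines of $\GQ(2,4)$ entirely: pick a chamber mapped to an opposite, look at the two opposite type-$6$ vertices (symps) of that pair, observe that in $\cA$ they meet in a unique type-$1$ vertex, which is therefore fixed, and check that every other point of those two symps is sent to a $\Gamma$-noncollinear point, hence a $\GQ(2,4)$-collinear (thus domestic) point. The subspace criterion of \cref{gq24} then forces point-domesticity, and the fixed point rules out the line-spread case, leaving the axial case. If you want to salvage your route, you would first need to identify the lines of $\GQ(2,4)$ correctly in terms of the Coxeter complex and recount; it is not clear this is simpler than the argument in the text.
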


\begin{proof}
Since a chamber must be mapped onto an opposite chamber, it is easy to check that each point of the symps of these chambers, except for the intersection point, is mapped onto a noncollinear point. Hence by \cref{gq24}, the induced collineation in $\GQ(2,4)$ is point-domestic. Since it fixes at least one point (the intersection point of the symps of the opposite chambers), the assertion again follows from \cref{gq24}.  
\end{proof}

\begin{lemma}\label{residualE6}
If an automorphism $\kappa$ of the Coxeter complex of type $\mathsf{E_6}$ fixes a type $6$ vertex $\xi$ and acts on the type $1$ vertices incident with $\xi$ as an involuton with exactly two fixed points, then $\kappa$ is induced by the longest word. 
\end{lemma}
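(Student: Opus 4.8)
The statement is a purely combinatorial/Coxeter-theoretic fact about the finite Coxeter complex of type $\mathsf{E}_6$ (equivalently, about $\mathsf{GQ}(2,4)$), so I would argue entirely inside the thin building, using the identification of an apartment $\cA$ of $\mathsf{E}_{6,1}(\K)$ with $\mathsf{GQ}(2,4)$ already recalled just before the statement. Fix $\xi$ a type $6$ vertex of $\cA$. The type $1$ vertices incident with $\xi$ form the point set of a generalised quadrangle of type $\mathsf{D}_5$ residue... more precisely, the residue of $\xi$ in $\cA$ is a thin building of type $\mathsf{D}_5$, and the relevant incidence structure on the type $1$ vertices incident with $\xi$ is the one coming from the $\mathsf{D}_{5,1}$ polar space, namely (in the apartment) a $5$-dimensional hyperbolic quadric apartment. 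The key geometric input I would use is the dual description: in $\mathsf{GQ}(2,4)$ a type $6$ vertex $\xi$ corresponds to a line $L_\xi$ (this is the standard $\mathsf{E}_{6,6}$/$\mathsf{E}_{6,1}$ point--line duality inside the apartment), and the type $1$ vertices incident with $\xi$ correspond to the points of $\mathsf{GQ}(2,4)$ not opposite... I need to pin down which $5$ of the $27$ points these are, and I expect they are exactly the points collinear (in $\mathsf{GQ}(2,4)$) to all three points of $L_\xi$ — equivalently, since each point of $\mathsf{GQ}(2,4)$ on a given line has a unique ``perp'' through the other points, the type $1$ vertices on $\xi$ are a carefully identified set of $5$ pairwise-noncollinear points forming one ``orbit'' of the residue.

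\textbf{Main steps.} First I would make the residue identification precise: the residue of $\xi$ in the $\mathsf{E}_6$ Coxeter complex is of type $\mathsf{D}_5$, and the induced action of $\kappa$ on the type $1$ vertices incident with $\xi$ is, by hypothesis, an involution of this $\mathsf{D}_5$ complex with exactly two fixed vertices of type $1$ — i.e. an automorphism of the $\mathsf{D}_{5,1}$ hyperbolic polar apartment whose fixed point set among the $10$ points is a pair of opposite (non-collinear) points. Second, I would translate this into $\mathsf{GQ}(2,4)$: $\kappa$ fixes $\xi$ and hence fixes the corresponding line $L_\xi$; it induces a collineation $\bar\kappa$ of $\mathsf{GQ}(2,4)$ stabilising $L_\xi$; I then show that the hypothesis ``exactly two fixed type $1$ vertices on $\xi$'' forces $\bar\kappa$ to map all points collinear to some point $x$ to non-collinear points while being ``as non-domestic as possible''. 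Concretely, I would show directly that under this hypothesis there is a point of $\mathsf{GQ}(2,4)$ collinear to all its neighbours' images only in the degenerate way, so $\bar\kappa$ is \emph{not} point-domestic; by \cref{gq24} this means the domestic points of $\bar\kappa$ do not form a proper subspace containing a point and all its neighbours plus one more, hence $\bar\kappa$ maps some chamber of $\cA$ to an opposite chamber. Third — and this is the crux — I would show that an involutory automorphism of the thin $\mathsf{E}_6$ complex which is non-domestic must be the one induced by $w_0$: since $\cA\cong$ Coxeter complex of $\mathsf{E}_6$, every automorphism is of the form $u\mapsto u^{-1}w$ composed with a diagram automorphism (Example~\ref{ex:thincase}); for type $\mathsf{E}_6$ the diagram automorphism $\sigma_0$ is the unique nontrivial one, and $w_0\sigma_0 = \mathrm{id}$ on types, so the type-preserving automorphisms are left translations $u\mapsto wu$; an involution among these is $w$ with $w^2=1$ and non-domesticity forces $w=w_0$ (one checks no proper involution $w$ of $W(\mathsf{E}_6)$ admits a chamber $u$ with $u^{-1}wu=w_0$, e.g. by the length/parity and fixed-space dimension count of Proposition~\ref{prop:ranksum}: a non-domestic involution must have $\FixRk=0$, and $\Cl(w_0)$ is the only such class).

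\textbf{Expected obstacle.} The delicate point is the second step: correctly identifying, inside a single apartment, which $5$ points of $\mathsf{GQ}(2,4)$ correspond to the type $1$ vertices incident with $\xi$, and then verifying that ``$\bar\kappa$ acts as an involution with exactly two fixed points on this pentad'' is \emph{equivalent} to $\bar\kappa$ being non-point-domestic (rather than merely implying it). This requires a short case analysis of involutory collineations of $\mathsf{GQ}(2,4)$ stabilising a line and their fixed-point patterns on the five points of a ``hyperbolic line'' (the $5$ points collinear to all points of $L_\xi$... actually the $5$ points forming the relevant residue), using the classification of domestic collineations of generalised quadrangles from \cite{TTVM2} as already invoked in \cref{gq24}. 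Once that equivalence is in hand, the reduction to ``$\bar\kappa$ non-domestic $\Rightarrow$ $\kappa$ induced by $w_0$'' is routine via the translation description of automorphisms of a Coxeter complex together with the uniqueness of the non-domestic involution class in $W(\mathsf{E}_6)$ (which in turn is \cref{cor:gq24} read backwards).
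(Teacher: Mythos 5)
Your proposal takes a genuinely different route from the paper's: the paper translates the hypothesis to $\GQ(2,4)$ directly, composes the resulting collineation with the axial collineation whose axis is the line $L$ of fixed points (this axial collineation is the one identified in Corollary~\ref{cor:gq24}), observes that the composition is a central collineation with centre $p$, and invokes the non-existence of nontrivial central collineations in $\GQ(2,4)$ (Payne--Thas~8.1.2). You instead want to run a domesticity analysis in $\GQ(2,4)$, deduce non-domesticity in the $\sE_6$ Coxeter complex, and then use an abstract uniqueness statement for non-domestic involutions.

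This plan has several genuine gaps. First, a counting error: the residue of a type-$6$ vertex $\xi$ in the $\sE_6$ complex is of type $\mathsf D_5$, and $\xi$ is incident with $10$ type-$1$ vertices (the vertex set of a $5$-dimensional cross-polytope), not $5$; the accompanying confusion about whether $\xi$ should be a line $L_\xi$ or a point $p$ of $\GQ(2,4)$ is never resolved, and you flag this yourself. Second, and more seriously, the domesticity direction is backwards. Corollary~\ref{cor:gq24} says that the $w_0$-induced automorphism of the $\sE_6$ Coxeter complex translates to a \emph{point-domestic} collineation of $\GQ(2,4)$ (an axial elation). Since the desired conclusion of the lemma is that $\kappa$ is this very automorphism, the translated collineation $\bar\kappa$ must itself be point-domestic on $\GQ(2,4)$; your plan to show $\bar\kappa$ is \emph{not} point-domestic would, if successful, contradict what you are trying to prove. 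I suspect you are conflating two different opposition relations: two type-$1$ vertices that are collinear in the Lie geometry are opposite in $\GQ(2,4)$, whereas opposition in the Coxeter complex of $\sE_6$ never relates two type-$1$ vertices at all. The inference from ``domestic points of $\bar\kappa$ do not form such a subspace of $\GQ(2,4)$'' to ``$\kappa$ maps a chamber of $\cA$ to an opposite chamber'' therefore has no justification. Finally, the assertion that a non-domestic involution must have $\FixRk=0$ is false: for $\sE_6$, $\FixRk(\Cl(w_0))$ equals the dimension of the $+1$-eigenspace of $w_0=-\sigma_0$ on the reflection representation, which is $2$. The conclusion you want from step~3 is nonetheless true for a simpler reason: a type-preserving automorphism of the Coxeter complex is $u\mapsto wu$ for a unique $w\in W$, with $\disp(\kappa)=\Cl(w)$, so $w_0\in\disp(\kappa)$ is literally the statement $w\in\Cl(w_0)$, which immediately gives that $\kappa$ is conjugate to the $w_0$-automorphism. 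A corrected version of your strategy should therefore run in the opposite direction: show $\bar\kappa$ is point-domestic with at least three fixed points, hence an axial elation by Lemma~\ref{gq24}, and then identify it with the collineation of Corollary~\ref{cor:gq24}; but at that point the paper's central-collineation composition is the most efficient way to close the argument.
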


\begin{proof}
Translated to $\GQ(2,4)$, we have a collineation fixing all points of a certain line $L$, and interchanging the points on the lines through one of the points $p$ of $L$. Composed with the axial collineation with axis $L$ we obtain a central collineation with centre $p$, which must be the identity, as $\GQ(2,4)$ does not admit non-trivial central collineations by 8.1.2 of \cite{Pay-Tha:09}. The lemma follows.
\end{proof}

\begin{defn}
Let $V$ be a set of points of $\Gamma$, no two of which are collinear, and not contained in one symp. Then $V$ is called an \emph{ideal Veronesean} if the intersection of $V$ with the symp $\xi$ determined by any pair of points of $V$, is an ovoid of $\xi$. Such a symp $\xi$ is called a \emph{host space} of $V$.
\end{defn}

The following properties of ideal Veroneseans are proved in Lemmas~4.8, 4.10 and~4.11, respectively, in \cite{NPVV}.

\begin{prop}\label{kanE6}
Let $V$ be an ideal Veronesean in $\Gamma$. Then 
\begin{compactenum}[$(1)$] 
\item every pair of distinct host spaces intersects in a unique point;
\item the set of host spaces is an ideal Veronesean in the dual $\Gamma^*$ of $\Gamma$.
\item Every point of $\Gamma$ belongs to at least one host space. 
\end{compactenum} 
\end{prop}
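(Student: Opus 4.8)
The three assertions are \cite[Lemmas~4.8,~4.10 and~4.11]{NPVV}; the plan below indicates how each would be obtained. Throughout one works in $\Gamma=\mathsf{E}_{6,1}(\K)$ as a parapolar space whose symps are $\mathsf{D}_5$ polar spaces, and exploits the model of an apartment of $\Gamma$ as $\GQ(2,4)$ from the discussion preceding \cref{gq24} (in which collinearity in $\Gamma$ becomes non-collinearity in $\GQ(2,4)$), together with the standard fact that two distinct symps of $\Gamma$ meet in the empty set, in a single point, or in a single line (see \cite{Shult}).

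For $(1)$ the plan is to take two distinct host spaces $\xi=\xi(p,q)$ and $\xi'=\xi(p',q')$ and rule out the remaining two intersection patterns. If $\xi\cap\xi'=\emptyset$, one picks an apartment containing $p,q,p',q'$ and a chamber incident with $\xi$; since any two points of $V$ are symplectic, the relative positions of $p,q,p',q'$ in $\GQ(2,4)$ are tightly constrained, and the ovoid property of $V\cap\xi$ and $V\cap\xi'$ then forces two of the involved points of $V$ to be collinear in $\Gamma$, contradicting the definition of $V$. If $\xi\cap\xi'$ is a line $L$, then, since an ovoid meets every maximal singular subspace of its symp in exactly one point, both $V\cap\xi$ and $V\cap\xi'$ contain a point of $L$, and these two points are distinct but collinear, again a contradiction. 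Hence $\xi\cap\xi'$ is a single point.

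For $(2)$ recall that the symps of the dual $\Gamma^{*}=\mathsf{E}_{6,6}(\K)$ are precisely the points of $\Gamma$, and that a host space passes through a point $x$ of $\Gamma$ exactly when, viewed in $\Gamma^{*}$, it lies in the symp corresponding to $x$. By $(1)$ no two host spaces share a line of $\Gamma$, so no two are collinear in $\Gamma^{*}$; and they are not all incident with a common point of $\Gamma$ since $V$ is not contained in a symp. It then remains to verify the ovoid condition: for each point $x$ of $\Gamma$ lying on at least two host spaces one shows that the host spaces through $x$ meet every maximal singular subspace of the residue $\URes_\Gamma(x)$ (a $\mathsf{D}_5$ polar space) in exactly one host space. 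The bound ``at most one'' is immediate from $(1)$; for ``at least one'' one chooses $p,q\in V$ with $x\in\xi(p,q)$ positioned so that $\xi(p,q)$ meets the prescribed maximal singular subspace, which is again a finite check inside a suitable $\GQ(2,4)$-apartment. This yields $(2)$.

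For $(3)$, suppose for contradiction that some point $y$ of $\Gamma$ lies on no host space, and let $Y$ be the set of points of $\Gamma$ lying on at least one host space of $V$. By $(2)$ the host spaces of $V$, viewed in $\Gamma^{*}$, form an ideal Veronesean $V^{*}$; applying $(1)$ and $(2)$ to $V^{*}$ shows that the points of $\Gamma$ lying on at least two host spaces themselves form an ideal Veronesean, which in particular contains $V$ (since, by $(2)$, for $p\in V$ the host spaces through $p$ form an ovoid of $\URes_\Gamma(p)$), so $V\subseteq Y$. The plan is then to prove that $Y$ is a subspace of $\Gamma$ that meets every symp in a subspace, and that such a subspace containing the generating set $V$ cannot be proper, forcing $Y$ to be all of $\Gamma$ and contradicting $y\notin Y$. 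The main obstacle is precisely this last step --- showing that $Y$ exhausts $\Gamma$ --- which requires the delicate connectivity and maximality analysis carried out in \cite[Lemma~4.11]{NPVV} and is substantially more involved than $(1)$ and $(2)$.
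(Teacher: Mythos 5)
The paper establishes Proposition~\ref{kanE6} by citation alone: it simply states that the three assertions are Lemmas~4.8, 4.10 and~4.11 of \cite{NPVV} and offers no argument of its own. Your opening line reproduces exactly this citation, so in that respect you take the same approach as the paper.

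The extra sketch you attach is not part of the paper's proof, and at least one step in it is incorrect. In your treatment of part $(1)$, in the case where $\xi\cap\xi'$ is a line $L$, you assert that ``since an ovoid meets every maximal singular subspace of its symp in exactly one point, both $V\cap\xi$ and $V\cap\xi'$ contain a point of $L$.'' This does not follow: the maximal singular subspaces of a $\mathsf{D}_5$ polar space are $4$-dimensional projective spaces, and a line $L$ is a much smaller subspace. An ovoid is pairwise non-collinear, so it meets $L$ in \emph{at most} one point, and in general it meets $L$ in no point at all. Hence you cannot conclude that the two ovoids pick up distinct collinear points of $L$; ruling out the line-intersection case requires a different argument (which is precisely what \cite{NPVV} supplies). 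Since the statement is in any case established by citation, this does not affect the correctness of your proposal as a reference to \cite{NPVV}, but the sketch as written should not be trusted as a standalone proof of $(1)$.
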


We now come to the main theorem for $\sE_6$ buildings. 

\begin{thm}\label{E6uni}
An non-trivial automorphism $\theta$ of $\mathsf{E_{6}}(\K)$, for some field $\K$, is uniclass if and only if it is an anisotropic duality, a symplectic polarity, or a collineation fixing an ideal Veronesean pointwise.
\end{thm}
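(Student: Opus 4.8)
The strategy follows the same two-directional pattern as the earlier cases (Theorems~\ref{rank2}, \ref{Anuni}, \ref{Bn1uni}, \ref{thm:F41}, \ref{thm:F42}): first show that a uniclass automorphism of $\mathsf{E}_6(\K)$ must be of one of the three listed forms, then conversely verify that each of the three forms is indeed uniclass via the convexity criterion of \cref{ext}.

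For the ``only if'' direction, suppose $\theta$ is a nontrivial uniclass automorphism of $\mathsf{E}_6(\K)$ with companion automorphism $\sigma$. By \cref{thm:uncapped}, $\theta$ is capped. If $\sigma$ has order $2$ (a duality) then, since $\{w_0\}$ is a $\sigma$-conjugacy class, either $\disp(\theta)=\{w_0\}$ (so $\theta$ is anisotropic) or $w_0\notin\disp(\theta)$ (so $\theta$ is domestic). In the latter case the classification of domestic dualities of $\mathsf{E}_6$ buildings from \cite{PVM:19a,PVMexc} leaves only the symplectic polarity (opposition diagram ${^2}\sE_{6;4}$) and the strongly exceptional domestic case, the latter being excluded by \cref{thm:uncapped}. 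If $\sigma=1$ then $\theta$ is a collineation; it cannot fix a chamber (else the uniclass property forces $\theta=1$), so it is either anisotropic or domestic with opposition diagram either $\sE_{6;2}$ or the ``$\mathsf{A}_2$-uncapped'' possibility, again excluded by cappedness. Thus we must show that a domestic collineation with opposition diagram $\sE_{6;2}$ fixes an ideal Veronesean pointwise. Here we pass to $\Gamma=\mathsf{E}_{6,1}(\K)$: the opposition diagram $\sE_{6;2}$ tells us that symps (type $2$ vertices) are mapped to opposite symps but no point (type $1$) or line (type $\{3,4\}$-flag) is mapped to an opposite. Using the analysis of $\GQ(2,4)$ in \cref{gq24} and \cref{cor:gq24} applied inside stabilised symps, together with a fixed-point analysis showing the fixed points inside each such symp form an ovoid (an argument parallel to the polar space case of \cref{Bn1uni} and the $\mathsf{F}_4$ case), one concludes that the fixed point set is an ideal Veronesean; no two fixed points are collinear (otherwise a line would be mapped to an intersecting one, forcing a domesticity contradiction as in the projective space argument of \cref{Anuni}), and the fixed points are not all in one symp (else a residue argument via \cref{residualE6} gives a longest-word displacement, contradicting that no symp is fixed pointwise beyond an ovoid).

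For the ``if'' direction, the anisotropic case is immediate. For the symplectic polarity: $\sigma$ is the nontrivial diagram automorphism of $\mathsf{E}_6$, which equals the opposition automorphism $\sigma_0$, and $\theta$ is an involution, so by Proposition~\ref{prop:basic}(1),(3) $\disp(\theta)$ is a union of $\sigma$-conjugacy classes of $\sigma$-involutions. Since $\sigma=\sigma_0$, the $\sigma$-involution classes are exactly the sets $\Cl^{\sigma}(w_0w)=w_0\Cl(w)$ with $w$ an involution or $w=1$; one checks using the opposition diagram ${^2}\sE_{6;4}$ (\cref{prop:duality}) that only the bi-capped class with minimal element $w_0w_{J}$ for $J$ the relevant $\sigma\sigma_0$-rigid set is compatible, forcing $\disp(\theta)$ to be a single class. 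For the ideal Veronesean case: given a chamber $C$ with point $p$, if $p$ is fixed we project $C,C^\theta$ onto the symp of $\mathsf{E}_{6,1}(\K)$ corresponding to $p$ (using the residual reduction) and argue by induction; if $p$ is mapped to a symplectic point we use that the unique symp $\xi$ containing $p$ and $p^\theta$ is stabilised and that $\theta|_\xi$ fixes an ovoid, so $\theta|_\xi$ is uniclass on that polar space by \cref{Bn1uni} and \cref{ext} applies to $C,C^\theta$ projected onto $\xi$; the case $p$ mapped to an opposite is handled via Proposition~\ref{kanE6} (every point lies on a host space, host spaces meet pairwise in a point), which lets one find two opposite fixed points symplectic to both $p$ and $p^\theta$ inside a common apartment, whence the displacement of $C$ lies in the fixed class by \cref{ext}. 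In all cases $\delta(C,C^\theta)$ lies in the single class $\bC$ determined by the opposition diagram $\sE_{6;2}$, so $\theta$ is uniclass.

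\textbf{Main obstacle.} The delicate part is the ``only if'' direction for collineations with opposition diagram $\sE_{6;2}$: one must show the fixed point set is precisely an ideal Veronesean, which requires a careful local analysis inside stabilised symps (via $\GQ(2,4)$ domesticity and \cref{residualE6}) to pin down that the fixed points in each host symp form an ovoid, and a global argument to show no two fixed points are collinear and that they are not confined to a single symp. This is essentially where the input from \cite{NPVV} on ideal Veroneseans, combined with the domesticity machinery of \cite{PVMexc}, does the heavy lifting; translating the displacement-spectrum constraint (a single $\sigma$-conjugacy class, with $\bC$ explicitly the class $\Cl(w_J)$ for the $\mathsf{A}_2$-type bi-capped diagram) into the geometric statement is the conceptual crux.
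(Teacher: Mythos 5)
There is a genuine conceptual gap in the ``only if'' direction for collineations. You write that a nontrivial uniclass collineation of $\mathsf{E}_6(\K)$ ``is either anisotropic or domestic with opposition diagram $\sE_{6;2}$.'' Both alternatives are impossible. A collineation has companion automorphism $\sigma=1$, while the opposition automorphism $\sigma_0$ of $\mathsf{E}_6$ is nontrivial, so $\{w_0\}$ is not a $\sigma$-conjugacy class and no collineation of $\mathsf{E}_6$ can be anisotropic. And the collineation fixing an ideal Veronesean is \emph{not} domestic: its opposition diagram is $\psi(\sE_{6;2})={^2}\sE_{6;4}$, which has every $\sigma_0$-orbit encircled, so chambers are mapped to opposite chambers. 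The paper's argument is instead: either $1\in\disp(\theta)$ (whence $\theta=\mathrm{id}$), or $\disp(\theta)=\Cl(w_0)$ because it must contain $w_0$; then $\Cl(w_0)$ is realised by axial collineations of $\GQ(2,4)$ (Corollary~\ref{cor:gq24}), which are point-domestic in $\GQ(2,4)$, so no type-1 vertex is mapped to a collinear one, and \cite[Theorem~4.1]{NPVV} then identifies the fixed points as an ideal Veronesean. Your plan would not produce the right class $\Cl(w_0)$ and would not supply the hypothesis needed for the NPVV input. Relatedly, you have systematically swapped fix and opposition diagrams: $\sE_{6;2}$ is the \emph{fix} diagram of the Veronesean collineation and ${^2}\sE_{6;4}$ is its \emph{opposition} diagram (and vice versa for the polarity), which matters because you use these labels to decide domesticity.

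Two smaller points. For the ``if'' direction with the ideal Veronesean, your third case ``$p$ mapped to an opposite'' does not occur: a collineation preserves type, so a type-1 vertex cannot be building-opposite to its image (opposition interchanges types $1$ and $6$), and in $\mathsf{E}_{6,1}(\K)$ any two non-collinear points lie in a unique symp — the paper needs only the dichotomy ``fixed'' versus ``symplectic to its image.'' For the symplectic polarity, invoking Proposition~\ref{prop:duality} is risky, since that proposition has bi-cappedness of $\disp(\theta)$ as a hypothesis, which is precisely what needs to be established; the paper avoids this with a direct count — there are five $\sigma$-involution classes in $W(\mathsf{E}_6)$, and four are eliminated because they contain elements of length greater than $36-12=24$, contradicting the opposition diagram $\sE_{6;2}$.
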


\begin{proof}
Suppose $\theta$ is \uniclass. If $\theta$ is a duality,  then either it is anisotropic, or it is domestic and hence a symplectic polarity (since we may assume by \cref{thm:uncapped} that $\theta$ is capped). Now suppose that $\theta$ is a collineation. If $\theta$ is domestic, then it fixes a chamber, and so $\theta$ is the identity. If it is not the identity, then the displacements are in the class of the longest word in the Coxeter group, which, by \cref{cor:gq24}, is realised by an axial collineation of $\GQ(2,4)$. Since such a collineation is point-domestic in the quadrangle, it does not map vertices of type 1 of $\mathsf{E_6}(\K)$ to vertices at distance 1. Now it follows from \cite[Theorem~4.1]{NPVV} that the fix point structure is an ideal Veronesean. 

Now we show the converse. If $\theta$ is a polarity, then all displacements are $\sigma$-involutions. We know from \cref{prop:basic}(3) that if an element of a $\sigma$-class is a displacement, then the full class is contained in the displacements. There are $5$ classes of $\sigma$-involutions. Four of these classes have elements of length exceeding $36-12=24$, which contradicts the opposition diagram. Hence only one class remains, and this is the $\sigma$-class of the identity. 

Finally, suppose $\theta$ is a collineation the fix structure of which is an ideal Veronesean. Let $C$ be an arbitrary chamber and let $x$ be its vertex of type 1. If $x$ is fixed, then $\theta$ induces in the residue of $x$, viewed as polar space of type $\mathsf{D_{5,1}}$, a collineation fixing an ovoid.  \cref{Bn1uni}(1) and (the dual of) \cref{residualE6} imply that $\delta(C,C^\theta)$ belongs to the conjugacy class of the longest word. 

So we may assume that $x$ is not fixed. We argue in $\Gamma=\mathsf{E_{6,1}}(\K)$. Then $x$ and $x^\theta$ are contained in a unique  symplecton $\xi$, which is fixed by \cref{kanE6}$(3)$. So $\xi$ is contained in $\mathsf{conv}\{C,C^\theta\}$ and hence, if $D$ is the projection of $C$ onto $\xi$, then $\{D,D^\theta\}\subseteq\mathsf{conv}\{C,C^\theta\}$. As in the previous paragraph, $\theta$  induces in $\xi$ a collineation fixing an ovoid. Now the assertion follows from \cref{Bn1uni}(1) and \cref{residualE6}, combined with \cref{ext}. 
\end{proof}

\subsection{Buildings of type $\mathsf{E_7}$}

We begin with some preliminaries on two types of fixed structures. 

\subsubsection{Fixing a metasymplectic space}
By \cite[Theorem~7.23]{NPVV} we have:
\begin{prop}\label{nofixedpoints}
Let $\theta$ be an automorphism of the building $\mathsf{E_7}(\K)$. Then the following are equivalent.
\begin{compactenum}[$(1)$]
\item $\theta$ does not fix any chamber and has opposition diagram $\mathsf{E_{7;3}}$.
\item The fixed point structure of $\theta$ induced in $\mathsf{E_{7,1}}(\K)$ is a fully isometrically embedded metasymplectic space $\mathsf{F_{4,1}}(\K,\LL)$, for some quadratic extension $\LL$ of $\K$. 
\item The collineation induced in $\mathsf{E_{7,7}}(\K)$ has no fixed points and does not map any point to a symplectic one, but it maps at least one point to a collinear one.  
\end{compactenum}
\end{prop}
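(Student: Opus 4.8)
\textbf{Plan for Proposition~\ref{nofixedpoints}.} The statement to prove is the equivalence of the three conditions concerning an automorphism $\theta$ of $\mathsf{E_7}(\K)$: (1) $\theta$ fixes no chamber and has opposition diagram $\mathsf{E_{7;3}}$; (2) the induced fixed point structure in $\mathsf{E_{7,1}}(\K)$ is a fully isometrically embedded metasymplectic space $\mathsf{F_{4,1}}(\K,\LL)$ for a quadratic extension $\LL/\K$; (3) the induced collineation in $\mathsf{E_{7,7}}(\K)$ has no fixed points, maps no point to a symplectic one, but maps at least one point to a collinear one. The proof is cited to \cite[Theorem~7.23]{NPVV}, so the plan is essentially to organize the cyclic chain $(1)\Rightarrow(3)\Rightarrow(2)\Rightarrow(1)$ (or whatever orientation is most convenient) using the opposition diagram machinery and the results on domestic automorphisms.

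First I would establish $(1)\Rightarrow(3)$. Given the opposition diagram $\mathsf{E_{7;3}}$ (which in Bourbaki labelling has the encircled orbit corresponding to the type $\{2,3,4,5\}$ region, so the displacement class is $\Cl(w_{\sD_4})$), one reads off which vertex types are mapped to opposite simplices. In particular no type $7$ vertex is mapped to an opposite, so the induced collineation in $\mathsf{E_{7,7}}(\K)$ is point-domestic in the building-theoretic sense. The analysis of point-domestic collineations of $\mathsf{E_{7,7}}$ (using the Lie incidence geometry description in Section~\ref{sec:4}, together with the fact that in long root subgroup geometries point pairs are collinear, symplectic, special or opposite) shows that such a collineation either fixes a point, maps a point to a symplectic one, or maps every point only to collinear/equal points. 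The hypothesis that $\theta$ fixes no chamber rules out fixed points of $\theta$ in $\mathsf{E_{7,7}}$ entirely (a fixed point of type $7$ together with the residual analysis via Proposition~\ref{prop:residual} would produce a fixed chamber), and the opposition diagram being exactly $\mathsf{E_{7;3}}$ and not smaller forces at least one point mapped to a collinear one. One must also argue that no point is mapped to a symplectic point: a symplectic pair $(p,p^\theta)$ would lie in a fixed symp, and chasing the induced action in that symp (a $\sD_6$ polar space) would either yield a fixed chamber or a larger opposition diagram, both excluded.

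Next, $(3)\Rightarrow(2)$: this is the geometric heart, and is the part I expect to be the main obstacle. Starting from a collineation of $\mathsf{E_{7,7}}(\K)$ that is fixed-point-free, maps no point to a symplectic one, yet maps some point to a collinear one, one must reconstruct the fixed point structure in the \emph{dual} geometry $\mathsf{E_{7,1}}(\K)$ and identify it as a metasymplectic space $\mathsf{F_{4,1}}(\K,\LL)$. This uses the equator-geometry apparatus: for opposite points $p, p^\theta$ in $\mathsf{E_{7,7}}$ the equator geometry $E(p,p^\theta)$ is isomorphic to a long root subgroup geometry of a point residue, and the fixed substructure is built up by gluing equators, much as in the $\sF_4$ case (Lemma~\ref{reduction}, Lemma~\ref{apaF4}). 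One shows the fixed points form a subspace on which the ambient collinearity/symplecticity relations restrict correctly (full isometric embedding), that the parapolar parameters match those of $\mathsf{F_{4,1}}$, and that the relevant residue fields produce a quadratic extension $\LL/\K$ (the degree $2$ coming from the "collinear but not fixed" behaviour, analogous to how a fixed line spread over $\K$ inside a symp produces a composition structure over a quadratic extension). Finally $(2)\Rightarrow(1)$ is the bookkeeping direction: if the fixed structure in $\mathsf{E_{7,1}}$ is a fully embedded $\mathsf{F_{4,1}}(\K,\LL)$ then $\theta$ fixes no maximal flag (as $\mathsf{F_{4,1}}(\K,\LL)$ is a proper substructure missing whole vertex types), so $\theta$ fixes no chamber, and one computes the opposition diagram by determining the maximal types of simplices mapped to opposites — the relative type $\sF_4$ of the substructure forces, via Proposition~\ref{prop:duality} and Table~\ref{fig:bicapped} ($\sE_{7;3}$ is dual to $\sE_{7;4}$, relative type $\sB_3$ dual to $\sF_4$), that $\Diag(\theta)=\mathsf{E_{7;3}}$. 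Throughout, the main difficulty is the geometric synthesis in $(3)\Rightarrow(2)$: identifying the quadratic extension intrinsically and verifying the isometric (not merely full) nature of the embedding, which is exactly where the detailed computations of \cite{NPVV} are needed.
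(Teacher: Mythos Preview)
The paper does not prove this proposition at all: it is stated with the single line ``By \cite[Theorem~7.23]{NPVV} we have:'' and the result is imported wholesale from that reference. So there is no argument in the paper to compare against, and your task here is simply to cite \cite{NPVV}.

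That said, your sketched reconstruction contains a concrete error that would derail the $(1)\Rightarrow(3)$ direction. You write that the opposition diagram $\mathsf{E_{7;3}}$ ``has the encircled orbit corresponding to the type $\{2,3,4,5\}$ region'' and conclude that ``no type $7$ vertex is mapped to an opposite, so the induced collineation in $\mathsf{E_{7,7}}(\K)$ is point-domestic.'' This is backwards. The diagram $\mathsf{E_{7;3}}$ has the three nodes $\{1,6,7\}$ encircled (see the example after Theorem~\ref{thm:bicapped} and the explicit drawing in Example~\ref{ex:noncrystallographic} for $\mathsf{E_{7;4}}$; the subscript records the number of encircled orbits, i.e.\ the relative rank). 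Hence type~$7$ vertices \emph{are} mapped to opposites, the collineation on $\mathsf{E_{7,7}}(\K)$ is \emph{not} point-domestic, and the displacement class is $\Cl(s_2s_5s_7)$, not $\Cl(w_{\sD_4})$. Condition~(3) does not exclude points being mapped to opposite (indeed, in the later proof of Proposition~\ref{propE7} the authors use that points are mapped to collinear \emph{or opposite} ones); your point-domesticity argument therefore aims at the wrong target. A second smaller issue: your justification that a fixed type~$7$ point would force a fixed chamber invokes Proposition~\ref{prop:residual}, but that proposition assumes $\theta$ is uniclass, which is not part of hypothesis~(1).
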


The following results are shown in \cite{NPVV}, where the action of $\theta$ on $\mathsf{E_{7,7}}(\K)$ is examined. \cref{nofixedpoints}$(3)$ tells us that the displacement of the points is highly restricted. The displacement of the symps is also highly restricted.

\begin{lemma}\label{notspecial}
Let $\theta$ be an automorphism of $\mathsf{E_7}(\K)$ with opposition diagram $\mathsf{E_{7;3}}$ and fix diagram $\mathsf{E_{7;4}}$.
\begin{compactenum}[$(1)$]
\item Let $\xi$ be an arbitrary symp of $\mathsf{E_{7,7}}(\K)$. Then either $\xi$ is fixed, or $\xi\cap\xi^\theta$ is a line, or $\xi$ is opposite $\xi^\theta$. 
\item For each symp $\xi$, the intersection $\xi\cap\xi^\theta$ is preserved by $\theta$.
\item If a point $x$ is mapped onto a collinear point $x^\theta$, then the line $xx^\theta$ is fixed. 
\item If a line $L$ is fixed, then  $\theta$ induces in the upper residue of $L$ a collineation pointwise fixing an ideal subspace of corank $2$ (with fix diagram $\mathsf{D_{5;3}^1}$) in the corresponding polar space of type $\mathsf{D_5}$.
\end{compactenum} 
\end{lemma}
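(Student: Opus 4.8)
The plan is to reduce everything to the geometric picture of $\theta$ and then exploit convexity together with the opposition diagram. By \cref{nofixedpoints} the hypotheses "opposition diagram $\mathsf{E_{7;3}}$, fix diagram $\mathsf{E_{7;4}}$'' are equivalent to the package: $\theta$ fixes no chamber; its fixed point structure in $\mathsf{E_{7,1}}(\K)$ is a fully, isometrically embedded metasymplectic space $M\cong\mathsf{F_{4,1}}(\K,\LL)$ with $\LL/\K$ quadratic; and in $\mathsf{E_{7,7}}(\K)$ the collineation $\theta$ has no fixed point, maps no point to a symplectic one, but maps at least one point to a collinear one. All four items concern the action on $\mathsf{E_{7,7}}(\K)$, and two facts do most of the work: first, \cref{ext}, which transfers the displacement class from a chamber $C$ to any chamber $D$ with $\{C,C^\theta\}\subseteq\mathsf{conv}\{D,D^\theta\}$; second, the opposition diagram $\mathsf{E_{7;3}}$, which by \cref{prop:attaindisplacement} (and cappedness of $\mathsf{E_7}(\K)$) pins down exactly which simplex–types can be sent to opposites, namely those contained in the encircled node set, and hence excludes most displacements.

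For (3), suppose $x$ is collinear with $y:=x^\theta$ and put $L=xy$. Applying $\theta$, $y$ is collinear with $y^\theta=x^{\theta^2}$, so $x^{\theta^2}$ lies on some line through $y$. If $x^{\theta^2}\notin L$ then $x$ and $x^{\theta^2}$ are distinct points of $y^\perp\setminus\{y\}$, and the parapolar geometry forces them to be collinear, special, or symplectic; confronting each possibility with the embedded $M$ (which meets, in a controlled way, the lines through its points) and with the restriction that no simplex of a type outside the encircled nodes of $\mathsf{E_{7;3}}$ goes to an opposite, one produces in every case a chamber through $x$ with an impossible displacement. Hence $x^{\theta^2}\in L$, and since $\theta$ is fixed-point-free $x^{\theta^2}\neq y$, so $L^\theta=yx^{\theta^2}=L$; thus the line joining a point to its image is fixed.

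For (1), let $\xi$ be a symp of $\mathsf{E_{7,7}}(\K)$, supported on a vertex of some type $p$, and encode the relative position of $\xi$ and $\xi^\theta$ by the $(W_{S\setminus\{p\}},W_{S\setminus\{p\}})$–double coset of the $W$–distance of the supporting vertices. For each double coset one exhibits, by arguments as in the proof of \cref{prop:attaindisplacement} together with \cref{ext}, a chamber $C$ in the star of $\xi$ whose displacement is forced, and reads off that every "intermediate'' position — $\xi\cap\xi^\theta$ a point or a plane, or $\xi,\xi^\theta$ disjoint but not opposite — makes some simplex of a type not among the encircled nodes of $\mathsf{E_{7;3}}$ go to an opposite, a contradiction; only $\xi=\xi^\theta$, $\xi\cap\xi^\theta$ a line, and $\xi$ opposite $\xi^\theta$ survive. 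For (2) the first and third cases are vacuous, so assume $\xi\cap\xi^\theta=L$ is a line; then $L^\theta=\xi^\theta\cap\xi^{\theta^2}$. Using the equator/extended-equator machinery for $M$ (as in \cref{equator}) one locates a fixed point on $L$, and (3) then forces $L^\theta=L$; equivalently, $L$ is the unique line of $\mathsf{conv}\{C,C^\theta\}$ in the relevant symmetric position, which \cref{ext} sends to itself.

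Finally, if a line $L$ of $\mathsf{E_{7,7}}(\K)$ is fixed then the corresponding building flag is fixed, so the upper residue $\URes_{\mathsf{E_{7,7}}(\K)}(L)$ — a polar space of type $\sD_5$ — is $\theta$–stable, and $\theta$ induces a collineation $\bar\theta$ of it. Since a simplex of this residue goes to an opposite under $\bar\theta$ exactly when the corresponding simplex of $\Delta$ through the flag of $L$ goes to an opposite under $\theta$, the opposition diagram of $\bar\theta$ is obtained by restricting $\mathsf{E_{7;3}}$ to the $\sD_5$ subdiagram carrying the residue, namely $\mathsf{D_{5;3}^1}$; the classification of domestic polar-space collineations used in the proof of \cref{Bn1uni} (that is, \cite{PVMclass}) then shows that $\bar\theta$ pointwise fixes an ideal subspace of corank $2$, whose fix diagram is in turn $\mathsf{D_{5;3}^1}$. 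I expect the genuine difficulty to be the $\mathsf{E_{7,7}}(\K)$–bookkeeping behind item (1) — identifying $p$, enumerating the double cosets, and verifying that exactly the three positions remain — together with the coincidence arguments in (2) and (3): one must check that $\theta$ carries $\xi\cap\xi^\theta$ (respectively $xx^\theta$) to itself and not merely to another simplex of the same type and position, which is exactly where the embedded metasymplectic space $M$, by supplying an honest fixed substructure, is indispensable.
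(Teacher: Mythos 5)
Your plan deviates from the paper's route: the paper does not argue item by item at all; it simply cites the companion manuscript~\cite{NPVV} (Lemmas 7.7, 7.8, 7.9, 7.17 and the proof of Proposition 7.18 there), so a self-contained derivation would already be a genuinely different contribution. Unfortunately your sketch contains errors that would block such a derivation.

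The most serious problem is in your argument for (2). You propose to ``locate a fixed point on $L$'' via the equator machinery and then invoke (3). But by Proposition~\ref{nofixedpoints}(3), an automorphism with opposition diagram $\mathsf{E_{7;3}}$ and fix diagram $\mathsf{E_{7;4}}$ has \emph{no} fixed points in $\mathsf{E_{7,7}}(\K)$ (type~$7$ is outside the encircled set $\{1,3,4,6\}$ of the fix diagram). So a fixed point on $L$ cannot exist, and the step is vacuous. Also note that \cref{equator} defines the extended equator for $\mathsf{F_{4,4}}(\K,\AA)$, not $\mathsf{E_{7,7}}(\K)$; the analogous construction in $\mathsf{E_{7,7}}$ exists but you cannot cite that definition for it.

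Your argument for (4) also has a conceptual flaw. You claim ``a simplex of this residue goes to an opposite under $\bar\theta$ exactly when the corresponding simplex of $\Delta$ through the flag of $L$ goes to an opposite under $\theta$,'' and hence that the opposition diagram of $\bar\theta$ is the restriction of $\mathsf{E_{7;3}}$. That equivalence is false: opposition inside a residue $R$ means maximal distance \emph{within} $R$, which is not the restriction of ambient opposition (the projections of two ambient-opposite simplices to $R$ can fail to be opposite in $R$, and conversely two $R$-opposite simplices need not be $\Delta$-opposite). Moreover, even the naive computation contradicts you: $\mathsf{E_{7;3}}$ has nodes $\{1,6,7\}$ encircled, so its restriction to the $\{1,2,3,4,5\}$ subdiagram (the $\sD_5$ upper residue) leaves a single encircled node, not three, so it cannot be $\mathsf{D_{5;3}^1}$. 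Getting the right diagram requires the analysis in~\cite{NPVV} that the paper cites (the key point is that $\bar\theta$ maps no point of the $\sD_5$ polar space to a collinear one, and fixes only points, lines and planes; from this \emph{and} Proposition~3.8$(v)$ of~\cite{NPVV} one deduces the fix diagram is $\mathsf{D_{5;3}^1}$, not from a naive restriction).

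Finally, your treatments of (1) and (3) are not proofs but promissory notes (``one produces in every case a chamber through $x$ with an impossible displacement''). Those are exactly the places where real geometric casework is required, and it is what Lemmas 7.7, 7.8, and 7.17 of~\cite{NPVV} supply; without them your outline cannot be assembled into an argument.
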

\begin{proof}
\begin{compactenum}[$(1)$]
\item By Lemma~7.7 of \cite{NPVV}, no symp is mapped onto a disjoint but non-opposite one, and by Lemma~7.17 of \cite{NPVV}, no symp is mapped onto an ``adjacent'' one. Hence the assertion. 
\item This is trivial if $\xi$ is fixed or is mapped onto an opposite. If $\xi$ and $x^\theta$ share a line, then this follows from Lemma~7.9 of \cite{NPVV}. 
\item This is precisely Lemma~7.8 in \cite{NPVV}.
\item In the penultimate paragraph of the proof of Proposition~7.18 of \cite{NPVV} it is shown that $\theta$ induces a collineation in the said polar space mapping no point to a collinear one, and fixing only points, lines ad planes. By Proposition~3.8$(v)$ of~\cite{NPVV}, the assertion follows. \qedhere
\end{compactenum} 
\end{proof}

\subsubsection{Fixing a dual polar space}
The following results are contained in \cite{NPVV}, 
where again the action of $\theta$ on $\mathsf{E_{7,7}}(\K)$ is examined.

\begin{lemma}\label{evendist}
Let $\theta$ be an automorphism of $\mathsf{E_7}(\K)$ with opposition diagram $\mathsf{E_{7;4}}$ and fix diagram $\mathsf{E_{7;3}}$. 
\begin{compactenum}[$(1)$]
\item A point is never mapped to a collinear one, nor to an opposite one. Moreover, the symp determined by a point $x$ mapped onto a symplectic one, and its image $x^\theta$, is stabilised.
\item The collineation induced by $\theta$ in the residue of any fixed point, pointwise fixes an ideal Veronesean. 
\item The collineation induced by $\theta$ in a fixed symp pointwise fixes an ideal subspace of rank $2$ and corank $4$.  \end{compactenum}
\end{lemma}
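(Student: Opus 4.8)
The plan is to derive all three statements from the detailed study of automorphisms of $\mathsf{E_7}(\K)$ with these diagrams carried out in \cite{NPVV}, where the collineation induced on the Lie incidence geometry $\mathsf{E_{7,7}}(\K)$ is analysed in depth. First I would record that the present hypotheses are the $\psi$-dual of those of \cref{nofixedpoints} and \cref{notspecial}: here the \emph{opposition} diagram is $\mathsf{E_{7;4}}$ and the \emph{fix} diagram is $\mathsf{E_{7;3}}$, so $\theta$ is precisely the class of automorphisms whose fixed point structure in $\mathsf{E_{7,7}}(\K)$ is an ideal dual polar Veronesian (c.f.\ Theorem~\ref{thm:main1}$(5)(b)$ and Table~\ref{AllWS}); in particular $\theta$ is domestic and fixes no chamber.

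For part (1), the assertion that no point of $\mathsf{E_{7,7}}(\K)$ is mapped to a collinear one, and none to an opposite one, is a restriction on the point displacements forced by the opposition diagram $\mathsf{E_{7;4}}$ together with domesticity; I would cite the corresponding point-displacement computation in \cite{NPVV} (the dual counterpart of \cref{notspecial}$(3)$). The claim that the symp determined by a symplectic pair $(x,x^\theta)$ is stabilised is the companion symp-stabilisation lemma of \cite{NPVV}, parallel to \cref{notspecial}$(2)$. For part (2), since a fixed point of $\mathsf{E_{7,7}}(\K)$ has residue a building of type $\sE_6$, the induced collineation on the associated $\sE_6$ Lie incidence geometry fixes an ideal Veronesean by the analysis of \cite{NPVV}, consistently with \cref{E6uni} and \cref{kanE6}. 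For part (3), a fixed symp of $\mathsf{E_{7,7}}(\K)$ is a polar space of rank~$6$ (of type $\sD_6$), and \cite{NPVV} shows that $\theta$ pointwise fixes an ideal subspace of it of corank~$4$, hence of rank~$2$; the arithmetic $2+4=6$ matches \cref{prop:ranksum} and \cref{Bn1uni}$(1)$, and this is the dual counterpart of \cref{notspecial}$(4)$.

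The bulk of the work is thus already contained in \cite{NPVV}, and the only delicate point in assembling this lemma is bookkeeping: keeping the diagram conventions of Remark~\ref{rem:change} straight, identifying the geometric objects in $\mathsf{E_{7,7}}(\K)$ (points, lines, symps of type $\sD_6$, residues of type $\sE_6$) with the combinatorial data of the diagrams, and checking that the cited statements are literally those claimed here. Were one to seek a self-contained argument instead, the main obstacle would be re-establishing the global rigidity of the point displacement on $\mathsf{E_{7,7}}(\K)$ in part (1); parts (2) and (3) would then follow by passing to the residue of a fixed point, respectively a fixed symp, and applying \cref{E6uni} and \cref{Bn1uni}$(1)$ together with \cref{ext}.
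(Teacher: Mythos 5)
Your proposal is correct and takes essentially the same approach as the paper: the proof there is a direct list of citations to \cite{NPVV} (Corollary~6.1 and Lemma~6.7 for part~(1), Theorem~4.1 and Proposition~6.17 for part~(2), and Propositions~6.9 and~3.8$(v)$ for part~(3)), and you delegate the same content to \cite{NPVV} without pinpointing those statements. One caution, relevant only to the speculative ``self-contained'' alternative you sketch at the end: deriving (2) and (3) residually by applying \cref{E6uni} and \cref{Bn1uni} would presuppose that the restriction of $\theta$ to the residue or symp is uniclass, but that is only established later (Proposition~\ref{propE7bis} uses the present lemma to prove it), so that route is circular as stated — though it does not affect your main argument, which, like the paper's, rests on \cite{NPVV}.
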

\begin{proof} 
\begin{compactenum}[$(1)$]
\item This follows from Corollary~6.1 and Lemma~6.7 of \cite{NPVV}.
\item By the previous result, the collineation induced by $\theta$ in the residue of a fixed point does not map a point of $\mathsf{E_{6,1}}(\K)$ to a collinear point. Then the assertion follows from Theorem~4.1 of \cite{NPVV}; see also Proposition~6.17 of \emph{loc.\ cit.} 
\item By (the proof of) Proposition~6.9 of \cite{NPVV} the collineation induced by $\theta$ in a fixed symp does not map points to collinear ones and pointwise fixes a subquadrangle, that is, a subspace of rank 2. By Proposition~3.8$(v)$, this subquadrangle is also an ideal subspace of corank $4$. \qedhere \end{compactenum}
\end{proof}

A substructure with the properties $(2)$ and $(3)$ of \cref{evendist}, that is, a substructure consisting of a set $\cP$ of points, a set $\cL$ of lines and a set $\cS$ of symps of $\mathsf{E_{7,7}}(\K)$ such that \begin{compactenum}[$(1)$] \item the set of lines and symps in $\cL$ and $\cS$, respectively, incident with a point $p\in\cP$ defines an ideal Veronesean in the residue of $p$; \item the set of points and lines in $\cP$ and $\cL$, respectively, incident with a symp $\xi\in\cS$ defines an ideal subspace of rank $2$ and corrank $4$ in $\xi$,\end{compactenum} will be called an \emph{ideal dual polar Veronesean}.
\subsubsection{Main theorem}
In this section we prove Theroem~\ref{thm:main1} for $\sE_7$ buildings. 

\begin{thm}\label{theoE7}
Let $\theta$ be an automorphism of $\mathsf{E_{7}}(\K)$, for some field $\K$. Then the following are equivalent
\begin{compactenum}[$(1)$]
\item The fixed point structure of $\theta$ induced in $\mathsf{E_{7,1}}(\K)$ is a fully embedded metasymplectic space $\mathsf{F_{4,1}}(\K,\LL)$, with $\LL$ a quadratic extension of $\K$, isometrically embedded as long root subgroup geometry, or the fixed point structure of $\theta$ induced in $\mathsf{E_{7,7}}(\K)$ is an ideal dual polar Veronesean. 
\item $\disp(\theta)$ is contained in a single nontrivial conjugacy class.  
\end{compactenum} 
\end{thm}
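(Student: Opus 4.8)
The plan is to prove the two directions of the equivalence separately, mirroring the structure already used for $\sE_6$ and $\sF_4$.

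For the direction $(1)\Rightarrow(2)$, I would treat the two fixed structures as two subcases. In each subcase the strategy is the now-standard \emph{convex hull reduction}: given an arbitrary chamber $C$, I would show that $\mathsf{conv}\{C,C^\theta\}$ contains a pair $\{D,D^\theta\}$ lying in a smaller residue on which $\theta$ acts in a way whose displacement class is already understood, and then invoke \cref{ext} to conclude $\delta(C,C^\theta)\in\Cl^\sigma(\delta(D,D^\theta))$. Concretely, in the metasymplectic case I would work in $\mathsf{E_{7,7}}(\K)$, take $x$ the type-$7$ vertex of $C$, and use \cref{notspecial}: if $x$ is fixed, project onto $x$ and use that $\theta$ induces on the residue a collineation fixing an ideal Veronesean (handled by \cref{E6uni}); if $x$ is mapped to a collinear point, use \cref{notspecial}(3) to get the fixed line $xx^\theta$, project onto it, and invoke \cref{notspecial}(4) together with \cref{Bn1uni}(1) (ideal subspace case in type $\sD_5$); if $\xi:=\xi(x,x^\theta)$ is the symp of $C$ and is fixed, project onto $\xi$; the remaining displacements are $x$ opposite $x^\theta$, which by the opposition diagram $\mathsf{E_{7;3}}$ is exactly the generic position. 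In the dual polar Veronesean case I would similarly use \cref{evendist}: a point is never collinear or opposite to its image, so either $x$ is fixed (project onto $x$, use that $\theta$ induces a collineation fixing an ideal Veronesean, \cref{E6uni} again) or $x$ is symplectic to $x^\theta$ and the symp $\xi(x,x^\theta)$ is stabilised (project onto $\xi$, use \cref{evendist}(3) and \cref{Bn1uni}(1)). In all cases the induced displacement classes are the bi-capped classes $\sE_{7;4}$ respectively $\sE_{7;3}$ from \cref{fig:bicapped}, and one checks these are single conjugacy classes; parity ensures they are nontrivial since $\theta$ is not the identity.

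For the direction $(2)\Rightarrow(1)$, suppose $\disp(\theta)=\bC$ is a single nontrivial conjugacy class. By \cref{thm:uncapped} $\theta$ is capped, so the opposition diagram of $\theta$ determines a unique maximal type $S\setminus J$ of simplices mapped to opposites, and by \cref{prop:attaindisplacement} the element $w_{S\setminus J}w_0\in\bC$; comparing with the list of admissible $\sE_7$ diagrams and using \cref{prop:full} (so $\bC$ consists of $\sigma$-involutions, $\sigma=1$ here), the only options with $\bC$ nontrivial, $\theta\neq\mathrm{id}$, $\theta$ not anisotropic, and $\theta$ not uncapped are the diagrams $\mathsf{E_{7;4}}$ and $\mathsf{E_{7;3}}$. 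Indeed, the identity is excluded because $\bC$ is nontrivial, anisotropic is excluded because $\disp(\theta)=\{w_0\}$ is then a single class but the statement is about nontrivial classes (I should be slightly careful: anisotropic gives $\disp(\theta)=\{w_0\}$ which \emph{is} a single nontrivial class, so I must show anisotropic automorphisms of $\sE_7(\K)$ do not exist — this follows from the fact that $w_0\sigma_0=1$ is not realised, i.e. there are no anisotropic automorphisms of thick $\sE_7$ buildings over a field, a standard fact; alternatively the statement $(2)$ in the theorem should be read together with the surrounding case analysis of \cref{thm:main1}). The remaining opposition diagrams of capped domestic collineations of $\sE_7(\K)$ are classified in \cite{NPVV}, and only $\mathsf{E_{7;3}}$ and $\mathsf{E_{7;4}}$ survive; the diagrams then force $\theta$ to fix no chamber in both cases, and \cref{nofixedpoints} (for $\mathsf{E_{7;3}}$) together with the dual statement and the definition of ideal dual polar Veronesean (for $\mathsf{E_{7;4}}$, via \cref{evendist}) identify the fixed point structures, giving $(1)$.

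The main obstacle I expect is the $(2)\Rightarrow(1)$ step of ruling out all other opposition diagrams: one must verify, diagram by diagram, that every capped domestic collineation of $\mathsf{E_7}(\K)$ whose opposition diagram is \emph{not} $\mathsf{E_{7;3}}$ or $\mathsf{E_{7;4}}$ has $\disp(\theta)$ meeting two distinct conjugacy classes — which amounts to exhibiting two chambers whose displacements have different lengths or cycle types, or alternatively to checking that $\bC=\Cl(w_{S\setminus J}w_0)$ is not downward-closed to the minimal length elements forced by the fix diagram. This is where the interplay between \cref{thm:downwardsclosure}, \cref{prop:duality}, and the explicit classification of domestic collineations from \cite{NPVV} does the real work; the geometric identification of the fixed structures once the diagram is pinned down is then essentially a citation.
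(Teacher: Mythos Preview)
Your treatment of the dual polar Veronesean subcase and of the direction $(2)\Rightarrow(1)$ is broadly in line with the paper. For the latter the paper cites the classification in \cite{NPVV} directly: besides the two structures in $(1)$, the only other domestic collineation fixing no chamber pointwise fixes an equator geometry of type $\sD_{6,2}$ in $\sE_{7,1}(\K)$, and this is excluded because such a collineation fixes type-$4$ vertices, contradicting the fix diagram $\sE_{7;3}$ that the class forces. Your worry about the anisotropic case is legitimate; the paper's statement is tacitly read in the context of Theorem~\ref{thm:main1}, where anisotropic is listed separately.

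There is, however, a genuine gap in your metasymplectic subcase of $(1)\Rightarrow(2)$. First, your case split on the type-$7$ vertex $x$ is off: by \cref{nofixedpoints}(3) no point of $\sE_{7,7}(\K)$ is fixed and none is mapped to a symplectic one, so only the cases ``$x$ collinear to $x^\theta$'' and ``$x$ opposite $x^\theta$'' occur; your ``$x$ fixed'' case is vacuous, and your appeal to an ideal Veronesean in the residue belongs to the \emph{other} subcase. More seriously, you do not handle the case where $x$ is opposite $x^\theta$ at all---you only remark that it is ``the generic position''. This is precisely the hard case: there is no obvious fixed vertex in $\mathsf{conv}\{C,C^\theta\}$ to project onto, so the convex-hull reduction via \cref{ext} does not apply directly. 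The paper (Proposition~\ref{propE7}) instead works with the symp $\xi$ of $C$, and when $\xi$ is opposite $\xi^\theta$ it first shows, by a nontrivial argument using domestic dualities in residues, that the points of $\xi$ mapped to collinear ones form an ideal subspace isomorphic to $\sB_{4,1}(\K,\LL)$; it then dualises to $\sE_{7,1}(\K)$, where $\xi,\xi^\theta$ become opposite points, proves that the equator geometry $E(\xi,\xi^\theta)\cong\sD_{6,2}(\K)$ is stabilised with fixed structure $\sB_{4,2}(\K,\LL)$, and from this extracts an apartment containing $C,C^\theta$ exhibiting the fix pattern $\sE_{7;4}$. Without this extended-equator argument (or a substitute), the metasymplectic direction is incomplete.
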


Note that the two examples correspond to the opposition diagram $\mathsf{E_{7;3}}$ and $\mathsf{E_{7;4}}$, respectively, and to the fix diagrams $\mathsf{E_{7;4}}$ and $\mathsf{E_{7;3}}$, respectively. In the latter case, the projective plane determined by the ideal Veronesean is a quaternion plane, or a plane over an inseparable extension of degree 4 of $\K$, in characteristic $2$. We will call such a dual polar space briefly \emph{quaternion}.

We start with opposition diagram $\mathsf{E_{7;3}}$ and fix diagram $\mathsf{E_{7;4}}$.

\begin{prop}\label{propE7}
Let $\theta$ be an automorphism of $\mathsf{E_{7}}(\K)$, for some field $\K$, the fixed point structure of which induced in $\mathsf{E_{7,1}}(\K)$ is a fully embedded metasymplectic space $\mathsf{F_{4,1}}(\K,\LL)$, with $\LL$ a quadratic extension of $\K$, isometrically embedded as long root subgroup geometry. Then $\theta$ is \uniclass.
\end{prop}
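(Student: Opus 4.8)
The strategy is the one used throughout Section~\ref{sec:4}: show that every chamber $C$ can be placed inside a common apartment together with a ``reference'' chamber whose displacement is already known to lie in the target conjugacy class, and then invoke Lemma~\ref{ext}. The reference class here is $\bC=\Cl(w_{\sD_4})$ (the conjugacy class labelled $\sE_{7;4}$ in Table~\ref{fig:bicapped}), which by Theorem~\ref{thm:bicapped} and Proposition~\ref{prop:duality} is the bi-capped class with minimal length element $w_J$ where $S\setminus J$ indexes the opposition diagram $\mathsf{E_{7;3}}$; equivalently, $\bC$ is the class of displacements of an automorphism realising a fixed metasymplectic $\mathsf{F_{4,1}}(\K,\LL)$. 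So it suffices to prove: for every chamber $C$ of $\mathsf{E_7}(\K)$, the element $\delta(C,C^\theta)$ lies in $\bC$.

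\textbf{Key steps.} First I would translate to the Lie incidence geometry $\mathsf{E_{7,7}}(\K)$ and use the structural description from Proposition~\ref{nofixedpoints}: since the fixed structure in $\mathsf{E_{7,1}}(\K)$ is the embedded $\mathsf{F_{4,1}}(\K,\LL)$, the induced collineation on $\mathsf{E_{7,7}}(\K)$ has no fixed points, maps no point to a symplectic point, but maps at least one point to a collinear one. Then Lemma~\ref{notspecial} gives the crucial rigidity: for a point $x$ with $x^\theta$ collinear, the line $xx^\theta$ is fixed (Lemma~\ref{notspecial}(3)); for any symp $\xi$, $\xi\cap\xi^\theta$ is preserved and is either all of $\xi$, a line, or empty (parts (1)--(2)); and the induced collineation in the upper residue of a fixed line pointwise fixes an ideal subspace of corank $2$ in a $\mathsf{D_5}$ polar space (part (4)). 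The plan is then a case analysis on the vertex $x$ of type $7$ of $C$ (a point of $\mathsf{E_{7,7}}(\K)$):
\begin{compactenum}[$(a)$]
\item If $x^\theta$ is opposite $x$, one produces two opposite fixed objects symplectic to both $x$ and $x^\theta$ (as in the $\mathsf{F_4}$ argument of Theorem~\ref{thm:F42} and Lemma~\ref{reduction}), places $C$ and $C^\theta$ in a common apartment via the analogue of Lemma~\ref{apaF4}, and computes the displacement inside a $\mathsf{D_5}$-residue using Theorem~\ref{Bn1uni}(1).
\item If $x^\theta$ is collinear with $x$, let $L=xx^\theta$ (fixed, by Lemma~\ref{notspecial}(3)); then $L\in\mathsf{conv}\{C,C^\theta\}$, project $C,C^\theta$ onto the vertex $L$ to get $D,D^\theta\in\mathsf{conv}\{C,C^\theta\}$, apply the corank-$2$ structure of $\theta$ on $\URes(L)$ (Lemma~\ref{notspecial}(4)) together with the already-established $\sD_n$ case (Theorem~\ref{Bn1uni}), and conclude via Lemma~\ref{ext}.
\item If $x^\theta$ is symplectic with $x$: this cannot occur by Proposition~\ref{nofixedpoints}(3).
\item There are no fixed points by Proposition~\ref{nofixedpoints}(3), so $x$ is never fixed.
\end{compactenum}
In each surviving case one needs to check that the displacement of the smaller chamber $D$ inside the stabilised residue lies in the $\mathsf{B}$- or $\mathsf{D}$-type class corresponding (under the embedding of Coxeter complexes) to $\bC$; this is the content already proved for polar spaces in Theorem~\ref{Bn1uni}(1), and the fact that the $\mathsf{D_5;3}^1$ residual class extends to the $\sE_{7;4}$ class is exactly the compatibility between Proposition~\ref{prop:ranksum} / the relative types in Table~\ref{fig:bicapped}. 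Finally, since $\bC$ is a single conjugacy class and every $\delta(C,C^\theta)\in\bC$, the companion automorphism is trivial and $\theta$ is uniclass by definition.

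\textbf{Main obstacle.} The hard part will be case $(a)$, the opposite case: producing the two mutually opposite fixed points (or fixed lines/planes) symplectic to both $x$ and $x^\theta$ and then certifying that the relevant apartment of the ambient building is genuinely stabilised, so that Lemma~\ref{ext} applies with the correct reference chamber. This parallels the delicate extended-equator argument in the $\mathsf{F_4}$ proof (Lemmas~\ref{reduction} and~\ref{apaF4}), and the analogue here requires the corresponding $\mathsf{E_7}$-versions of those geometric lemmas — which is presumably where the bulk of the detailed work in \cite{NPVV} is being invoked. A secondary technical point is bookkeeping the identification of the residual displacement classes (e.g.\ $\mathsf{D_5;3}^1$ inside $\sE_{7;4}$) so that the conjugacy class emerging from Lemma~\ref{ext} is exactly $\bC$ and not merely a class of the right parity; Proposition~\ref{prop:duality} and Theorem~\ref{thm:bicapped} should make this routine once the geometric placement is in hand.
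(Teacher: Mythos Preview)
Your overall strategy (project to a stabilised residue, use Lemma~\ref{ext}) and your case (b) are essentially what the paper does. However, your case decomposition is on the type-$7$ vertex (the \emph{point} of $\mathsf{E_{7,7}}(\K)$), whereas the paper cases on the type-$1$ vertex (the \emph{symp} $\xi$ of $C$ in $\mathsf{E_{7,7}}(\K)$), obtaining three cases from Lemma~\ref{notspecial}(1): $\xi$ fixed, $\xi\cap\xi^\theta$ a line, or $\xi$ opposite $\xi^\theta$. The first two are disposed of quickly (via Theorem~\ref{Bn1uni} on $\mathsf{D_6}$, respectively via the fixed line $M=\xi\cap\xi^\theta$ and Lemma~\ref{notspecial}(4)), and only the third is hard.

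Your case (a) as written has a genuine gap. You propose to ``produce two opposite fixed objects symplectic to both $x$ and $x^\theta$'' in analogy with Lemma~\ref{symplfixed}(2) and then run an equator argument. But in $\mathsf{E_{7,7}}(\K)$ there are \emph{no fixed points} (Proposition~\ref{nofixedpoints}(3)), so the phrase ``fixed objects symplectic to $x$'' has no direct meaning; and $\mathsf{E_{7,7}}$ is not the long root geometry of $\mathsf{E_7}$ (that is $\mathsf{E_{7,1}}$), so the equator/extended-equator machinery of Definition~\ref{equator} does not apply to opposite \emph{points} of $\mathsf{E_{7,7}}$. There is also no stabilised $\mathsf{D_5}$-residue available to project into when $x$ is opposite $x^\theta$, since the obvious candidates (residues of fixed lines through $x$) require $x^\theta$ collinear to $x$. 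The paper's resolution of its hard case is precisely to dualise: opposite symps $\xi,\xi^\theta$ in $\mathsf{E_{7,7}}$ become opposite \emph{points} in $\mathsf{E_{7,1}}$, where the equator $E(\xi,\xi^\theta)\cong\mathsf{D_{6,2}}(\K)$ is defined and stabilised; a nontrivial argument then shows the fixed structure in this equator is $\mathsf{B_{4,2}}(\K,\LL)$, after which an apartment containing $C,C^\theta$ and realising the fix diagram $\mathsf{E_{7;4}}$ can be exhibited. Your sketch does not indicate this passage to $\mathsf{E_{7,1}}$, and without it case (a) does not close.
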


\begin{proof}
Let $C$ be a chamber of $\mathsf{E_7}(\K)$, and we consider $C$ in $\Delta=\mathsf{E_{7,7}}(\K)$. We show that $\delta(C,C^\theta)$ belongs to the conjugacy class $\bC$ of the Weyl group of type $\mathsf{E_7}$ defined by the opposition diagram $\mathsf{E_{7;3}}$ or, equivalently, the fix diagram $\mathsf{E_{7;4}}$. It induces the fix diagram $\mathsf{D_{5;3}^1}$ in a fixed vertex of type $6$.  

Let $\xi$ be the symp of $C$. By \cref{notspecial}, $\xi$ is mapped either to itself, to a symp intersecting $\xi$ in a line, or to an opposite symp.  At the same time, points are only mapped onto collinear and opposite ones, by virtue of  \cref{nofixedpoints} and \cref{nofixedpoints}. We will use this without further reference. 

So there are three possibilities.
\begin{enumerate}
\item \emph{The symp $\xi$ is fixed.} Then the result follows from \cref{Bn1uni}(2) applied to $\mathsf{D_6}(\K)$. 
\item \emph{The symps $\xi$ and $\xi^\theta$ share a line $M$.} By \cref{notspecial}$(3)$, the line  $M$ is fixed. Since $\theta$ acts fixed point freely on $M$, \cref{notspecial}$(4)$ implies that $\theta$ belongs to $\bC$. 
\item \emph{The symps $\xi$ and $\xi^\theta$ are opposite.}  Our first aim is to show that the set $\wE$ of points of $\xi$ mapped to a collinear point is a nondegenerate ideal subspace (of corank $2$), actually isomorphic to $\mathsf{B_{4,1}}(\KK,\LL)$, for a separable quadratic extension $\LL$ of $\KK$.  

We first claim that there exist at least two opposite fixed lines intersecting $\xi$. Indeed, if each point of $\xi$ is mapped onto a collinear one, this is trivial. So assume there is a point $x\in\xi$ mapped to an opposite one. Considering the mapping $\theta_x$, noting that it is a domestic duality, and that $\xi$ is mapped onto an opposite line through $x$, we infer from \cite{Mal:12} that there exist at least two nonplanar lines $L_1,L_2$ through $x$ in $\xi$ which are absolute with respect to $\theta_x$, that is, whose image $\xi_1,\xi_2$ under $\theta_x$ contains $L_1,L_2$, respectively.  Let $i\in\{1,2\}$. Then the foregoing implies that $L_i^\theta$ intersects $\xi_i$ in a point $y_i$. The inverse image $y_i'$ of $y_i$ is contained in $\xi_i$, hence is not opposite $y_i$; if follows that $y_i'$ is collinear to $y_i$. The points $y_1'$ and $y_2'$ belong to $\xi$ and are not collinear. Now note that the line $M_1:=y_1y_1'$ is opposite the line $M_2:=y_2y_2'$. Indeed, $y_2\in\xi^\theta$, which is opposite $\xi$, so $y_2^\perp\cap\xi=\{y_2'\}$. It follows that $y_2$ is opposite $y_1'$. Likewise, $y_1$ is opposite $y_2'$. This implies that $M_1$ and $M_2$ are opposite. The claim is proved.

Now let $K$ be a fixed line contained in a symp $\zeta_i$ together with $y_iy_i'$. Since $y_1'$ and $y_2'$ are symplectic, they have to be collinear with the same point of $K$, which then necessarily belongs to $\xi$. Also, if two collinear points $z_1,z_2$ belong to $\wE$, then considering a symp through the lines $z_1z_1^\theta$ and $z_2z_2^\theta$, we see that all points of $z_1z_2$ belong to $\wE$; hence $\wE$ is a subspaces. In the metasymplectic space $\mathsf{F_{4,4}}(\K,\LL)$, this amounts to a \emph{hyperbolic line}. These remarks now imply that the \emph{extended equator} of $\mathsf{F_{4,4}}(\K,\LL)$ defined by the points corresponding to the fixed lines $M_1$ and $M_2$ consists solely of lines intersecting $\xi$ in a point. Since this geometry is a polar space isomorphic to $\mathsf{B_{4,1}}(\K,\LL)$, and $\wE$ is a subspace, we see that the former is embedded in $\xi$ and our first aim follows from the fact that the codimension of the subspace of the ambient projective space of $\xi$ needed to intersect in $\wE$ is the same as the corank of $\wE$ as a polar space in $\xi$, namely, $2$.

Now it is convenient to consider the dual situation, that is, the translation to $\mathsf{E_{7,1}}(\K)$. The symps $\xi$ and $\xi^\theta$ correspond to two opposite points $x,x^\theta$. The fact that in the previous paragraphs we had two opposite fixed lines intersecting $\xi$ and $\xi^\theta$ implies that the imaginary line defined by $x$ and $x^\theta$ is stablized, hence the equator $E(x,x^\theta)$ is stabilized. The previous paragraph translated to $E(x,x^\theta)\cong\mathsf{D_{6,2}}(\K)$ means that the fixed structure of $\theta$ in $E(x,x^\theta)$ is $\mathsf{B_{4,2}}(\KK,\LL)$.  The chambers $C$ and $C^\theta$ induce unique chambers $D$ and $D^\theta$ in $E(x,x^\theta)$. Noting that $\mathsf{B_{4,1}}(\KK,\LL)$ is an ideal subspace in $\mathsf{D_{6,1}}(\K)$, we find an apartment $\cA$ containing $C,C^\theta, D,D^\theta$ and an apartment of $E(x,x^\theta)$ corresponding to the fix diagram $\mathsf{D_{6;3}^1}$. However, an apartment of type $\mathsf{B_{4,2}}$ is also of type $\mathsf{F_{4,1}}$. Hence the fix structure in $\cA$ amounts to the fix diagram $\mathsf{E_{7;4}}$, proving $\delta(C,C^\theta)\in\bC$.
\end{enumerate}
The proposition is proved. \end{proof}

We now turn to the opposition diagram $\mathsf{E_{7;4}}$. Denote by $\bC$ the displacement class defined by the fix diagram $\mathsf{E_{7;3}}$, that is, multiplication with the longest word of a standard $\mathsf{D_4}$ subsystem. 

\begin{prop}\label{propE7bis}
Let $\theta$ be an automorphism of $\mathsf{E_{7}}(\K)$, for some field $\K$, the fixed point structure of which induced in $\mathsf{E_{7,7}}(\K)$ is an ideal dual polar Veronesean, hence a fully embedded quaternion dual polar space, isometrically embedded. Then $\theta$ is \uniclass\ and its displacement belongs to $\bC$. 
\end{prop}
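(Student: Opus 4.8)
The plan is to mirror the structure of the proof of \cref{propE7}, but working in $\mathsf{E_{7,7}}(\K)$ and using \cref{evendist} in place of \cref{notspecial}. Let $C$ be an arbitrary chamber of $\mathsf{E_7}(\K)$, which we regard in the parapolar space $\Delta = \mathsf{E_{7,7}}(\K)$. I want to show that $\delta(C,C^\theta) \in \bC$, the conjugacy class determined by the fix diagram $\mathsf{E_{7;3}}$ (equivalently the opposition diagram $\mathsf{E_{7;4}}$). Let $x$ be the point of $\Delta$ belonging to $C$. By \cref{evendist}$(1)$, $x$ is never mapped to a collinear point nor to an opposite one, so there are exactly three cases to distinguish: $x$ is fixed, $x$ is symplectic to $x^\theta$, or $x$ is special with respect to $x^\theta$. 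The strategy in each case is to produce a fixed symplecton or a fixed point contained in $\mathsf{conv}\{C,C^\theta\}$, project $C$ and $C^\theta$ onto it, apply the already-established rank-reduction results (\cref{Bn1uni}(1), \cref{E6uni}, and the description of the induced fixed structures in \cref{evendist}), and then conclude via \cref{ext}.

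First, if $x$ is fixed, then $\theta$ stabilises the symp residue at $x$, which is a building of type $\mathsf{E_6}$; by \cref{evendist}$(2)$ the induced collineation pointwise fixes an ideal Veronesean, so by \cref{E6uni} (the ideal Veronesean case) together with the residual description of the displacement class, $\delta(C,C^\theta)$ lies in the appropriate class, and since the $\mathsf{E_6}$ fix structure extends to the $\mathsf{E_{7;3}}$ fix diagram we get membership in $\bC$. Second, if $x$ is symplectic to $x^\theta$, then by \cref{evendist}$(1)$ the unique symp $\xi$ through $x$ and $x^\theta$ is stabilised; $\xi$ is a polar space of type $\mathsf{D_6}$ (or a $\mathsf{B}$-type polar space) lying in $\mathsf{conv}\{C,C^\theta\}$, and by \cref{evendist}$(3)$ $\theta$ pointwise fixes an ideal subspace of rank $2$ and corank $4$ inside $\xi$; projecting $C,C^\theta$ onto $\xi$ gives chambers $D,D^\theta$ with $\{D,D^\theta\}\subseteq\mathsf{conv}\{C,C^\theta\}$, and \cref{Bn1uni}(1) together with \cref{ext} forces $\delta(C,C^\theta)\in\bC$ (one checks that an apartment of type $\mathsf{B_4}$ carrying the ideal-subspace-of-corank-$4$ fix pattern sits inside an apartment of type $\mathsf{E_7}$ with the $\mathsf{E_{7;3}}$ fix diagram).

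The main obstacle, as in \cref{propE7}, will be the special case, where $x$ and $x^\theta$ are a special pair of points: here there is no symp through both, and we must instead work with the equator (or extended equator) geometry $E(x,x^\theta)$, which for $\mathsf{E_{7,7}}(\K)$ is a polar space isomorphic to something of type $\mathsf{D_{6,?}}(\K)$ whose point residue is a symp of $\mathsf{E_{7,1}}(\K)$. The argument will need: that the imaginary line through $x$ and $x^\theta$ is stabilised (so $E(x,x^\theta)$ is stabilised), which should follow by producing two opposite fixed objects meeting both — the existence of such objects comes from a domesticity analysis of the duality $\theta_x$ induced on the residue of $x$, exactly parallel to the use of \cite{Mal:12} in \cref{propE7}; that the induced fixed structure in $E(x,x^\theta)$ is again an ideal/isometrically embedded subspace realising a $\mathsf{B}$-type (or quaternion dual polar) fix pattern; and finally that an apartment of $E(x,x^\theta)$ containing two opposite points in the fixed subspace corresponds (via a $\mathsf{B_4}$-is-also-$\mathsf{F_4}$ type coincidence) to an apartment of $\Delta$ realising the $\mathsf{E_{7;3}}$ fix diagram, so that \cref{ext} applies once more. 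I expect verifying this last apartment-correspondence and the precise identification of the embedded polar space (quaternion versus inseparable-degree-$4$, matching the remark after \cref{theoE7}) to be the delicate bookkeeping, but all the geometric input is already available in \cite{NPVV} via \cref{evendist} and its citations.
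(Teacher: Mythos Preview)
Your treatment of the two cases where the type $7$ point $x$ is fixed or symplectic to $x^\theta$ matches the paper's proof and is correct. The issue is your third case: in $\mathsf{E_{7,7}}(\K)$ there are \emph{no} special pairs of points. This geometry is a strong parapolar space (a minuscule geometry), in which any two non-collinear, non-opposite points are automatically symplectic. So the possible relations between $x$ and $x^\theta$ are only: equal, collinear, symplectic, opposite. Since \cref{evendist}$(1)$ rules out collinear and opposite, the paper's proof needs only the two cases you already handled, and is correspondingly much shorter.

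In other words, the ``main obstacle'' you anticipate simply does not arise, and the elaborate machinery you sketch for it (equator geometries, the $\theta_x$ duality, apartment bookkeeping) is unnecessary here. You were perhaps misled by the analogy with \cref{propE7}, which works in $\mathsf{E_{7,1}}(\K)$, the long root subgroup geometry where special pairs \emph{do} occur and where the hard case is indeed when a point is mapped to an opposite.
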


\begin{proof}
Let $C$ be an arbitrary chamber, and let $x$ be the vertex of type $7$ of $C$. Then, as a point of $\mathsf{E_{7,7}}(\K)$, \cref{evendist} asserts that $x$ is either fixed or mapped onto a symplectic point. First suppose that $x=x^\theta$. Then everything happens in the residue of $x$ and the assertion follows from \cref{E6uni}, \cref{Bn1uni}(1) and \cref{evendist}$(3)$. 

Hence we may assume that $x$ is symplectic to $x^\theta$. Let $\zeta$ be the symp containing $x$ and $x^\theta$. In view of \cref{evendist}, the result now follows from \cref{ext} and \cref{evendist}. 
\end{proof}

We can now finish the proof of \cref{theoE7}. Suppose the automorphism $\theta$ of $\mathsf{E_7}(\K)$ is \uniclass. If $\theta$ fixes a chamber, it is trivlal; if it maps a chamber to an opposite it is anisotropic. If it does not fix any chamber and is domestic, then, according to the main result of \cite{NPVV}, we either have $(1)$ of \cref{theoE7}, or $\theta$ pointwise fixes an equator geometry of type $\mathsf{D_{6,2}}$ in the associated long root geometry. In the latter case $\theta$ belongs to the opposition diagram $\sE_{7;4}$, and hence the fix diagram $\mathsf{E_{7;3}}$. It follows that $\theta$ does not fix any vertex of type $4$, a contradiction since the above mentioned equator contains type $4$ vertices. \cref{theoE7} is now proved.

\subsection{Buildings of type $\sE_8$}

We begin with some preliminaries. Buildings of type $\mathsf{E}_8$ constitute the unique class of buildings where always only at most one type of nontrivial uniclass collineation exists, and it has opposition diagram $\mathsf{E_{8;4}}$ and $\mathsf{E_{8;4}}$ fix diagram. 

The following lemma is shown in \cite{PVMexc4}.

\begin{prop}\label{coruniclassE8}
Let $\theta$ be an automorphism of $\mathsf{E_8}(\K)$ with opposition diagram $\mathsf{E_{8;4}}$ and fixed diagram $\mathsf{E_{8;4}}$. Let $p$ be a point such that $p^\theta$ is opposite $p$. Then the fix structure of $\theta$ contained in $E(p,p^\theta)$ is a parapolar space isomorphic to $\mathsf{F_{4,1}}(\K,\LL)$, isometrically and fully embedded, where $\LL$ is a quadratic extension of $\K$.
\end{prop}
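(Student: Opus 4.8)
The statement to prove is \cref{coruniclassE8}, which asserts that for an automorphism $\theta$ of $\mathsf{E_8}(\K)$ with opposition diagram $\mathsf{E_{8;4}}$ and fix diagram $\mathsf{E_{8;4}}$, and a point $p$ with $p^\theta$ opposite $p$, the fixed structure of $\theta$ inside the equator geometry $E(p,p^\theta)$ is an isometrically and fully embedded copy of $\mathsf{F_{4,1}}(\K,\LL)$ for a quadratic extension $\LL/\K$. Since the excerpt explicitly says ``The following lemma is shown in \cite{PVMexc4}'', I do not actually have to reprove this from first principles; but I will sketch the natural line of argument, which mirrors the $\mathsf{E_7}$ analysis in \cref{propE7} and the results surrounding \cref{nofixedpoints}.

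The plan is as follows. First I would recall that the equator geometry $E(p,p^\theta)$ of two opposite points in the long root geometry $\mathsf{E_{8,8}}(\K)$ is isomorphic to the long root geometry $\mathsf{E_{7,7}}(\K)$ (it is the long root subgroup geometry of the residue of a point, in the building-theoretic sense, as recorded in the discussion of equators in \cref{sec:4}). Since $p$ and $p^\theta$ are both mapped to each other's opposite and the imaginary line through $p,p^\theta$ is $\theta$-stable (because $\theta$ stabilises the pair $\{p,p^\theta\}$ up to swapping, which follows from the fix/opposition diagram being self-dual and a uniqueness argument on the equator), the geometry $E(p,p^\theta)$ is stabilised by $\theta$, and $\theta$ induces a collineation $\theta'$ of $\mathsf{E_{7,7}}(\K)$. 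The heart of the proof is then to determine the opposition and fix diagrams of $\theta'$: one shows that $\theta'$ has opposition diagram $\mathsf{E_{7;3}}$ and fix diagram $\mathsf{E_{7;4}}$. This is done by a residue/eigenspace computation — the $\mathsf{E_{8;4}}$ diagram restricted to the $\mathsf{E_7}$-residue obtained by removing the polar node (node $8$) produces exactly the $\mathsf{E_{7;3}}$ opposition diagram, and the fix rank computation (using \cref{prop:ranksum} and \cref{thm:bicapped}, noting $\FixRk(\mathsf{E_{8;4}}) = 4$ splits correctly across the equator) pins down the fix diagram as $\mathsf{E_{7;4}}$.

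Once the induced automorphism $\theta'$ of $\mathsf{E_{7,7}}(\K)$ is known to have opposition diagram $\mathsf{E_{7;3}}$ and fix diagram $\mathsf{E_{7;4}}$, I would invoke \cref{nofixedpoints}: its part (2) states precisely that under these conditions the fixed point structure of $\theta'$ induced in $\mathsf{E_{7,1}}(\K)$ is a fully and isometrically embedded metasymplectic space $\mathsf{F_{4,1}}(\K,\LL)$ for some quadratic extension $\LL$ of $\K$. Passing from the $\mathsf{E_{7,1}}$ picture (point-residue geometry) to the $\mathsf{E_{7,7}}$ picture (long root geometry) via the standard duality of the $\mathsf{E_7}$ Lie incidence geometries — and recalling that the fixed structure of a uniclass automorphism is itself a building (a Weyl substructure) whose relative type is read off from the fix diagram $\mathsf{E_{7;4}}$, which has relative type $\mathsf{F_4}$ — yields that the fixed structure sitting inside $E(p,p^\theta) \cong \mathsf{E_{7,7}}(\K)$ is the asserted $\mathsf{F_{4,1}}(\K,\LL)$, isometrically and fully embedded. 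The ``fully embedded'' and ``isometrically embedded'' clauses transfer directly from the corresponding clauses in \cref{nofixedpoints}(2) and the fact (used throughout \cref{sec:4}) that equator geometries are fully embedded subgeometries and isometric embeddings compose.

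The main obstacle, and the step that genuinely requires the work of \cite{PVMexc4} rather than a citation to earlier results, is pinning down that the induced automorphism $\theta'$ on the equator geometry has precisely opposition diagram $\mathsf{E_{7;3}}$ (equivalently, that $\theta'$ fixes no chamber of $E(p,p^\theta)$, maps at least one point to a collinear point, and maps no point to a symplectic one — the conditions in \cref{nofixedpoints}(3)). This is delicate because one must rule out degenerate behaviours of the restriction — for instance that $\theta'$ could be anisotropic on the equator, or could fix a point there, or have a different (e.g.\ $\mathsf{E_{7;4}}$) opposition diagram — and this requires a careful analysis of how chambers of $\mathsf{E_8}(\K)$ mapped to opposites by $\theta$ intersect the equator, together with the detailed structure theory of domestic collineations of $\mathsf{E_8}$ buildings developed in \cite{PVMexc4}. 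Everything downstream of that determination is then a formal consequence of \cref{nofixedpoints} and the general theory of Weyl substructures already in place.
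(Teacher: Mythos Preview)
The paper does not prove this proposition: it is stated with the preamble ``The following lemma is shown in \cite{PVMexc4}'' and no argument is given. So there is no in-paper proof to compare against; you correctly flag this and offer a sketch in lieu of one.

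Your sketch has the right architecture --- reduce to the $\mathsf{E_7}$ situation on the equator and invoke \cref{nofixedpoints} --- but two points deserve correction. First, a labelling slip: the equator geometry of two opposite points of $\mathsf{E_{8,8}}(\K)$ is the long root geometry of the $\mathsf{E_7}$ residue, which is $\mathsf{E_{7,1}}(\K)$, not $\mathsf{E_{7,7}}(\K)$ (you drift between the two). Second, and more substantively, your justification that $E(p,p^\theta)$ is $\theta$-stable is not right: you claim $\theta$ ``stabilises the pair $\{p,p^\theta\}$ up to swapping'', but $\theta$ sends $p^\theta$ to $p^{\theta^2}$, and nothing in the hypotheses forces $p^{\theta^2}=p$. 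The self-duality of the $\mathsf{E_{8;4}}$ diagram does not make $\theta$ an involution. The way this step is handled in the analogous $\mathsf{E_7}$ argument (see the proof of \cref{propE7}, case~(3)) is the reverse of what you wrote: one first locates opposite \emph{fixed} elements inside the equator, and then uses those to pin down the imaginary line and hence stabilise the equator. Your closing paragraph essentially concedes that this is where the real work in \cite{PVMexc4} lies; just be aware that the stabilisation of the equator is part of that work, not a preliminary triviality.
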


Also the proofs of the following properties are contained in  \cite{PVMexc4}.
\begin{lemma}\label{nospecialE8}
Let $\theta$ be an automorphism of $\mathsf{E_8}(\K)$ with opposition diagram $\mathsf{E_{8;4}}$ and fixed diagram $\mathsf{E_{8;4}}$.
\begin{compactenum}[$(1)$] \item No point is mapped onto a special point. \item No point is mapped onto a collinear point.\item No symp is mapped onto an adjacent one. \item If a point $x$ is mapped onto a symplectic one, then the corresponding symp $\xi$ is fixed. 
\end{compactenum}
\end{lemma}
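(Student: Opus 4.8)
The plan is to reduce the four geometric statements, which concern mutual positions of points and symps under $\theta$, to a finite computation in the Weyl group $W(\mathsf{E_8})$. Throughout one works in the long root subgroup geometry $\Delta'=\mathsf{E_{8,8}}(\K)$. Since the Coxeter graph of type $\mathsf{E_8}$ has trivial automorphism group, $\theta$ is capped (Theorem~\ref{thm:uncapped}) and \uniclass\ (Proposition~\ref{prop:full}), so $\disp(\theta)$ is a single ordinary conjugacy class; reading off an element of $\disp(\theta)$ from the opposition diagram $\mathsf{E_{8;4}}$ via Proposition~\ref{prop:attaindisplacement}, together with Table~\ref{fig:bicapped}, gives $\disp(\theta)=\sE_{8;4}=\Cl(w_{\sD_4})$, the conjugacy class of the involution $w_{\sD_4}=w_{\{2,3,4,5\}}$ (the longest element of the standard $\mathsf{D_4}$-parabolic, of length $12$). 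Now for any fixed vertex type $t$, the pairs of type $t$ vertices of $\Delta$ fall into orbits under $\mathrm{Aut}(\Delta)$ indexed by the double cosets in $W_{S\setminus\{t\}}\backslash W/W_{S\setminus\{t\}}$: for two type $t$ vertices $\alpha,\beta$ the set $\{\delta(C,D):C\ni\alpha,\ D\ni\beta\}$ is exactly one such double coset, and which one records the mutual position of $\alpha$ and $\beta$. Taking $t$ to be the point type one gets the positions \emph{equal}, \emph{collinear}, \emph{special}, \emph{symplectic}, \emph{opposite}; taking $t$ to be the symp type one gets \emph{equal}, several \emph{adjacent} positions, \emph{disjoint non-opposite}, and \emph{opposite}.

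The point of this reduction is that $\delta(C,C^\theta)\in\disp(\theta)=\sE_{8;4}$ for \emph{every} chamber $C$, so if a point $x$ were mapped to a collinear (resp.\ special) point $x^\theta$, then any chamber $C\ni x$ would supply an element of $\Cl(w_{\sD_4})$ lying in the ``collinear'' (resp.\ ``special'') point double coset, and likewise for symps in part~$(3)$. Thus parts~$(1)$, $(2)$, $(3)$ follow once one verifies that the ``collinear'', ``special'', and all ``adjacent'' double cosets in $W(\mathsf{E_8})$ are disjoint from $\Cl(w_{\sD_4})$, while -- consistently with Proposition~\ref{coruniclassE8} -- the ``symplectic'' and ``opposite'' point double cosets and the ``fixed'' and ``opposite'' symp double cosets do meet it. Concretely one lists the (few) double cosets with their minimal length representatives (the minimal representative of the collinear point double coset is simply $s_8$) and, for each relevant representative $d$, shows $W_{S\setminus\{t\}}\,d\,W_{S\setminus\{t\}}\cap\Cl(w_{\sD_4})=\emptyset$: some positions are killed by parity of word length, since every element of $\Cl(w_{\sD_4})$ has even length (Proposition~\ref{prop:basic1}(1)), and for the rest one checks that an involution inside such a double coset would have a $(-1)$-eigenspace on the reflection representation either of the wrong dimension or not $W$-equivalent to the span of a $\mathsf{D_4}$-root subsystem. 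Where no transparent conceptual argument is available this last step is a finite verification, carried out as elsewhere in the paper with $\mathsf{MAGMA}$~\cite{MAGMA}. Granting $(1)$--$(3)$, part~$(4)$ is immediate: if $x$ is mapped to a symplectic point $x^\theta$ and $\xi$ is the unique symp containing both, then $x^\theta\in\xi\cap\xi^\theta$, so $\xi$ and $\xi^\theta$ are equal or adjacent, and $(3)$ rules out the latter.

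An alternative, more geometric route -- closer in spirit to Lemmas~\ref{notspecial} and~\ref{evendist} for $\mathsf{E_7}$ taken from~\cite{NPVV}, and presumably the one followed in~\cite{PVMexc4} -- would argue inside residues: assuming $x$ collinear (resp.\ special, resp.\ symplectic) to $x^\theta$, show that $\theta$ fixes the line $\langle x,x^\theta\rangle$ (resp.\ a line or symp at the gate of the special pair, resp.\ the symp on $x$ and $x^\theta$), pass to the relevant upper residue -- a building of type $\mathsf{D_6}$, $\mathsf{E_7}$, or $\mathsf{E_6}$ -- on which $\theta$ restricts to a \uniclass\ automorphism by Proposition~\ref{prop:residual}, invoke the classifications already proved (Theorems~\ref{Bn1uni}, \ref{theoE7}, \ref{E6uni}), and deduce via Lemma~\ref{ext} that each surviving case yields either a fixed chamber or a global displacement incompatible with $\mathsf{E_{8;4}}$. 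The main obstacle, in either approach, is precisely that node~$8$ is encircled in $\mathsf{E_{8;4}}$: points genuinely are mapped to opposite points, so the coarse cappedness information does not separate ``collinear'' and ``special'' from ``symplectic'' and ``opposite'', and one really needs the finer double-coset analysis (or the residue case-check). As this analysis is lengthy it is deferred to~\cite{PVMexc4}, and the lemma is used above as a black box.
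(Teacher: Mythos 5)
The paper itself does not prove Lemma~\ref{nospecialE8}: it is introduced with ``Also the proofs of the following properties are contained in~\cite{PVMexc4}'', i.e.\ the entire lemma is deferred to the forthcoming $\sE_8$ paper and used here as a black box. You correctly identify this, so there is no in-paper proof to compare against; what I can assess is the mathematical soundness of your two proposed strategies.

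Your first strategy is conceptually sound for parts~$(1)$--$(3)$. Since $\sE_8$ has trivial diagram automorphism group, $\sigma=1$, $\theta$ is capped and (because we are assuming the opposition/fixed diagrams, not uniclassness) one still has to observe that $\delta(C,C^\theta)$ lies in the poset determined by the opposition diagram; granting the uniclass conclusion of the $\sE_8$ classification, $\disp(\theta)=\Cl(w_{\{2,3,4,5\}})$ is indeed the length-$12$ bi-capped class, every element of which has even length. Reading the position of a type-$t$ vertex $\alpha$ and its image off the double coset $W_{S\setminus\{t\}}\,\delta(C,C^\theta)\,W_{S\setminus\{t\}}$ for any $C\ni\alpha$ is correct, and so disjointness of $\Cl(w_{\sD_4})$ from the ``collinear'', ``special'', and ``adjacent'' double cosets does give $(1)$--$(3)$ as a necessary-condition check. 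That disjointness, however, is a non-trivial computation in $W(\sE_8)$: parity kills nothing (these double cosets contain elements of both parities), and the eigenspace criterion you mention -- a conjugate of $w_{\sD_4}$ must have $(-1)$-eigenspace a $W$-translate of a $\sD_4$ root span -- is correct but still requires verifying, double coset by double coset, that every involution lying inside it fails this test; you are right that in practice this is a $\mathsf{MAGMA}$ check.

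The genuine gap is in your derivation of $(4)$ from $(3)$. From $x^\theta\in\xi\cap\xi^\theta$ you conclude ``$\xi$ and $\xi^\theta$ are equal or adjacent''. But ``adjacent'' is a specific relative position for symps (not defined in this paper, only used informally), and it does not a priori coincide with ``distinct but meeting''. The $\sE_7$ analogue Lemma~\ref{notspecial}$(1)$ makes exactly this distinction: two symps there may be fixed, \emph{meet in a line}, or be opposite, while the ``adjacent'' and ``disjoint non-opposite'' positions are separately ruled out. So in $\sE_7$ there is an allowed, non-equal, non-adjacent position in which symps still intersect. Nothing in your argument prevents the analogous possibility in $\sE_{8,8}$ -- that $\xi\neq\xi^\theta$ meet in (say) a line without being ``adjacent''. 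Thus $(3)$ alone does not deliver $(4)$; part $(4)$ is a positive structural statement about the action of $\theta$ near a symplectic pair and almost certainly requires a geometric argument along the lines of your alternative residue strategy (pass to $\Res(\xi)$ and invoke Theorem~\ref{theoE7} or Theorem~\ref{Bn1uni}), which is also what Lemma~\ref{notspecial}$(3)$ does in the $\sE_7$ case via \cite{NPVV}. As you note, those details live in~\cite{PVMexc4} and cannot be reconstructed from the machinery in the present paper alone.
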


The main result for $\sE_8$ buildings is as follows. 

\begin{thm}\label{E8uni}
An automorphism $\theta$ of $\Delta:=\mathsf{E_{8}}(\K)$, for some field $\K$, is \uniclass\ if and only if it  is either an anisotropic collineation or pointwise fixes a fully (and automatically isometrically) embedded metasymplectic space $\mathsf{F_{4,1}}(\K,\HH)$, with $\HH$ a quaternion algebra over $\K$ or an inseparable quadratic  field extension of degree $4$, in the associated long root geometry $\mathsf{E_{8,8}}(\K)$.
\end{thm}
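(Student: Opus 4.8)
\textbf{Proof plan for Theorem~\ref{E8uni}.}
The plan is to follow the same two-directional strategy as in the $\sE_6$ and $\sE_7$ cases (Theorems~\ref{E6uni}, \ref{theoE7}), splitting into the ``only if'' direction (which is essentially a citation to the domesticity literature) and the ``if'' direction (which requires a genuine convexity argument via \cref{ext}). First I would handle the trivial sub-cases: if $\theta$ fixes a chamber then the uniclass hypothesis forces $\theta=\id$; if $\theta$ maps some chamber to an opposite then, since $\disp(\theta)\subseteq\Cl^\sigma(w)$ for a single class and $w_0$ lies in any class containing an element mapped to opposite in a building of type $\sE_8$ (here $\sigma=1$ as $\sE_8$ has no diagram automorphisms), one checks using \cref{thm:downwardsclosure} that the only class of involutions containing $w_0$ with the correct opposition diagram forces $\disp(\theta)=\{w_0\}$, i.e.\ $\theta$ is anisotropic. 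Otherwise $\theta$ is a nontrivial domestic collineation fixing no chamber, and by \cref{thm:uncapped} it is capped. Its opposition diagram must then be one of the admissible $\sE_8$ diagrams; by the classification of domestic collineations of $\sE_8$ buildings (the forthcoming \cite{PVMexc4}, which we are permitted to cite here), the only capped possibility yielding a single displacement class is the diagram $\mathsf{E_{8;4}}$ with fix diagram $\mathsf{E_{8;4}}$, and for that diagram the fixed structure in $\mathsf{E_{8,8}}(\K)$ is precisely a fully isometrically embedded $\mathsf{F_{4,1}}(\K,\HH)$ with $\HH$ a quaternion algebra or an inseparable degree-$4$ extension. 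This disposes of the ``only if'' direction.

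For the ``if'' direction, suppose $\theta$ pointwise fixes such an embedded metasymplectic space; I must show every chamber $C$ has $\delta(C,C^\theta)$ in the single class $\bC$ determined by the opposition/fix diagram $\mathsf{E_{8;4}}$ (equivalently, multiplication of $w_0$ by the longest element of a standard $\sD_4$-parabolic, mirroring the $\sE_7$ situation). I would argue in $\Gamma=\mathsf{E_{8,8}}(\K)$, the long root geometry, and let $x$ be the type-$8$ vertex of $C$, so $x$ is a point of $\Gamma$. By \cref{nospecialE8} the possibilities for the pair $(x,x^\theta)$ are severely restricted: $x$ is fixed, $x$ is mapped to a symplectic point, or $x$ is mapped to an opposite point (collinear and special images being excluded). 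In the fixed case, everything takes place in the residue of $x$, which is an $\sE_7$ building, $\theta$ restricts to the $\sE_7$ situation of Theorem~\ref{theoE7} (the embedded $\sF_4$ geometry restricts to an ideal dual polar Veronesean or a metasymplectic fix structure of the appropriate kind), so $\delta(C,C^\theta)$ lies in the corresponding class by the $\sE_7$ result together with \cref{ext}. In the symplectic case, \cref{nospecialE8}(4) gives that the symp $\xi$ through $x$ and $x^\theta$ is fixed, so $\xi\in\mathsf{conv}\{C,C^\theta\}$; projecting $C$ and $C^\theta$ onto $\xi$ yields chambers $D,D^\theta$ with $\{D,D^\theta\}\subseteq\mathsf{conv}\{C,C^\theta\}$, one controls $\delta(D,D^\theta)$ inside the fixed $\sD_7$-type symp by \cref{Bn1uni} (the fix structure there being an ideal subspace coming from restricting the embedded $\sF_4$), and then \cref{ext} transfers the conclusion to $C$.

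The genuinely hard case is when $x^\theta$ is opposite $x$. Here I would invoke \cref{coruniclassE8}: the fixed structure of $\theta$ inside the equator geometry $E(x,x^\theta)$ is a fully, isometrically embedded $\mathsf{F_{4,1}}(\K,\LL)$ for a quadratic extension $\LL$, sitting inside $E(x,x^\theta)$, which is a Lie incidence geometry of type $\sE_{7,7}$ (the point residue of an $\sE_8$ long root geometry). The chambers $C$ and $C^\theta$ induce chambers $D$ and $D^\theta$ in $E(x,x^\theta)$ — this requires verifying that the imaginary line through $x,x^\theta$ is stabilised so that the equator is $\theta$-invariant, which should follow from the existence of two opposite fixed geometric objects meeting near $x$, exactly as in Case (3) of the proof of Proposition~\ref{propE7}. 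Having reduced to $E(x,x^\theta)$, the $\theta$-action there is an automorphism of an $\sE_7$ geometry whose fixed structure is a fully isometrically embedded metasymplectic $\mathsf{F_{4,1}}(\K,\LL)$ — precisely the situation of Proposition~\ref{propE7}. One then finds an apartment $\cA$ of $\Delta$ containing $C,C^\theta,D,D^\theta$ and restricting to an apartment of $E(x,x^\theta)$ adapted to the fix diagram (using that an apartment of type $\mathsf{F_{4}}$ inside the equator extends; and that the relevant $\sF_4$-substructure is an ideal subspace in the appropriate $\sD$-type host, so that apartments extend as in \cref{apaF4}); computing the fix structure of $\theta$ in $\cA$ gives exactly the fix diagram $\mathsf{E_{8;4}}$, hence $\delta(C,C^\theta)\in\bC$. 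The main obstacle I anticipate is precisely this last reduction: establishing the $\theta$-invariance of the equator geometry $E(x,x^\theta)$ and verifying that an apartment of the embedded $\sF_4$-structure inside $E(x,x^\theta)$ extends to an apartment of the full $\sE_8$ building with the claimed fix diagram — the parapolar-space bookkeeping of imaginary lines, equators, and ideal subspaces in the $\sE_8$ geometry is where the technical work sits, and it will lean heavily on the structural results of \cite{PVMexc4}.
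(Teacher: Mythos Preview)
Your plan matches the paper's proof closely in overall structure and in the ``if'' direction (three cases on the position of the type-$8$ vertex, using \cref{nospecialE8}, \cref{coruniclassE8}, the $\sE_7$ results, and \cref{ext}). Two points deserve correction.

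First, in the ``only if'' direction you have a genuine gap. The classification in \cite{PVMexc4} of domestic chamber-free collineations of $\sE_8(\K)$ does \emph{not} give a single outcome: it yields two possibilities, namely the embedded quaternion metasymplectic space $\mathsf{F_{4,1}}(\K,\HH)$ \emph{or} a pointwise-fixed equator geometry of type $\mathsf{E_{7,1}}$ inside $\mathsf{E_{8,8}}(\K)$. Both have opposition diagram $\mathsf{E_{8;4}}$, so your phrase ``the only capped possibility yielding a single displacement class'' does not distinguish them. The paper excludes the equator case by observing that an $\mathsf{E_{7,1}}$ equator contains singular $3$-spaces, hence fixed type-$5$ vertices, whereas the fix diagram $\mathsf{E_{8;4}}$ (forced by the uniclass hypothesis) has no encircled node at $5$. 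You need this extra argument.

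Second, some minor imprecisions in the ``if'' direction: the equator geometry $E(x,x^\theta)$ in $\mathsf{E_{8,8}}(\K)$ has type $\mathsf{E_{7,1}}$, not $\mathsf{E_{7,7}}$; and in the fixed-point case the residue at $x$ is $\mathsf{E_{7,7}}(\K)$, where the induced fixed structure is specifically an ideal dual polar Veronesean (so you invoke \cref{propE7bis}, not the metasymplectic half of \cref{theoE7}). Your worries about $\theta$-invariance of the equator in the opposite case are reasonable but the paper handles case (2) more directly than you suggest: it simply appeals to \cref{coruniclassE8} to produce an apartment of $\Delta$ containing $p,p^\theta$ and the projections of $C,C^\theta$ into $E(p,p^\theta)$, then reads off the fix diagram from the $\mathsf{F_{4,1}}$-type fixed substructure in that apartment, without an explicit reduction to \cref{propE7}.
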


\begin{proof}
First let $\theta$ be uniclass, say each displacement belongs to the class $\bC$. If $\theta$ maps some chamber to an opposite, then $\theta$ is an anisotropic colineation. If $\theta$ fixes some chamber, then $\theta$ is the identity. hence we may assume that $\theta$ is domestic and does not fix any chamber. According to \cite{PVMexc4}, $\theta$ either pointwise fixes a fully (and automatically isometrically) embedded metasymplectic space $\mathsf{F_{4,1}}(\K,\HH)$, with $\HH$ a  quaternion algebra over $\K$ an inseparable quadratic  field extension of degree $4$, in the associated long root geometry $\mathsf{(E_{8,8}}(\K)$, or pointwise fixes an equator geometry of type $\mathsf{E_{7,1}}$ in $\mathsf{E_{8,8}}(\K)$. In the latter case singular subspaces of dimension $3$ are fixed. In both cases the class $\bC$ is determined by the opposition diagram $\mathsf{E_{8;4}}$, which has the same fix diagram. Hence no vertex of type $5$ is fixed. But these correspond to singular $3$-spaces in $\mathsf{E_{8,8}}(\K)$. Hence only the first case occurs. 

Now we show the converse, that is, we assume that $\theta$ pointwise fixes a fully (and automatically isometrically) embedded quaternion metasymplectic space $\mathsf{F_{4,1}}(\K,\HH)$, with $\HH$ as in te statement of the theorem, in the associated long root geometry $\mathsf{E_{8,8}}(\K)$.

Let $C$ be a chamber of $\Delta$, and denote by $\bC$ the class of displacement determined by the longest element of a standard $\mathsf{D_4}$ subsystem of $\mathsf{E_8}$. Let $p$ be the element of type $8$ in $C$ and conceive $\Delta$ as the long root subgroup geometry $\mathsf{E_{8,8}}(\K)$.  According to \cref{nospecialE8} and \cref{nospecialE8}, there are three possibilities.
\begin{compactenum}[$(1)$]
\item $p^\theta=p$. In this case, everything happens in the residue of $p$, where we can apply \cref{propE7bis} and  and the residuality of being \uniclass\ to conclude.
\item $p$ is opposite $p^\theta$. In this case, we use \cref{coruniclassE8} to find an apartment $\cA$ of $\Delta$ containing $p$ and $p^\theta$, and such that the equator $E(p,p^\theta)$ of $p$ and $p^\theta$ contains the projections of $C$ and $C^\theta$ into $E(p,p^\theta)$. Clearly the fixed points in $\cA$ are those in $E(p,p^\theta)$ and constitute a subgeometry of type $\mathsf{F_{4,1}}$. Hence the displacement of $C$ belongs to $\bC$. 
\item $p$ is symplectic to $p^\theta$. By \cref{nospecialE8} the symp $\zeta$ containing $p$ and $p^\theta$ is fixed by $\theta$.  Let $D$ be the projection of $C$ onto $\zeta$. Then $D^\theta$ is the projection of $C^\theta$ onto $\zeta$. Also, $\zeta$ belongs to the convex closure of $C$ and $C^\theta$, hence also $D$ and $D^\theta$ belong to the convex closure of $C$ and $C^\theta$. The fixed point structure of $\theta$ in $\zeta$ is an ideal subspace of type $\mathsf{B_{3,1}}$ and hence assertion follows from \cref{ext}.
\end{compactenum}
This completes the proof of the proposition.\end{proof}

\goodbreak

\subsection{Proof of Theorem~\ref{thm:main1} and Corollaries~\ref{cor:bicapped} and~\ref{cor:pairing}}

We now summarise the proof of Theorem~\ref{thm:main1}. 

\begin{proof}[Proof of Theorem~\ref{thm:main1}] (1) is given in Theorem~\ref{rank2}, and (2) is Theorem~\ref{Anuni}. (3) is proved in Theorem~\ref{Bn1uni} along with Theorem~\ref{nonondom} to exclude the possibility of uniclass trialities of $\sD_4$. The $\sF_4$ case is proved in Theorems~\ref{thm:F41} and~\ref{thm:F42}, and the $\sE_n$ cases $(n=6,7,8$) are proved in Theorems~\ref{E6uni}, \ref{theoE7}, \ref{E8uni}.
\end{proof}

We now give the proof of Corollaries~\ref{cor:bicapped} and~\ref{cor:pairing}. 

\begin{proof}[Proof of Corollary~\ref{cor:bicapped}] To compute the $\sigma$-conjugacy class of each uniclass automorphism (c.f. Table~\ref{AllWS}) note that if $J$ is the type of the maximal fixed simplices, then $w_{S\backslash J}\in\disp(\theta)$ (by Proposition~\ref{prop:attaindisplacement}), and hence $\disp(\theta)=\Cl^{\sigma}(w_{S\backslash J})$. The list of $\sigma$-classes given in Table~\ref{AllWS} follows, and comparing with Theorem~\ref{thm:bicapped} we arrive at Corollary~\ref{cor:bicapped}. 
\end{proof}

Our classification of uniclass automorphisms (Theorem~\ref{thm:main1}) implies that for each uniclass automorphism $\theta$ of an irreducible thick spherical building of type $\mathsf{X}_n$, there exists a uniclass automorphism $\theta'$ of a possibly different thick building of the same type $\mathsf{X}_n$ such that $\Fix(\theta)=\Opp(\theta')$ and $\Opp(\theta)=\Fix(\theta')$. We say that $\theta$ and $\theta'$ are \emph{paired}, and also that the corresponding fixed Weyl substructures are \emph{paired}. 

If $\theta$ is paired with itself, it is called \emph{self-paired}. Note that there is exactly one irreducible type (with rank at least 2) with the property that there exists exactly one type of nontrivial Weyl substructure, which is necessarily self-paired, and that is type $\mathsf{E_8}$. Moreover, note that several types do not admit nontrivial Weyl substructures, for instance $\mathsf{A}_{2n}$, $\mathsf{I_2}(2n+1)$. 

\begin{remark} On the level of thin buildings, the pairing of uniclass automorphisms can be described explicitly as follows. Let $\bC$ be a bi-capped class of $\sigma$-involutions. If $w\in \bC$ then the automorphism $\theta=w\sigma$ acts on the Coxeter complex with displacement set $\bC$, and the automorphism $\theta'=ww_0\sigma_0\sigma$ acts on the Coxeter complex with displacement set $\psi(\bC)=\bC w_0$. Thus $\theta$ and $\theta'$ are paired. 
\end{remark}

\begin{proof}[Proof of Corollary~\ref{cor:pairing}]
For the classical cases, Corollary~\ref{cor:pairing} follows immediately from Theorem~\ref{thm:main1}. For the exceptional cases existence of automorphisms fixing the desired Weyl substructures is proved the the associated papers~\cite{Lam-Mal:23,Mal:12,NPVV,PVMexc4}. By Proposition~\ref{prop:duality}, the ranks of paired Weyl substructures necessarily add up to the rank of the building in question. 
\end{proof}

We note that Theorem~\ref{thm:main1} ``nearly'' proves that an automorphism of $\Delta$ is uniclass if and only if it is either anisotropic or its fixed element structure is a Weyl substructure. Indeed Theorem~\ref{thm:main1} proves the forwards direction, however for the reverse direction, for example in the $\sF_4$ case,  Theorem~\ref{thm:main1} implies the following weaker statement: If $\theta$ is either type preserving and fixes an ideal quadrangular Veronesian, or $\theta$ is a polarity (hence fixing a Moufang octagon) then $\theta$ is uniclass. Thus to have the full equivalence, one needs to prove additionally that no collineation fixes precisely a Moufang octagon, and that every duality fixing a Moufang octagon is necessarily a polarity. Similar comments apply to dualities in buildings of type $\mathsf{I}_2(m)$, $\sA_n$, and $\sE_6$. In these latter cases the proof of the additional statements are relatively straightforward, however in the $\sF_4$ case they are non-trivial. Therefore we shall postpone the details to future work~\cite{PRV}, where we will also give an axiomatic uniform definition of Weyl substructures.

\section{The finite case}\label{sec:5}

Theorem~\ref{thm:counts} determines the cardinalities $|\Delta_w(\theta)|$ for a uniclass automorphism~$\theta$ in terms of the class sum $\bC(q^{1/2})$, where $\bC=\disp(\theta)$. In the following theorem we use Corollary~\ref{cor:bicapped} to determine $|\Delta_w(\theta)|$ another way, using the fixed Weyl substructure. This leads to another formula for $|\Delta_w(\theta)|$, and combining with Theorem~\ref{thm:counts} we deduce a formula for $\bC(q^{1/2})$. 

\begin{thm}\label{thm:counts2}
Let $\theta$ be a uniclass automorphism of a thick spherical building $\Delta$ of type $(W,S)$ with fixed structure a Weyl substructure $\Delta'$ of type $(W',S')$ with parameters $q'=(q_{s'}')_{s'\in S'}$. For $w\in\disp(\theta)$ we have
$$
|\Delta_w(\theta)|=W_J(q)W'(q')q_w^{1/2}q_{w_J}^{-1/2},
$$
and writing $\bC=\disp(\theta)$ we have
$$
\bC(q^{1/2})=\frac{W(q)q_{w_J}^{1/2}}{W_J(q)W'(q')}.
$$
Here $W'(q')$ is the Poincar\'e polynomial of $(W',S')$, and so $W'(q')$ is the number of chambers of the Weyl substructure~$\Delta'$.
\end{thm}

\begin{proof}
Let $w_J$ be the unique minimal length element of the bi-capped class $\bC$ (using Corollary~\ref{cor:bicapped}). The chambers of the fixed Weyl substructure are the simplices of type $S\backslash J$ in the building $\Delta$ that are fixed by $\theta$. Thus the number $F_J$ of simplices of type $S\backslash J$ of $\Delta$ fixed by~$\theta$ equals the total number of chambers of $\Delta'$, giving $F_J=W'(q')$. 

We now count $F_J$ in another way. If $C\in\Ch(\Delta)$ then the type $S\backslash J$ simplex of $C$ is fixed if and only if $\delta(C,C^{\theta})\in W_J$. Since $w_J$ is the minimal length element of $\disp(\theta)$ it follows that the type $S\backslash J$ simplex of $C$ is fixed if and only if $\delta(C,C^{\theta})=w_J$. Moreover if $D\in\Res_J(C)$ then $\delta(D,D^{\theta})\in W_J$, and since $w_J$ has minimal length in $\disp(\theta)$ we have $\delta(D,D^{\theta})=w_J$. Hence $F_J=|\Delta_{w_J}(\theta)|/W_J(q)$, and so $W'(q')=|\Delta_{w_J}(\theta)|/W_J(q)$. The result now follows from Theorem~\ref{thm:counts}.
\end{proof}

\begin{example}
Let $\theta$ be a uniclass automorphism of the building $\Delta=\sE_7(q)$ with fixed Weyl substructure $\Delta'$ is  a building of type $\sF_4$ with parameters $(q,q^2)$. Then $\disp(\theta)=\Cl(w_J)$, where $J=\{2,5,7\}$. By Theorem~\ref{thm:counts2} and well known factorisations of Poincar\'{e} polynomials  of type $\sE_7$ and $\sF_4$ (see \cite{Mac:72}) we have
\begin{align*}
|\Delta_w(\theta)|&=\frac{(q+1)^3(q^2+1)(q^4+1)(q^5+1)(q^9+1)(q^6-1)(q^{12}-1)}{(q-1)^2}q^{(\ell(w)-3)/2}
\end{align*}
for all $w\in\disp(\theta)$, and
$$
\bC(q^{1/2})=\frac{(q^5-1)(q^9-1)(q^{14}-1)}{(q-1)^3(q+1)}
$$
where $\bC=\Cl(s_3s_5s_7)$. In particular, we also see that $|\bC|=\bC(1)=315$.
\end{example}

\cref{FWS} provides a list of Weyl substructures in the finite irreducible case using the following conventions. When a polar space has type $\mathsf{B}_n$ and parameters $(q,q)$ we mention in the column ``Remarks'' whether it concerns a parabolic or symplectic polar space; in all other cases it is clear from the parameters which polar space is meant. Also, we adopt the convention $\mathsf{B_1}=\mathsf{A_1}$. Moreover, if parameters $(s,t)$ are stated for type $\mathsf{B_1}$, one has to omit $s$, that is, the Weyl substructure has rank 1 and consists of exactly $t+1$ points.  In general, parameters $(s,t)$ for a building of type $\mathsf{B}_n$ means that in the corresponding polar space there are precisely $s+1$ points on a line and every submaximal singular subspace is contained in exactly $t+1$ maximal singular subspaces. In terms of the notation of \cref{sec:counting}, we have $s=q_{s_i}$, for $i\in\{1,2,\ldots,n-1\}$ and $q_{s_n}=t$.  There is some ambiguity in rank $2$, in which case we call points the vertices corresponding to the smallest fixed type (in Bourbaki labelling) of the absolute diagram. For instance for $\Gamma(\mathsf{D_4},\mathsf{B_2})$ over $\FF_q$, there are vertices of type $1$ fixed, so these are the points of the fixed quadrangle, which has then parameters $(q,q^2)$. However, for $\Gamma'(\mathsf{D_4},\mathsf{B_2})$ over $\FF_q$, there are no vertices of type 1 fixed, but there are vertices of type $2$ fixed. The parameters of the fixed quadrangle are then $(q^2,q)$. Note that these cases correspond to each other under a triality.

\begin{table}[h!]
\renewcommand{\arraystretch}{1.5}
\begin{tabular}{l|c||c|c||c|c}
\mbox{Absolute type} &  \mbox{Parameters} & \mbox{Relative type} & \mbox{Parameters}& \mbox{Remarks}& Ref.\\ \hline\hline
\multirow{2}{*}{$\mathsf{A}_{2n+1}$, $n\geq 1$} & \multirow{2}{*}{$q$} & $\mathsf{B}_{n+1}$ & $(q,q)$ & Symplectic&(1)\\ 
& &  $\mathsf{A}_{n-1}$ & $q^2$ & &(2)\\ \hline 
$\mathsf{B}_n$, $n\geq 2$ & $(q,q)$ & $\mathsf{B}_{n-1}$ & $(q,q^2)$ & Parabolic, $q$ odd&(3)\\ \hline
$\mathsf{B}_{2n}$, $n\geq 1$& $(q,q)$ & $\mathsf{B}_n$ & $(q^2,q^2)$ & Symplectic, $q$\mbox{ odd}&(4)\\ \hline
$\mathsf{B}_{2n}$, $n\geq 1$ & $(q,q^2)$ & $\mathsf{B}_{n}$ & $(q^2,q^3)$ &&(5)\\ \hline
$\mathsf{B_2}$ & $(2^{2e+1},2^{2e+1})$ & $\mathsf{A_1}$ & $2^{4e+1}$ & Polarity&(6)\\ \hline
\multirow{2}{*}{$\mathsf{D}_n$, $n\geq 3$} & \multirow{2}{*}{$q$} & $\mathsf{B}_{n-1}$ & $(q,q)$ & Parabolic &(7)\\ 
& & $\mathsf{B}_{n-2}$ & $(q,q^2)$ & &(8)\\ \hline
$\mathsf{D}_{2n}$, $n\geq 2$ & $q$ & $\mathsf{B}_{n}$ & $(q^2,q)$ & &(9)\\ \hline
$\mathsf{E_6}$ & $q$ & $\mathsf{F_4}$ & $(q,q)$ & Polarity&(11)\\ \hline
$\mathsf{E_7}$ & $q$ & $\mathsf{F_4}$ & $(q,q^2)$ & &(12)\\ \hline
$\mathsf{F_4}$ & $2^{2e+1}$ & $\mathsf{I_2(8)}$ & $(2^{2e+1},2^{4e+2})$ & Polarity&(13)\\ \hline
$\mathsf{G_2}$ & $q$ & $\mathsf{A_1}$ & $q^3$ & $q\equiv2\mod3$&(14)\\ \hline
$\mathsf{G_2}$ & $3^{2e+1}$ & $\mathsf{A_1}$ & $3^{6e+3}$ & Polarity&(15)\\ \hline
\end{tabular}
\vspace{6pt}
\caption{Nontrivial finite Weyl substructures\label{FWS}}\label{tab:FWS}
\end{table}

We will now prove that this table is complete, if one disregards ovoids in finite Moufang octagons of order $(q,q^2)$. We conjecture, however, that no such ovoid is the fix structure of any collineation. 

\begin{thm}\label{thm:finitecase}
Disregarding the possible existence of ovoids in Moufang octagons of order $(2^{2e+1},2^{4e+2})$, the list of nontrivial (that is, $\Delta'\neq \Delta$) Weyl substructures that occur in finite thick irreducible Moufang spherical buildings is given in \emph{\cref{tab:FWS}}
\end{thm}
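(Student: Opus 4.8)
The plan is to combine the already-established classification of uniclass automorphisms (Theorem~\ref{thm:main1}) with the structure of the fixed Weyl substructures (Section~\ref{sec:4}) and then specialise each case to finite fields, determining exactly which of the parametrised families of substructures actually occur over $\FF_q$. Concretely, the proof proceeds case-by-case along the rows of \cref{tab:FWS}, mirroring the organisation of Section~\ref{sec:4}: for each absolute type $\mathsf{X}_n$ we take a nontrivial uniclass $\theta$, read off its opposition/fix diagram and hence the relative type $(W',S')$ from Theorem~\ref{thm:bicapped}, and then identify the finite building $\Delta'=\Fix(\theta)$ using the geometric description of the Weyl substructure. The key input that makes this a finite problem rather than an existence problem is that for each (relative type, parameter) pair one must check (a) that a building of that type with those parameters exists over $\FF_q$ and (b) that it actually arises as a fixed structure — the existence over finite fields of the relevant collineations/polarities is exactly what the cited classification papers \cite{TTVM3,TTVM2,NPVV,Lam-Mal:23,PVMexc4,Mal:12,Mal:14} supply, restricted to the finite case.

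First I would dispose of the rank-$2$ cases. By Theorem~\ref{rank2} a uniclass $\theta$ of a generalised $d$-gon (other than anisotropic) is either a collineation fixing a (distance-$d/2$) ovoid or spread, or a polarity with ovoid-spread fixed pairing, and $d$ is even. Over finite fields the possibilities are tightly constrained: for $\mathsf{B}_2$ the polarities are the Suzuki-type polarities of $W(2^{2e+1})$ giving the Suzuki--Tits ovoid (row (6)); for $\mathsf{G}_2$ the ovoid-fixing collineations exist exactly when $q\equiv 2\bmod 3$ (row (14)) and the polarities of $\mathsf{G}_2(3^{2e+1})$ give the Ree--Tits unitals (row (15)); for $\mathsf{F}_4$ the polarities of the metasymplectic space give the Ree--Tits octagon of order $(2^{2e+1},2^{4e+2})$ (row (13)). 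The collineations of finite generalised $2m$-gons fixing an ovoid or spread for $m\geq 3$ force $2m\in\{6,8\}$ (hexagons and octagons) by the standard feasibility results on ovoids/spreads of finite generalised polygons, giving the $\mathsf{G}_2$ and $\mathsf{F}_4$ rows and (conjecturally excluded) octagon ovoids. The remaining type $\mathsf{I}_2(2m)$ rows with $m\geq 5$ do not survive; this is the main number-theoretic/geometric subtlety in the low-rank part.

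Next I would handle the higher-rank cases using the geometric models already set up: type $\sA_{2n+1}$ via symplectic polarities (which exist over every finite field $\FF_q$, giving the symplectic polar space $\mathsf{B}_{n+1}(q,q)$, row (1)) and composition line spreads (requiring a degree-$2$ field extension, hence relative type $\mathsf{A}_{n-1}(q^2)$, row (2)); types $\sB_n,\sD_n$ via ideal subspaces and line spreads as in Theorem~\ref{Bn1uni}, where the parameters of the ideal subspace (parabolic, symplectic, or with parameters $(q,q^2)$, $(q^2,q)$, $(q^2,q^3)$) are forced by the embedding geometry and the requirement that the fixed structure be an ideal subspace of the appropriate corank — here the constraints ``$q$ odd'' in rows (3) and (4) come from the non-existence of the relevant ovoids/fixed configurations in even characteristic, exactly as recorded in the geometric literature; and the exceptional types $\sE_6$ (symplectic polarity giving $\mathsf{F}_4(q,q)$, row (11)) and $\sE_7$ (partial composition spread giving $\mathsf{F}_4(q,q^2)$, row (12)). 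For type $\sE_8$ Theorem~\ref{E8uni} requires a quaternion algebra or inseparable degree-$4$ extension over $\K$; over a finite field neither exists, so there is \emph{no} nontrivial finite Weyl substructure of absolute type $\sE_8$, which is why $\sE_8$ is absent from \cref{tab:FWS}. Trialities of $\sD_4$ are excluded by Theorem~\ref{nonondom}, and types $\sA_{2n}$, $\mathsf{I}_2(2m+1)$ admit no nontrivial uniclass automorphisms at all.

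The main obstacle I anticipate is not the bookkeeping of which relative types appear — that is essentially dictated by Table~\ref{fig:bicapped} — but rather pinning down the \emph{parameters} of the finite fixed building in each case and verifying that all and only the listed parameter values are realised. This requires, for each family, either an explicit construction over $\FF_q$ (symplectic polarities, Suzuki/Ree polarities, field-extension spreads) or a citation to the precise finite-case statement in \cite{TTVM3,TTVM2,NPVV,Lam-Mal:23,Mal:12,Mal:14}, together with the divisibility/characteristic restrictions (``$q$ odd'' for parabolic/symplectic ideal subspaces in $\mathsf{B}$-type, $q\equiv 2\bmod 3$ for $\mathsf{G}_2$ ovoids, $q=2^{2e+1}$ or $3^{2e+1}$ for the twisted polarities). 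The one genuinely open point, which I would isolate as a conjecture rather than prove, is whether a collineation of a finite Moufang octagon of order $(2^{2e+1},2^{4e+2})$ can have an ovoid as its exact fixed structure; the theorem is stated modulo this, so in the proof I would simply note that the preceding classification allows this a priori and flag it as the sole residual case.
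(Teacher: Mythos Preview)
Your case-by-case strategy mirrors the paper's approach closely, and most of your reductions are on target. However, there is one genuine gap: the theorem only disregards \emph{ovoids} in finite Moufang octagons, not spreads. You must therefore actively rule out the possibility that a nontrivial collineation of a finite Moufang octagon fixes a spread elementwise, and you do not address this. The paper does so via a specific geometric argument: two members of a putative fixed spread determine a non-thick full suboctagon (cf.~\cite{JVM}); the spread then induces an ovoid in the generalised quadrangle underlying this suboctagon, which is a subquadrangle of a symplectic quadrangle; the collineation therefore acts on an ambient projective $3$-space pointwise fixing an ovoid, forcing it to be the identity. Without this argument your proof is incomplete, since Theorem~\ref{rank2} allows spread-fixing collineations a priori.

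A secondary issue is that you list only the \emph{surviving} exceptional cases ($\sE_6$ polarity, $\sE_7$ partial composition spread) without saying why the paired substructures from Table~\ref{AllWS} are absent in the finite case. You correctly invoke the non-existence of quaternion division algebras over $\FF_q$ for $\sE_8$, but the same reasoning is needed to exclude ideal Veroneseans in $\sE_6$ (which by \cite{NPVV} require quaternion or octonion division algebras), ideal dual polar Veroneseans in $\sE_7$ (quaternion), and ideal quadrangular Veroneseans in $\sF_4$ (which by \cite[Main Result (DOM14)$(iii)$]{Lam-Mal:23} arise from Moufang quadrangles of Tits index $\mathsf{^2D_{5,2}^{(2)}}$, again requiring quaternions). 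These are not difficult once stated, but they are part of the content of the theorem and should be made explicit.
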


\begin{proof}
For type $\mathsf{A}_n$ there is nothing to prove as all possibilities can occur over finite fields. 

By Proposition~\ref{prop:anisotropic}, a uniclass collineation of a building of type $\mathsf{B}_n$ or $\mathsf{D}_n$, fixing a geometric subspace of corank at least $2$ or at least $3$, respectively, acts as a anisotropic collineation on an irreducible residue of rank at least 2 or 3, respectively. If $\Delta$ is a parabolic polar space in odd characteristic, then a nontrivial colllneation can fix a nondegenerate hyperplane inducing an elliptic polar space in $\Delta$ and its pole. In characteristic 2 such a nontrivial collineation does not exist as a parabolic polar space is isomorphic to a symplectic polar space, and in this case the collineation must be nontrvial over the space generated by the elliptic quadric, a contradiction.   In hyperbolic polar spaces, one can find nontrivial collineations pointwise fixing a suitable hyperplane or subhyperplane (a hyperplane of a hyperplane) and their pole so that uniclass collineations of corank $i=1,2$ arise. The Weyl substructures are parabolic and elliptic polar spaces of one or two, respectively, ranks lower.  See rows~(3), (7) and (8) of \cref{FWS}

Composition line spreads in symplectic polar spaces of even rank (row (4) in \cref{FWS}) in odd characteristic, in hyperbolic polar spaces of even rank (row (9) in \cref{FWS}) in arbitrary characteristic, and in elliptic polar spaces of even rank (row (5) in \cref{FWS}) and arbitrary characteristic exist in the finite case by Section~3.3 of \cite{PVMclass}; the same reference shows that there are no others. 

Concerning type $\mathsf{E_6}$, it is also well known (for an explicit geometric proof for arbitrary fields, see \cite{ADS-NSNS-HVM}) that the building $\mathsf{F_4}(q)$ arises as fix structure of a (symplectic) polarity in the building $\mathsf{E_6}(q)$. Moreover, since there do not exist quaternion or octonion division algebras over finite fields, there do not exist ideal Veroneseans in $\mathsf{E_6}(q)$.  

Proposition~5.7 of \cite{DSV} asserts that each building $\mathsf{F_4}(q,q^2)$ is the fix structure via a partial composition spread of each nontrivial member of a nontrivial group of collineations of $\mathsf{E_7}(q)$, cf.\ Proposition~8.1 in \cite{NPVV}.   Also, since there do not exist quaternion division algebras over finite fields, there do not exist ideal dual polar Veroneseans in $\mathsf{E_7}(q)$.

For the same reason as in the previous paragraph, there do not exist finite quaternion metasymplectic spaces, hence no uniclass collineations of $\mathsf{E_8}(q)$ exist at all. 

By \cite[Main Result (DOM14)$(iii)$]{Lam-Mal:23}, an ideal quadrangular Veroneseans in buildings isomorphic to $\mathsf{F_4}(q,q)$ or $\mathsf{F_4}(q,q^2)$ stems from a Moufang quarangle with Tits index $\mathsf{^2D_{5,2}^{(2)}}$, and these only exist when the base field admits a quaternion division algebra, hence not in the finite case. On the other hand, a polarity of $\mathsf{F_4}(q)$ that produces a Moufang octagon always exists as soon as $q$ is a power of $2$ with an odd exponent, cf.\ \cite{Tits:60/61}.

The assertions for Moufang hexagons follow from \cite{Tits:60/61} (polarities) and \cite{PVMexc2}.

Finally suppose that a spread is fixed in a Moufang octagon, where we fix the duality class by requiring that each line pencil is paramatrized by a Suzuki ovoid and each point row by the base field. Two members of the spread determine a unique non-thick but full suboctagon (cf.\ \cite{JVM}). By the definition of spread, each point $p$ of that suboctagon lies at distance at most 3 from a member $L$ of the spread, and hence  $L$ is contained in the convex closure of $p$ and its image, hence in the suboctagon. It follows that we find a spread in the suboctagon, inducing an ovoid in the generalized quadrangle underlying the nonthick suboctagon. That quadrangle is a subquadrangle of a symplectic quadrangle and so the collineation acts on an ambient projective $3$-space, where it pointwise fixes an ovoid, forcing it to be the identity. 
\end{proof}

\section{Connection with the Freudenthal--Tits Magic Square}\label{FTMS}
There are several ways to introduce the Freudenthal--Tits Magic Square. Perhaps its most basic form is as a table of Lie algebras constructed by Tits \cite{Tits:65}. As explained in \cite{Tits:65}, this table has several forms: split, compact or mixed. But perhaps the most popular form is the mixed one. Replacing each algebra with its corresponding Tits index (cf.~\cite{Tits:66}), written as a Tits diagram, we obtain, with the conventions of \cite{Tits:66}, the following appearance of the nine cells in the South-East corner of the Freudenthal--Tits Magic Square:

\begin{center}
\begin{tabular}{|c|c|c|}
\hline
\raisebox{-.45cm}{\begin{tikzpicture}[scale=0.3]
\node at (0,0.3) {};
\fill (-3.75,0) circle (5pt);
\fill (-3.75,-2) circle (5pt);
\draw (-4.1,0) arc [start angle=180, end angle=0, radius = 10pt];
\draw (-4.1,-2) arc [start angle=180, end angle=360, radius = 10pt];
\draw (-4.1,0)--(-4.1,-2);
\draw (-3.4,0)--(-3.4,-2);
\fill (-1.25,0) circle (5pt);
\fill (-1.25,-2) circle (5pt);
\draw (-1.6,0) arc [start angle=180, end angle=0, radius = 10pt];
\draw (-1.6,-2) arc [start angle=180, end angle=360, radius = 10pt];
\draw (-1.6,0)--(-1.6,-2);
\draw (-0.9,0)--(-0.9,-2);
\draw (-3.75,0)--(-1.25,0);
\draw (-3.75,-2)--(-1.25,-2);
\end{tikzpicture} }
&
\begin{tikzpicture}[scale=0.3]
\node at (0,0.3) {};
\fill (-3.75,0) circle (5pt);
\fill (-1.25,0) circle (5pt);
\draw (-1.25,0) circle (10pt);
\fill (1.25,0) circle (5pt);
\fill (3.75,0) circle (5pt);
\draw (3.75,0) circle (10pt);
\fill (6.25,0) circle (5pt);
\draw (-3.6,0)--(-1.25,0);
\draw (-1.25,0)--(1.1,0);
\draw (3.75,-0)--(6.25,0);
\draw (1.4,0)--(3.6,0);
\end{tikzpicture} 
&
{\begin{tikzpicture}[scale=0.3]
\node at (0,2.7) {};
\fill (-3.75,0) circle (5pt);
\fill (-1.25,0) circle (5pt);
\draw (-3.75,0) circle (10pt);
\fill (1.25,0) circle (5pt);
\fill (3.75,0) circle (5pt);
\draw (6.25,0) circle (10pt);
\fill (6.25,0) circle (5pt);
\draw (-3.6,0)--(-1.25,0);
\draw (-1.25,0)--(1.1,0);
\draw (3.75,-0)--(6.25,0);
\draw (1.4,0)--(3.6,0);
\fill (1.25,2) circle (5pt);
\draw (1.25,0)--(1.25,2);
\end{tikzpicture} }
\\ [20pt] \hline
\raisebox{-.35cm}{\begin{tikzpicture}[scale=0.3]
\node at (0,0.3) {};
\fill (-3.75,0) circle (5pt);
\fill (-3.75,-2) circle (5pt);
\draw (-4.1,0) arc [start angle=180, end angle=0, radius = 10pt];
\draw (-4.1,-2) arc [start angle=180, end angle=360, radius = 10pt];
\draw (-4.1,0)--(-4.1,-2);
\draw (-3.4,0)--(-3.4,-2);
\fill (-1.25,0) circle (5pt);
\fill (-1.25,-2) circle (5pt);
\draw (-1.6,0) arc [start angle=180, end angle=0, radius = 10pt];
\draw (-1.6,-2) arc [start angle=180, end angle=360, radius = 10pt];
\draw (-1.6,0)--(-1.6,-2);
\draw (-0.9,0)--(-0.9,-2);
\draw (-3.75,0)--(-1.25,0);
\draw (-3.75,-2)--(-1.25,-2);
\draw (-1.25,0) arc [start angle=90, end angle=-90, radius=1cm];
\fill (-0.25,-1) circle (5pt);
\draw (-0.25,-1) circle (10pt);
\end{tikzpicture} }
&
\begin{tikzpicture}[scale=0.3]
\node at (0,2.7) {};
\fill (-3.75,0) circle (5pt);
\fill (-1.25,0) circle (5pt);
\draw (-1.25,0) circle (10pt);
\fill (1.25,0) circle (5pt);
\fill (3.75,0) circle (5pt);
\draw (3.75,0) circle (10pt);
\fill (6.25,0) circle (5pt);
\draw (-3.6,0)--(-1.25,0);
\draw (-1.25,0)--(1.1,0);
\draw (3.75,-0)--(6.25,0);
\draw (1.4,0)--(3.6,0);
\fill (3.75,2) circle (5pt);
\draw (6.25,0) circle (10pt);
\draw (3.75,0)--(3.75,2);
\end{tikzpicture} 
&
\begin{tikzpicture}[scale=0.3]
\node at (0,0.3) {};
\fill (-3.75,0) circle (5pt);
\fill (-1.25,0) circle (5pt);
\draw (-3.75,0) circle (10pt);
\fill (1.25,0) circle (5pt);
\fill (3.75,0) circle (5pt);
\draw (6.25,0) circle (10pt);
\fill (6.25,0) circle (5pt);
\draw (-3.6,0)--(-1.25,0);
\draw (-1.25,0)--(1.1,0);
\draw (3.75,-0)--(6.25,0);
\draw (1.4,0)--(3.6,0);
\fill (1.25,2) circle (5pt);
\draw (1.25,0)--(1.25,2);
\draw (8.75,0) circle (10pt);
\fill (8.75,0) circle (5pt);
\draw (6.25,0)--(8.75,0);
\end{tikzpicture} 
\\ [22pt] \hline
\raisebox{-.4cm}{\begin{tikzpicture}[scale=0.3]
\node at (0,1.5) {};
\fill (-3.75,0) circle (5pt);
\fill (-3.75,-2) circle (5pt);
\draw (-4.1,0) arc [start angle=180, end angle=0, radius = 10pt];
\draw (-4.1,-2) arc [start angle=180, end angle=360, radius = 10pt];
\draw (-4.1,0)--(-4.1,-2);
\draw (-3.4,0)--(-3.4,-2);
\fill (-1.25,0) circle (5pt);
\fill (-1.25,-2) circle (5pt);
\draw (-1.6,0) arc [start angle=180, end angle=0, radius = 10pt];
\draw (-1.6,-2) arc [start angle=180, end angle=360, radius = 10pt];
\draw (-1.6,0)--(-1.6,-2);
\draw (-0.9,0)--(-0.9,-2);
\draw (-3.75,0)--(-1.25,0);
\draw (-3.75,-2)--(-1.25,-2);
\draw (-1.25,0) arc [start angle=90, end angle=-90, radius=1cm];
\fill (-0.25,-1) circle (5pt);
\draw (-0.25,-1) circle (10pt);
\fill (2.25,-1) circle (5pt);
\draw (2.25,-1) circle (10pt);
\draw (-0.25,-1)--(2.25,-1);
\node at (0,-3) {};
\end{tikzpicture} }
&
\begin{tikzpicture}[scale=0.3]
\node at (0,2.8) {};
\fill (-3.75,0) circle (5pt);
\fill (-1.25,0) circle (5pt);
\draw (-1.25,0) circle (10pt);
\fill (1.25,0) circle (5pt);
\fill (3.75,0) circle (5pt);
\draw (3.75,0) circle (10pt);
\fill (6.25,0) circle (5pt);
\draw (-3.6,0)--(-1.25,0);
\draw (-1.25,0)--(1.1,0);
\draw (3.75,-0)--(6.25,0);
\draw (1.4,0)--(3.6,0);
\fill (3.75,2) circle (5pt);
\draw (6.25,0) circle (10pt);
\draw (3.75,0)--(3.75,2);
\draw (8.75,0) circle (10pt);
\fill (8.75,0) circle (5pt);
\draw (6.25,0)--(8.75,0);
\end{tikzpicture} 
&
\begin{tikzpicture}[scale=0.3]
\node at (0,0.3) {};
\fill (-3.75,0) circle (5pt);
\fill (-1.25,0) circle (5pt);
\draw (-3.75,0) circle (10pt);
\fill (1.25,0) circle (5pt);
\fill (3.75,0) circle (5pt);
\draw (6.25,0) circle (10pt);
\fill (6.25,0) circle (5pt);
\draw (-3.6,0)--(-1.25,0);
\draw (-1.25,0)--(1.1,0);
\draw (3.75,-0)--(6.25,0);
\draw (1.4,0)--(3.6,0);
\fill (1.25,2) circle (5pt);
\draw (1.25,0)--(1.25,2);
\draw (8.75,0) circle (10pt);
\fill (8.75,0) circle (5pt);
\draw (6.25,0)--(8.75,0);
\draw (11.25,0) circle (10pt);
\fill (11.25,0) circle (5pt);
\draw (11.25,0)--(8.75,0);
\end{tikzpicture} 
\\ [12pt] \hline
\end{tabular}
\end{center}
Now if we interpret the irreducible diagrams as fix or opposition diagrams, then we obtain the following table.

\begin{center}
\begin{tabular}{|c|c|c|}
\hline
& \phantom{.} \raisebox{-6pt}{$\mathsf{A_{5;2}}$} \phantom{.}& \phantom{.} \raisebox{-6pt}{$\mathsf{E_{6;2}}$} \phantom{.} \\ [.45cm]\hline
\raisebox{-6pt}{$\mathsf{^2A_{5;3}}$} & \raisebox{-6pt}{$\mathsf{D_{6;3}^2}$}& \raisebox{-6pt}{$\mathsf{E_{7;3}}$} \\ [.45cm] \hline
\raisebox{-6pt}{$\mathsf{^2E_{6;4}}$} & \raisebox{-6pt}{$\mathsf{E_{7;4}}$} & \raisebox{-6pt}{$\mathsf{E_{8,4}}$}\\ [.45cm] \hline \end{tabular}
\end{center}
We now observe that all the fix and opposition diagrams of \uniclass\ automorphisms of exceptional types $\sE_6,\sE_7,\sE_8$ appear in the table; moreover the fix diagram of the \uniclass\ automorphism belonging to the opposition diagram of any cell appears in the cell which lies symmetric with respect to the main diagonal. Hence cells lying symmetric with respect to the main diagonal are paired in our sense. On the one hand, this adds some magic to the square; on the other hand our results ``explain'' the magic why the sum of the relative ranks of cells lying symmetric with respect to the main diagonal add up to the absolute rank.  

The automorphisms themselves belong to the so-called \emph{delayed Magic Square}, see Section 9.2 in \cite{Mal:23}, except for the first row, interpreted as fix diagrams, where also Galois descent is allowed. Note that the table above also displays the two possible fix and opposition diagrams for nontrivial and non-anisotropic \uniclass\ automorphisms of buildings of type $\mathsf{A_5}$, unlike for type $\mathsf{D_6}$.   

With similar interpretation, the compact form of the Freudenthal--Tits Magic Square corresponds to anisotropic automorphisms (the fix diagrams are all empty---everything is mapped to an opposite), and the split form corresponds to the identity (the fix diagrams are all full---everything is fixed).  Hence for the exceptional types $\mathsf{E_6,E_7,E_8}$, each uniclass automorphism is encoded in one or the other  form of the Freudenthal--Tits Magic Square (and this also holds for type $\mathsf{A_5}$).

\end{document}